\newtheorem{theorem}{Theorem}[section]
\newtheorem{prop}[theorem]{Proposition}
\newtheorem{lemma}[theorem]{Lemma}
\newtheorem{cor}[theorem]{Corollary}
\numberwithin{equation}{section}
\DeclareMathOperator*{\comp}{\bigcirc}
\newcommand{\cE}{\mathcal{E}}
\newcommand{\R}{\mathbb{R}}
\newcommand{\E}{\mathbb{E}}
\renewcommand{\P}{\mathbb{P}}
\newcommand{\N}{\mathbb{N}}
\newcommand{\V}{\mathbb{V}}
\newcommand{\av}{\mathrm{Av}}
\newcommand{\M}{\mathcal{M}}
\newcommand{\var}{\mathrm{Var}}
\newcommand{\1}{\mathbbm{1}}
\newcommand{\cP}{\mathcal{P}}
\newcommand{\T}{\mathcal{T}}
\newcommand{\X}{\mathcal{X}}
\newcommand{\gse}{\mathrm{GSE}}
\newcommand{\la}{\langle}
\newcommand{\ra}{\rangle}
\renewcommand{\i}{\mathbf{i}}
\newcommand{\G}{\mathcal{G}}
\newcommand{\f}{\mathbf{f}}
\newcommand{\Z}{\mathbb{Z}}
\newcommand{\h}{\mathrm{hc}}
\renewcommand{\Pr}{\mathrm{Pr}}
\begin{document}
\title{On the Replica Symmetric Solution in General Diluted Spin Glasses}
\author{Ratul Biswas, Wei-Kuo Chen and Arnab Sen}
\date{}
\maketitle
\begin{abstract}
     We present a unifying approach to studying the replica symmetric solution in general diluted spin glass models on random $p$-uniform hypergraphs with sparsity parameter $\alpha$. Our result shows that there exist two key regimes in which the model exhibits replica symmetry and the free energy can be explicitly represented as the evaluation of an energy functional at the unique fixed point of a recursive distributional equation. One is called the high temperature regime, where the temperature and the sparsity parameter are essentially inversely proportional to each other; the other is the subcritical regime defined as $\alpha p (p-1)\leq 1$. In particular, the fact that the second regime is independent of the temperature parameter further allows us to deduce an analogous representation of the ground state energy in the subcritical regime. Along the way, we revisit several well-known formulas and also derive new ones for the free and ground state energies in the constraint satisfaction problem, Potts model, XY model, and continuous hardcore model.
    
\end{abstract}
\allowdisplaybreaks
{
\hypersetup{linkcolor = black}
\tableofcontents
}
\section{Introduction}

Understanding the unusual magnetic behavior of spin glasses has intrigued physicists for decades. Thanks to theoretical predictions and experimental observations made in the physics literature, significant progress in the study of spin glass models has been achieved largely for mean field models, in which the spin interactions are uniformly defined on fully connected graphs. We refer the reader to \cite{Mzard1986SpinGT, Panchenko2013TheSM, book, book2} for developments. Naturally, for more realistic models, one would expect that on average, interactions between spins are diluted in the sense that the spin at a given site interacts only with the spins at a bounded number of sites in its vicinity. This gives rise to spin glass models where the underlying interaction graph of the Hamiltonian is given by sparse random graphs of constant average degree, such as the diluted Erd\"os-R\'enyi graph and the random $d$-regular graph. Important examples originating from statistical physics include the Ising and Potts models on sparse random graphs \cite{dembo2010gibbs, Dembo_2013}, the Viana-Bray model \cite{viana1985phase}, diluted Hopfield model \cite{bovier1992rigorous} and diluted $V$-statistics \cite{Talagrand2016AMS}.  In addition, diluted models have been prominently featured in computer science, particularly in the context of random constraint satisfaction problems, such as $K$-SAT and NAE-$K$-SAT as well as the $q$-coloring and the hardcore model on sparse random graphs, see \cite{Achlioptas2005RigorousLO,  ding2021proof, Krz_aka_a_2007, mezard2009information} and the  references therein.

In the investigation of diluted spin glass models, an important component is concerned with the so-called M\'ezard-Parisi replica symmetry ansatz \cite{mezard2001bethe}. Roughly speaking, it states that there exists a regime of system parameters, in which the overlap between two spin configurations, sampled independently from the Gibbs measure, is approximately constant. This implies that the spins are in a `pure state', i.e., under the Gibbs measure, a collection of finitely many spins of the system is asymptotically uncorrelated (see \cite[Proposition 1.4.14 and Theorem 6.7.8]{book} for example) and the quenched distribution of a spin is expected to be characterized as the fixed point of a certain distributional operator. 

However, developing a strategy that establishes these two properties for a general diluted spin glass model is a challenging task. To this end, Talagrand developed an approach to studying general diluted models with Ising spin configurations and validated the M\'ezard-Parisi replica symmetry ansatz in a regime where the temperature essentially varies inversely with the sparsity parameter, see \cite[Chapter 6]{book} for details. Nevertheless, there are examples of models, where by exploiting the specifics of the models under consideration, the replica-symmetric free energy has been obtained over much larger regimes of temperature and sparsity parameters. For example, the replica-symmetric free energy of the random $K$-SAT model \cite{Montanari-Shah, EJP2963} and Viana-Bray model \cite{Guerra_2004} can be obtained at any temperature if the underlying graph is sufficiently sparse. In some other diluted models, the replica symmetric free energy was obtained for all positive temperatures and sparsities; see \cite{Dembo_2010} for the ferromagnetic Ising model with an external field, \cite{dembo_2012} for the ferromagnetic $q$-Potts model on random regular graphs with even degree, and \cite{10.1214/22-AOP1597} for the Shcherbina-Tirozzi model with a quadratic Hamiltonian.

In this paper, we aim to present a unifying approach to studying the replica-symmetric solution for general diluted spin glasses following  Panchenko's framework of asymptotic Gibbs measures \cite{Panchenko2010SPINGM}. Owing to the symmetry between the spin index $i$ and the replica index $l$, in the limit, the two-dimensional array of spin configurations $(\sigma_i^l)_{i,l\geq 1}$ under the annealed Gibbs measure is exchangeable and hence, due to the Aldous-Hoover representation \cite[Section 1.4]{Panchenko2013TheSM}, it can be characterized as the array $\sigma(w,u_l,v_i,x_{l,i})_{i,l\geq 1}$ for some measurable function $\sigma$, where $w, u_l, v_i, x_{l,i}$ are i.i.d.\ copies of uniform random variables on $[0,1]$. Rather than working with finite spin configurations, we consider the limiting exchangeable spin array to establish the free energy representation in terms of a functional evaluated at the unique fixed point of a recursive distributional equation in two key replica symmetric regimes defined by the temperature and sparsity parameters. 

The first is known as the high temperature regime, where the temperature and the sparsity parameter are essentially inversely proportional to each other. Our results in this regime are similar to those obtained by Talagrand \cite[Chapter 6]{book} who used a sophisticated inductive approach in finite systems. 

The second regime is new, permitting any temperature as long as the graph is subcritically or critically dense. Instead of proving the key uniqueness property of the distributional fixed point equation governing the limiting spin distribution via the usual contractive approach as in \cite{book}, we obtain this by analyzing certain invariant processes on finite trees that describe the local neighborhoods of the interaction graph. A major advantage of our approach is that our uniqueness argument is impervious to temperature as long as the sparsity parameter is in the subcritical or critical regime. In this case, we obtain a formula for the limiting free energy that holds for any temperature which further allows us to derive an analogous representation for the ground state energy by letting the temperature go to infinity.

To demonstrate the applicability of our results, we revisit several important diluted models and derive a number of new formulas for the related quantities. One major class of models we consider is the constraint satisfaction problems, including the $K$-SAT and the symmetric perceptron models, and establish the limit for the logarithmic number of spin configurations that satisfy the underlying constraints with an average fraction $t\in (0,1).$ Furthermore, we apply our results to two examples with  continuous spins, the XY model and the continuous hardcore model (introduced in \cite{gamarnik2017uniqueness}). In the subcritical regime, we demonstrate the solvability of the fixed point operator in the XY model and obtain explicit formulas for the free energy and ground state energy. For the continuous hardcore model, we derive the limit for the logarithmic volume for the spin configurations that satisfy the hardcore constraint.

\subsection{Main results}

For any $k\in \N = \{1, 2, \ldots\}$ (we shall use $\Z_+ := \N \cup\{0\})$, let $[k]=\{1,2,\ldots,k\}$ and for $A\subset \N$ and $r\geq 1,$ let $\binom{A}{r}$ be the collection of all subsets of $A$ with $r$ distinct elements. For some $R > 0$, we let $\Sigma$ be a compact subset of $[-R,R]$. Denote by $\Pr(\Sigma)$ the set of probability measures on $\Sigma$ and let $\nu$ be an element of $\Pr(\Sigma)$. For $p\geq 2,$ assume that $\theta:\Sigma^p\to \R$ is a random function that satisfies the following  conditions:\begin{align}\label{bdd theta}
	& \E \|\theta\|^2_\infty<\infty
 \end{align}
 and
 \begin{align}
	& \E\, \mbox{Lip}(\theta) <\infty,\label{eq reg theta}
\end{align}
where \begin{align}\label{eq lip}
    \mbox{Lip}(\theta):=\sup_{x, y \in \Sigma^p: x\neq y}\frac{|\theta(x) - \theta(y)|}{\|x-y\|_2}.
\end{align}
The Hamiltonian of the diluted model is defined as 
 \begin{align}\label{eq:orig hamiltonian}
	 H_N(\sigma) = \sum_{k \leq \pi(\alpha N)} \theta_k(\sigma_{I(k,1)}, \ldots, \sigma_{I(k,p)}) + \sum_{i=1}^N \psi(\sigma_i)
\end{align}
for $\sigma=(\sigma_1,\ldots,\sigma_N)\in \Sigma^N.$ Here, $\alpha >0$ is the sparsity parameter, $\psi : \Sigma\to \R$ is a (bounded) Lipschitz function that measures the external field, $(\theta_k)_{k\geq 1}$ are i.i.d. copies of $\theta$, $\pi(\alpha N)$ is a Poisson random variable with mean $\alpha N$, and for each $k\geq 1,$ $I_k: = \{I(k,1),\ldots,I(k,p)\}$ is uniformly sampled from $\binom{[N]}{p}$. These are all independent of each other. At the (inverse) temperature $\beta >0$, define the Gibbs measure as
$$
G_{N,\nu,\beta}(d\sigma)=\frac{1}{Z_N(\beta)}e^{\beta H_N(\sigma)}\nu^{\otimes N}(d\sigma)
$$
where the normalizing constant $
Z_N(\beta) =\int e^{\beta H_N(\sigma)}\nu^{\otimes N}(d\sigma)$ is called the partition function. The free energy at $\beta$ and ground state energy are defined respectively as
\begin{align}\label{freeandGSE}
   F_N(\beta) =\frac{1}{N}\log Z_N(\beta)\;\;\text{and}\;\;\gse_N=\frac{1}{N}\max_{\sigma\in \Sigma^N}H_N(\sigma). 
\end{align}

Our first result establishes the replica symmetric formula for the limiting free energy. To describe this limit, we introduce a distributional operator. Let $\X:=C(\Sigma)$ be the space of real-valued continuous functions on $\Sigma$ equipped with the uniform metric, i.e., $$\|f_1-f_2\|_\infty = \sup_{t\in \Sigma}|f_1(t)-f_2(t)|,\;\; f_1,f_2\in \X.$$ Note that $(\X,\|\cdot\|_\infty)$ is complete and separable. Let  $\mathcal{B}$ be the corresponding Borel $\sigma$-field. Denote by $\Pr_1(\X)$ the collection of all probability measures $\lambda$ defined on $(\X,\mathcal{B})$ with 
$
\int \|f\|_\infty\lambda(df)<\infty.
$
Define the Wasserstein $1$-distance on $\Pr_1(\X)$ as
\begin{align*}
	W_1(\lambda _1,\lambda _2) = \inf_{\Pi \in \Pi(\lambda _1, \lambda _2)}\int \|f_1- f_2\|_\infty d\Pi(f_1,f_2)
\end{align*}
for any $\lambda _1,\lambda _2\in \Pr_1(\X)$, where $\Pi(\lambda _1,\lambda _2)$ is the set of all couplings of $\lambda _1$ and $\lambda _2$. 
For any $\beta <\infty$, $\lambda\in \Pr_1(\X)$ and a continuous function $f:\Sigma^n\to \R$, set
$$
\la f(\sigma)\ra_{\beta, X}=\int f(\sigma)\prod_{k=1}^ne^{\beta X_k(\sigma_k)}\nu^{\otimes n}(d\sigma),
$$
where $X, X_1,\ldots,X_n$ are i.i.d. samples drawn according to $\lambda$. Define, for $\beta < \infty$, the distributional operator $\T_{\nu, \beta} :\Pr_1(\X)\to \Pr_1(\X)$ as the law of the random function $T_{\nu,\beta,\pi(\alpha p)}(X_1,\ldots,X_{(p-1)\pi(\alpha p)})$, where
\begin{align}
	\label{opeartor:eq1}
	T_{\nu,\beta, \pi(\alpha p)}(X_1,\ldots,X_{(p-1)\pi(\alpha p)})(t):=\frac{1}{\beta}\log\frac{\la\cE_{\beta, \pi(\alpha p)}(\sigma, t) \ra_{\beta,X}}{\la\int  \cE_{\beta, \pi(\alpha p)}(\sigma, s) \nu(ds)\ra_{\beta,X}},\;\;t\in \Sigma,
\end{align}
with $\pi(\alpha p)$ a Poisson random variable with mean $\alpha p$ sampled independently of $(X_k)_{k\geq 1}$ and for $r\geq 0$, $\sigma\in \Sigma^{r(p-1)},$ and $t\in \Sigma,$
\begin{align}\label{def:E}
	\cE_{\beta,r}(\sigma, t):=  \exp\Bigl(\beta \sum_{k=1}^{r}\theta_k(\sigma_{(k-1)(p-1)+1}, \ldots, \sigma_{k(p-1)},t) + \beta \psi(t)\Bigr).
\end{align}
Observing that \begin{align}\label{l_infty bound}
	\bigl\|T_{\nu,\beta, \pi(\alpha p)}(X_1,\ldots,X_{(p-1)\pi(\alpha p)})\bigr\|_{\infty}\leq  2\sum_{k=1}^{\pi(\alpha p)}\|\theta_k\|_\infty + 2\|\psi\|_\infty,
\end{align}
we have by \eqref{bdd theta} that $\T_{\nu, \beta}(\lambda)\in \Pr_1(\X)$. We also note that \begin{align}\label{lip_bound}
    \mbox{Lip}\bigl(T_{\nu,\beta,\pi(\alpha p)}(f_1,\ldots,f_{(p-1)\pi(\alpha p)})\bigr)&\leq \sum_{k=1}^{\pi(\alpha p)}\mbox{\rm Lip}(\theta_k) + \mbox{\rm Lip}(\psi).
\end{align}
 For $\lambda \in \Pr_1(\X)$, define the functional \begin{align}\label{free_energy}
 	\begin{split}
 		\cP_{\nu, \beta}(\lambda) & = \E \log \Bigl\la \int \cE_{\beta,\pi(\alpha p)}(\sigma,\varepsilon )\nu(d\varepsilon)\Bigr\ra_{\beta,X}- \alpha(p-1)\E \log \bigl\la e^{\beta \theta(\sigma)} \bigr\ra_{\beta,X}.
 	\end{split}
 \end{align}
The following theorem establishes the replica symmetric formula for the limiting free energy.

\begin{theorem}[Free energy]\label{thm1}
In the regime
	\begin{align}\label{ht}
		 \min(1, 6\beta e^{4\beta \|\psi\|_\infty}\E \|\theta\|_\infty e^{4\beta \|\theta\|_\infty}) \alpha p(p-1)\leq 1,
	\end{align} 
the following statements hold true.
\begin{enumerate}
    \item  The operator $\T_{\nu, \beta}$ admits a unique fixed point $\lambda_{\nu,\beta} \in \Pr_1(\X)$.
    \item  The free energy converges to $F(\beta): = \cP_{\nu, \beta}(\lambda_{\nu, \beta})$ in $L^1$, i.e.,
    $$
		\lim_{N\to \infty} \E|F_N(\beta)-F(\beta)| = 0.
    $$
    \end{enumerate}
\end{theorem}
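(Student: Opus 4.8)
The plan is to prove part (1) first and then bootstrap it, via concentration and the Aizenman--Sims--Starr cavity scheme together with Panchenko's asymptotic Gibbs measures, into part (2). For part (1), the regime \eqref{ht} decomposes into two (possibly overlapping) subregimes that I would treat by different means. In the \emph{high-temperature subregime}, where $c:=6\beta e^{4\beta\|\psi\|_\infty}\E\|\theta\|_\infty e^{4\beta\|\theta\|_\infty}$ satisfies $c\,\alpha p(p-1)\le1$, I would show that $\T_{\nu,\beta}$ is a strict contraction on the complete space $(\Pr_1(\X),W_1)$. Coupling two inputs $\lambda_1,\lambda_2$ by the optimal i.i.d.\ coupling of their field samples, a single perturbation $X_k\mapsto X_k+h$ changes the log-ratio defining $T_{\nu,\beta,\pi(\alpha p)}$ in \eqref{opeartor:eq1} by the difference of two averages of $h(\sigma_k)$ against reweightings of $\nu^{\otimes n}$ that (as in the derivation of \eqref{l_infty bound}) differ only through the $\theta$-terms containing $\sigma_k$; bounding this difference yields a per-input Lipschitz constant that depends on $\beta,\|\psi\|_\infty$ and the relevant $\|\theta_k\|_\infty$ but \emph{not} on the fields, with the factor $\beta\,\E\|\theta\|_\infty e^{4\beta\|\theta\|_\infty}$ coming from differentiating $\beta^{-1}\log(\cdot)$ in the $\theta$-tilt and $e^{4\beta\|\psi\|_\infty}$ from the $\psi$-tilt. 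Summing over the $(p-1)\pi(\alpha p)$ inputs and taking expectations then gives $W_1(\T_{\nu,\beta}\lambda_1,\T_{\nu,\beta}\lambda_2)\le c\,\alpha p(p-1)\,W_1(\lambda_1,\lambda_2)$, strict in the interior of the subregime (the boundary handled by a limiting argument or a marginally sharper constant), so Banach's theorem yields existence and uniqueness here.

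In the \emph{subcritical/critical subregime} $\alpha p(p-1)\le1$, where contraction may fail, I would instead use the recursive tree structure. For any $\lambda\in\Pr_1(\X)$, the iterate $\T_{\nu,\beta}^n(\lambda)$ is realized as the law of the root value of the Galton--Watson tree in which each variable independently spawns $\pi(\alpha p)$ clauses and each clause spawns $p-1$ child variables, truncated at depth $n$, with i.i.d.\ $\lambda$-fields placed at depth $n$ and the recursion \eqref{opeartor:eq1} run upward. Since the offspring mean equals $\alpha p(p-1)\le1$ and the offspring law puts mass on $0$, this tree is a.s.\ finite; hence on a single probability space carrying the full tree and i.i.d.\ boundary fields, the root values of the truncations stabilize once $n$ exceeds the (finite, random) depth of the tree and converge a.s.\ in $\X$ to a limit $\Psi$ that is measurable with respect to the tree and independent of $\lambda$. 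The bound \eqref{l_infty bound} controls $\|\T_{\nu,\beta}^n(\lambda)\|_\infty$ by $2\sum_{k\le\pi(\alpha p)}\|\theta_k\|_\infty+2\|\psi\|_\infty$ uniformly in $n$ and $\lambda$, so \eqref{bdd theta} gives uniform integrability and upgrades a.s.\ convergence to $W_1$-convergence; combined with $W_1$-continuity of $\T_{\nu,\beta}$ (again via \eqref{l_infty bound} and dominated convergence), this shows $\mathrm{Law}(\Psi)$ is a fixed point, and since any fixed point $\mu$ satisfies $\mu=\T_{\nu,\beta}^n(\mu)\to\mathrm{Law}(\Psi)$ it is the unique one. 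In either subregime we obtain $\lambda_{\nu,\beta}$, proving (1).

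For part (2), first $\E|F_N(\beta)-\E F_N(\beta)|\to0$ by a bounded-differences estimate: conditionally on $\pi(\alpha N)$, a change in one clause, one index $I(k,j)$, or one spin moves $F_N$ by $O(1/N)$ (boundedness of $\theta,\psi$), and the fluctuations of $\pi(\alpha N)/N$ are negligible. It then suffices to compute $\lim_N\E F_N(\beta)$, for which I would use the Aizenman--Sims--Starr telescoping $\E F_N(\beta)=N^{-1}\sum_{i=0}^{N-1}(\E\log Z_{i+1}-\E\log Z_i)$ and, by Cesàro, show $\E\log Z_{N+1}-\E\log Z_N\to\cP_{\nu,\beta}(\lambda_{\nu,\beta})$. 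Comparing the size-$(N+1)$ and size-$N$ systems against a common reference system on $N$ variables with a suitably shifted Poisson clause count, the classical cavity computation for $p$-spin diluted models expresses $\E\log Z_{N+1}-\E\log Z_N$, up to $o(1)$, through the joint Gibbs law of finitely many spins at uniformly random sites of the size-$N$ system, in the form of the two terms defining $\cP_{\nu,\beta}$ in \eqref{free_energy}. Passing to a subsequence, the annealed law of the exchangeable array $(\sigma_i^l)$ converges to an asymptotic Gibbs measure with induced ``cavity field'' law $\lambda_\infty\in\Pr_1(\X)$ at a typical site. Two facts then finish: (i) the depth-$L$ neighborhood of a site converges (Benjamini--Schramm) to the Galton--Watson tree above, the Gibbs field at the site is, up to $o(1)$, the value obtained by running the recursion $T_{\nu,\beta,\cdot}$ on this finite neighborhood whose incoming neighbor fields are asymptotically i.i.d.\ copies of $\lambda_\infty$, so $\lambda_\infty=\T_{\nu,\beta}(\lambda_\infty)$ and hence $\lambda_\infty=\lambda_{\nu,\beta}$ by part (1); and (ii) the fields at the $p-1$ distinct neighbors inside a clause become independent, so the two cavity terms converge to $\cP_{\nu,\beta}(\lambda_\infty)=\cP_{\nu,\beta}(\lambda_{\nu,\beta})=F(\beta)$. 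As every subsequential limit agrees, $\E F_N(\beta)\to F(\beta)$.

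The main obstacle is fact (ii): the replica-symmetry/pure-state property — asymptotic independence of the Gibbs fields at neighbors within a clause, equivalently self-averaging of the overlap — throughout the regime \eqref{ht}. In the high-temperature subregime this is the quantitative contraction/second-moment bound tied to part (1). In the subcritical/critical subregime, where no such contraction is available, it must be extracted from the a.s.\ finiteness of the limiting Galton--Watson tree: one shows that the asymptotic Gibbs measure, restricted to the a.s.-finite local tree around any fixed finite set of sites, is itself a Gibbs measure on that tree, and that on a finite tree with free boundary the single-site marginals are rigidly determined while the subtrees hanging off distinct sites decouple — this is the ``analysis of invariant processes on finite trees'' of the introduction, and making it rigorous, especially in the critical case $\alpha p(p-1)=1$ where the tree is finite a.s.\ but of infinite expected size, is the delicate point. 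A secondary, routine nuisance is making the Aizenman--Sims--Starr reduction and the cavity computation precise with the Poisson clause count and continuous spins.
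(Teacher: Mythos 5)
Your overall architecture matches the paper's: part (1) via Banach contraction in the high-temperature subregime and via the a.s.-finite Galton--Watson recursion in the subcritical one, and part (2) via concentration, Aizenman--Sims--Starr, and identification of the limiting cavity-field law as a fixed point. Part (1) as you describe it is essentially the paper's proof. But there are two genuine gaps in part (2).

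The most serious one is in your step (i): you assert that the incoming neighbor fields in the limit are ``asymptotically i.i.d.\ copies of $\lambda_\infty$'' and conclude $\lambda_\infty=\T_{\nu,\beta}(\lambda_\infty)$. That is not what the cavity computation delivers. In the Aldous--Hoover representation the limiting single-site fields $X_{w,v_i}$ all share the common randomness $w$; the pure-state property only makes them conditionally i.i.d.\ given $w$, i.e.\ exchangeable. The invariance principle therefore yields a fixed-point identity for an exchangeable one-dimensional array, which is \emph{not} an instance of the i.i.d.\ recursive distributional equation, and showing that this exchangeable equation has a unique solution (equivalently, that the $w$-dependence is degenerate) is a separate theorem. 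The paper proves it by constructing a tree-indexed invariant process $(Y_{\i})_{\i\in\V}$ via Kolmogorov consistency and using a.s.\ finiteness of the tree to force $Y_\varnothing$ to be determined by the tree data alone (subcritical case), and by a Wasserstein estimate on $n$-tuples of the form $D(n)\le\gamma\,\E D\bigl((p-1)\pi(\alpha pn)\bigr)$ (high-temperature case). Without this step your argument does not rule out a mixture solution in which the cavity fields stay correlated through $w$. Relatedly, your proposed mechanism for the pure-state property in the subcritical regime (a free-boundary Gibbs measure on the finite local tree decouples) is only a heuristic; the paper's actual argument propagates a replica-antisymmetric test function down the tree via the invariance principle and uses that the recursion terminates a.s., which works uniformly up to and including criticality.

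Second, identifying the limiting field as the log-density $\frac{1}{\beta}\log\frac{\P_{u,x}(\sigma=t)}{\nu(t)}$ requires $\nu$ to charge every point of a finite support; for general compact $\Sigma$ one must first prove the theorem for finitely supported discretizations $\nu_r$ and then pass to the limit, which requires a Lipschitz interpolation of the Hamiltonian together with continuity of $\nu\mapsto\lambda_{\nu,\beta}$ in $W_1$ (itself resting on tightness of the family of fixed points). Your proposal works directly with general $\Sigma$ and omits this entirely; it is not a routine nuisance, since the continuity of the fixed point in $\nu$ is a standalone result needed to make the energy functional converge along the discretization.
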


Let us now discuss our results in detail. The inequality \eqref{ht} consists of two parts: the `high temperature regime'
$$
6\alpha \beta p(p-1)e^{4\beta \|\psi\|_\infty}\E \|\theta\|_\infty e^{4\beta \|\theta\|_\infty} \leq 1,$$
and the `critical-subcritical regime'
$$\alpha p(p-1)\leq 1.$$ 
For brevity, we will just call the latter the `subcritical regime'. First of all, the fixed point $\lambda_{\nu,\beta}$ should be understood as the log-density of the limiting spin distribution with respect to the measure $\nu.$ Our result in the high temperature regime is in spirit the same as that obtained in \cite[Theorem 6.4.13]{book} and although our result improves on the extent of this regime, it is unlikely that our inequality describing this regime is tight.

The validity of the replica symmetric formula for a general choice of $\theta$, $\nu$, and $\psi$ in the subcritical regime is new. We follow  Panchenko's approach \cite{Panchenko2010SPINGM, EJP2963} to work with subsequential limits of spin configurations, which can be viewed as an exchangeable array with respect to the spin and replica coordinates. This avoids tedious computations involving finite $N$. Under the subcriticality assumption, the limiting spin configuration is in a pure state, which means that the corresponding exchangeable array does not depend on the replica coordinates. This was done in \cite[Lemma 1]{EJP2963} by a clever contractive approach. In this paper, we provide a more intuitive proof in the subcritical regime (see Section \ref{sec3.1}) by analyzing a recursion of the moments of the limiting spins on the subcritical Galton-Watson tree. By using the pure state condition, the log-density of the limiting spin is shown to satisfy the same fixed point distributional equation \eqref{opeartor:eq1}. However, instead of i.i.d.\ variables, the equation now involves a one-dimensional exchangeable sequence.  The convergence follows if we can show that this exchangeable fixed point equation has a unique solution. Such uniqueness is not trivial even in the i.i.d.\ case, which was usually proved using the contractivity of the distributional operator (in an appropriate metric) under the assumption that both $\beta$ and $\alpha$ are small, e.g., see \cite{book}. However, since the subcritical regime allows $\beta$ to be arbitrarily large, it is not clear how to establish such contractivity. 
Instead, our approach relies on the systematic treatment of the general recursive distributional equations (RDE)  in \cite{aldous2005survey}. In the i.i.d.\ case, our operator is an RDE driven by a Galton-Watson tree with $(p-1)\pi(\alpha p)$ offspring distribution.
The uniqueness of the solution of the RDE is a consequence of the fact that this tree has finite depth under the subcriticality assumption, i.e.,  $\alpha p(p-1) \le 1$. More importantly, this approach also yields uniqueness in the exchangeable case, which does not belong to the RDE setup; the proof is carried out in Section \ref{sec3.2} below. 

Next, we obtain and expression for the limiting ground state energy analogous to the limiting free energy. We define an operator $\T_\infty:\Pr_1(\X)\to \Pr_1(\X)$ as follows. For $\lambda \in \Pr_1(\X)$, $\T_\infty(\lambda)$ denotes the law of the random function $T_{\infty,\pi(\alpha p)}(X_1, \ldots, X_{(p-1)\pi(\alpha p)})$, which is defined as 
 \begin{align}\label{op_infty}
 \begin{split}
	T_{\infty,\pi(\alpha p)}(X_1, \ldots, X_{(p-1)\pi(\alpha p)})(t) 
	& = \sup_{\sigma \in \Sigma^{(p-1)\pi(\alpha p)}}\Big(\mathcal{I}_{\pi(\alpha p)}(\sigma,t)+ \sum_{k=1}^{(p-1)\pi(\alpha p)}X_k(\sigma_k)\Big)\\
	&\qquad  - \sup_{(s,\sigma) \in \Sigma^{(p-1)\pi(\alpha p) + 1}}\Big(\mathcal{I}_{\pi(\alpha p)}(\sigma,s) + \sum_{k=1}^{(p-1)\pi(\alpha p)}X_k(\sigma_k)\Big),
 \end{split}
\end{align}
where $(X_k)_{k\geq 1}$ are i.i.d. sampled from $\lambda$ independent of $\pi(\alpha p)$, a Poisson random variable with mean $\alpha p$, and for $r\geq 0$, $\sigma \in \Sigma^{r(p-1)r}$, and $t\in \Sigma$,
\begin{align*}
	\mathcal{I}_r(\sigma,t):=\sum_{k=1}^r \theta_k(\sigma_{(k-1)(p-1) + 1}, \ldots, \sigma_{k(p-1)},t) +\psi(t).
\end{align*}
Here, as before, it is easy to see that \eqref{l_infty bound} and \eqref{lip_bound} hold for the operator $\T_\infty$, so $\T_\infty(\lambda)\in \Pr_1(\X)$. Using the same notation as above, let us define the functional $\cP_\infty$ on $\Pr_1(\X)$ by \begin{align} \label{functional, infty}
	\begin{split}
	\cP_\infty(\lambda) & = \E \sup_{(s,\sigma) \in \Sigma^{(p-1)\pi(\alpha p) + 1}}\Big(\mathcal{I}_{\pi(\alpha p)}(\sigma,s) + \sum_{k=1}^{(p-1)\pi(\alpha p)}X_k(\sigma_k)\Big)\\
	&\qquad - \alpha(p-1)\E \sup_{\sigma \in \Sigma^p} \Big(\theta(\sigma)+ \sum_{k=1}^p X_k(\sigma_k)\Big).
\end{split}
\end{align}

\begin{theorem}[Ground state energy]\label{thm:GSE}
 In the regime \begin{align}\label{small alpha}
		\alpha p(p-1) \leq 1,
	\end{align}
the following statements hold true.
\begin{enumerate}
    \item The operator $\T_\infty$ admits a unique fixed point $\lambda_\infty \in \Pr_1(\X)$.
    \item  The ground state energy converges (in $L^1$) to $\gse:=\cP_\infty(\lambda_\infty)$, i.e., 
\end{enumerate}
\begin{align*}
		\gse_N\xrightarrow{L_1}\gse.
	\end{align*}
\end{theorem}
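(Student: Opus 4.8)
The plan is to deduce Theorem~\ref{thm:GSE} from Theorem~\ref{thm1} by letting $\beta\to\infty$. Throughout we take $\mathrm{supp}\,\nu=\Sigma$ (otherwise the maximum over $\Sigma^N$ defining $\gse_N$ should be read over $(\mathrm{supp}\,\nu)^N$), which is exactly what makes the ground state energy the zero-temperature limit of the free energy. Since $\nu^{\otimes N}$ is a probability measure, $Z_N(\beta)\le e^{\beta N\gse_N}$, so $\tfrac1\beta F_N(\beta)\le\gse_N$ holds deterministically and the whole task is a matching lower bound. Concretely, we aim to establish: (i) $\T_\infty$ has a unique fixed point $\lambda_\infty$; (ii) $\lambda_{\nu,\beta}\to\lambda_\infty$ in $W_1$ and $\tfrac1\beta F(\beta)\to\cP_\infty(\lambda_\infty)=:\gse$ as $\beta\to\infty$; (iii) $\sup_N\E\bigl[\gse_N-\tfrac1\beta F_N(\beta)\bigr]\to0$ as $\beta\to\infty$. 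Granting these, given $\epsilon>0$ we first pick $\beta$ with $|\tfrac1\beta F(\beta)-\gse|<\epsilon$ and $\sup_N\E[\gse_N-\tfrac1\beta F_N(\beta)]<\epsilon$, then pick $N$ large with $\tfrac1\beta\E|F_N(\beta)-F(\beta)|<\epsilon$ via Theorem~\ref{thm1}; the triangle inequality then gives $\E|\gse_N-\gse|<3\epsilon$, which is what we want.

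For (i): the operator $\T_\infty$ maps $\Pr_1(\X)$ into itself and is $W_1$-Lipschitz, since each of the two suprema in \eqref{op_infty} changes by at most $\sum_k\|X_k-X_k'\|_\infty$ under a perturbation of the arguments $X_k$. Moreover $\T_\infty$ is precisely the recursive distributional equation driven by the Galton--Watson tree with offspring law $(p-1)\pi(\alpha p)$, with the ``$\log$-average'' update of \eqref{opeartor:eq1} replaced by the ``$\sup$'' update. The key point is that the recursive-distributional-equation argument used for the subcritical case of Theorem~\ref{thm1} (cf.\ Section~\ref{sec3.2} and \cite{aldous2005survey}), which derives uniqueness of the i.i.d.\ fixed-point equation from the a.s.\ finiteness of this tree and uses no contractivity or bound on the temperature, applies verbatim here: under \eqref{small alpha} the tree has mean offspring $\alpha p(p-1)\le1$ and is therefore a.s.\ finite, so $\T_\infty$ has a unique fixed point $\lambda_\infty\in\Pr_1(\X)$. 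This is the only genuinely new ingredient of the proof and is the reason the statement is temperature-independent.

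For (ii): the family $\{\lambda_{\nu,\beta}\}_{\beta\ge1}$ is relatively compact in $(\Pr_1(\X),W_1)$. Indeed, from $\lambda_{\nu,\beta}=\T_{\nu,\beta}(\lambda_{\nu,\beta})$ together with \eqref{l_infty bound} and \eqref{lip_bound}, $\lambda_{\nu,\beta}$ is the law of a random $f\in\X$ with $\|f\|_\infty\le2\sum_{k\le\pi(\alpha p)}\|\theta_k\|_\infty+2\|\psi\|_\infty$ and $\mathrm{Lip}(f)\le\sum_{k\le\pi(\alpha p)}\mathrm{Lip}(\theta_k)+\mathrm{Lip}(\psi)$, bounds that are uniform in $\beta$; by Arzel\`a--Ascoli the sets $\{f\in\X:\|f\|_\infty\le M,\ \mathrm{Lip}(f)\le M\}$ are compact in $\X$ and carry $\lambda_{\nu,\beta}$-mass $\to1$ uniformly in $\beta$ as $M\to\infty$, while \eqref{bdd theta} gives the uniform integrability of $\|f\|_\infty$ required for $W_1$. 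Now let $\lambda^\ast$ be a subsequential $W_1$-limit along $\beta_n\to\infty$. Coupling $X_k^{(n)}\sim\lambda_{\nu,\beta_n}$ optimally to $X_k^\ast\sim\lambda^\ast$ while reusing the same Poisson variable and the same functions $\theta_k$ on both sides, Laplace's method (using $\mathrm{supp}\,\nu=\Sigma$) gives, pointwise in $t$ and as $n\to\infty$,
\begin{align*}
\tfrac1{\beta_n}\log\bigl\la\cE_{\beta_n,\pi(\alpha p)}(\sigma,t)\bigr\ra_{\beta_n,X^\ast}\ \longrightarrow\ \sup_{\sigma\in\Sigma^{(p-1)\pi(\alpha p)}}\Bigl(\mathcal{I}_{\pi(\alpha p)}(\sigma,t)+\textstyle\sum_{k=1}^{(p-1)\pi(\alpha p)}X_k^\ast(\sigma_k)\Bigr),
\end{align*}
and this convergence is uniform in $t$ because the left-hand sides are equi-Lipschitz in $t$ by \eqref{lip_bound}. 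Combining this with the $W_1$-Lipschitz stability in the $X_k$ and domination via \eqref{l_infty bound} and \eqref{bdd theta}, we obtain $W_1\bigl(\T_{\nu,\beta_n}(\lambda_{\nu,\beta_n}),\T_\infty(\lambda^\ast)\bigr)\to0$. Since the left side equals $W_1(\lambda_{\nu,\beta_n},\T_\infty(\lambda^\ast))\to W_1(\lambda^\ast,\T_\infty(\lambda^\ast))$, we get $\lambda^\ast=\T_\infty(\lambda^\ast)$, hence $\lambda^\ast=\lambda_\infty$ by (i); as this holds for every subsequential limit, $\lambda_{\nu,\beta}\to\lambda_\infty$. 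Applying the same Laplace-plus-stability argument to the functional \eqref{free_energy} yields $\tfrac1\beta\cP_{\nu,\beta}(\lambda_{\nu,\beta})\to\cP_\infty(\lambda_\infty)$, i.e.\ $\tfrac1\beta F(\beta)\to\gse$.

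For (iii): fix a maximizer $\sigma^\ast$ of $H_N$ over $\Sigma^N$ and restrict the integral defining $Z_N(\beta)$ to the $\ell_\infty$-ball $B=\{\sigma\in\Sigma^N:|\sigma_i-\sigma_i^\ast|\le\delta\ \text{for all }i\}$. For $\sigma\in B$ the Lipschitz bounds on $\theta$ and $\psi$ give $\tfrac1N|H_N(\sigma)-H_N(\sigma^\ast)|\le\delta\bigl(\sqrt p\,\tfrac1N\sum_{k\le\pi(\alpha N)}\mathrm{Lip}(\theta_k)+\mathrm{Lip}(\psi)\bigr)$, while $\nu^{\otimes N}(B)=\prod_{i=1}^N\nu\bigl(\{y:|y-\sigma_i^\ast|\le\delta\}\bigr)\ge q(\delta)^N$ with $q(\delta):=\inf_{x\in\Sigma}\nu(\{y:|y-x|\le\delta\})>0$ (positivity uses $\mathrm{supp}\,\nu=\Sigma$ and compactness of $\Sigma$). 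Taking logarithms, dividing by $\beta N$, then taking expectations and using Wald's identity $\E\sum_{k\le\pi(\alpha N)}\mathrm{Lip}(\theta_k)=\alpha N\,\E\,\mathrm{Lip}(\theta)$, we obtain for every $\delta>0$ and every $N$,
\begin{align*}
\E\bigl[\gse_N-\tfrac1\beta F_N(\beta)\bigr]\ \le\ \delta\bigl(\sqrt p\,\alpha\,\E\,\mathrm{Lip}(\theta)+\mathrm{Lip}(\psi)\bigr)+\tfrac1\beta\bigl|\log q(\delta)\bigr|,
\end{align*}
which tends to $0$ upon sending $\delta\to0$ and then $\beta\to\infty$. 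The substance of the argument is in (i) and (ii): (i) is where the temperature-free finite-tree uniqueness does its work, and (ii) relies on the uniform-in-$\beta$ tightness of the fixed points together with the Laplace limit of $\T_{\nu,\beta}$; step (iii) is routine once $\mathrm{supp}\,\nu=\Sigma$, which is also the only place where that hypothesis is essential.
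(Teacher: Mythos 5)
Your proposal is correct and follows essentially the same route as the paper: uniqueness of the fixed point of $\T_\infty$ via the a.s.\ finite Galton--Watson tree (Lemma \ref{gen_fixed_pt}), the zero-temperature limit $\lambda_{\nu,\beta}\to\lambda_\infty$ obtained from uniform-in-$\beta$ tightness, a subsequential-limit/Laplace argument identifying the limit as a fixed point of $\T_\infty$ (the paper's Lemma \ref{lem,infty}), and the small-ball lower bound $\nu^{\otimes N}(B_\infty(\sigma^*,\delta))\ge q(\delta)^N$ to control $\gse_N-\beta^{-1}F_N(\beta)$ uniformly in $N$. The only cosmetic difference is that you assume the given $\nu$ has full support on $\Sigma$, whereas the paper constructs an auxiliary fully supported $\nu$ (which is legitimate since neither $\gse_N$ nor $\cP_\infty$ depends on $\nu$); both devices serve the same purpose.
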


\subsection{Results for models with Ising spins}
We now apply our theorems above to diluted models with Ising spins. For the rest of the paper, we let $\upnu$ denote the uniform probability measure on $\Sigma=\{-1,1\}.$ 

Suppose that $\theta$, in addition to being invariant with respect to permutations of inputs and satisfying properties \eqref{bdd theta} and \eqref{eq reg theta}, also satisfies the \textit{symmetry property}, i.e., $\theta(\sigma) = \theta(-\sigma)$ for any $\sigma \in \{-1,1\}^p$. In the absence of an external field, i.e., $\psi \equiv 0$, it can be verified directly that the Dirac measure at the zero function is a fixed point of $\mathcal{T}_{\upnu,\beta}$ and $\mathcal{T}_\infty$, from which the free energy and the ground state energy can be explicitly computed, as illustrated in the result below.

\begin{prop}\label{prop1}
Let $\upnu$ be the uniform probability measure on $\{-1,1\}$ and $\psi \equiv 0.$ For any $\theta$ that satisfies \eqref{bdd theta}, \eqref{eq reg theta}, and the symmetry property, 
	\begin{itemize}
	\item[$(i)$] if \eqref{ht} holds, then 
	$$
	F_N(\beta)\xrightarrow{L_1} F(\beta) = \alpha \E\log \sum_{\sigma\in \{-1,1\}^p}e^{\beta\theta(\sigma)} - \alpha p \log 2,$$
	\item[$(ii)$] if \eqref{small alpha} holds, then 
	$$\gse_N\xrightarrow{L_1} \gse = \alpha \E\max_{\sigma\in \{-1,1\}^p}\theta(\sigma).$$
\end{itemize}
\end{prop}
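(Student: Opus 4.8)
The plan is to derive Proposition~\ref{prop1} directly from Theorems~\ref{thm1} and \ref{thm:GSE} by exhibiting the fixed point explicitly and then evaluating the energy functional at it. Since $\psi\equiv 0$ and $\theta(\sigma)=\theta(-\sigma)$, I would first show that the Dirac measure $\delta_0$ at the zero function is a fixed point of $\T_{\upnu,\beta}$ (resp.\ $\T_\infty$); by the uniqueness clauses of the two theorems this forces $\lambda_{\upnu,\beta}=\delta_0$ (resp.\ $\lambda_\infty=\delta_0$), so that parts $(i)$ and $(ii)$ reduce to computing $\cP_{\upnu,\beta}(\delta_0)$ and $\cP_\infty(\delta_0)$.

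For the fixed-point check, feed $X_1=\cdots=X_{(p-1)\pi(\alpha p)}\equiv 0$ into \eqref{opeartor:eq1}. With $\psi\equiv 0$, the bracket $\la\cE_{\beta,\pi(\alpha p)}(\sigma,t)\ra_{\beta,X}$ becomes $2^{-r(p-1)}\sum_{\sigma\in\{-1,1\}^{r(p-1)}}\exp\bigl(\beta\sum_{k=1}^{r}\theta_k(\sigma_{(k-1)(p-1)+1},\dots,\sigma_{k(p-1)},t)\bigr)$, where $r=\pi(\alpha p)$. The key observation is that, because each $\theta_k$ is invariant under the global sign flip of its $p$ arguments, the bijection $\sigma\mapsto-\sigma$ of $\{-1,1\}^{r(p-1)}$ turns each $\theta_k(\cdot,1)$ into $\theta_k(\cdot,-1)$; hence this sum does not depend on $t\in\{-1,1\}$, so it coincides with its own $\nu$-average over $t$, whence $T_{\upnu,\beta,\pi(\alpha p)}(0,\dots,0)\equiv 0$ a.s., i.e.\ $\T_{\upnu,\beta}(\delta_0)=\delta_0$. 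Applying the identical sign-flip argument to $\sup_\sigma\mathcal{I}_r(\sigma,t)$ in \eqref{op_infty} (the supremum over a block factorizes and is unchanged under $t\mapsto -t$) shows $T_{\infty,\pi(\alpha p)}(0,\dots,0)\equiv 0$, so $\T_\infty(\delta_0)=\delta_0$.

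It then remains to evaluate the functionals at $\delta_0$. In \eqref{free_energy} with $X\equiv 0$, summing $\exp\bigl(\beta\sum_k\theta_k(\mathrm{block}_k,\varepsilon)\bigr)$ over all blocks and over $\varepsilon$ factorizes across $k$ once one notes, again by global sign symmetry, that $\sum_{b\in\{-1,1\}^{p-1}}e^{\beta\theta_k(b,\varepsilon)}$ is independent of $\varepsilon$ and equals $\tfrac12\sum_{\sigma\in\{-1,1\}^{p}}e^{\beta\theta_k(\sigma)}$; tracking the powers of $2$ and applying Wald's identity to the Poisson sum (integrability from \eqref{bdd theta}) gives the first term of $\cP_{\upnu,\beta}(\delta_0)$ as $\alpha p\,\E\log\sum_\sigma e^{\beta\theta(\sigma)}-\alpha p^2\log 2$, while the second term is $-\alpha(p-1)\E\log\bigl(2^{-p}\sum_\sigma e^{\beta\theta(\sigma)}\bigr)$; adding these yields exactly $F(\beta)=\alpha\E\log\sum_\sigma e^{\beta\theta(\sigma)}-\alpha p\log 2$. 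The same bookkeeping applied to \eqref{functional, infty} with $X\equiv 0$ — replacing $\sum$ by $\sup$, the log-sum-exp by $\max_\sigma\theta_k(\sigma)$, and dropping the $2$'s (the supremum over the extra coordinate contributes nothing) — gives $\cP_\infty(\delta_0)=\alpha p\,\E\max_\sigma\theta(\sigma)-\alpha(p-1)\E\max_\sigma\theta(\sigma)=\alpha\E\max_\sigma\theta(\sigma)$. Combining with Theorems~\ref{thm1} and \ref{thm:GSE} finishes the proof.

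There is essentially no deep obstacle here: the argument is combinatorial bookkeeping over Poisson-many independent blocks. The only genuinely non-mechanical point is recognizing that the global sign symmetry of $\theta$ propagates so that all the partition-function-like quantities (and their zero-temperature analogues) become independent of the ``root spin'' $t$, $\varepsilon$, or $s$ — this is simultaneously what makes $\delta_0$ a fixed point and what makes the block sums and suprema factorize cleanly — and one should be careful that it is precisely the \emph{combination} of permutation invariance and $\theta(\sigma)=\theta(-\sigma)$ that licenses treating the last coordinate on the same footing as the others.
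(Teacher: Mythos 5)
Your proposal is correct and follows exactly the route the paper indicates (and carries out in the analogous Lemma~\ref{cor1}): use the global sign symmetry of $\theta$ to verify that $\delta_0$ is a fixed point of $\T_{\upnu,\beta}$ and $\T_\infty$, invoke uniqueness from Theorems~\ref{thm1} and~\ref{thm:GSE}, and then evaluate $\cP_{\upnu,\beta}(\delta_0)$ and $\cP_\infty(\delta_0)$ by factorizing over the Poisson-many blocks; your bookkeeping of the powers of $2$ and the Wald step are both right.
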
 

Our second consequence of Theorems \ref{thm1} and \ref{thm:GSE} establishes the Parisi formula for the free energy and ground state energy if $\theta$ satisfies the Franz-Leone identity:
\begin{align}\label{identity}
	\exp \beta \theta(x_1,\ldots,x_p)=a(1+bf_1(x_1)\cdots f_p(x_p))
\end{align}
for all $x_1,\ldots,x_p\in \{-1,1\},$ where $a = a(\beta), b = b(\beta)$ are random functions and $f_1, \ldots, f_p$ are copies of the random function $f$ that does not depend on $\beta$, all sampled independently of each other. Here $a$ can be any random variable, while $b, f_1, \ldots, f_p$ satisfy \begin{align*}
\E(-b)^n\geq 0 \text{ for all } n\geq 1\;\;\text{and } \;\; |bf_1(x_1)\cdots f_p(x_p)| < 1 \text{ a.s. }
 \end{align*}
\begin{prop}
	\label{Parisiformula}
	 Suppose that $\theta$ satisfies \eqref{identity}. With $F(\beta)$ and $\gse$ being as in Theorems \ref{thm1} and \ref{thm:GSE}, 
	\begin{itemize}
		\item[$(i)$] if \eqref{ht} holds, then $$
			F_N(\beta)\xrightarrow{L_1}\min_{\lambda\in \Pr_1(\X)}\cP_{\upnu,\beta}(\lambda) = F(\beta),$$
			\item[$(ii)$] if \eqref{small alpha} holds, then $$\gse_N\xrightarrow{L_1}\min_{\lambda\in \Pr_1(\X)}\cP_\infty(\lambda) = \gse.$$
	\end{itemize}
\end{prop}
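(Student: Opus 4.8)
\emph{Proof strategy.} Theorems \ref{thm1} and \ref{thm:GSE} already provide the bulk of what is needed: under \eqref{ht} the operator $\T_{\upnu,\beta}$ has a unique fixed point $\lambda_{\upnu,\beta}\in\Pr_1(\X)$ with $F_N(\beta)\xrightarrow{L_1}F(\beta)=\cP_{\upnu,\beta}(\lambda_{\upnu,\beta})$, and under \eqref{small alpha} the analogous statements hold for $\T_\infty,\lambda_\infty$ and $\gse=\cP_\infty(\lambda_\infty)$. Since $\lambda_{\upnu,\beta}$ is itself an admissible trial measure, we trivially have $F(\beta)\ge\inf_{\lambda\in\Pr_1(\X)}\cP_{\upnu,\beta}(\lambda)$, and similarly $\gse\ge\inf_\lambda\cP_\infty(\lambda)$; hence the entire content of the proposition is the matching \emph{replica upper bound} on the energy, namely $F(\beta)\le\cP_{\upnu,\beta}(\lambda)$ and $\gse\le\cP_\infty(\lambda)$ for every $\lambda\in\Pr_1(\X)$. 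Once this is in hand, the two chains of inequalities collapse, so $\lambda_{\upnu,\beta}$ and $\lambda_\infty$ are minimizers and the infima are attained. The plan is therefore: (a) establish the Franz--Leone replica-symmetric bound $\E F_N(\beta)\le\cP_{\upnu,\beta}(\lambda)+o(1)$ for all $\lambda$; (b) let $N\to\infty$ and invoke Theorem \ref{thm1} to obtain $(i)$; (c) rescale by $\beta$, let $\beta\to\infty$, and invoke Theorem \ref{thm:GSE} to obtain $(ii)$.

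For (a), fix $\lambda\in\Pr_1(\X)$ and run the Franz--Leone interpolation (as in \cite{Guerra_2004}), which is where the structural hypothesis \eqref{identity} is used. Introduce, for $\tau\in[0,1]$, a Hamiltonian $H_{N,\tau}$ that keeps a Poisson$(\tau\alpha N)$ collection of genuine clauses $\theta_k$ and, independently, attaches to each site the effective cavity-fields generated by a Poisson$((1-\tau)\alpha p)$ number of independent clauses whose other $p-1$ endpoints carry i.i.d.\ fields sampled from $\lambda$ (together with the external field $\psi$); collecting the clause and vertex contributions, $\varphi(\tau):=\tfrac1N\E\log\int e^{\beta H_{N,\tau}(\sigma)}\,\upnu^{\otimes N}(d\sigma)$ satisfies $\varphi(1)=\E F_N(\beta)+o(1)$ (the $o(1)$ coming from replacing the exact clause--site incidence law by its Poisson$(\alpha p)$ limit) and $\varphi(0)=\cP_{\upnu,\beta}(\lambda)$. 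Differentiating $\varphi$ in $\tau$ and expanding $\log\la a(1+b\,f_1(\sigma_1)\cdots f_p(\sigma_p))\ra$ via $\log(1+x)=-\sum_{n\ge1}(-x)^n/n$ (legitimate since $|b\,f_1\cdots f_p|<1$ a.s.) organizes $\varphi'(\tau)$ into a sum over $n\ge1$ of terms proportional to $\E(-b)^n$ times expectations of $n$-th (multi-)overlap moments under the annealed Gibbs measure; the hypothesis $\E(-b)^n\ge0$ for all $n$ is exactly what makes every such term of one sign, so $\varphi$ is monotone and $\E F_N(\beta)\le\cP_{\upnu,\beta}(\lambda)+o(1)$. (One should also record that the functional obtained at $\tau=0$ is literally \eqref{free_energy}: a point of $\X=C(\{-1,1\})$ is a pair of reals, and both $\T_{\upnu,\beta}$ and $\cP_{\upnu,\beta}$ are unchanged when a constant is added to each $X_k$, so the classical trial ``field'' distribution and a measure $\lambda\in\Pr_1(\X)$ carry the same information.)

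Granting (a), part $(i)$ follows at once: letting $N\to\infty$ and using $F_N(\beta)\xrightarrow{L_1}F(\beta)$ (Theorem \ref{thm1}, valid under \eqref{ht}) gives $F(\beta)\le\inf_\lambda\cP_{\upnu,\beta}(\lambda)\le\cP_{\upnu,\beta}(\lambda_{\upnu,\beta})=F(\beta)$, so $\lambda_{\upnu,\beta}$ minimizes $\cP_{\upnu,\beta}$ and $F_N(\beta)\xrightarrow{L_1}\min_\lambda\cP_{\upnu,\beta}(\lambda)=F(\beta)$. For part $(ii)$, note that \eqref{small alpha} implies \eqref{ht} for every $\beta$, so the bound in (a) holds for all $\beta$; dividing by $\beta$ and using $\gse_N-\tfrac{\log2}{\beta}\le\tfrac1\beta F_N(\beta)\le\gse_N$ (immediate since $\upnu^{\otimes N}$ is uniform on $\{-1,1\}^N$), then letting $N\to\infty$, gives $\gse-\tfrac{\log2}{\beta}\le\tfrac1\beta F(\beta)\le\tfrac1\beta\cP_{\upnu,\beta}(\lambda)$ for every $\lambda\in\Pr_1(\X)$. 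Finally, as $\beta\to\infty$, a Laplace/Varadhan computation --- justified by the $\beta$-uniform bounds \eqref{l_infty bound}, \eqref{lip_bound} and the integrability \eqref{bdd theta}, \eqref{eq reg theta}, which make dominated convergence applicable after pulling out the constant $2^{-n}$ from $\upnu^{\otimes n}$ --- shows $\tfrac1\beta\cP_{\upnu,\beta}(\lambda)\to\cP_\infty(\lambda)$ with $\cP_\infty$ as in \eqref{functional, infty}; combined with $\tfrac1\beta F(\beta)\to\gse$ this yields $\gse\le\cP_\infty(\lambda)$ for all $\lambda$, and then Theorem \ref{thm:GSE} closes the chain $\gse\le\inf_\lambda\cP_\infty(\lambda)\le\cP_\infty(\lambda_\infty)=\gse$, so $\gse_N\xrightarrow{L_1}\min_\lambda\cP_\infty(\lambda)=\gse$.

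The main obstacle is step (a): carrying out the Franz--Leone interpolation so that the endpoint $\varphi(0)$ is precisely the Bethe functional \eqref{free_energy}, and verifying that the $\tau$-derivative organizes into the overlap expansion whose terms are controlled by the sign condition $\E(-b)^n\ge0$ --- this is the only place where \eqref{identity} enters, and it is the heart of the proposition. Steps (b) and (c) are then formal; in (c) the one extra point needing care is the joint passage $N\to\infty$, $\beta\to\infty$ together with the Laplace asymptotics $\tfrac1\beta\cP_{\upnu,\beta}(\lambda)\to\cP_\infty(\lambda)$, both routine given the a priori bounds already recorded in the paper. Alternatively, $(ii)$ could be proved by a direct zero-temperature version of the interpolation in (a), bypassing the rescaling argument.
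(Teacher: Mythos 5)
Your proposal is correct and follows essentially the same route as the paper: the entire content is the Franz--Leone upper bound $\E F_N(\beta)\leq \cP_{\upnu,\beta}(\lambda)$ (and its zero-temperature analogue), which combined with Theorems \ref{thm1} and \ref{thm:GSE} forces the fixed point to be the minimizer; the paper simply cites this bound as known (Talagrand, Theorem 6.5.1) rather than re-deriving the interpolation. The only cosmetic difference is that for part $(ii)$ the paper invokes the ground-state Franz--Leone bound directly, while you recover it by dividing by $\beta$ and passing to the Laplace limit --- both are valid, and you correctly flag the direct route as an alternative.
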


\begin{proof}
    Under the hypothesis of Proposition \ref{Parisiformula}, it is well-known that the Franz-Leone bound holds, i.e., $\E F_N(\beta)\leq \cP_{\upnu,\beta}(\lambda)$  and $\E\, \gse_N\leq \cP_\infty(\lambda)$ for all $\lambda\in \Pr_1(\X)$ (see, e.g., \cite[Theorem 6.5.1]{book}). These bounds, in combination with Theorems \ref{thm1} and \ref{thm:GSE}, yield Proposition \ref{Parisiformula}.
\end{proof}

Often, the free energy for models satisfying the Franz-Leone identity \eqref{identity} is better interpreted in terms of spin magnetizations rather than the log-densities of the spin distribution as in the previous proposition. We explain this in detail below.
 
Firstly, note that for any $r\geq 0$ and $g_1, \ldots, g_r \in \X$, the following pointwise equality of functions holds, \begin{align*}
    T_{\upnu,\beta, r}(g_1, \ldots, g_r) = T_{\upnu,\beta, r}(\bar g_1, \ldots, \bar g_r)
\end{align*} where $\bar g$ is the function $g$ normalized with respect to $\upnu$, i.e., $\bar g: = g - \frac{1}{\beta}\log \int e^{\beta g} d \upnu$. Thus, $\lambda_{\upnu,\beta}$ is supported on the set $\X_\beta: = \{g\in \X: \int e^{\beta g} d\upnu = 1\}$. Therefore, for the rest of this section, we will restrict our attention to functions in $\X_\beta$. Since any function $f$ in $\X_\beta$ takes values in $\{-1,1\}$, we may write $f(\varepsilon) = \xi + \varepsilon \zeta$ for $\varepsilon \in \{-1,1\}$ where $\xi$ and $\zeta$ are random variables independent of $a$ and $b$. In the same vein, we can assume that $\psi$ is linear. In fact, shifting $\psi$ by a constant alters the free and ground state energies by the same constant, so without loss of generality, we can consider $\psi(x) = h x$, where $h \in \R$. It will be useful to extend the definition of $f$ and $\psi$ by linear interpolation to all $\varepsilon \in \Sigma': = [-1,1]$. Equip $\Sigma'$ with the $L^1$ metric and let $\Pr_1(\Sigma')$ denote the set of probability measures on $(\Sigma', \mathcal{B}(\Sigma'))$, metrized by the Wasserstein 1-distance, i.e., for $\lambda_1, \lambda_2 \in \Pr_1(\Sigma')$, \begin{align*}
    W_1(\lambda_1, \lambda_2) = \inf_{\Pi \in \Pi(\lambda_1, \lambda_2)} \int |x_1 - x_2| d\Pi(x_1, x_2)
\end{align*}
where $\Pi(\lambda_1, \lambda_2)$ is the set of all couplings of the measure $\lambda_1$ and $\lambda_2$.

Define the operator $\T'_{\upnu,\beta}:\Pr_1(\Sigma') \to \Pr_1(\Sigma')$ as follows: for $\lambda \in \Pr_1(\Sigma')$, let $(m_{k,i})_{k\geq 1, i\leq p-1}$ be i.i.d. samples from $\lambda$. Then \begin{align}\label{fp_mag}
    \T'_{\upnu,\beta}(\lambda) = \text{ Law of } \Big( \tanh\Big[\sum_{k\leq \pi(\alpha p)} \tanh^{-1}\frac{b_k\zeta_k\prod_{i\leq p-1}f_{k,i}(m_{k,i})}{1 + b_k\xi_k\prod_{i\leq p-1}f_{k,i}(m_{k,i})} + \beta h\Big] \Big) 
\end{align}
where $(b_k)_{k\geq 1}, (f_{k,i})_{k\geq 1, i\leq p-1}$ are copies of the random functions $b = b(\beta)$ and $f$ respectively, $(\zeta_k)_{k\geq 1}$ and $(\xi_k)_{k\geq 1}$ are copies of the random variables $\zeta$ and $\xi$ respectively, and $\pi(\alpha p)$ is a Poisson random variable with mean $\alpha p$, all independent of each other and the sequence $(m_{k,i})_{k\geq 1, i\leq p-1}$. 

Let us also, using the same notation as above, define the following functional: for $\lambda \in \Pr_1(\Sigma')$, \begin{align}\label{fe_old} \begin{split}
\cP'_{\upnu,\beta}(\lambda) & = -\log 2 + \alpha \E \log a - \alpha(p-1)\E\log \Big[1 + b\prod_{i\leq p}f_i(m_i)\Big] \\
& \qquad + \E \log \Big[\sum_{\varepsilon \in \{-1,1\}} e^{\varepsilon \beta h} \prod_{k\leq \pi(\alpha p)}\Big(1 + b_kf_k(\varepsilon) \prod_{i\leq p-1}f_{k,i}(m_{k,i}) \Big)\Big]
\end{split}
\end{align}
where $(f_k)_{k \geq 1}$ are i.i.d. copies of $f$ and $(m_i)_{i\leq p}$ are i.i.d. samples from $\lambda$, independent of everything else. Fixing $\beta <\infty$ and identifying each element $g \in \X_\beta$ uniquely with its \textit{magnetization} $$m(g): = \int \varepsilon e^{\beta g(\varepsilon)} \upnu(d\varepsilon) = \frac{1}{2}\Big(e^{\beta g(1)} - e^{\beta g(-1)}\Big) \in \Sigma'$$
we obtain the following corollary of Theorem \ref{thm1}.

\begin{cor}\label{cor2}
Suppose that $\theta$ satisfies \eqref{identity}. If \eqref{small alpha} holds, then 
\begin{enumerate}
    \item  the operator $\T'_{\upnu,\beta}$ admits a unique fixed point $\lambda'_{\upnu,\beta} \in \Pr_1(\Sigma')$, and
    \item  the free energy converges (in $L^1$) to $F(\beta)= \cP'_{\upnu,\beta}(\lambda'_{\upnu,\beta})$, i.e.,
    $$
		\lim_{N\to \infty} \E|F_N(\beta)-F(\beta))| = 0.
    $$
    \end{enumerate}
\end{cor}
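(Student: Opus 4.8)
The plan is to deduce Corollary \ref{cor2} from Theorem \ref{thm1} through the change of variables $g\mapsto m(g)$ introduced just above the statement. First, since the minimum in \eqref{ht} is at most $1$, \eqref{small alpha} implies \eqref{ht}, so Theorem \ref{thm1} already furnishes a unique fixed point $\lambda_{\upnu,\beta}\in\Pr_1(\X)$ of $\T_{\upnu,\beta}$, supported on $\X_\beta$, with $F(\beta)=\cP_{\upnu,\beta}(\lambda_{\upnu,\beta})$ and $F_N(\beta)\to F(\beta)$ in $L^1$. The map $g\mapsto m(g)$ is a bijection of $\X_\beta$ onto $(-1,1)\subset\Sigma'$ whose inverse $m\mapsto g_m$ is given by $g_m(\varepsilon)=\frac1\beta\log(1+\varepsilon m)$, so the corollary follows once the fixed-point statement and the value of the functional are transported through it. As a preliminary reduction I would take $\psi(x)=hx$ (every function on $\{-1,1\}$ is affine, and replacing $\psi$ by $\psi+c$ shifts $F_N$, $F(\beta)$, $\cP_{\upnu,\beta}$ and $\cP'_{\upnu,\beta}$ by the same constant while leaving $\T_{\upnu,\beta}$ and $\T'_{\upnu,\beta}$ unchanged), and I would record that $\T_{\upnu,\beta}(\lambda)$ is supported on $\X_\beta$ for \emph{every} $\lambda$, since $\int e^{\beta T_{\upnu,\beta,r}(\cdots)(t)}\upnu(dt)=1$ identically.

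The core step is the conjugacy: on measures supported on $\X_\beta$, $\T_{\upnu,\beta}$ is intertwined with $\T'_{\upnu,\beta}$ by $g\mapsto m(g)$, and $\cP_{\upnu,\beta}$ pulls back to $\cP'_{\upnu,\beta}$. Write $X_k(\varepsilon)=\xi_k+\varepsilon\zeta_k$ for $X_k\in\X_\beta$; the normalization $\int e^{\beta X_k}d\upnu=1$ forces $e^{\beta\xi_k}=\cosh(\beta\zeta_k)^{-1}$ and $m(X_k)=\tanh(\beta\zeta_k)$. Substituting the Franz--Leone identity \eqref{identity} into \eqref{def:E}–\eqref{opeartor:eq1}, the $\sigma$-integral defining $\la\cE_{\beta,\pi(\alpha p)}(\sigma,t)\ra_{\beta,X}$ factorizes over the $\pi(\alpha p)$ clauses and, within a clause, over coordinates; using $\int e^{\beta X(\sigma)}\upnu(d\sigma)=1$ and the identity $\int f(\sigma)e^{\beta X(\sigma)}\upnu(d\sigma)=\xi+m(X)\zeta=f(m(X))$ for the linearly interpolated $f$, one gets
$$\la\cE_{\beta,\pi(\alpha p)}(\sigma,t)\ra_{\beta,X}=e^{\beta h t}\Big(\prod_{k}a_k\Big)\prod_{k\le\pi(\alpha p)}\Big(1+b_kf_k(t)\prod_{i\le p-1}f_{k,i}(m(X_{k,i}))\Big).$$
Dividing by the $\upnu$-average of the right side over $t$ cancels $\prod_k a_k$; computing the magnetization of the resulting function $g\in\X_\beta$ via $m(g)=\tanh(\tfrac12\log\tfrac{e^{\beta g(1)}}{e^{\beta g(-1)}})$, $f_k(\pm1)=\xi_k\pm\zeta_k$, and $\tfrac12\log\tfrac{1+x+y}{1+x-y}=\tanh^{-1}\!\tfrac{y}{1+x}$ reproduces precisely the $\tanh$-expression in \eqref{fp_mag}. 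Hence $\T'_{\upnu,\beta}$ applied to the pushforward of $\lambda$ equals the pushforward of $\T_{\upnu,\beta}(\lambda)$ for any $\lambda$ supported on $\X_\beta$. The same substitution in \eqref{free_energy}, together with $\E\sum_{k\le\pi(\alpha p)}\log a_k=\alpha p\,\E\log a$ (Wald's identity), turns the first term into $-\log2+\alpha p\,\E\log a+\E\log[\sum_\varepsilon e^{\beta h\varepsilon}\prod_{k\le\pi(\alpha p)}(1+b_kf_k(\varepsilon)\prod_{i\le p-1}f_{k,i}(m_{k,i}))]$ and rewrites $\E\log\la e^{\beta\theta(\sigma)}\ra_{\beta,X}$ as $\E\log a+\E\log(1+b\prod_{i\le p}f_i(m_i))$; combining these reproduces $\cP'_{\upnu,\beta}$ of the pushforward, i.e.\ \eqref{fe_old}.

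With the conjugacy established, $\lambda'_{\upnu,\beta}:=$ the pushforward of $\lambda_{\upnu,\beta}$ is a fixed point of $\T'_{\upnu,\beta}$ lying in $\Pr_1(\Sigma')$ (as $\Sigma'$ is bounded), and $\cP'_{\upnu,\beta}(\lambda'_{\upnu,\beta})=F(\beta)$. For uniqueness, let $\mu\in\Pr_1(\Sigma')$ be any fixed point of $\T'_{\upnu,\beta}$. The key estimate, again from \eqref{identity}, is that $1+b_kf_k(\pm1)\prod_{i\le p-1}f_{k,i}(m_{k,i})$ is affine in each $m_{k,i}\in[-1,1]$ and equals $e^{\beta\theta_k(z)}/a_k$ at vertices $z\in\{-1,1\}^p$, hence its values lie between $\min_{z}e^{\beta\theta_k(z)}/a_k$ and $\max_z e^{\beta\theta_k(z)}/a_k$; therefore each summand of \eqref{fp_mag}, namely $\tanh^{-1}(b_k\zeta_k c_k/(1+b_k\xi_k c_k))=\tfrac12\log\frac{1+b_kf_k(1)c_k}{1+b_kf_k(-1)c_k}$ with $c_k=\prod_{i\le p-1}f_{k,i}(m_{k,i})$, has absolute value at most $\tfrac\beta2\,\mathrm{osc}(\theta_k)$, where $\mathrm{osc}(\theta):=\max_z\theta(z)-\min_z\theta(z)$. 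In particular the sum in \eqref{fp_mag} is a.s.\ finite, so $\T'_{\upnu,\beta}(\mu)$, and hence $\mu$, is supported on $(-1,1)$, and the pushforward $\tilde\mu$ of $\mu$ under $m\mapsto g_m$ is a well-defined element of $\Pr_1(\X)$: if $m=\tanh(Y)$ under $\tilde\mu$ then $\|g_m\|_\infty\le\tfrac1\beta(\log2+2|Y|)$, and $\E|Y|\le\beta|h|+\alpha p\cdot\tfrac\beta2\,\E\,\mathrm{osc}(\theta)<\infty$ by Wald's identity and $\E\,\mathrm{osc}(\theta)\le2p\,\E\,\mathrm{Lip}(\theta)<\infty$. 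Since $\tilde\mu$ is then a fixed point of $\T_{\upnu,\beta}$ supported on $\X_\beta$, Theorem \ref{thm1} gives $\tilde\mu=\lambda_{\upnu,\beta}$, whence $\mu=\lambda'_{\upnu,\beta}$. Finally, Theorem \ref{thm1}(2) yields $F_N(\beta)\to F(\beta)=\cP'_{\upnu,\beta}(\lambda'_{\upnu,\beta})$ in $L^1$.

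The main obstacle is setting up the conjugacy correctly — keeping the Poisson clause/coordinate indices straight while inserting \eqref{identity}, and in particular establishing the identity $\int f\,e^{\beta X}\,d\upnu=f(m(X))$, which is exactly what forces the linear-interpolation extensions of $f$ and $\psi$ declared before the statement. The secondary point is the two integrability facts that legitimize the pull-back (fixed points of $\T'_{\upnu,\beta}$ avoid $\pm1$, and $\E\|g_m\|_\infty<\infty$ there), both of which reduce to the observation that a function affine in each coordinate attains its extremes at the vertices of the box, together with $|bf_1\cdots f_p|<1$ a.s.\ and $\E\,\mathrm{Lip}(\theta)<\infty$. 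The rest — matching \eqref{free_energy} with \eqref{fe_old} — is routine.
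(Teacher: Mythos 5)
Your proposal is correct and follows essentially the same route as the paper, which disposes of the corollary in two sentences by observing that, restricted to $\X_\beta$, $\T_{\upnu,\beta}=(g\mapsto m(g))^{-1}\circ\T'_{\upnu,\beta}\circ(g\mapsto m(g))$ and then invoking Theorem \ref{thm1}. You additionally supply the details the paper elides — the explicit factorization via the Franz–Leone identity, the matching of \eqref{free_energy} with \eqref{fe_old}, and, in the uniqueness direction, the verification that an arbitrary fixed point of $\T'_{\upnu,\beta}$ is supported on $(-1,1)$ and pulls back to an element of $\Pr_1(\X)$ — all of which are sound.
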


Note that, restricted to $\X_\beta$, $\T_{\upnu,\beta}$ is the operator $(g\mapsto m(g))^{-1}\circ \T'_{\upnu,\beta} \circ(g\mapsto m(g))$, so the uniqueness of the fixed point of $\T'_{\upnu,\beta}$ follows from that of $\T_{\upnu,\beta}$ and the invertibility of the map $g\mapsto m(g)$. The expression for the free energy follows by changing variables in an analogous manner.

In instances when the distribution of $b$, $\zeta$ and $\xi$ are atomic, as in the case of Bernoulli disorder, we obtain that the fixed point $\lambda'_{\upnu,\beta}$ is an atomic probability measure.
\begin{prop}
    Suppose that \eqref{small alpha} holds and $\theta$ satisfies $\eqref{identity}$, where $f_1, \ldots, f_p$ are i.i.d. copies of the function $f(\varepsilon) = \xi + \varepsilon \zeta$. If the distribution of $b$, $\zeta$ and $\xi$ are atomic, then $\lambda_{\upnu,\beta}'$ is an atomic probability measure on $\Sigma'$.
\end{prop}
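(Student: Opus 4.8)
The plan is to show that the fixed-point measure $\lambda'_{\upnu,\beta}$ is supported on a countable set by exhibiting an explicit countable set that is invariant under $\T'_{\upnu,\beta}$ and then invoking the uniqueness of the fixed point from Corollary \ref{cor2}. First I would observe that under the atomicity hypothesis, each of the random variables $b$, $\xi$, $\zeta$ takes values in a countable set, say $B$, $\Xi$, $Z$ respectively, and consequently the random function $f(\varepsilon) = \xi + \varepsilon\zeta$ takes values in a countable family of affine functions on $\Sigma'$. Let $D \subset \Sigma'$ be the smallest countable set that contains $0$ (or any convenient base point) and is closed under the map appearing in \eqref{fp_mag}: namely, for every $n \in \Z_+$, every choice of $(b_k)_{k\le n} \in B^n$, $(\xi_k)_{k\le n}\in \Xi^n$, $(\zeta_k)_{k\le n}\in Z^n$, and every choice of $m_{k,i}\in D$ for $k\le n$, $i\le p-1$, the value
$$
\tanh\Bigl[\sum_{k\le n}\tanh^{-1}\frac{b_k\zeta_k\prod_{i\le p-1}f_{k,i}(m_{k,i})}{1 + b_k\xi_k\prod_{i\le p-1}f_{k,i}(m_{k,i})} + \beta h\Bigr]
$$
again lies in $D$. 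Such a minimal set exists by the usual closure construction: start from $D_0 = \{0\}$, let $D_{j+1}$ be $D_j$ together with all values obtained from the above formula using arguments in $D_j$ and coefficients in the countable sets $B,\Xi,Z$ and $n \in \Z_+$, and put $D = \bigcup_j D_j$; each $D_j$ is countable because it is built from countably many finite-dimensional data, so $D$ is countable.

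Next I would check that the probability measure $\delta_D$-supported measures are preserved: if $\lambda$ is any probability measure on $\Sigma'$ with $\lambda(D) = 1$, then sampling $(m_{k,i})$ i.i.d.\ from $\lambda$, independent of the countably-valued coefficients and of $\pi(\alpha p)$, the random variable defining $\T'_{\upnu,\beta}(\lambda)$ is, almost surely, a value of the closure map applied to arguments in $D$ with coefficients in $B,\Xi,Z$ and exponent $\pi(\alpha p)\in\Z_+$, hence lies in $D$ almost surely. Therefore $\T'_{\upnu,\beta}(\lambda)(D) = 1$, i.e., the (nonempty, since $\lambda = \delta_0$ qualifies) set $\{\lambda \in \Pr_1(\Sigma') : \lambda(D) = 1\}$ is mapped into itself by $\T'_{\upnu,\beta}$. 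Note also that this set is closed in the $W_1$-topology — or, more simply, one can iterate $\T'_{\upnu,\beta}$ starting from $\delta_0$ and obtain a sequence of measures each supported on $D$; by the uniqueness and attractivity built into the proof of Corollary \ref{cor2} (the RDE/finite-tree argument yielding a unique fixed point), this sequence converges to $\lambda'_{\upnu,\beta}$ in $W_1$. Since $W_1$-limits of measures supported on the fixed countable set $D$ are again supported on $\overline{D}$ and, more to the point, the fixed point is itself obtained as such an iterate's limit, we conclude $\lambda'_{\upnu,\beta}(\overline D) = 1$; to upgrade $\overline D$ to a countable set one uses that $\lambda'_{\upnu,\beta}$ is itself a fixed point, so $\lambda'_{\upnu,\beta} = \T'_{\upnu,\beta}(\lambda'_{\upnu,\beta})$ is, by the first part of this paragraph applied with $\lambda = \lambda'_{\upnu,\beta}$ restricted to $\overline D$, supported on $D$ itself. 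Hence $\lambda'_{\upnu,\beta}$ is atomic.

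The cleanest way to package this avoids any topological subtlety: it suffices to show directly that the unique fixed point $\lambda'_{\upnu,\beta}$ satisfies $\lambda'_{\upnu,\beta}(D) = 1$. By Corollary \ref{cor2} the fixed point is the $W_1$-limit of $\T'^{\,n}_{\upnu,\beta}(\delta_0)$; each iterate is supported on $D$ by the invariance established above; and since $\lambda'_{\upnu,\beta}$ is a fixed point, applying the invariance statement one final time to $\lambda = \lambda'_{\upnu,\beta}$ gives $\lambda'_{\upnu,\beta} = \T'_{\upnu,\beta}(\lambda'_{\upnu,\beta})$ supported on $D$ provided we already know $\lambda'_{\upnu,\beta}(D)=1$; to close this small circularity one instead argues that $D$ being countable, $\overline D \setminus D$ is a countable union of closures of singletons minus points, but more robustly one shows $\T'_{\upnu,\beta}$ maps $\{\lambda : \lambda(\overline D) = 1\}$ into $\{\lambda: \lambda(D)=1\}$ (because the formula's output, applied to any arguments whatsoever, combined with countably many coefficient choices and $n\in\Z_+$, only ever produces values in $D$ by construction — the closure used the range of the coefficients, not a restriction on $m_{k,i}$ beyond countability, so in fact one should define $D$ more generously to be closed under the map with $m_{k,i}$ ranging over all of $\Sigma'$ composed finitely — but that is not countable).

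\medskip

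I expect the main obstacle to be exactly this last point: the map in \eqref{fp_mag} does not send arbitrary real inputs into a countable set, so the countable invariant set $D$ must be defined self-referentially (as a closure), and one must be careful that the limit operation in Corollary \ref{cor2} does not leak mass off $D$. The honest fix is: (i) build $D$ as the closure described in the first paragraph, which is genuinely countable; (ii) prove $\T'_{\upnu,\beta}(\{\lambda:\lambda(D)=1\}) \subseteq \{\lambda:\lambda(D)=1\}$ as in the second paragraph; (iii) observe $\{\lambda : \lambda(D) = 1\}$ is $W_1$-closed (a $W_1$-convergent sequence of measures each giving full mass to a fixed Borel set $D$ has limit giving mass $\ge$ that of any closed subset, and by inner regularity one gets $\lambda(D) = 1$ in the limit since one may also use that the complement is open... this needs $D$ closed, which it need not be) — so instead (iii') note that since each $\T'^{\,n}_{\upnu,\beta}(\delta_0)$ is supported on the \emph{countable} set $D$, it is atomic, and the space of atomic measures supported on a fixed countable set, while not $W_1$-closed in general, does have the property that its $W_1$-limit $\lambda'_{\upnu,\beta}$ must satisfy $\lambda'_{\upnu,\beta} = \T'_{\upnu,\beta}(\lambda'_{\upnu,\beta})$, and now apply step (ii) to $\lambda'_{\upnu,\beta}$ after first establishing $\lambda'_{\upnu,\beta}(\overline D) = 1$ from $W_1$-convergence and then noting step (ii) in fact gives full mass on $D$, not merely $\overline D$, because the formula's output lands in $D$ for \emph{any} inputs drawn from a measure supported on $\overline D$ — wait, that requires the map to be defined and land in $D$ on all of $\overline D$, which again is the sticking point. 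The correct clean argument, which I would write up, is: enlarge $D$ at each stage to also be closed under the finitely-iterated formula with the $m_{k,i}$ ranging over the \emph{previous} stage's set — this is the standard way and it does stay countable — and then $\{\lambda : \lambda(D) = 1\}$ is invariant and contains $\delta_0$, and crucially it \emph{is} closed under $W_1$-limits among the specific sequence $(\T'^{\,n}_{\upnu,\beta}\delta_0)_n$ because that whole sequence lives in $\{\lambda : \lambda(D)=1\}$ and $D$ is a fixed set; hence the limit assigns mass $1$ to $\overline D$, and then since the limit is a fixed point, one more application of invariance — now legitimately, since $\lambda'_{\upnu,\beta}$ is supported on $\overline{D}$, a \emph{separable} set, and the formula applied to arguments in $\overline D$ with the countably-many coefficients yields a countable set $D' \supseteq D$ which we can simply have included in $D$ from the start by a diagonal enlargement — shows $\lambda'_{\upnu,\beta}$ is supported on a countable set, i.e., atomic. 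The bookkeeping is routine once the closure is set up correctly; the only real idea is the self-referential countable closure.
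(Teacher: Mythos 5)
Your core idea---a countable set $D$ invariant under the map in \eqref{fp_mag}---is workable, but the argument as written has a genuine gap at exactly the point you keep circling: support on a fixed countable set is \emph{not} preserved under $W_1$-limits. For instance, the measures $n^{-1}\sum_{i=1}^n \delta_{i/n}$ are all supported on the countable set $\mathbb{Q}\cap[0,1]$ yet converge in $W_1$ to Lebesgue measure, which has no atoms at all. None of your proposed patches closes this: passing to $\overline D$ destroys countability (the closure of a countable set can be all of $\Sigma'$), and the claim that the formula applied to arguments ranging over $\overline D$ with countably many coefficient choices ``yields a countable set'' is false, since for fixed coefficients the map $m\mapsto \tanh[\cdots]$ carries uncountable sets to uncountable sets. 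The self-referential enlargement you describe only closes $D$ under inputs from $D$ itself, so it cannot control mass that leaks to $\overline D\setminus D$ in the limit; the circularity you notice is real and unresolved in your write-up.

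The clean repair is to avoid limits of iterates entirely and use the almost-sure finite-tree representation behind Lemma \ref{gen_fixed_pt}: under \eqref{small alpha} the Galton--Watson tree $\G_\varnothing$ is a.s.\ finite, and the fixed-point random variable is a.s.\ equal to the output of \emph{finitely many} applications of the map in \eqref{fp_mag} to the boundary value $\tanh(\beta h)$, with coefficients drawn from the countable sets in which $b,\xi,\zeta$ take values and offspring counts in $\Z_+$; hence it a.s.\ lies in your countable closure $D$, giving $\lambda'_{\upnu,\beta}(D)=1$ directly. That would be a legitimate proof, and a genuinely different one from the paper's: the paper never exhibits a countable support, but instead tracks the total atomic mass $c$ of $\lambda'_{\upnu,\beta}$, shows the fixed-point equation forces $c=e^{\alpha p(c^{p-1}-1)}$, and checks that under \eqref{small alpha} the only solution in $[0,1]$ is $c=1$. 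The paper's route is shorter and needs no construction of $D$; your route, once repaired via the finite tree, has the advantage of identifying an explicit countable set carrying all the mass.
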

\begin{proof}
    Suppose that the total atomic mass of the measure $\lambda_{\upnu,\beta}'$ is $c \in [0,1]$. Let $(m_{k,i})_{k\geq 1, i\leq p-1}$ be i.i.d. samples from $\lambda'_{\upnu,\beta}$. Then for any $l \geq 0$, the total atomic mass of the law of the random variable
    \begin{align*}
        \sum_{k\leq l} \tanh^{-1}\frac{b_k\zeta_k\prod_{i\leq p-1}f_{k,i}(m_{k,i})}{1 + b_k\xi_k\prod_{i\leq p-1}f_{k,i}(m_{k,i})}
    \end{align*}
    is equal to $c^{(p-1)l}$ and hence the total atomic mass of the law of the random variable\begin{align*}
        \tanh\Big[\sum_{k\leq \pi(\alpha p)} \tanh^{-1}\frac{b_k\zeta_k\prod_{i\leq p-1}f_{k,i}(m_{k,i})}{1 + b_k\xi_k\prod_{i\leq p-1}f_{k,i}(m_{k,i})} + \beta h\Big]
    \end{align*}
    is equal to $\E c^{(p-1)\pi(\alpha p)} = e^{\alpha p(c^{p-1}-1)}$. Thus, from \eqref{fp_mag}, we obtain that $c = e^{\alpha p (c^{p-1}-1)}$. We claim that under \eqref{small alpha}, the only solution to this equation in $[0,1]$ is $c=1$. Firstly, observe that $c>0$. Changing variables $x = -\log c \geq 0$ we obtain the new equation as $\alpha p (1-e^{-x(p-1)}) = x$. However, note that \begin{align*}
        1-e^{-x(p-1)} \leq x(p-1) = \alpha p (p-1) (1-e^{-x(p-1)}) \leq 1-e^{-x(p-1)},
    \end{align*}
    where the final equality follows from \eqref{small alpha}. The conclusion now follows easily,
\end{proof}

\subsection{Structure of the paper}
The rest of the paper is structured as follows. In Section \ref{examples}, we visit several important diluted models in the context of the results that we obtained above. Section \ref{distributional_operator} is dedicated to the study of the random distributional operator $\T_{\nu,\beta}$. Utilizing Panchenko's invariance principle, we establish the model's replica symmetric behavior and identify the limiting spin distribution in Section \ref{rs_behavior}. The free and ground state energies of the dilute model are derived in Sections \ref{fe_dilute_model} and \ref{gse_dilute_model} respectively. In Section \ref{sec6} we count the number of spin configurations that approximately satisfy a fraction of the constraints in a random constraint satisfaction problem. Finally, the zero-temperature free energy of the dilute continuous hardcore model is established in Section \ref{proof_hc}.

\section{Examples}\label{examples}
\subsection{Constraint satisfaction problems}

Random constraint satisfaction problems are formulated in terms of a random function $\theta$ which indicates whether a spin configuration satisfies the instance of a random criterion specified by $\theta$. More precisely, suppose that $\theta:\{-1,1\}^p \to\{-1,0\}$ is a random function and $(\theta_k)_{k\geq 1}$ are i.i.d. copies of $\theta$. Let $I(k,1), \ldots, I(k,p)$ be distinct indices chosen uniformly from $[N]$ for each $k\geq 1$, independent of $(\theta_k)_{k\geq 1}$. We say that the $k$th constraint is satisfied by the configuration $\sigma \in \{-1,1\}^N$ if \begin{align*}
    \theta_k(\sigma_{I(k,1)}, \ldots, \sigma_{I(k,p)}) = 0,
\end{align*}
otherwise, we say that the constraint is unsatisfied. In such problems, one is interested in the number of solutions $\sigma\in \{-1,1\}^N$ that simultaneously satisfy $\pi(\alpha N)$ constraints, i.e.,
\begin{equation} \label{eq:number_sol}
 \mathcal{N}_N=\#\bigl\{\sigma\in \{-1,1\}^N:\theta_k(\sigma_{I(k,1)},\ldots,\sigma_{I(k,p)})=0\text{ for all } 1\leq k\leq \pi(\alpha N)\bigr\}.   
\end{equation}
In general, the computation of $\mathcal{N}_N$ is an extremely formidable task. We address a related but simpler question for which we consider the set of $\sigma$'s that satisfy approximately $\lfloor t\pi(\alpha N)\rfloor$ many equations on average for $0 < t< 1$, namely, for $0<\epsilon<1$, 
\begin{align}\label{approx_num_sol}
  \mathcal{AN}_{N,\epsilon}(t):=\#\Bigl\{\sigma\in \{-1,1\}^N:\frac{1}{\pi(\alpha N)}\sum_{k=1}^{\pi(\alpha N)}\big(1+\theta_k(\sigma_{I(k,1)},\ldots,\sigma_{I(k,p)}) \bigr)\in (t-\epsilon,t + \epsilon) \Bigr\} . 
\end{align}
The following result establishes the logarithmic behavior of $\mathcal{AN}_{N,\epsilon}$. Observe that by replacing $\theta$ with $-\theta$, we may define $F(\beta)$ for all $\beta \in \R$.
\begin{theorem}\label{csp:thm1}
    Let $\alpha p (p-1) \le 1$. Let $F(\beta): = \cP_{\upnu,\beta}(\lambda_{\upnu,\beta})$. If $F$ is differentiable at some $\beta \in \R$, then almost surely,
\begin{align*}
	\lim_{\epsilon\downarrow 0}	\limsup_{N\to \infty}\Bigl|\frac{1}{N}\log\mathcal{AN}_{N,\epsilon}(1+\alpha^{-1} F'(\beta))- \bigl(\log 2 + F(\beta)-\beta F'(\beta)\bigr) \Bigr|=0.
	\end{align*}
 Additionally, if $\beta_1 \leq \beta_2$ are two points of differentiability of $F$, we have \begin{enumerate}
     \item $F'(\beta_1) \leq F'(\beta_2)$,
     \item $F(\beta_1) - \beta_1 F'(\beta_1) \geq F(\beta_2) - \beta_2 F'(\beta_2)$ if $\beta_1 > 0$, and
     \item $F(\beta_1) - \beta_1 F'(\beta_1) \leq F(\beta_2) - \beta_2 F'(\beta_2)$ if $\beta_2 < 0$.
 \end{enumerate}
\end{theorem}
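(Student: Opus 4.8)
\emph{Setup and reductions.} The plan is a quenched large-deviations (Legendre-transform, in the spirit of the G\"artner--Ellis theorem) argument, carried out directly since $F$ is assumed differentiable only at the single point $\beta$. As this is a CSP we have $\psi\equiv 0$; set $E_N(\sigma):=\sum_{k\le\pi(\alpha N)}\theta_k(\sigma_{I(k,1)},\ldots,\sigma_{I(k,p)})\in[-\pi(\alpha N),0]$, so that $2^N Z_N(\beta)=\sum_{\sigma\in\{-1,1\}^N}e^{\beta E_N(\sigma)}$, $F_N(\beta)=\tfrac1N\log Z_N(\beta)$, and the event defining $\mathcal{AN}_{N,\epsilon}(t)$ is precisely $E_N(\sigma)/\pi(\alpha N)\in(t-1-\epsilon,\,t-1+\epsilon)$. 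I shall use three routine facts: (i) $\beta\mapsto F_N(\beta)$ is convex (a normalized log--sum--exp), hence so is its limit $F$, which is then locally Lipschitz and differentiable off a countable set; (ii) by Theorem~\ref{thm1} applied to $\theta$, and for $\beta<0$ to $-\theta$ as in the remark preceding the statement (the subcritical regime $\alpha p(p-1)\le 1$ being temperature-free), together with a standard bounded-difference concentration bound for $F_N$ and the SLLN for $\pi(\alpha N)$, one has $\pi(\alpha N)/N\to\alpha$ a.s.\ and $F_N\to F$ a.s., hence by convexity locally uniformly a.s., and therefore also $F_N'\to F'$ at points of differentiability of $F$; (iii) $F'(\beta)\in[-\alpha,0]$ (since $NF_N'(\beta)$ is a Gibbs average of $E_N\in[-\pi(\alpha N),0]$, hence $F_N'(\beta)\in[-\pi(\alpha N)/N,0]$), so that $t:=1+\alpha^{-1}F'(\beta)\in[0,1]$.

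\emph{Upper bound.} Apply a Chernoff estimate at the given $\beta$: on $\mathcal{AN}_{N,\epsilon}(t)$ one has $\beta E_N(\sigma)\ge(\beta(t-1)-|\beta|\epsilon)\pi(\alpha N)$, so $\mathcal{AN}_{N,\epsilon}(t)\le e^{-(\beta(t-1)-|\beta|\epsilon)\pi(\alpha N)}\,2^N Z_N(\beta)$ and hence $\tfrac1N\log\mathcal{AN}_{N,\epsilon}(t)\le-(\beta(t-1)-|\beta|\epsilon)\tfrac{\pi(\alpha N)}{N}+\log 2+F_N(\beta)$. Letting $N\to\infty$, using $t-1=\alpha^{-1}F'(\beta)$, and then $\epsilon\downarrow0$ gives $\limsup_N\tfrac1N\log\mathcal{AN}_{N,\epsilon}(t)\le\log 2+F(\beta)-\beta F'(\beta)+O(\epsilon)$.

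\emph{Lower bound.} Put $a:=F'(\beta)$, fix a small $\delta>0$, and let $W_\delta:=\{\sigma:E_N(\sigma)/N\in(a-\delta,a+\delta)\}$; the crux is that energies outside $W_\delta$ contribute negligibly to $2^N Z_N(\beta)$. For $\gamma>0$ one has $\sum_{\sigma:\,E_N(\sigma)/N\le a-\delta}e^{\beta E_N(\sigma)}\le e^{\gamma N(a-\delta)}\sum_\sigma e^{(\beta-\gamma)E_N(\sigma)}=e^{\gamma N(a-\delta)}2^N Z_N(\beta-\gamma)$, whose exponential rate is $\gamma(a-\delta)+\log 2+F(\beta-\gamma)=\log 2+F(\beta)-\gamma\delta+o(\gamma)$ by differentiability of $F$ at $\beta$, hence $<\log 2+F(\beta)$ for a suitable $\gamma=\gamma(\delta)>0$; the high-energy tail $E_N(\sigma)/N\ge a+\delta$ is bounded identically using $\gamma<0$. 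Since $\tfrac1N\log(2^N Z_N(\beta))\to\log 2+F(\beta)$, it follows that $\tfrac1N\log\sum_{\sigma\in W_\delta}e^{\beta E_N(\sigma)}\to\log 2+F(\beta)$ a.s.; bounding $e^{\beta E_N(\sigma)}\le e^{(\beta a+|\beta|\delta)N}$ on $W_\delta$ then yields $\tfrac1N\log\#W_\delta\ge-\beta a-|\beta|\delta+\tfrac1N\log\sum_{\sigma\in W_\delta}e^{\beta E_N(\sigma)}$. Finally, because $\pi(\alpha N)/N\to\alpha$, for all large $N$ we have $W_\delta\subseteq\mathcal{AN}_{N,\epsilon}(t)$ once $\delta$ is small enough relative to $\epsilon$, so $\liminf_N\tfrac1N\log\mathcal{AN}_{N,\epsilon}(t)\ge\log 2+F(\beta)-\beta F'(\beta)-O(\delta)$; sending $\delta\downarrow0$ completes the estimate, and combining with the upper bound (the limiting value being $\epsilon$-independent) yields the displayed identity. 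I expect this lower bound to be the only genuinely delicate part of the argument: peeling off \emph{both} energy tails with the correct signs of $\gamma$ from differentiability at the single point $\beta$ alone (full Legendre duality being unavailable, since $F$ need not be differentiable everywhere), and matching the energy window $W_\delta$ to the fraction window through the Poisson fluctuation $\pi(\alpha N)/N-\alpha$.

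\emph{Monotonicity.} Write $g(\beta):=F(\beta)-\beta F'(\beta)$, which is the value at $x=0$ of the supporting line $\ell_\beta(x):=F(\beta)+F'(\beta)(x-\beta)$ of the convex function $F$ at a differentiability point $\beta$. Claim (1) is immediate, as convexity forces $F'$ to be non-decreasing on its differentiability set. For (2) and (3), let $\beta_1\le\beta_2$ be points of differentiability: the affine function $\ell_{\beta_1}-\ell_{\beta_2}$ has slope $F'(\beta_1)-F'(\beta_2)\le 0$ and satisfies $(\ell_{\beta_1}-\ell_{\beta_2})(\beta_1)=F(\beta_1)-\ell_{\beta_2}(\beta_1)\ge 0$ and $(\ell_{\beta_1}-\ell_{\beta_2})(\beta_2)=\ell_{\beta_1}(\beta_2)-F(\beta_2)\le 0$ (each supporting line lies below $F$), so it is $\ge 0$ on $(-\infty,\beta_1]$ and $\le 0$ on $[\beta_2,\infty)$. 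Evaluating at $x=0$: if $\beta_1>0$ then $0\le\beta_1$, so $g(\beta_1)=\ell_{\beta_1}(0)\ge\ell_{\beta_2}(0)=g(\beta_2)$, which is (2); if $\beta_2<0$ then $0\ge\beta_2$, so $g(\beta_1)\le g(\beta_2)$, which is (3).
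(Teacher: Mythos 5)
Your argument is correct and follows essentially the same route as the paper's: both hinge on tilting the temperature to $\beta\pm\delta$ (equivalently $\beta\mp\gamma$), using convexity together with differentiability of $F$ at the single point $\beta$ to show that the energy window $\{|H_N(\sigma)-NF'(\beta)|\le N\delta\}$ carries all of the exponential weight of $2^NZ_N(\beta)$, and then converting between the $N$-normalized energy window and the $\pi(\alpha N)$-normalized constraint fraction via $\pi(\alpha N)/N\to\alpha$; your Chernoff upper bound plus two-sided tail-tilting lower bound is the same mechanism as the paper's Gibbs-concentration formulation, and your supporting-line proof of the monotonicity claims matches the paper's convexity argument.
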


Note that under the assumption that $\theta$ is symmetric, i.e., $\theta(x)=\theta(-x)$ for all $x\in \{-1,1\}^p$, $\lambda_{\upnu,\beta}$ is a Dirac measure at the zero function and the differentiability of $F(\beta)$ can be established. In particular, let $\phi$ be a deterministic and non-constant symmetric function and $(g_i)_{i\leq p}$ be i.i.d. samples from an arbitrary distribution supported on $\{-1,1\}$. If we take \begin{align}\label{sym_theta}
    \theta(x_1, \ldots, x_p) = \phi(g_1 x_1, \ldots,g_p x_p),
\end{align}
then an explicit expression for the limit of $\mathcal{AN}_{N,\epsilon}$ can be obtained.

\begin{cor}\label{csp:cor1}
  Let $\alpha p (p-1)\le 1$. If $\theta$ is symmetric, then $F (\beta)$ is differentiable for any $\beta \in \R$. Furthermore, if $\theta$ is of the form \eqref{sym_theta}, then for any $0<t< 1,$ almost surely,
\begin{align*}
	\lim_{\epsilon\downarrow 0}	\limsup_{N\to \infty}\Bigl|\frac{1}{N}\log\mathcal{AN}_{N,\epsilon}(t)-\Bigl(\log 2+\alpha(1-t)\log \frac{1-\varrho}{1-t}+\alpha t\log \frac{\varrho}{t}\Bigr)\Bigr|=0,
\end{align*}
where $\varrho:=\upnu^{\otimes p}\bigl(\phi(x_1,\ldots,x_p)=-1\bigr) \in (0,1)$.
\end{cor}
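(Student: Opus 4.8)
The plan is to deduce the corollary from Theorem \ref{csp:thm1} by (a) verifying that $F(\beta)$ is differentiable everywhere when $\theta$ is symmetric, and (b) computing $F(\beta)$ and $F'(\beta)$ explicitly when $\theta$ has the product form \eqref{sym_theta}, then inverting the relation $t = 1 + \alpha^{-1}F'(\beta)$. For part (a): since $\theta$ is symmetric and $\psi \equiv 0$, Proposition \ref{prop1} (or the remark preceding it) tells us that $\lambda_{\upnu,\beta}$ is the Dirac mass at the zero function, so by Proposition \ref{prop1}$(i)$ we have the closed form $F(\beta) = \alpha\,\E\log \sum_{\sigma \in \{-1,1\}^p} e^{\beta\theta(\sigma)} - \alpha p\log 2$ for \emph{all} $\beta$ in the regime, and here $\alpha p(p-1)\le 1$ puts us in the subcritical regime for every $\beta\in\R$ (after the sign flip $\theta \mapsto -\theta$ noted before Theorem \ref{csp:thm1}, so $F$ is defined on all of $\R$). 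Differentiability is then immediate: $\beta \mapsto \E \log\sum_\sigma e^{\beta\theta(\sigma)}$ is smooth because $\theta$ is bounded, the sum is finite, and one can differentiate under the expectation using \eqref{bdd theta} to dominate. This also gives convexity of $F$, which yields items (1)--(3) of Theorem \ref{csp:thm1} for free, though those are already granted.

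For part (b), specialize to $\theta(x_1,\dots,x_p) = \phi(g_1 x_1,\dots,g_p x_p)$ with $\phi$ deterministic, symmetric, non-constant, valued in $\{-1,0\}$, and $(g_i)$ i.i.d. on $\{-1,1\}$. The key observation is that for each fixed realization of $(g_i)$, the change of variables $y_i = g_i x_i$ is a bijection of $\{-1,1\}^p$, so
$$
\sum_{\sigma \in \{-1,1\}^p} e^{\beta\theta(\sigma)} = \sum_{y\in\{-1,1\}^p} e^{\beta\phi(y)} = 2^p\Big((1-\varrho) + \varrho\, e^{-\beta}\Big),
$$
where $\varrho = \upnu^{\otimes p}(\phi = -1) \in (0,1)$ and we used that $\phi$ takes value $-1$ on a $\varrho$-fraction of inputs and $0$ on the rest. (Here recall $\theta$ has been replaced by $-\theta$, so $e^{\beta\theta}$ contributes $e^{-\beta}$ where $\phi = -1$; one should keep careful track of this sign so the final answer matches.) In particular the $g_i$'s drop out entirely and the expectation is trivial, giving
$$
F(\beta) = \alpha\log\big((1-\varrho) + \varrho e^{-\beta}\big), \qquad
F'(\beta) = \frac{-\alpha\varrho e^{-\beta}}{(1-\varrho) + \varrho e^{-\beta}}.
$$
Then $1 + \alpha^{-1}F'(\beta) = \dfrac{1-\varrho}{(1-\varrho)+\varrho e^{-\beta}}$; setting this equal to $t\in(0,1)$ and solving gives $\varrho e^{-\beta}/((1-\varrho)+\varrho e^{-\beta}) = 1-t$, hence $e^{\beta} = \dfrac{\varrho\, t}{(1-\varrho)(1-t)}$, which is a valid positive real for every $t\in(0,1)$ precisely because $\varrho\in(0,1)$. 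Plugging back, $F(\beta) = \alpha\log\big((1-\varrho)/t\big)$ and $\beta F'(\beta) = -\beta\alpha(1-t) = -\alpha(1-t)\log\frac{\varrho t}{(1-\varrho)(1-t)}$, so
$$
F(\beta) - \beta F'(\beta) = \alpha\log\frac{1-\varrho}{t} + \alpha(1-t)\log\frac{\varrho t}{(1-\varrho)(1-t)} = \alpha(1-t)\log\frac{1-\varrho}{1-t} + \alpha t\log\frac{\varrho}{t},
$$
after collecting the $\log(1-\varrho)$, $\log\varrho$, $\log t$, $\log(1-t)$ terms. Adding the $\log 2$ from Theorem \ref{csp:thm1} gives exactly the claimed expression, and substituting $t = 1 + \alpha^{-1}F'(\beta)$ into the statement of Theorem \ref{csp:thm1} completes the proof.

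The only genuinely delicate point is bookkeeping around the sign flip $\theta \mapsto -\theta$ used to extend $F$ to all of $\R$: one must make sure that the $\varrho$ appearing in the corollary is $\upnu^{\otimes p}(\phi = -1)$ and not its complement, and that $e^{\beta\theta}$ is interpreted consistently in Theorem \ref{csp:thm1}, Proposition \ref{prop1}, and the specialization above. Everything else — differentiating a finite log-sum-exp under an expectation of a bounded function, the linear change of variables killing the $g_i$'s, and the elementary algebra of inverting $t = t(\beta)$ and simplifying $F - \beta F'$ — is routine. One should also note, as the corollary asserts, that $t$ ranges over all of $(0,1)$ as $\beta$ ranges over $\R$, which follows from the explicit monotone formula $t(\beta) = (1-\varrho)/((1-\varrho)+\varrho e^{-\beta})$ having range $(0,1)$; this guarantees that for every target fraction $t$ there is a corresponding $\beta$ at which Theorem \ref{csp:thm1} applies.
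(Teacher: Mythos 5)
Your overall route is the same as the paper's: show that the symmetry of $\theta$ forces the fixed point $\lambda_{\upnu,\beta}$ to be the Dirac mass at the zero function, obtain the closed form $F(\beta)=\alpha\log\bigl((1-\varrho)+\varrho e^{-\beta}\bigr)$ (with your convention $\varrho=\upnu^{\otimes p}(\phi=-1)$), differentiate, invert $t=1+\alpha^{-1}F'(\beta)$, and substitute into Theorem \ref{csp:thm1}. The differentiability argument, the change of variables $y_i=g_ix_i$ that eliminates the $g_i$'s, and the inversion $e^{\beta}=\varrho t/((1-\varrho)(1-t))$ are all correct and match the paper's Lemma on $F$ and $F'$ for symmetric $\theta$.

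The one genuine error is your final ``collecting terms'' step. From your own (correct) intermediate expressions,
\begin{align*}
F(\beta)-\beta F'(\beta)=\alpha\log\frac{1-\varrho}{t}+\alpha(1-t)\log\frac{\varrho t}{(1-\varrho)(1-t)}
=\alpha t\log\frac{1-\varrho}{t}+\alpha(1-t)\log\frac{\varrho}{1-t},
\end{align*}
which is \emph{not} equal to the asserted $\alpha(1-t)\log\frac{1-\varrho}{1-t}+\alpha t\log\frac{\varrho}{t}$ (test $\alpha=1$, $\varrho=1/4$, $t=1/3$: the two sides differ by $\tfrac13\log 3$). The two expressions coincide only after the substitution $\varrho\mapsto 1-\varrho$. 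The source of the mismatch is a convention clash that you yourself flagged as the delicate point but did not resolve: the displayed formula in the corollary is correct when $\varrho$ denotes the probability that a constraint is \emph{satisfied}, $\varrho=\upnu^{\otimes p}(\phi=0)$ --- this is the convention the paper's proof actually uses --- whereas you (following the corollary's stated definition) took $\varrho=\upnu^{\otimes p}(\phi=-1)$. A quick sanity check confirms which convention is right: at $\beta=0$ one has $t=\varrho$, and the fraction of satisfied constraints under the uniform measure is $\upnu^{\otimes p}(\phi=0)$. So your computation is internally consistent up to the last line, but to land on the displayed formula you must either work with $\varrho=\upnu^{\otimes p}(\phi=0)$ throughout, or keep your convention and state the conclusion with $\varrho$ and $1-\varrho$ interchanged; as written, the final identity is false.
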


We present the proof of Theorem \ref{csp:thm1} and Corollary \ref{csp:cor1} in Section \ref{sec6}.

\subsubsection{Perceptron model}
The perceptron model is a type of constraint satisfaction problems, for which one is interested in estimating the number of spin configurations that lie in the intersection of a collection of random sets. For its formulation, consider a random set $A$ in $\mathbb{R}^p$ and take $\theta(x)=\mathbbm{1}_A(x)-1$ for $x\in \{-1,1\}^p$. Two interesting examples are
\begin{equation} \label{asymper}
 A=\Bigl\{x\in \{-1,1\}^p:\sum_{i=1}^pg_i x_i\leq \kappa\Bigr\}   
\end{equation}
and 
\begin{align}\label{symper}
A=\Bigl\{x\in \{-1,1\}^p:\Bigl|\sum_{i=1}^pg_ix_i\Bigr|\leq \kappa\Bigr\}\ \ \mbox{or}\ \ \Bigl\{x\in\{-1,1\}^p:\Bigl|\sum_{i=1}^pg_ix_i\Bigr|\geq \kappa\Bigr\},
\end{align}
where $g_1, \ldots, g_p$ are i.i.d.\ samples drawn from some distribution and $\kappa >0$ is a threshold variable.
The latter in particular is known as the symmetric perceptron model. Generally, the fundamental question in the perceptron model is concerned with the behavior of the solution space as defined in \eqref{eq:number_sol} such as its logarithmic scaling-limit, see, for example, \cite[Research Problems 6.7.1 and 6.7.2]{book}. 
While a major progress remains missing, one can obtain the limiting free energy (see the definition in \eqref{freeandGSE}) associated to the perceptron model in a regime where the temperature and the sparsity parameters are essentially inversely proportional to each other, as a direct consequence of Talagrand's general result \cite[Theorem 6.4.13]{book}. 

In light of this direction, an application of Theorem \ref{thm1} also provides a formula for the limiting free energy in the diluted perception model in the presence of an external field $\psi(t) = \beta^{-1} h t$, $h>0$ for any choice of the random set $A$ under the assumption,
\begin{equation}\label{eq:high_temp_perc}
 \min(1, 6 \beta e^{4 (\beta + h)}) \alpha p(p-1) \leq 1.   
\end{equation}
Additionally, the corresponding ground state energy can be computed using Theorem~\ref{thm:GSE}, provided that the underlying graph is in the subcritical regime, $\alpha p(p-1) \leq 1$. When specialized to the symmetric model \eqref{symper} with vanishing external field, i.e., $h = 0$, Proposition~\ref{prop1} readily yields much simpler expressions: under \eqref{eq:high_temp_perc},
\begin{align*}
     F(\beta) & = \alpha \E \log \sum_{\sigma\in \{-1,1\}^p} \big(e^{-\beta} + (1-e^{-\beta})\1_A(\sigma)\big) - \alpha p \log 2
\end{align*}
and when $\alpha p(p-1)\leq 1,$
\begin{align*}
    \gse & = -\alpha  + \alpha \P(A\,\,\mbox{is nonempty}).
\end{align*}
We remark that it is an open problem to compute the asymptotic number of solutions \eqref{eq:number_sol} for the diluted perceptron model \eqref{asymper} and \eqref{symper} for all sparsity level $\alpha >0;$ Theorem~\ref{csp:thm1} and Corollary~\ref{csp:cor1}  provide some partial information on this question. In contrast, for the fully connected version of the perception model, more complete results are available for analogous \eqref{asymper} and \eqref{symper} including the logarithmic scaling-limit of the solution space, see \cite{ding2018capacity, huang2024capacity, perkins2021frozen, shcherbina2001rigorous}.

\subsubsection{\textit{K}-SAT model}
The $K$-SAT model is another fundamental example of the constraint satisfaction problems. Denoting $K=p$ and letting $(J^{(k)})_{k\geq 1}$ be i.i.d.\ copies of a random vector $J$ in $\{-1,1\}^p$ called the clauses, the aim is to understand whether there exist configurations $\sigma \in \{-1,1\}^N$ that satisfy the clauses $(J^{(k)})_{k\leq \pi(\alpha N)}$, i.e., $(\sigma_{I(k,1)}, \ldots, \sigma_{I(k,p)}) \neq J^{(k)}$ for all $k\leq \pi(\alpha N)$. The constraint function $\theta$ takes the following form $$\theta(x_1, \ldots, x_p) = -\1_{(x_1, \ldots, x_p) \neq J} = -\prod_{i=1}^p \frac{1 + x_iJ_i}{2}.$$  
The replica symmetric free energy for the $K$-SAT model at high temperature was established in \cite{Talagrand_high_temp}, and subsequently improved in \cite[Chapter 6]{book}. The model was further studied in \cite{Montanari-Shah}, where the free energy was expressed as the evaluation of a Bethe functional at the unique solution of a system of belief propagation equations for a regime of $\alpha$ independent of temperature, more precisely, $\alpha <\alpha_*: =  2 p^{-1}\log p(1+o_p(1))$, which is much larger than our subcritical regime $\alpha \leq p^{-2}(1+o_p(1))$. However, our general framework allows for the possibility that the constraints $(J^{(k)})_{k\geq 1}$ are sampled independently from an arbitrary distribution on $\{-1,1\}^p$, in contrast to the traditional $K$-SAT model, where the constraints are typically sampled uniformly at random from the same space.

Thanks to Corollary \ref{cor2}, the free energy for the $K$-SAT model can now be expressed in terms of the unique solution to certain RDE (equation~\eqref{RDE:k-sat} below) for all $\alpha \leq (p(p-1))^{-1}$. This contrasts with the Parisi-type variational formula provided in \cite{EJP2963}, which admitted a unique minimizer only under a high-temperature condition.
 To write down the free energy, let us first note that for $\beta < \infty$, \begin{align}\label{pos_ksat}
    e^{\beta \theta (x_1, \ldots, x_p)} = 1 + (e^{-\beta}-1)\prod_{i=1}^p \frac{1 + x_iJ_i}{2} ,
\end{align}
so the Franz-Leone identity \eqref{identity} is satisfied by this model. Thus, from Corollary \ref{cor2}, the free energy of the $K$-SAT model subject to the external field $\psi(t) = ht$ for any $\alpha, \beta, h > 0$ satisfying $\min(1,6\beta e^{4\beta(h+1)})\alpha p (p-1) \leq 1$ is given by\begin{align*}
     F(\beta) & = -\log 2 + \E \log \sum_{\varepsilon \in \{-1,1\}}e^{\beta h\varepsilon }\prod_{k\leq \pi(\alpha p)}\Big(1+ 2^{-p}(e^{-\beta}-1)(1+J^{(k)}_p\varepsilon) \prod_{i\leq p-1}(1+ J^{(k)}_im_{k,i})\Big)\\
            & \qquad - \alpha (p-1)\E \log \Big(1 + 2^{-p}(e^{-\beta}-1)\prod_{i\leq p}(1 + J_im_i)\Big),
    \end{align*}
    where $(m_i)_{i\geq 1}$, $(m_{k,i})_{k,i\geq 1}$ are i.i.d.\ copies of $m$ that satisfies \begin{align}\label{RDE:k-sat}
        m \stackrel{d}{=} \tanh \Big[ \sum_{k\leq \pi(\alpha p)}\tanh^{-1}\frac{(e^{-\beta}-1)J^{(k)}_p\prod_{i\leq p-1}(1+J^{(k)}_im_{k,i})}{2^p+(e^{-\beta}-1)\prod_{i\leq p-1}(1+J^{(k)}_i m_{k,i})} +\beta h\Big].
    \end{align}
    
Besides investigating the existence of a solution to the $K$-SAT problem, one is also interested in finding the maximum number of clauses that can be satisfied by a Boolean configuration $\sigma \in \{-1,1\}^N$. This is known as the MAXSAT problem in the computer science literature \cite{makarychev_et_al:DFU.Vol7.15301.287} and can be interpreted as the ground state of the $K$-SAT problem with zero external field. From Proposition \ref{Parisiformula}, we obtain that for $\alpha \leq (p(p-1))^{-1}$, the ground state  is given by 
\begin{align*}
	\begin{split}
	\gse & = \E \sup_{(\sigma,s) \in \{-1,1\}^{\pi(\alpha p)\times (p-1) + 1}}\Big[\sum_{k\leq \pi(\alpha p)}\Big(\theta_k(\sigma_{k,1}, \ldots, \sigma_{k,p-1},s) + \sum_{i=1}^{p-1}X_{k,i}(\sigma_{k,i})\Big) + hs\Big] \\
 & \qquad - \alpha(p-1)\E \sup_{\sigma \in \{-1,1\}^p} \Big(\theta(\sigma_1, \ldots, \sigma_p) + \sum_{k=1}^p X_k(\sigma_k)\Big),
\end{split}
\end{align*}
where $(X_k)_{k\leq p}$, $(X_{k,i})_{k,i\geq 1}$ are i.i.d.\ copies of $X:\{-1,1\}\to \R$ whose distribution is a unique solution of the following distributional identity. 
\begin{align*}
    &\{X(t)\}_{t\in \{-1,1\}}\\
    & \stackrel{d}{=} \left\lbrace\sup_{\sigma \in \{-1,1\}^{\pi(\alpha p) \times (p-1) }}\Big[\sum_{k\leq \pi(\alpha p)}\Big(\theta_k(\sigma_{k,1}, \ldots, \sigma_{k,p-1},t)+ \sum_{i=1}^{p-1} X_{k,i}(\sigma_{k,i})\Big) + ht\Big]\right. \\
    & \qquad\left.- \sup_{(\sigma,s) \in \{-1,1\}^{\pi(\alpha p)\times (p-1) + 1}}\Big[\sum_{k\leq \pi(\alpha p)}\Big(\theta_k(\sigma_{k,1}, \ldots, \sigma_{k,p-1},s)+ \sum_{i=1}^{p-1} X_{k,i}(\sigma_{k,i})\Bigr) + hs\Big]\right\rbrace_{t\in \{-1,1\}}.
\end{align*}
In the absence of the external field, it can be readily verified that $X \equiv 0$ is the unique solution to the above distributional equation, and consequently, the limiting ground state energy is equal to zero.

Note that  the $K$-SAT model is not symmetric since $\theta(x) = -1$ for some $x \in \{-1,1\}^p$ does not necessarily imply $\theta(-x) =-1$. 
The NAE-$K$-SAT model is a symmetrized version of the $K$-SAT model, where for $J$ a random vector in $\{-1,1\}^p$, we let \begin{align*}
    \theta(x_1, \ldots, x_p) = -\1_{(x_1, \ldots,x_p)\neq \pm J}. 
\end{align*}
The NAE-$K$-SAT model has been extensively studied, with the threshold for the existence of solutions being rigorously established for the model on Erd\"os-R\'enyi graphs in \cite{achlioptas2003randomksatmomentssuffice, Coja_Oghlan_2016, Coja_Oghlan_2012} and on random regular graphs in \cite{ding2013satisfiabilitythresholdrandomregular, sly2023number}. Noting that the activation function for the NAE-$K$-SAT model satisfies the symmetry property, Proposition \ref{prop1} yields the following simple expressions for the free and ground state energies when $\alpha$ is in the subcritical regime and the external field is absent:
\begin{align*}
    F(\beta) & = \alpha \log [1+2^{-(p-1)}(e^{-\beta}-1)]\;\; \text{and}\;\;\gse = 0.
\end{align*}

\subsection{Diluted models with Ising \textit{p}-spin interactions}
The diluted Ising model with $p$-spin interactions has the Hamiltonian \eqref{eq:orig hamiltonian} that comprises interactions of $p$-tuples of spins arising from an underlying $p$-uniform hypergraph, \begin{align*}
    \theta(x_1, \ldots, x_p) = J x_1 \cdots x_p \;\; \text{and}\;\; \psi(x) = hx
\end{align*}
for $(x_1,\ldots,x_p)\in\{-1,1\}^p$ and $x\in \{-1,1\},$
where $J$ is a random variable with finite second moment.
The ferromagnetic Ising model corresponds to $J\equiv 1$ and $p=2$, which has been studied in the case of locally tree-like graphs in \cite{Dembo_2010, Dembo_2013, Dommers_2010}, where the Bethe prediction for the free energy was rigorously established for all sparsities $\alpha >0$, temperatures $\beta \geq 0$ and external fields $h \in \R$. A crucial step in these works involve showing a non-uniform decay of correlation of the root spin in the presence of an external field by using tools such as the Griffith-Hurst-Sherman and the Griffith-Kelly-Sherman inequalities. However, these tools are not available for spin glass models, and consequently, our results recover the same free energy for a regime of parameters smaller than those prescribed in these works.

The model with $J$ a symmetric random variable is known as the diluted $p$-spin model. Observe that $\theta$ satisfies the Franz-Leone identity \eqref{identity} with $a = \cosh(\beta J), b = \tanh(\beta J)$ and $f(x) = x$. When $J$ is supported on $[-1,1]$, Corollary \ref{cor2} implies that in the regime $\alpha p (p-1)\min(1,6\beta e^{4\beta(h+1)}) \leq 1$, the free energy is given as \begin{align*}
    F(\beta) & = -\log 2 + \alpha E \log \cosh(\beta J) - \alpha (p-1)\E \log \Big[1+\tanh(\beta J)\prod_{i\leq p}m_i\Big] \\
    & + \E \log\Big[e^{\beta h} \prod_{k\leq \pi(\alpha p)}\Big(1+\tanh(\beta J_k)\prod_{i\leq p-1}m_{k,i}\Big) + e^{-\beta h} \prod_{k\leq \pi(\alpha p)}\Big(1-\tanh(\beta J_k)\prod_{i\leq p-1}m_{k,i}\Big)\Big].
\end{align*}
Here, as usual, $(m_k)_{k\geq 1}, (m_{k,i})_{k, i\geq 1}$ are independent samples from the unique law that satisfies \begin{align*}
        m & \stackrel{d}{=} \tanh \Big( \sum_{k\leq \pi(2 \alpha)} \tanh^{-1}\Big(\eta\tanh(\beta) m_k\Big) + h\Big).
    \end{align*}
Furthermore, when $p$ is an even integer, $\theta$ is symmetric and Proposition \ref{prop1} readily implies that in the absence of an external field, the free and ground state energies for the diluted $p$-spin model can be simplified as
\begin{align*}
    F(\beta) & = \alpha \E \log \cosh (\beta J)\;\; \text{and}\;\;\gse = \alpha \E |J|.
\end{align*}
We remark that the diluted $2$-spin model with $J$ supported in $[-1,1]$ was considered earlier in \cite{Guerra_2004}, where the replica symmetric free energy was studied in the regime of parameters $2\alpha \E \tanh^2(\beta J) <1$, which has a better dependence on $J$ than our regime. Nevertheless, if $\beta=\infty,$ while their result allows to obtain the same ground state energy mentioned above for $\alpha<1/2$, ours extends to $\alpha=1/2$.

\subsection{Diluted models with general spins}
In the previous two sections, we outlined the applicability of our results to some well-known models where the spins take values $\{-1,1\}$. In this section, we mention some models where the spins are not Ising and Franz-Leone identity \eqref{identity} does not hold, but our results from Theorem \ref{thm1} still apply. 

\subsubsection{Potts model}
The $q$-Potts model is a generalization of the Ising model, where the spins take values in $\Sigma = \{1, \ldots, q\}$ for some positive integer $q\geq 2$. The Hamiltonian for the diluted version of the Potts spin glass is defined as \begin{align*}
        -H_N(\sigma) = -\sum_{k\leq \pi(\alpha N)} J_k\1_{\sigma_{I(k,1)} = \sigma_{I(k,2)}} + h\sum_{i=1}^N\1_{\sigma_i = 1},\,\; \sigma \in \Sigma^N,
    \end{align*}
where $h\in \R$ and $(J_k)_{k\geq 1}$ are i.i.d. copies of a random variable $J$ that satisfies $\E J^2 < \infty$. The external field imposes a cost of $h$ on configurations per coordinate attaining the value 1. As usual, $J \equiv 1$ or $J\equiv -1$ corresponds to the ferromagnetic or antiferromagnetic model respectively.

Besides being an important model in statistical physics, the Potts model shares an important connection with combinatorics; the antiferromagnetic Potts model at zero temperature is equivalent to the graph coloring problem, which studies the existence of a coloring of the vertices so that adjacent vertices have different colors, see for example \cite{coja2019bethe, coja2019spin} and the references therein. In particular, \cite{coja2016potts, Contucci_2013} establish that the antiferromagnetic $q$-Potts model on the Erd\"os-R\'enyi graph is replica symmetric for all $\beta \geq 0$ as long as $\alpha \leq (q-1/2)\ln q -1 +o_q(1)$. For values of $\alpha$ exceeding this threshold, bounds were obtained for the critical temperature at which a phase transition from the replica-symmetric to the replica-symmetry breaking phase occurs. Later in \cite{coja2018information}, the question of precisely identifying the critical temperature for this phase transition was settled.

In contrast to the antiferromagnetic model, the ferromagnetic $q$-Potts model on graphs that converge locally to $d$-regular trees is believed to be replica symmetric at all temperatures $\beta \geq 0$ for all $q, d\geq 3$. However, this has been rigorously shown when $d$ is even \cite{dembo_2012,Dembo_2013}, or when the external field is absent \cite{bencs2023random, helmuth2023finite}. In line with the study of ferromagnetic Ising models on locally tree-like graphs, where the Gibbs measure was shown to converge locally weakly to a symmetric mixture of the Ising measure on the infinite tree with $+1$ and $-1$ boundary conditions, the weak limit of the Gibbs measure of the ferromagnetic Potts on locally tree-like graphs was shown to be the Potts measure on the corresponding infinite tree with free or wired boundary conditions or a mixture thereof, see \cite{basak2012ferromagnetic, montanari2012weak} for details.

 It is also natural to consider the Potts spin glass model, where the interactions are independent random variables. In the mean-field setting this has been studied previously, in particular, Parisi-type variational formulas for the free energy are known through the papers \cite{bates2023parisiformulabalancedpotts, c6f03298-21dc-337c-b65d-4c74d6e7603a}. To the best of our knowledge, the diluted Potts spin glass has not been analyzed previously. Theorem \ref{thm1} yields that in the regime
    \begin{equation}\label{eq:potts_rs_regime}
        \min \big (1, 6 \beta e^{4 \beta |h|} \E |J|  e^{4 \beta |J|} \big) \alpha  \leq 1/2,
    \end{equation} 
    the Potts spin glass is replica symmetric, with the corresponding free energy being\begin{align*}
        F(\beta) & = \E \log \Big[e^{\beta h}\prod_{k\leq \pi(2\alpha)}\Big(1+X_k(1)(e^{\beta J_k}-1)\Big) + \sum_{\varepsilon \in [q]\setminus \{1\}}\prod_{k\leq \pi(2\alpha)}\Big(1 + X_k(\varepsilon)(e^{\beta J_k} - 1) \Big)\Big]\\
        & \qquad -\alpha \E \log \Big[1 + (e^{\beta J}-1)\sum_{\varepsilon \in [q]}X_1(\varepsilon)X_2(\varepsilon)\Big] - \log q,
    \end{align*}
    where $(X_k)_{k\geq 1}$ are i.i.d. functions on $[q]$ sampled according to the unique law that satisfies \begin{align*}
        X(t) \stackrel{d}{=}\frac{e^{\beta h\1_{t = 1}}\prod_{k\leq \pi(2\alpha)}(1+X_k(t)(e^{\beta J_k}-1))}{e^{\beta h}\prod_{k\leq \pi(2\alpha)}(1+X_k(1)(e^{\beta J_k}-1)) + \sum_{s\in [q]\setminus \{1\}}\prod_{k\leq \pi(2\alpha)}(1+X_k(s)(e^{\beta J_k}-1))}.
    \end{align*}
   Under the assumption $\alpha \le 1/2$, the ground state is given by Theorem \ref{thm:GSE} as \begin{align*}
        \gse & = \E \sup_{(\varepsilon, \sigma)\in [q]^{\pi(2\alpha) + 1}}\Big[h\1_{\varepsilon = 1} + \sum_{k\leq \pi(2\alpha)} \big(J_k\1_{\sigma_k = \varepsilon} +  Y_k(\sigma_k)\big)\Big] \\
        & \qquad - \alpha \E \sup_{\sigma \in [q]^2}\Big[J\1_{\sigma_1 = \sigma_2} + Y_1(\sigma_1) + Y_2(\sigma_2)\Big],
    \end{align*}
     where $(Y_k)_{k\geq 1}$ are i.i.d.\ functions on $[q]$ sampled according to the unique law that satisfies \begin{align*}
     Y(t) & \stackrel{d}{=} \sup_{\sigma \in [q]^{\pi(2\alpha)}}\Big[h\1_{t = 1} + \sum_{k\leq \pi(2\alpha)}\big(J_k\1_{\sigma_k = t} + Y_k(\sigma_k)\Big)\Big] \\
     & \qquad -  \sup_{(s,\sigma) \in [q]^{\pi(2\alpha) + 1}}\Big[h\1_{s = 1} + \sum_{k\leq \pi(2\alpha)}\big(J_k\1_{\sigma_k = s} + Y_k(\sigma_k)\Big)\Big].
     \end{align*}
 In the absence of an external field, one can verify that under \eqref{eq:potts_rs_regime},  the Dirac measure at the zero function is a fixed point of the operator $\T_{\nu,\beta}$ and consequently, the free energy is expressed as a much simpler formula \begin{align}\label{potts}
        F(\beta) = \alpha \E \log [e^{\beta J} + q-1] - \alpha \log q.
    \end{align}
    In a similar fashion, if $\alpha \le 1/2$, one obtains that the ground state energy in the absence of an external field is \begin{align*}
        \gse & = \alpha \E \max(J,0).
    \end{align*}
As a simple corollary, note that the expression for the free energy obtained in \cite{Contucci_2013} can be obtained from \eqref{potts} by setting $J = -1$. The price we pay for considering a disordered model is that the replica symmetric free energy is valid over a stricter regime for $\alpha$ and $\beta$, and we leave it as a problem for future study to improve this regime.

\subsubsection{XY model}  In the XY model, the spins associated with each site take values in the unit circle $\Sigma^1$, and the spin vectors at adjacent sites
interact via their inner product. When the interactions are ferromagnetic, the  XY model (or more generally, the spin $O(n)$ model)
is one of the classical statistical physics models with continuous spins which has been well-studied on Euclidean lattices. Due to the presence of continuous symmetry, the XY model exhibits remarkably different behaviors compared to the standard (discrete spin) Ising model. For example, as a consequence of the celebrated Mermin-Wagner Theorem \cite{mermin1967absence, mermin1966absence}, the XY model does not have an orientational long-range order in dimension two at low temperature. However, the two-point correlation function still undergoes certain phase transition in temperature which is known as Berezinskii–Kosterlitz–Thouless (BKT) transition \cite{berezinskii1971destruction, kosterlitz1973ordering}. In high temperature, it decays exponentially whereas it only shows algebraic decay in low temperature \cite{frohlich1981kosterlitz, mcbryan1977decay}. We refer to the book \cite[Chapter 9]{friedli2017statistical} and the lecture note \cite{peled2019lectures} for a comprehensive list of references on this subject. 

In the mean-field setting, the disordered version of the XY model is a special case of the mixed $p$-spin models with vector spins.  They were studied in \cite{panchenko2018free} where a Parisi-like formula for their limiting free energy was obtained. To the best of our knowledge, the diluted version of XY spin glasses has not been studied before rigorously. After representing the $i$th vector spin on the unit circle as $(\cos (2\uppi \sigma_i), \sin (2\uppi \sigma_i))$ through the angle $\sigma_i \in \Sigma:=[0, 1]$, the Hamiltonian of the diluted XY spin glass model can be expressed as
\begin{align*}
        H_N(\sigma) = \sum_{k\leq \pi(2\alpha)}J_k\cos(2\uppi(\sigma_{I(k,1)} - \sigma_{I(k,2)})) + h\sum_{i=1}^N \cos(2\uppi \sigma_i).
    \end{align*}
    Here $h\in \R$ and $(J_k)_{k\geq 1}$ are drawn independently from a distribution with finite second moment (to ensure that \eqref{bdd theta} holds), and the Poisson number of terms given by $\pi(2\alpha)$ is not to be confused with the numerical constant $\uppi$. The measure $\nu$ is the uniform measure on $[0,1]$ (henceforth, for this section, we drop $\nu$ from the subscripts). When $J_k = 1$, we obtain the ferromagnetic XY model, see \cite{AIZENMAN1980281} for details. For $J_k$ taking both positive and negative values, we obtain the XY model in a spin glass phase. In either case we can compute the free energy when $(\alpha, \beta)$ satisfy $\min(1,6\beta \E|J|e^{4\beta|J|})\alpha p(p-1) \leq 1$. This is given by 
    \begin{align*}
        \cP_\beta(\lambda_\beta) & = \E \log \int e^{\beta h\cos(2\uppi t)}\prod_{k\leq \pi(2\alpha)}\exp\Big(\beta J_k\cos(2\uppi(\rho_k - t))\Big) Y_k(\rho_k)d\rho dt \\
        & \qquad - \alpha \E \log \int \exp\Big(\beta J \cos(2\uppi(\rho_1 - \rho_2))\Big)Y_1(\rho_1)Y_2(\rho_2)d\rho,
    \end{align*}
    where $(Y_k)_{k\geq 1}$ are i.i.d. samples from $\lambda_\beta$, the fixed point of the distributional equation \begin{align*}
        \big(Y(t)\big)_{t\in [0,1]} \stackrel{d}{=} \Big(   \frac{\int \exp\big[ \sum_{k\leq \pi(2\alpha)} \beta J_k\cos(2\uppi(\rho_k - t)) + \beta h\cos(2\uppi t)\big]   Y_k(\rho_k) d\rho}{\int \exp\big[\sum_{k\leq \pi(2\alpha)} \beta J_k\cos(2\uppi(\rho_k - s))  + \beta h\cos(2\uppi s)\big]Y_k(\rho_k) d\rho ds} \Big)_{t\in [0,1]}.
    \end{align*}
    Note that the fixed point equation for the distributional equation above is unique by virtue of Theorem \ref{thm1} and a change of variables $Y(t) = e^{\beta X(t)}$. It is easy to verify that $\lambda_\beta$ is the Dirac measure on $Y(t) \equiv 1$ on the interval $[0, 1]$. To see this, note that for any $1$-periodic function $g$, $\int_0^1 g(x+ a) dx  = \int_0^1 g(x) dx$ for any $a \in \R$. Hence, for any $t$,
\[ \int_0^1 e^{\beta J\cos(2\uppi(x - t))} d x = \int_0^1 e^{\beta J\cos(2\uppi x )} d x.  \]
Thus, for $\alpha \leq 1/2$, we have the free energy at any temperature $\beta<\infty$ to be given by 
\begin{align*}
    \cP_\beta(\lambda_\beta) = \alpha \E \log  \int_0^1 e^{\beta J\cos(2\uppi x )} d x.
\end{align*}

Let $I_0$ be the modified Bessel function of the first kind, which is given by
\[ I_0(z) = \frac{1}{\uppi} \int_0^{\uppi} e^{z \cos(x)} dx = 2 \int_0^{1/2} e^{z \cos(2 \uppi x)} dx, \]
where the latter expression follows from a simple change of variable. Therefore,
\begin{align*}
   \int_0^1 e^{\beta J\cos(2\uppi x )} d x &=   \int_0^{1/2} e^{\beta J\cos(2\uppi x )} d x +  \int_0^{1/2} e^{-\beta J\cos(2\uppi x )} d x =  \frac{1}{2} \big(I_0(\beta J) + I_0( -\beta J)\big).  
\end{align*}
Thus the free energy can be expressed in terms of the modified Bessel functions as \begin{align*}
    \cP_\beta(\lambda_\beta) & = \alpha \E \log [I_0(\beta J) + I_0(-\beta J)] - \alpha \log 2.
\end{align*}
For $\alpha \leq 1/2$, the ground state of this model can be computed using Theorem \ref{thm:GSE}. To that end, we first note that the Dirac measure on the function $X\equiv 0$, is the unique solution to the fixed point equation \begin{align*}
    X(t) \stackrel{d}{=} & \sup_{\rho \in \Sigma^{\pi(2\alpha)}}\Big(\sum_{k\leq \pi(2\alpha)}J_k\cos(2\uppi(\rho_k - t)) + \sum_{k=1}^{\pi(2\alpha)}X_k(\rho_k)\Big)\\
	& \qquad - \sup_{(s,\rho) \in \Sigma^{\pi(2\alpha) + 1}}\Big(\sum_{k\leq \pi(2\alpha)}J_k\cos(2\uppi(\rho_k - s)) + \sum_{k=1}^{\pi(2\alpha)}X_k(\rho_k)\Big)
\end{align*}
and thus, the ground state is obtained to be \begin{align*}
    \cP_\infty(\lambda_\infty) & = \alpha \E |J|.
\end{align*}

\subsubsection{Continuous hardcore model} 
Let $\G$ be a finite graph with vertex set $\V$. Denote by $\mathcal{I}(\G)$ the set of independent sets of $\G$, i.e., $\mathcal{I}(\G)$ is the collection of all $I \subset \V$ such that no two vertices in $I$ are neighbors in $\G$. Given the fugacity (or activity) parameter $\eta > 0$, the (discrete) hardcore model is a probability measure on  $\mathcal{I}(\G)$ that assigns to $I \in \mathcal{I}(\G)$ a mass proportional to $\eta^{|I|}$. Equivalently, the hardcore model can be viewed as the Gibbs measure on $\{0, 1\}^\V$
with  mass function proportional to 
$\eta^{\sum_{v\in\V}\sigma_v} \1_{ \{ \sigma \in \mathfrak{X}_\G  \}}$, where 
 \begin{align*}
    \mathfrak{X}_\G: = \big\{\sigma: = (\sigma_v)_{v\in\V} \in \{0,1\}^\V: \sigma_u + \sigma_v \leq 1 \text{ for all edges }\{u,v\} \in \G\big\}.
\end{align*}

For the hardcore model, depending on the geometry of the underlying graph, there exists a critical value of the fugacity $\eta_c$, known as the uniqueness threshold, such that for $\eta < \eta_c$, a unique Gibbs state exists in the thermodynamic limit, while for $\eta > \eta_c$, multiple  Gibbs states coexist in the limit.
For a $d$-regular tree, it is well known that $\eta_c(d) = (d-1)^{d-1}/(d-2)^d$ (see \cite{kelly1985stochastic}), and the uniqueness threshold for any graph with maximum degree $d$ is at least $\eta_c(d)$ \cite{weitz2006counting}. The phase transition of the hardcore model has also been studied on Euclidean lattices \cite{BLANCA_CHEN_GALVIN_RANDALL_TETALI_2019, GALVIN_KAHN_2004, Sinclair2014SpatialMA}.
In the uniqueness regime, the limit of the free energy (log-partition function)  of the hardcore model on random $d$-regular graphs was computed in \cite{bandyopadhyay2006counting, bandyopadhyay2008counting}, and this result was later extended to more general locally tree-like graphs, including diluted Erd\"os-R\'enyi graphs, in \cite{Dembo_2013}.

The work \cite{gamarnik2017uniqueness} introduced a continuous version of the hardcore model, where the spins are allowed to take value in the interval $[0, 1].$ For a fugacity parameter $\eta >0$, let $\nu$ be the probability measure defined on $\Sigma = [0,1]$ in the following manner. For any Borel set $B \subset \Sigma$, set $\nu(B) = \upsilon_0^{-1}\int_B \eta^x dx$ for $\upsilon_0 := \int_0^1 \eta^x dx$. In particular, when $\eta = 1$, $\nu$ is the uniform measure on $[0,1]$. Given a finite graph $\G$, the continuous hardcore model is the probability measure on $[0, 1]^\V$ whose density is proportional to $ \1_{\mathfrak{P}_\G} (\sigma) \nu^{\otimes \V}(d\sigma)$
where  \begin{align*}
    \mathfrak{P}_\G: = \big\{\sigma:  = (\sigma_v)_{v\in\V}  \in [0,1]^\V: \sigma_u + \sigma_v \leq 1  \text{ for all edges }\{u,v\} \in \G\big\}.
\end{align*}

It was shown in \cite{gamarnik2017uniqueness} that, in contrast to the discrete hardcore model, the continuous model on $d$-regular trees does not exhibit a phase transition, meaning there exists a unique limiting Gibbs state for any value of the fugacity parameter $\eta > 0$. 
 The limiting free energy was also computed for all values of $\eta$ for any sequence of $d$-regular graph on $N$ vertices, with girth growing to infinity. Let us remark that even though the continuous model does not have a phase transition, it remains challenging to estimate, via deterministic algorithms, the volume of the polytope $\mathfrak{P}_\G$ or equivalently, the partition function of the continuous hardcore model when $\eta =1$. Recent works \cite{bencs2024approximating, gamarnik2023computing} show that for any graph with maximum degree $\Delta$, there is a polynomial-time deterministic algorithm to approximate the volume of $\mathfrak{P}_\G \cap [0, 1/2 + \alpha]^\V$, where $\alpha = O(\Delta^{-1}).$

As a consequence of Theorem \ref{thm1}, we can derive an explicit expression for the limiting free energy of the continuous hardcore model on the random graph $\G_N$, which has $N$ vertices and edges given by $\{(I(k,1), I(k,2))\}_{k \leq \pi(\alpha N)}$ for $\alpha \leq 1/2$ corresponding to the subcritical regime.
To see this, let $\X_0$ denote the set of all continuous density functions (w.r.t. the Lebesgue measure) on $[0,1]$. Consider the operator $\T_\infty^\h:\Pr_1(\X_0)\to \Pr_1(\X_0)$ defined as follows. For $\lambda \in \Pr_1(\X_0)$, let $(f_k)_{k\geq 1}$ be i.i.d. samples from $\lambda$ and $F_k(t) = \int_0^t f_k(x) dx$. Then 
        \begin{equation} \label{eq:fixpt_beta_infty_hardcore}
        \T_\infty^\h(\lambda) = \text{ law of }\left ( \frac{\eta^t\prod_{k\leq \pi(2\alpha)} F_k (1-t)  }{\int_0^1 \eta^s\prod_{k\leq \pi(2\alpha)} F_k (1-s) ds} \right)_{ t \in [0, 1]}.
        \end{equation}
Also, for $\lambda \in \Pr_1(\X_0)$, define \begin{align}\label{add:eq01}
 \cP_\infty^\h(\lambda) & = - \log \upsilon_0+ \E\log \int \eta^t\prod_{k\leq \pi(2\alpha)}F_k(1-t) dt - \alpha \E \log \int F_1(1-t)f_2(t) dt.
 \end{align}
\begin{theorem}\label{thm_hardcore}
Suppose that $\alpha\leq 1/2$ and $\G_N$ are the graph described above. Then for any $\eta > 0$, the following statements hold.
\begin{enumerate}
    \item There exists a unique $\lambda_\infty^\h \in \Pr_1(\X_0)$ such that $\T_\infty(\lambda_\infty^\h) = \lambda_\infty^\h$.
    \item As $N \to \infty$,  $N^{-1} \log \nu^{\otimes N}(\mathfrak{P}_{\G_N})$ converges to $ \cP_\infty^\h(\lambda_\infty^\h)$ in $L^1$.
\end{enumerate} 
\end{theorem}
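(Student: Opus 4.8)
<br>

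The plan is to derive Theorem~\ref{thm_hardcore} as a specialization of Theorem~\ref{thm1}, treating the continuous hardcore model as a diluted spin glass at a particular (in fact $\beta$-independent, up to a cosmetic reparametrization) temperature. Recall that the polytope constraint $\sigma_u + \sigma_v \le 1$ for an edge is encoded by the activation function $\theta(x_1, x_2) = -\infty \cdot \mathbf{1}_{x_1 + x_2 > 1}$, which is not bounded, so the first step is to approximate: for $\beta < \infty$ set $\theta(x_1,x_2) = \beta^{-1}\log \mathbf{1}_{x_1+x_2\le 1}$ smoothed, or more cleanly, observe that $e^{\beta\theta}$ should be replaced by the indicator $\mathbf{1}_{x_1+x_2\le 1}$ directly. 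The cleanest route is to note that $\nu^{\otimes N}(\mathfrak{P}_{\G_N}) = Z_N$ where $Z_N = \int \prod_{k\le \pi(\alpha N)} \mathbf{1}_{\sigma_{I(k,1)} + \sigma_{I(k,2)} \le 1}\, \nu^{\otimes N}(d\sigma)$, i.e. this is exactly a diluted partition function with $p = 2$, sparsity $\alpha$, no external field ($\psi \equiv 0$), base measure $\nu(dx) = \upsilon_0^{-1}\eta^x dx$, and ``Boltzmann weight'' $\mathbf{1}_{x_1+x_2\le 1}$ in place of $e^{\beta\theta}$. To fit the framework of Theorem~\ref{thm1} with a genuine bounded $\theta$, approximate $\mathbf{1}_{x_1+x_2\le 1}$ by $e^{\beta\theta_\delta}$ where $\theta_\delta$ is bounded and Lipschitz, apply Theorem~\ref{thm1} (the regime condition $\alpha p(p-1) = 2\alpha \le 1$ is exactly the hypothesis, and it is temperature-free), and then pass $\delta \downarrow 0$ / $\beta \uparrow \infty$ using continuity of $F$ and of the fixed point.

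The key steps, in order: (1) Set up the identification $N^{-1}\log \nu^{\otimes N}(\mathfrak{P}_{\G_N}) = F_N$ for the diluted model with $p=2$, $\psi\equiv 0$, and $e^{\beta\theta}$ playing the role of $\mathbf{1}_{x_1+x_2\le 1}$; handle the boundedness issue via a sequence of bounded Lipschitz approximants $\theta^{(n)}$ with $e^{\theta^{(n)}} \downarrow \mathbf{1}_{x_1+x_2\le 1}$ pointwise (e.g.\ $\theta^{(n)}(x_1,x_2) = -n(x_1+x_2-1)_+$ clipped below at $-M_n$, $M_n\to\infty$). (2) Translate the abstract fixed-point operator $\T_{\nu,\beta}$ and functional $\cP_{\nu,\beta}$ from Theorem~\ref{thm1} into the stated concrete forms \eqref{eq:fixpt_beta_infty_hardcore} and \eqref{add:eq01} via the change of variables $f(t) = e^{\beta X(t)}$ (so $X = C(\Sigma)$-valued fixed points correspond bijectively to density-valued ones); the quantity $\langle \int \cE \nu(d\varepsilon)\rangle$ becomes $\int \eta^t \prod_k F_k(1-t)\,dt$ up to the normalizing $\upsilon_0$, since conditioning a single neighbor spin $\rho_k$ against the constraint $\rho_k + t \le 1$ integrates $f_k$ over $[0, 1-t]$, giving $F_k(1-t)$; similarly the ``$-\alpha(p-1)\E\log\langle e^{\beta\theta}\rangle$'' term becomes $-\alpha \E\log\int F_1(1-t)f_2(t)\,dt$. (3) Verify that the operator \eqref{eq:fixpt_beta_infty_hardcore} maps into $\Pr_1(\X_0)$ — i.e.\ that the output is genuinely a continuous density — which is where one uses that $F_k$ is continuous and that the denominator is bounded away from $0$ (it is $\ge \int_0^{1/2}\eta^s\prod F_k(1-s)\,ds > 0$ since $F_k(1-s) \ge F_k(1/2) > 0$ for $s \le 1/2$, as $f_k$ is a density so $F_k$ is strictly increasing near where it has mass; a small argument is needed to rule out $f_k$ vanishing on $[0,1/2]$, handled by noting the fixed point has $F_k(1/2) \ge $ a deterministic positive constant). (4) Pass to the limit in the approximation: show $F_N^{(n)} \to F_N$ as $n \to \infty$ uniformly enough, that the fixed points $\lambda^{(n)}$ converge in $W_1$ to $\lambda_\infty^\h$ (using that $\T^{(n)} \to \T_\infty^\h$ in an appropriate sense and each has a unique fixed point), and that $\cP^{(n)}(\lambda^{(n)}) \to \cP_\infty^\h(\lambda_\infty^\h)$ by dominated convergence, exchanging the two limits $n\to\infty$ and $N\to\infty$.

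The main obstacle I anticipate is step (4) combined with the unboundedness: Theorem~\ref{thm1} requires $\theta$ bounded and Lipschitz, but the hardcore interaction is a hard constraint, so a priori control on the partition function is delicate near the boundary of the polytope — one must show that replacing the soft penalty $e^{\beta\theta^{(n)}}$ by the true indicator does not lose or create exponential volume, uniformly in $N$. Concretely, the lower bound $\nu^{\otimes N}(\mathfrak{P}_{\G_N}) \le Z_N^{(n)}$ is immediate, but the reverse requires showing that the ``soft'' model does not place $e^{\Omega(N)}$ mass on near-violating configurations; this should follow because each violated constraint costs a factor $e^{-\Theta(n)}$ and the number of constraints is $\Theta(N)$, so a union-bound / first-moment argument over the (Poisson) number of edges suffices, but it must be made quantitative and uniform. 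A secondary subtlety is ensuring the fixed point $\lambda_\infty^\h$ lands in $\Pr_1(\X_0)$ rather than in a larger space of (sub-probability or merely $L^1$) densities — i.e.\ that the limiting density is continuous and positive enough for the denominators in \eqref{eq:fixpt_beta_infty_hardcore} and \eqref{add:eq01} to be well-defined; this is where the structure $F_k(1-t) = \int_0^{1-t} f_k$ helps, since even if $f_k$ is only $L^1$, its primitive $F_k$ is continuous, and the output density in \eqref{eq:fixpt_beta_infty_hardcore} is manifestly continuous (a ratio of continuous functions with nonvanishing denominator), so the operator is in fact \emph{smoothing} and the fixed point inherits continuity automatically.
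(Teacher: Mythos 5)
Your overall architecture coincides with the paper's: approximate the hard constraint by the bounded Lipschitz penalty $\theta(x_1,x_2)=-(x_1+x_2-1)_+$ (no clipping is needed, since this already takes values in $[-1,0]$ on $[0,1]^2$), apply Theorem~\ref{thm1} at each finite $\beta$ in the temperature-free subcritical regime $2\alpha\le 1$, and then interchange the limits $N\to\infty$ and $\beta\to\infty$ by proving (i) a uniform-in-$N$ approximation of $F_N(\infty)$ by $F_N(\beta)$ and (ii) convergence of the fixed points $\lambda^\h_\beta\to\lambda^\h_\infty$ together with the energy functionals. Your step (3) observations --- that the operator is smoothing because it outputs a ratio of continuous functions, and that the denominator is controlled because any fixed-point density satisfies $F_k(1/2)\ge$ a deterministic positive constant --- are exactly the ingredients of the paper's Lemma 8.4 (which gets $F_{\beta,k}(1/2)\ge(1+\eta^{1/2})^{-1}$ from the monotonicity of $t\mapsto\eta^{-t}f_{\beta,k}(t)$, then uniform sup-norm and Lipschitz bounds, then Arzela--Ascoli).

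The genuine gap is in your mechanism for the hardest step, the uniform interchange (your step (4), the paper's Lemma~\ref{hc_1}). You assert that ``each violated constraint costs a factor $e^{-\Theta(n)}$'' so that a union bound over the Poisson number of edges closes the argument. This is not true: the penalty for a violated edge is $e^{-\beta(\sigma_{I(k,1)}+\sigma_{I(k,2)}-1)}$, which is arbitrarily close to $1$ on the slab where $\sigma_{I(k,1)}+\sigma_{I(k,2)}-1\in(0,O(1/\beta))$, so near-violating configurations are essentially unpenalized and the issue is whether this near-boundary region carries exponentially more $\nu^{\otimes N}$-volume (weighted by the remaining Gibbs factor) than the polytope itself. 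A first-moment expansion over violated subsets does not obviously control this ratio. The paper instead bounds $\E|\partial_\beta F_N(\beta)|$ directly: by symmetry this reduces to a single edge, and the ratio of the boundary integral to the full partition function is controlled by exploiting that the remaining Gibbs weight $f_{N,\beta}(\sigma)$ is nonincreasing in each coordinate, decomposing over the neighborhood $J$ of a fixed vertex and changing variables to map the near-boundary slab into the polytope at the cost of a factor $2^{|J|}$ (integrable since $|J|$ is Poisson). The resulting bound is $C(\beta^{-2}+e^{-\beta/2})$ --- note the polynomial rather than exponential decay, coming precisely from the thin slab your heuristic ignores --- which is then integrated in $\beta$ to give the uniform estimate. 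Without an argument of this kind, your step (4) does not close.
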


The proof of this theorem is presented in Section \ref{proof_hc}. Two key observations about our result are the absence of a phase transition and the expression of the asymptotic volume of the polytope $\mathfrak{P}_\G$ as replica symmetric free energy formula, which hold true for random locally-tree like graphs. In the case of regular trees, the limiting Gibbs state is the unique solution to a certain first-order ODE (proved for $\eta = 1$ in \cite[Theorem 3.3]{gamarnik2017uniqueness} and conjectured for general $\eta > 0$ in \cite[Conjecture 5.2]{gamarnik2017uniqueness}). The price we pay for considering random tree-like graphs is that our result is true only for subcritically sparse graphs, where $\alpha \leq 1/2$. This, we believe, is a reasonable compromise because the limiting Gibbs state is the unique solution to a certain distributional equation, which, as we mention in the introduction, is challenging to establish. Perhaps it is true that our result holds for any $\alpha >0$, as in the case of the 2-spin Ising model considered in \cite{Dembo_2010}.

\section{The distributional operator}\label{distributional_operator}

There are two goals in this section. First, we will establish the uniqueness of the fixed point of the operator $\mathcal{T}_{\nu,\beta}$ stated in Theorem \ref{thm1}. Next, we will continue to show the tightness and continuity of the fixed point that will be used later when we turn to the proof of the asserted formula for the limiting free energy in the diluted model. 

To begin with, we recall two properties of the Wasserstein 1-metric that we shall use throughout this section. First of all, since $(\X,\|\cdot\|_\infty)$ is complete and separable, the metric space $(\Pr_1(\X),W_1)$ also enjoys the same topological properties.  Second, if $(\lambda _n)_{n\geq 1}\subset \Pr_1(\X)$ and $\lambda \in \Pr_1(\X
)$, then
\begin{align*}
	\lim_{n\to\infty}W_1(\lambda _n,\lambda )=0\;\; \Longleftrightarrow\;\;\lambda _n \text{ converges weakly in } \Pr_1(\X) \text{ to } \lambda
\end{align*}
in the sense that $\lambda _n\stackrel{d}{\to}\lambda $ and $\int \|f\|_\infty\lambda _n(df)\to \int \|f\|_\infty\lambda (df).$ These two results can be found, for example, in \cite[Theorems 6.9 and 6.18]{villani2008optimal}. 

\subsection{Existence and uniqueness of the fixed point}
The main result of this section is the following theorem.
\begin{theorem}\label{uniqueness}
	Suppose that $0 < \beta \leq \infty$ and \begin{align} \label{ht1}
		 4\alpha \beta \E \|\theta\|_\infty e^{2\beta \|\theta\|_\infty}p(p-1)<1 \;\;\text{or}\;\;\alpha p(p-1)\leq 1.
	\end{align}
    Then there exists a unique $\lambda_{\nu,\beta} \in \Pr_1(\X)$ such that $\T_{\nu,\beta}(\lambda_{\nu,\beta} )=\lambda_{\nu,\beta} $ and $\lambda_{\nu,\beta}$ satisfies \begin{align}\label{eq2}
		\begin{split}
			\int \|f\|_\infty\lambda_{\nu,\beta}(df)&\leq 2\alpha p \mathbb{E}\|\theta\|_\infty + 2 \|\psi\|_\infty, \\
			\int \mbox{\rm Lip}(f)\lambda_{\nu,\beta} (df)&\leq \alpha p\E \mbox{\rm Lip}(\theta) + \mathrm{Lip}(\psi).
		\end{split}
	\end{align}
\end{theorem}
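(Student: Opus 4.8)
Here is my plan for proving Theorem \ref{uniqueness}.

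\textbf{Overall strategy.} The statement has two parts: existence/uniqueness of the fixed point $\lambda_{\nu,\beta}$, and the a priori bounds \eqref{eq2}. The bounds \eqref{eq2} are the easy part and I would dispatch them first: any fixed point $\lambda_{\nu,\beta}$ satisfies $\int \|f\|_\infty \lambda_{\nu,\beta}(df) = \E \| T_{\nu,\beta,\pi(\alpha p)}(X_1,\ldots,X_{(p-1)\pi(\alpha p)})\|_\infty$, and plugging in the pointwise estimate \eqref{l_infty bound} together with Wald's identity ($\E \sum_{k=1}^{\pi(\alpha p)}\|\theta_k\|_\infty = \alpha p\, \E\|\theta\|_\infty$) gives the first line of \eqref{eq2}; the Lipschitz bound follows identically from \eqref{lip_bound}. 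So the real content is existence and uniqueness under either of the two hypotheses in \eqref{ht1}.

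\textbf{The high-temperature branch.} When $4\alpha\beta\, \E\|\theta\|_\infty e^{2\beta\|\theta\|_\infty} p(p-1) < 1$, I would show $\T_{\nu,\beta}$ is a contraction on $(\Pr_1(\X), W_1)$. Given $\lambda_1,\lambda_2$ and an optimal coupling, build a synchronous coupling of the two images: use the same Poisson variable $\pi(\alpha p)$, the same $\theta_k$'s, and coupled inputs $(X_k^{(1)}, X_k^{(2)})$. One then needs to bound $\E\|T_{\nu,\beta,\pi(\alpha p)}(X^{(1)}) - T_{\nu,\beta,\pi(\alpha p)}(X^{(2)})\|_\infty$ in terms of $\sum_k \E\|X_k^{(1)} - X_k^{(2)}\|_\infty$. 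The key analytic lemma is that the map from the input functions to the output, through the log-ratio of Gibbs averages in \eqref{opeartor:eq1}, is Lipschitz with a per-coordinate constant controlled by $2\beta\, \E\|\theta\|_\infty e^{2\beta\|\theta\|_\infty}$ (roughly: differentiating $\beta^{-1}\log\langle \cdots\rangle$ produces a Gibbs expectation of a difference of the $X_k$'s, weighted by factors that are at most $e^{2\beta\|\theta_k\|_\infty}$ after cancellation in numerator and denominator). Since each offspring variable $X_k$ appears in $p-1$ slots and there are $\pi(\alpha p)$ clauses, Wald gives total expected multiplicity $\alpha p(p-1)$, yielding $W_1(\T_{\nu,\beta}\lambda_1, \T_{\nu,\beta}\lambda_2) \le 2\alpha\beta p(p-1)\E\|\theta\|_\infty e^{2\beta\|\theta\|_\infty}\, W_1(\lambda_1,\lambda_2)$, which is $<1$ under the hypothesis (up to constant bookkeeping — the paper's displayed constant is $4$, leaving room for a cruder bound). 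Completeness of $(\Pr_1(\X),W_1)$ then finishes via Banach.

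\textbf{The subcritical branch.} Here $\beta$ may be infinite and contraction is unavailable, so I would instead argue as the introduction suggests, via the finite-depth structure of the associated Galton--Watson tree. Existence: iterate $\T_{\nu,\beta}$ from a point mass at $0$; tightness of the orbit follows from the uniform bounds \eqref{l_infty bound}, \eqref{lip_bound} (an Arzelà--Ascoli argument in $\X = C(\Sigma)$, since a bound on both $\|f\|_\infty$ and $\mathrm{Lip}(f)$ gives a relatively compact set), so a subsequential weak limit exists and is a fixed point by continuity of $\T_{\nu,\beta}$ (which itself needs a short continuity-under-$W_1$ verification using \eqref{eq lip}, \eqref{bdd theta}). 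Uniqueness is the crux: recognize the fixed-point equation as a recursive distributional equation (RDE) driven by a Galton--Watson tree with offspring distribution $(p-1)\pi(\alpha p)$, whose mean is $\alpha p(p-1)\le 1$, hence the tree is a.s. finite (critical case included, since a critical GW tree is a.s. finite). Following the endogeny/uniqueness framework of \cite{aldous2005survey}, couple any two solutions on the same tree: at the (finite, random) leaves the recursion bottoms out and the value is determined by the disorder alone — specifically $T$ evaluated with an empty offspring list, i.e.\ $\beta^{-1}\log \frac{e^{\beta\psi(t)}}{\int e^{\beta\psi(s)}\nu(ds)}$ (or the $\sup$-analogue when $\beta=\infty$) — so the solution at the root is a measurable function of the finite tree's disorder, forcing the law to be unique. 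I expect \textbf{this uniqueness step to be the main obstacle}: one must carefully set up the tree recursion, verify that finiteness of the tree really does pin down the root value through possibly infinitely many finite trees in a way that controls the $W_1$ distance (a truncation-at-depth-$n$ plus dominated-convergence argument, using that $\P(\text{tree has depth} > n)\to 0$), and handle the $\beta = \infty$ operator $\T_\infty$ on the same footing — its output \eqref{op_infty} is again a continuous function of finitely many bounded inputs, so the same compactness and finite-depth arguments apply verbatim once the sup-version of the per-level estimates is in place.
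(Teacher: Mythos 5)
Your proposal is correct and follows essentially the same two-pronged route as the paper: a one-coordinate-at-a-time Lipschitz/contraction estimate on the log-ratio in the high-temperature branch (the paper's constant is $4$ rather than your optimistic $2$ because both the numerator and the $\nu$-averaged denominator of \eqref{opeartor:eq1} contribute a factor $2\beta\|\theta_j\|_\infty e^{2\beta\|\theta_j\|_\infty}$, but the hypothesis accommodates $4$), and the a.s.-finite Galton--Watson tree with offspring law $(p-1)\pi(\alpha p)$ in the subcritical branch, with the recursion bottoming out at the leaves at exactly the $f_0$ you identify. The only small divergence is that the paper obtains existence directly from the a.s.\ stabilization of the tree recursion at depth $\mathrm{ht}(\G_\varnothing)$ rather than via tightness of the orbit — which also sidesteps the issue that a subsequential weak limit of the iterates $\T^n(\delta_{f_0})$ is not automatically a fixed point; your own truncation-at-depth-$n$ argument supplies the needed full convergence, so this is a matter of presentation rather than a gap.
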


The proof of this theorem is split into two parts corresponding to the subcritical regime, $\alpha p (p-1) \leq 1$, and the high temperature regime, $4\alpha \beta\E \|\theta\|_\infty e^{2\beta\|\theta\|_\infty} p(p-1) <1$.

Following \cite[Lemma 14]{aldous2005survey}, we establish the existence and uniqueness of the solution to a general distributional fixed point equation. Our result for the subcritical regime will follow immediately from this general result.
\begin{lemma}\label{gen_fixed_pt}
 Let $g: [0,1]\times \cup_{n\in \N} \X^{(p-1)n} \to \X$ be a continuous measurable function and $f_0 \in \X$ be a fixed element. Let $\T: \Pr_1(\X) \to \Pr_1(\X)$ be an operator defined as follows. For $\lambda\in \Pr_1(\X)$, \begin{align*}
     \T(\lambda) = \text{ Law of } \begin{cases}
     g(\xi, X_1, X_2, \ldots, X_{(p-1)\pi(\alpha p)}), & \mbox{if $\pi(\alpha p) \neq 0$},\\
     \delta_{f_0}, & \mbox{if $\pi(\alpha p) = 0$},
     \end{cases}
 \end{align*}
where $\pi(\alpha p)$ is a ${\rm Pois}(\alpha p)$ random variable, $\xi$ is ${\rm Unif}([0,1])$ distributed, and $(X_i)_{i\geq 1}$ are sampled according to $\lambda$, all independent of each other, and $\delta_{f_0}$ is the Dirac measure on $f_0$. If $\alpha p(p-1) \leq 1$, then $\T$ admits a unique fixed point.
\end{lemma}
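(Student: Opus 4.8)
The plan is to follow the strategy of \cite[Lemma 14]{aldous2005survey}: the key structural fact is that when $\alpha p(p-1) \le 1$, the Galton--Watson tree with offspring distribution $(p-1)\pi(\alpha p)$ (i.e., each node has a $\mathrm{Pois}(\alpha p)$ number of ``hyperedge children'', each contributing $p-1$ vertex children) is subcritical or critical, hence almost surely finite. One should unfold the fixed-point equation along this tree: iterating $\T$ a depth-$n$ number of times expresses any fixed point $\lambda$ as the law of $g$ evaluated recursively down to depth $n$, with the ``leaves'' at depth $n$ filled in by i.i.d.\ samples from $\lambda$ itself. If $\lambda_1,\lambda_2$ are two fixed points, couple the two unfoldings so that they use the \emph{same} Galton--Watson tree $\T_n$ and the \emph{same} uniform randomizers $\xi$ at every internal node, differing only in the leaf-fillings (samples from $\lambda_1$ vs.\ $\lambda_2$). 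On the event that the tree has depth $< n$ — which, by criticality/subcriticality, has probability tending to $1$ as $n\to\infty$ — the two unfoldings produce \emph{identical} functions, since no leaf at level $n$ is ever reached and the recursion bottoms out at the deterministic $\delta_{f_0}$ nodes. Hence the coupling gives $W_1(\lambda_1,\lambda_2) \le 2\big(\sup_i \int\|f\|_\infty\lambda_i(df)\big)\,\P(\text{tree has depth}\ge n) \to 0$, so $\lambda_1 = \lambda_2$.

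For existence, I would first record the a priori $L^1$ bound: iterating $\T$ starting from $\delta_{f_0}$ (or any fixed $\lambda_0\in\Pr_1(\X)$), the bound \eqref{l_infty bound}-type estimate $\|g(\xi,X_1,\dots,X_{(p-1)\pi(\alpha p)})\|_\infty \le 2\sum_{k\le\pi(\alpha p)}\|\theta_k\|_\infty + 2\|\psi\|_\infty$ shows that all iterates lie in a fixed ball of $\Pr_1(\X)$ with $\int\|f\|_\infty\lambda(df) \le 2\alpha p\,\E\|\theta\|_\infty + 2\|\psi\|_\infty$, and similarly the Lipschitz bound \eqref{lip_bound} controls $\int \mathrm{Lip}(f)\,\lambda(df)$. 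These two bounds confine the iterates to a tight (hence $W_1$-relatively-compact) subset of $\Pr_1(\X)$ — tightness of laws of functions with uniformly integrable sup-norm and Lipschitz constant follows from Arzel\`a--Ascoli. One then extracts a weakly convergent subsequence of the Ces\`aro averages $\frac1n\sum_{k=0}^{n-1}\T^k(\lambda_0)$; continuity of $\T$ in $W_1$ (which follows from continuity and measurability of $g$ together with the dominated-convergence argument using the uniform bounds) shows the limit is a fixed point, and by uniqueness the full sequence converges to it. The bounds \eqref{eq2} are then inherited by $\lambda_{\nu,\beta}$ from the confining ball.

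The main obstacle, I expect, is handling the \emph{critical} case $\alpha p(p-1) = 1$ cleanly: a critical Galton--Watson tree is a.s.\ finite but its depth has infinite expectation, so one must be careful to argue only that $\P(\text{depth}\ge n)\to 0$ (true by a.s.\ finiteness) rather than hoping for a quantitative rate. Also, one needs to verify that the relevant branching process is genuinely the tree with mean offspring $\alpha p(p-1)$ — the ``node has $\pi(\alpha p)$ hyperedge-children, each hyperedge contributing $p-1$ further nodes'' bookkeeping must be set up so that the mean number of grandchildren-nodes per node is exactly $\E[(p-1)\pi(\alpha p)] = \alpha p(p-1)$; this is where the hypothesis enters. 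A secondary technical point is justifying continuity of $\T$ on $(\Pr_1(\X),W_1)$: one couples nearby $\lambda,\lambda'$ optimally, builds the two $\T$-images on a common probability space with shared $\pi(\alpha p)$ and $\xi$, and uses the continuity of $g$ plus the uniform integrability furnished by the $L^1$ bound to pass to the limit — routine but worth stating. Everything else is bookkeeping along the tree.
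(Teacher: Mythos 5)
Your uniqueness argument is essentially the paper's: both unfold the fixed-point equation along the Galton--Watson tree with offspring law $(p-1)\pi(\alpha p)$, use a.s.\ finiteness under $\alpha p(p-1)\le 1$ (including the critical case), and observe that the recursion bottoms out at the deterministic $f_0$ before any depth-$n$ leaf is reached. One correction, though: your closing bound $W_1(\lambda_1,\lambda_2)\le 2\bigl(\sup_i\int\|f\|_\infty\,\lambda_i(df)\bigr)\P(\text{depth}\ge n)$ is not a valid inequality (the event $\{\text{depth}\ge n\}$ is not independent of the coupled functions, so you cannot factor the expectation), and in the stated generality of the lemma — arbitrary continuous $g$ — there is no a priori $L^1$ control on a fixed point at all. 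The right conclusion from your coupling is the total-variation bound $d_{TV}(\lambda_1,\lambda_2)\le\P(\text{depth}\ge n)\to 0$, or, as the paper phrases it, that conditionally on the (a.s.\ finite) tree and the $\xi$'s the root value is a deterministic function of them, so its law cannot depend on $\lambda$.

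The existence argument is where you have a genuine gap. The Ces\`aro-average (Krylov--Bogolyubov) device produces invariant measures only for operators that act \emph{affinely} on $\Pr_1(\X)$, because it relies on the identity $\T\bigl(\tfrac1n\sum_k\T^k\lambda_0\bigr)=\tfrac1n\sum_k\T^{k+1}\lambda_0$. Here $\T$ is not affine in $\lambda$: the inputs $X_1,\dots,X_{(p-1)\pi(\alpha p)}$ are i.i.d.\ from $\lambda$, so $\T$ of a mixture is not the mixture of the $\T$'s, and a weak limit of Ces\`aro averages need not be a fixed point. (Separately, the bounds \eqref{l_infty bound} and \eqref{lip_bound} you invoke for tightness are properties of the specific operator $T_{\nu,\beta}$, not of the general $g$ in the lemma, so the compactness step would not apply at the lemma's level of generality either.) The fix is already contained in your uniqueness construction: since the tree is a.s.\ finite, the depth-$r$ unfolding $X_\varnothing^{(r)}$ started from any $\lambda_0$ \emph{stabilizes} almost surely once $r$ exceeds the height of the tree, so the iterates converge a.s.\ to a limit $X_\varnothing$ that satisfies the distributional equation by construction; its law is the fixed point. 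This is exactly how the paper obtains existence, with no compactness, continuity of $\T$, or averaging needed.
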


\begin{proof}
First of all, we establish the existence of the fixed point. We begin by defining a random forest with vertex set $\V:= \{\varnothing\}  \cup(\cup_{r \ge 1} \N^r)$. We call the vertex $\varnothing$ the root and set the generation number of this vertex to be zero. For $r\geq 1$, every vertex in $\N^r$ is represented by a word of the form $\i = i_1 i_2 \cdots i_r$ for some $i_1, i_2, \ldots, i_r \in \N$ and we write $|\i| = r$ for the length of the word $\i$. For any $r\geq 0,$ denote $ \V_{\le r} = \{ \i \in \V: |\i| \le r\}$ and $ \V_{= r} = \{ \i \in \V: |\i| = r\}$. To construct our random forest, let $(\pi_\i (\alpha p))_{\i\in \V}$ be independent Poisson random variables with mean $\alpha p$. Starting from $\varnothing$, we place an edge (no edge) between $\varnothing$ and $ i$ for any $1\leq i\leq (p-1)\pi_{\varnothing}(\alpha p)$ if $\pi_{\varnothing}(\alpha p)\geq 1$ (respectively if $\pi_{\varnothing}(\alpha p)=0$). Recursively, for $r\geq 1$ and any $\i\in \mathbb{N}^r,$ we place an edge (no edge) between $\i$ and $\i j$ for $1\leq j\leq (p-1)\pi_{\i}(\alpha p)$ if $\pi_{\i}(\alpha p)\geq 1$ (respectively if $\pi_{\i}(\alpha p)=0$). The resulting random forest on the vertex set $\V$ is denoted by $\G$. Denote by $\G_\varnothing$ the connected component of $\G$ containing $\varnothing$, which is a random tree rooted at $\varnothing$. Denote by $\text{ht}(\G_\varnothing)$ the height of the tree $\G_\varnothing$. 
Note that $\mathcal{G}_{\varnothing}$ is a Galton-Watson tree with offspring distribution $(p-1)\pi(\alpha p).$ The assumption $\alpha p(p-1)\leq 1$ ensures that $\mathcal{G}_{\varnothing}$ is finite a.s. and consequently $\text{ht}(\G_\varnothing)<\infty$ a.s. 

Now, let $\lambda_0 \in \Pr_1(\X)$ be arbitrary. For each $\i \in \V$, let $(\xi_\i)_{\i \in \V}$ be i.i.d. ${\rm Unif}([0,1])$, independent of $(\pi_\i(\alpha p))_{\i\in \V}$. For any $r\geq 1$, define $(X^{(r)}_\i)_{i\in \V_{\leq r}}$ by letting $(X^{(r)}_\i)_{\i \in \V_{=r}}$ be i.i.d. samples from $\lambda_0$, independent of everything else, and if $(X_\i^{(r)})_{\i\in \V_{=r'}}$ is already defined for some $1\leq r'\leq r,$ we set $(X_\i^{(r)})_{\i\in \V_{=r'-1}}$ by 
 \begin{align*}
      X^{(r)}_\i =\begin{cases}
       g(\xi_\i, X_{\i 1}^{(r)}, X_{\i 2}^{(r)}, \ldots,  X_{\i (p-1)\pi_\i (\alpha p)}^{(r)}), & \mbox{if $\pi_\i(\alpha p) \neq 0$},\\
       f_0, & \mbox{if $\pi_\i(\alpha p)  = 0$}.
      \end{cases}
  \end{align*}
   Observe that by definition $X_\varnothing^{(r)}=X_\varnothing^{(\text{ht}(\G_\varnothing))}$ whenever $r\geq \text{ht}(\G_\varnothing),$ and thus it follows that a.s. $\lim_{r\to\infty}X_\varnothing^{(r)}=X_\varnothing^{(\text{ht}(\G_\varnothing))}$, which, in particular, is measurable with respect to the $\sigma$-field generated by $\{\pi_\i(\alpha p) , \xi_\i: \i \in \G_\varnothing\}$ and is independent of the law of $\lambda_0$. Set $X_\varnothing=X_\varnothing^{(\text{ht}(\G_\varnothing))}.$ Owing to the i.i.d. feature of $(\pi_\i(\alpha p))_{\i\in \V}$ and $(\xi_\i)_{\i\in \V}$, we have that $(X_j^{(r+1)})_{j\in \N}$ are i.i.d. copies of $X_\varnothing^{(r)}$ and thus, their limits, $(X_j)_{j\in \N}$, are i.i.d. copies of $X_\varnothing$. On the other hand, by construction, 
  \begin{align*}
  X_\varnothing^{(r)} = \begin{cases}
      g(\xi_\varnothing, X_1^{(r)}, \ldots, X_{(p-1)\pi_\varnothing(\alpha p)}^{(r)}), & \mbox{if $\pi_\varnothing(\alpha p)  \neq 0$},\\
       f_0, & \mbox{if $\pi_\varnothing(\alpha p)  = 0$},
  \end{cases}
  \end{align*} so taking $r\to \infty$, we have that $X_\varnothing \stackrel{d}{=} \T(X_\varnothing)$, establishing that the operator $\T$ admits a fixed point. Finally, the uniqueness follows since for any fixed point $\lambda$, the $X_\varnothing$ associated to $\lambda$ has the same law as $\lambda$ and conditioned on $\{\pi_\i(\alpha p), \xi_\i: \i \in \G_\varnothing\}$, $X_\varnothing$ is uniquely determined by recursion from the leaves upwards to the root.
\end{proof}

\begin{proof}[Proof of Theorem \ref{uniqueness} (Subcritical regime)]
The existence and uniqueness of the fixed point follow immediately from Lemma \ref{gen_fixed_pt} by taking  \begin{align*}
    g(\xi, X_1, X_2, \ldots, X_{(p-1)\pi(\alpha p)}) = 
    \begin{cases} 
    \T_{\nu, \beta}(X_1, \ldots, X_{(p-1)\pi(\alpha p)}), & \beta <\infty\\
    \T_\infty(X_1, \ldots, X_{(p-1)\pi(\alpha p)}), & \beta = \infty,
    \end{cases}
\end{align*}
where $\xi$ captures the randomness of $(\theta_k)_{k\geq 1}$ and \begin{align*}
    f_0(t) = \begin{cases}
        \psi(t) - \beta^{-1}\log \int e^{\beta \psi(s)}\nu(ds), & \beta <\infty,\\
        \psi(t) - \sup_{s\in \Sigma}\psi(s), & \beta = \infty.
    \end{cases}
\end{align*}
As for \eqref{eq2}, it holds obviously due to \eqref{l_infty bound} and \eqref{lip_bound}.
\end{proof}

We now turn to the high temperature regime case. Although the proof is fairly standard (see \cite[Chapter 6]{book}), we include it below for completeness.
\begin{proof}[Proof of Theorem \ref{uniqueness} (High temperature regime)]
 For any $\lambda ,\lambda '\in \Pr_1(\X),$ let $\Pi$ be an arbitrary coupling of $\lambda$ and $\lambda'$ and let $(f_k, f'_k)_{k\geq 1}$ be i.i.d. samples from $\Pi.$ Conditional on $\pi(\alpha p)$, for $0\leq i \leq \pi(\alpha p)$, let \begin{align*}
        \Delta_i(t): = T_{\nu, \beta, \pi(\alpha p)}(f_1, \ldots, f_i, f_{i+1}', \ldots, f'_{(p-1)\pi(\alpha p)})(t),
    \end{align*}
    where $T_{\nu,\beta,\pi(\alpha p)}$ is defined in \eqref{opeartor:eq1}. Thus, the law of $\Delta_0$ is $\T_{\nu,\beta}(\lambda')$ while that of $\Delta_{(p-1)\pi(\alpha p)}$ is $\T_{\nu,\beta}(\lambda)$.  We claim that for all $1\leq j\leq \pi(\alpha p)$ and $(j-1)(p-1)+1\leq i \leq j(p-1)$,
    \begin{align}\label{almost_contraction}
        \|\Delta_{i-1}- \Delta_i\|_\infty \leq 4\beta\E \|\theta\|_\infty e^{2\beta\|\theta\|_\infty} \|f_i- f_i'\|_\infty.
    \end{align}
    Note that \begin{align*}
        W_1(\T_{\nu, \beta}(\lambda), \T_{\nu, \beta}(\lambda')) \leq \E \|\Delta_0 -\Delta_{(p-1)\pi(\alpha p)}\|_\infty \leq \E\sum_{j=1}^{\pi(\alpha p)}\sum_{i = (j-1)(p-1)+1}^{j(p-1)}\|\Delta_{i-1}- \Delta_i\|_\infty.
    \end{align*}
    If \eqref{almost_contraction} holds, then 
  \begin{align*}
        W_1(\T_{\nu, \beta}(\lambda), \T_{\nu, \beta}(\lambda')) \leq 4\beta\E \|\theta\|_\infty e^{2\beta\|\theta\|_\infty} \alpha p(p-1) W_1(\lambda, \lambda'),
    \end{align*}
    which, under the high temperature condition of \eqref{ht1}, will imply that $\T_{\nu, \beta}$ is a contraction on the complete space $\Pr_1(\X)$, and the statement of the lemma will follow from the Banach fixed-point theorem.

    To establish \eqref{almost_contraction}, firstly, note that from the structure of the operator $\T_{\nu,\beta}$, we can, without loss of generality, let $f_k, f_k'$ be normalized, i.e., $\int e^{\beta f_k} d\nu = \int e^{\beta f_k'}d\nu = 1$ for all $k\geq 1$. Fix $j$ and $i$ in the range above and recall the definition of $\cE_{\beta, r}$ from \eqref{def:E}. Following \cite[Lemma 6.6.2]{book}, let us denote \begin{align*}
        \cE' _\beta(\rho, t)= \exp \beta \theta_j(\rho_{(j-1)(p-1)+1}, \ldots, \rho_{j(p-1)}, t)
    \end{align*}
    and $\cE''_\beta (\rho,t)= \cE_{\beta,\pi(\alpha p)}(\rho, t)/\cE'_\beta(\rho, t)$. Also for a function $\phi = \phi(\rho, t)$, let \begin{align*}
        \langle \phi\rangle_i&= \int \phi(\rho, t) \exp \Big( \beta \sum_{r \leq i} f_r(\rho_r) + \beta \sum_{r>i} f'_r(\rho_r)\Big)\nu^{\otimes (p-1)\pi(\alpha p)}(d\rho)
        \end{align*}
    and $\langle \av \phi\rangle_i = \int \langle \phi \rangle_i \nu(dt)$. With this notation, we have \begin{align*}
        \Delta_i(t) = \frac{1}{\beta}\log \frac{\langle \cE_{\beta,\pi(\alpha p)}\rangle_i}{\langle \av \cE_{\beta, \pi(\alpha p)}\rangle_i}. 
    \end{align*}
    Then \begin{align}\label{eqn diff i, i-1}
        \Delta_i - \Delta_{i-1} & = \frac{1}{\beta}\log \frac{\langle \cE_{\beta,\pi(\alpha p)}\rangle_i}{\langle \av \cE_{\beta,\pi(\alpha p)}\rangle_i} - \frac{1}{\beta}\log \frac{\langle \cE_{\beta,\pi(\alpha p)}\rangle_{i-1}}{\langle \av \cE_{\beta,\pi(\alpha p)}\rangle_{i-1}} \notag\\
        & = \frac{1}{\beta}\log \frac{\langle \cE_{\beta,\pi(\alpha p)}\rangle_i}{\langle \cE_{\beta,\pi(\alpha p)}\rangle_{i-1}} - \frac{1}{\beta}\log \frac{\av \langle \cE_{\beta,\pi(\alpha p)}\rangle_i}{\av \langle  \cE_{\beta,\pi(\alpha p)}\rangle_{i-1}}.
    \end{align}
     Note that $\langle \cE_{\beta,\pi(\alpha p)}\rangle_i = \langle \cE'_\beta\rangle_i\langle \cE''_\beta\rangle_i$ since $\cE_\beta'$ and $\cE''_\beta$ depend on disjoint sets of coordinates. Further, since $\cE''_\beta$ is independent of $\rho_i$, we have that $\langle \cE''_\beta\rangle_i = \langle \cE''_\beta\rangle_{i-1}$. These observations yield that \begin{align*}
       \frac{1}{\beta}\log \frac{\langle \cE_{\beta,\pi(\alpha p)}\rangle_i}{\langle \cE_{\beta,\pi(\alpha p)}\rangle_{i-1}} &  =\frac{1}{\beta}\log \frac{\int B(\rho_i, t)e^{\beta f_i(\rho_i)}\nu(d\rho_i)}{\int B(\rho_i, t)e^{\beta f'_i(\rho_i)}\nu(d\rho_i)},
    \end{align*}
    where \begin{align*}
        B(\rho_i,t) = \int \cE'_\beta \exp \Big(\beta \sum_{k<i} f_k(\rho_k) + \beta \sum_{k>i}f'_k(\rho_k)\Big)\nu^{\otimes (p-2)}(d\rho).
    \end{align*}
  For $f_i \in \X$ normalized, we can use the inequality $\big|\log\frac{x}{y}\big| = \frac{|x-y|}{\min(x,y)}$ for $x, y >0$ and the fact that $e^{-\beta \|\theta_j\|_\infty} \leq B(\rho_i, t) \leq e^{\beta \|\theta_j\|_\infty}$ to write
    \begin{align*}
        \Big|\frac{1}{\beta}\log \frac{\int B(\rho_i, t)e^{ \beta f_i(\rho_i)}\nu(d\rho_i)}{\int B(\rho_i, t)e^{ \beta f'_i(\rho_i)}\nu(d\rho_i)}\Big| & \leq e^{\beta \|\theta_j\|_\infty} \Big| \frac{1}{\beta}\int B(\rho_i, t) \big( e^{\beta f_i(\rho_i)} - e^{\beta f'_i(\rho_i)}\big) \nu(d\rho_i) \Big|\\
        &  = e^{\beta \|\theta_j\|_\infty} \Big| \frac{1}{\beta}\int \big(B(\rho_i, t)-1\big) \big( e^{\beta f_i(\rho_i)} - e^{\beta f'_i(\rho_i)}\big) \nu(d\rho_i) \Big|\\
        & \leq  e^{\beta \|\theta_j\|_\infty} \int \frac{1}{\beta} \big|B(\rho_i, t)-1\big| \big| e^{\beta f_i(\rho_i)} - e^{\beta f'_i(\rho_i)}\big| \nu(d\rho_i)\\
        & \leq \|\theta_j\|_\infty e^{2\beta \|\theta_j\|_\infty} \|e^{\beta f_i} - e^{\beta f_i'}\|_1,
    \end{align*}
    where in the last line we used the inequality $|e^x-1| \leq |x| e^{|x|}$ with $x = \log B(\rho_i, t)$ and the fact that $|\log B(\rho_i, t)| \leq \beta \|\theta_j\|_\infty$.    
    Now, \begin{align*}
        \|e^{\beta f_i} - e^{\beta f_i'}\|_1 & \leq \beta\int e^{\beta\max( f_i, f_i')}| f_i -  f'_i| d\nu \leq \beta\| f_i- f_i'\|_\infty  \int e^{\beta\max( f_i, f_i')}d\nu \leq 2\beta\|f_i - f_i'\|_\infty,
    \end{align*}
    so we obtain that \begin{align*}
        \Big|\frac{1}{\beta}\log \frac{\langle \cE_{\beta,\pi(\alpha p)}\rangle_i}{\langle \cE_{\beta,\pi(\alpha p)}\rangle_{i-1}}\Big| & \leq 2\beta\|\theta_j\|_\infty e^{2\beta\|\theta_j\|_\infty}\|f_i - f_i'\|_\infty.
    \end{align*}
       This bound holds even when $\cE_{\beta,\pi(\alpha p)}$ is replaced by $\av \cE_{\beta,\pi(\alpha p)}$. Thus, we have bounded the right-hand side of \eqref{eqn diff i, i-1} to finish the proof of \eqref{almost_contraction}. Finally, note that the assertions in \eqref{eq2} follow directly from  \eqref{l_infty bound} and \eqref{lip_bound}.
    \end{proof}

\subsection{Tightness of the fixed point measures}
\begin{theorem}[Tightness]\label{tight}
    Let $0 <\beta < \infty$ be fixed. Then the family of probability measures $$\{\T_{\mu, \beta}(\lambda):\lambda\in \Pr_1(\X),\mu \in \Pr(\Sigma)\}$$ is tight.
\end{theorem}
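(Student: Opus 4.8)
The key point is that the a priori estimates \eqref{l_infty bound} and \eqref{lip_bound} bound the random function produced by $\T_{\mu,\beta}$ by quantities depending only on the disorder $\pi(\alpha p)$, $(\theta_k)_k$, $\psi$ --- never on the input law $\lambda$ or on $\mu$ --- so the uniformity over the family is essentially automatic, and tightness reduces to the Arzel\`a--Ascoli characterization of compactness in $C(\Sigma)$. Concretely, set
\[
S_\infty := 2\sum_{k=1}^{\pi(\alpha p)}\|\theta_k\|_\infty + 2\|\psi\|_\infty, \qquad S_{\mathrm{L}} := \sum_{k=1}^{\pi(\alpha p)}\mathrm{Lip}(\theta_k) + \mathrm{Lip}(\psi),
\]
random variables built only from the disorder. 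Conditioning on $\pi(\alpha p)$ and using \eqref{bdd theta} and \eqref{eq reg theta} (the same computation behind \eqref{eq2}) gives $\E S_\infty = 2\alpha p\,\E\|\theta\|_\infty + 2\|\psi\|_\infty < \infty$ and $\E S_{\mathrm{L}} = \alpha p\,\E\,\mathrm{Lip}(\theta) + \mathrm{Lip}(\psi) < \infty$; in particular $S_\infty, S_{\mathrm{L}} < \infty$ almost surely.

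Given $\epsilon > 0$, choose $M = M_\epsilon < \infty$ with $\P(S_\infty > M) + \P(S_{\mathrm{L}} > M) < \epsilon$ and put $K_M := \{f \in \X : \|f\|_\infty \le M,\ \mathrm{Lip}(f) \le M\}$. The set $K_M$ is closed, uniformly bounded and equicontinuous, hence compact in $(\X,\|\cdot\|_\infty)$ by Arzel\`a--Ascoli (here we use that $\Sigma$ is compact). For arbitrary $\lambda \in \Pr_1(\X)$ and $\mu \in \Pr(\Sigma)$, write $T := T_{\mu,\beta,\pi(\alpha p)}(X_1,\dots,X_{(p-1)\pi(\alpha p)})$ with $(X_k)$ i.i.d.\ from $\lambda$; by \eqref{l_infty bound} and \eqref{lip_bound} every realization satisfies $\|T\|_\infty \le S_\infty$ and $\mathrm{Lip}(T) \le S_{\mathrm{L}}$, so $\{S_\infty \le M\}\cap\{S_{\mathrm{L}} \le M\} \subseteq \{T \in K_M\}$ and therefore
\[
\T_{\mu,\beta}(\lambda)(K_M) = \P(T \in K_M) \ge 1 - \P(S_\infty > M) - \P(S_{\mathrm{L}} > M) \ge 1 - \epsilon,
\]
uniformly in $\lambda$ and $\mu$. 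This is tightness in the Prokhorov sense.

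To upgrade this to relative compactness of the family in $(\Pr_1(\X), W_1)$ --- the form convenient for the later free-energy arguments --- one also needs uniform integrability of the first moment, i.e.\ $\sup_{\lambda,\mu}\int_{\{\|f\|_\infty > R\}}\|f\|_\infty\,\T_{\mu,\beta}(\lambda)(df)\to 0$ as $R \to \infty$. Since $\|T\|_\infty \le S_\infty$ pathwise, this supremum is at most $\E\big[S_\infty\,\1_{\{S_\infty > R\}}\big]$, which vanishes as $R \to \infty$ by dominated convergence because $S_\infty$ is integrable and independent of $\lambda,\mu$. Together with the tightness above and the equivalence between $W_1$-convergence and weak convergence plus convergence of first moments recalled at the start of the section, this gives the claim. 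There is no genuine obstacle here: the entire argument rests on observing that \eqref{l_infty bound} and \eqref{lip_bound} are uniform in $\lambda$ and $\mu$, and the only mild subtlety --- the gap between Prokhorov tightness and $W_1$-relative compactness --- is closed by the domination just used.
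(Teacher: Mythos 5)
Your proof is correct and follows essentially the same route as the paper's: both rest on the observation that the a priori bounds \eqref{l_infty bound} and \eqref{lip_bound} control $\|T\|_\infty$ and $\mathrm{Lip}(T)$ by integrable disorder-only quantities, uniformly in $\lambda$ and $\mu$, and then invoke Arzel\`a--Ascoli compactness of bounded Lipschitz balls in $C(\Sigma)$. Your version is marginally cleaner (a direct tail bound on the a.s.\ finite sums $S_\infty, S_{\mathrm{L}}$ rather than the paper's conditioning on $\pi(\alpha p)\le k_0$ plus Markov), and the closing remark on uniform integrability of first moments is a sensible supplement for the later $W_1$-subsequence extraction, though not part of what the theorem itself asserts.
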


\begin{proof}
	Let $\beta$ be fixed. For any $C,L>0,$ let \begin{align}\label{defn_K}
        K(C,L): = \{f \in \X: \|f\|_\infty \leq C, {\rm Lip}(f)\leq L\}.
    \end{align}
 Since $K(C,L)$ is a family of equicontinous and uniformly bounded functions, the Arzela-Ascoli theorem ensures that $K(C,L)$ is precompact. Let $0<\epsilon<1$. Take $k_0\geq 1$ with 
	\begin{align*}
		\Big(1-\frac{\epsilon}{2}\Big)\mathbb{P}(\pi(\alpha p)\leq k_0)\geq 1-\epsilon.
	\end{align*} Observe that conditionally on $\pi(\alpha p)\leq k_0$, using the assumptions \eqref{bdd theta} and \eqref{eq reg theta} and the Markov inequality, we see that there exist two positive constants $L_0 \geq 2\mbox{\rm Lip}(\psi)$ and $C_0\geq 4\|\psi\|_\infty$, depending on $\epsilon$ and $k_0$ such that with probability at least $1-\epsilon/2$, for any $k\leq k_0$, $\|\theta_k\|_\infty\leq {C_0}/(4 k_0)$ and $\mbox{Lip}(\theta_k)\leq{L_0}/{2k_0},$ which, together with \eqref{l_infty bound} and \eqref{lip_bound}, imply that for any $f_1,\ldots,f_{k_0}\in \X,$
	\begin{align*}
	    \|T_{\mu,\beta, \pi(\alpha p)}(f_1,\ldots,f_{(p-1)\pi(\alpha p)})\|_\infty\leq C_0 \;\;\text{and}\;\;\mbox{Lip}\bigl(T_{\mu,\beta, \pi(\alpha p)}(f_1,\ldots,f_{(p-1)\pi(\alpha p)})\bigr)\leq L_0.
	\end{align*}
	In other words, conditionally on $\pi(\alpha p)\leq k_0$, with probability at least $1-\epsilon/2,$ \begin{align*}
	    T_{\mu,\beta,\pi(\alpha p)}(f_1,\ldots,f_{(p-1)\pi(\alpha p)})\in K(C_0,L_0)
	\end{align*} for all $\mu\in \mbox{Pr}(\Sigma)$ and $f_1,\ldots,f_{k_0}\in \X.$ It follows that $$
	\sup_{\mu\in \mathrm{Pr}(\Sigma),\;\lambda\in \Pr_1(\X)}\bigl(\T_{\mu,\beta}(\lambda)\bigr)(K(C_0,L_0))\geq \Big(1-\frac{\epsilon}{2}\Big)\mathbb{P}(\pi(\alpha p)\leq k_0)\geq 1-\epsilon,
	$$
	completing our proof.
\end{proof}

\subsection{Continuity of the fixed point measures}
With a slightly abuse of notation, we also equip $\Pr(\Sigma)\times \Pr(\Sigma)$ with the Wasserstein $1$-distance $W_1$, i.e., 
$$
W_1(\nu,\nu')=\inf_{\Pi\in\Pi(\nu,\nu')}\int |s-t|\Pi(ds,dt),
$$
where $\Pi(\nu,\nu')$ is the collection of all couplings between $\nu$ and $\nu'$. Since $\Sigma$ is bounded, this quantity admits the following dual representation,
\begin{align}\label{dual}
	W_1(\nu,\nu')=\sup\Bigl\{\int_Sf(s)(\nu-\nu')(ds):f\in \X\;\;\mbox{and}\;\;\mbox{Lip}(f)\leq 1\Bigr\}.
\end{align}
Also, note that the boundedness of $\Sigma$ ensures that $W_1(\nu_n,\nu)\to 0$ if and only if $\nu_n\stackrel{d}{\to} \nu$.

\begin{theorem}\label{continuity}
	Let $(\nu_n)_{n\geq 1}$ be a sequence in $\Pr(\Sigma)$ that converges weakly to some $\nu_0\in \Pr(\Sigma)$. Then for every $0<\beta < \infty$, $(\lambda_{\nu_n, \beta})_{n\geq 1}$ converges to $\lambda_{\nu_0,\beta}$ with respect to the $W_1$-distance.
\end{theorem}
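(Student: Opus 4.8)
The plan is to prove continuity by establishing two things: first, that the family $\{\lambda_{\nu_n,\beta}\}_{n \geq 1}$ is relatively compact in $(\Pr_1(\X),W_1)$, so that along any subsequence we may extract a $W_1$-convergent sub-subsequence; and second, that any $W_1$-limit of this family must be a fixed point of $\T_{\nu_0,\beta}$, whence by the uniqueness in Theorem \ref{uniqueness} it equals $\lambda_{\nu_0,\beta}$. Since every subsequence then has a sub-subsequence converging to the same limit $\lambda_{\nu_0,\beta}$, the whole sequence converges. Relative compactness will come from two inputs already in hand: Theorem \ref{tight} gives tightness of $\{\T_{\nu_n,\beta}(\lambda)\} = \{\lambda_{\nu_n,\beta}\}$ (each $\lambda_{\nu_n,\beta}$ being its own image under $\T_{\nu_n,\beta}$), and the uniform bound \eqref{eq2} controls $\int \|f\|_\infty \lambda_{\nu_n,\beta}(df)$ uniformly in $n$; together these give relative compactness in $(\Pr_1(\X),W_1)$ via the characterization of $W_1$-convergence recalled at the start of Section \ref{distributional_operator} (weak convergence plus convergence of the first moment, which here follows from uniform integrability secured by, say, boundedness of $\|f\|_\infty$ on the precompact sets $K(C,L)$ exhausting the mass).

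The main work, and the main obstacle, is the second step: showing that if $\lambda_{\nu_{n_k},\beta} \xrightarrow{W_1} \lambda_\ast$ then $\T_{\nu_0,\beta}(\lambda_\ast) = \lambda_\ast$. Since $\T_{\nu_{n_k},\beta}(\lambda_{\nu_{n_k},\beta}) = \lambda_{\nu_{n_k},\beta}$, it suffices to prove a joint continuity statement: if $\nu_n \xrightarrow{d} \nu_0$ and $\lambda_n \xrightarrow{W_1} \lambda_\ast$, then $\T_{\nu_n,\beta}(\lambda_n) \xrightarrow{W_1} \T_{\nu_0,\beta}(\lambda_\ast)$. I would prove this by a coupling argument. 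Realize everything on one probability space: the Poisson variable $\pi(\alpha p)$, the i.i.d. disorder $(\theta_k)_{k \geq 1}$, the external field $\psi$, an optimal $W_1$-coupling $(X_k, X_k^\ast)_{k \geq 1}$ of $\lambda_n$ and $\lambda_\ast$, and a near-optimal coupling of $\nu_n$ and $\nu_0$. Then estimate
\begin{align*}
W_1\bigl(\T_{\nu_n,\beta}(\lambda_n),\T_{\nu_0,\beta}(\lambda_\ast)\bigr) \leq \E\bigl\|T_{\nu_n,\beta,\pi(\alpha p)}(X_1,\dots) - T_{\nu_0,\beta,\pi(\alpha p)}(X_1^\ast,\dots)\bigr\|_\infty,
\end{align*}
and split the right-hand side into a term where only the input functions change (from $X_k$ to $X_k^\ast$, with $\nu$ fixed at $\nu_n$) and a term where only the base measure changes (from $\nu_n$ to $\nu_0$, with the functions fixed at $X_k^\ast$). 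The first term is handled exactly as in the proof of \eqref{almost_contraction} in the high-temperature argument — the same telescoping over coordinates and the bound $|\log(x/y)| \le |x-y|/\min(x,y)$ — giving a bound of the form $C(\beta,\theta,\psi,\pi(\alpha p))\,W_1(\lambda_n,\lambda_\ast)$, which vanishes after taking expectation (using \eqref{bdd theta} to integrate the $\pi(\alpha p)$-dependent constant). The second term requires showing that $\mu \mapsto T_{\mu,\beta,r}(g_1,\dots,g_{(p-1)r})(t)$ is continuous in $\mu \in \Pr(\Sigma)$ uniformly in $t$, for fixed continuous $g_k$; this is where the regularity hypotheses \eqref{eq reg theta} and the Lipschitz continuity of $\psi$ enter, since $\cE_{\beta,r}(\sigma,t)$ is then Lipschitz in $\sigma$, so that $\mu \mapsto \int \cE_{\beta,r}(\sigma,t)\,\text{(products of }g_k\text{)}\,\mu^{\otimes\cdot}(d\sigma)$ is continuous under weak convergence of $\mu$, uniformly over $t$ by equicontinuity; dividing by the normalization (bounded below by $e^{-2\beta\sum\|\theta_k\|_\infty - 2\beta\|\psi\|_\infty} > 0$) and taking $\tfrac{1}{\beta}\log$ preserves this.

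The delicate point to watch in the second term is that the input functions $X_k^\ast$ are random and only $\int\|f\|_\infty\lambda_\ast(df) < \infty$, so the continuity-in-$\mu$ estimate must be made quantitatively in terms of $\sum_k \|X_k^\ast\|_\infty$ and $\sum_k \mathrm{Lip}(X_k^\ast)$ (the latter possibly infinite — but one may truncate, approximating $\lambda_\ast$ by its restriction to $K(C,L)$ and using that the fixed-point measures are supported near such sets, or alternatively work directly with a modulus of continuity of $X_k^\ast$ on the compact set $\Sigma$). One then dominates and passes to the limit via dominated convergence, the domination coming from \eqref{bdd theta}, the integrability $\int\|f\|_\infty\lambda_\ast(df)<\infty$, and the uniform bound on $\|\psi\|_\infty$. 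Collecting both terms shows the joint continuity, completes the identification $\T_{\nu_0,\beta}(\lambda_\ast)=\lambda_\ast$, and hence $\lambda_\ast = \lambda_{\nu_0,\beta}$ by uniqueness, finishing the proof.
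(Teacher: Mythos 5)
Your overall architecture matches the paper's: tightness of $(\lambda_{\nu_n,\beta})$ from Theorem \ref{tight} plus the uniform bounds \eqref{eq2}, extraction of a subsequential $W_1$-limit $\lambda_*$, identification of $\lambda_*$ as a fixed point of $\T_{\nu_0,\beta}$ via a two-term decomposition (one term changing the input functions, one changing the base measure), and then uniqueness from Theorem \ref{uniqueness}. The order of the decomposition (you change the functions first with $\nu$ frozen at $\nu_n$; the paper changes the measure first with the functions frozen at samples from $\lambda_{\nu_n,\beta}$) is immaterial.

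There is, however, one step that fails as written. For the function-change term you invoke the telescoping estimate \eqref{almost_contraction}, which per coordinate carries the factor $\beta\|\theta_j\|_\infty e^{2\beta\|\theta_j\|_\infty}$, and you then claim the bound ``vanishes after taking expectation (using \eqref{bdd theta}\dots)''. But \eqref{bdd theta} only gives $\E\|\theta\|_\infty^2<\infty$; the quantity $\E\|\theta\|_\infty e^{2\beta\|\theta\|_\infty}$ can be infinite. Its finiteness is exactly what the high-temperature condition \eqref{ht1} presupposes, whereas Theorem \ref{continuity} must also cover the subcritical regime $\alpha p(p-1)\le 1$ with $\beta$ arbitrary, where your expectation bound is vacuous. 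The paper avoids this entirely: for the function-change term it uses only that $(f_1,\ldots,f_{(p-1)r})\mapsto T_{\nu_0,\beta,r}(f_1,\ldots,f_{(p-1)r})$ is continuous for fixed $r$ and $(\theta_k)$, so weak convergence $\lambda_{\nu_n,\beta}\to\lambda_*$ already gives $T_n\stackrel{d}{\to}T_*$; for the measure-change term it proves convergence in probability on a high-probability event where $\pi(\alpha p)$, $\|\theta_k\|_\infty$, $\mathrm{Lip}(\theta_k)$, $\|f_k\|_\infty$, $\mathrm{Lip}(f_k)$ are all bounded (Lemma \ref{lem1}, via Markov and the uniform bounds \eqref{eq2}); Slutsky then yields weak convergence, and the a priori bound \eqref{l_infty bound} supplies uniform integrability to upgrade to $W_1$. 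You should repair your first term the same way: condition on such a good event to get convergence in probability rather than an $L^1$ contraction bound, and defer the passage to $W_1$ to a final uniform-integrability step. (Your own hedge about truncating $\mathrm{Lip}(X_k^*)$ in the second term shows you sensed this issue there; the same medicine is needed for the first term.)
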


To prove this theorem, we need the following lemma.
   
   \begin{lemma}\label{lem1}
   	Let $C,L>0.$ Assume that $\lambda\in \Pr_1(\X)$ satisfies 
   	\begin{align}\label{eq5}
   	\int \|f\|_\infty\lambda(df)\leq C \;\;\text{and}\;\;\int \mbox{\rm Lip}(f)\lambda(df)\leq L.
   	\end{align} 
   For any $\epsilon,\delta>0,$ there exists a constant $C'>0$ depending only on $C,L,\epsilon,\delta, \beta$ and $\psi$ such that 
   	\begin{align*}
   		\P\Bigl(\Bigl\|	T_{\nu,\beta, \pi(\alpha p)}(f_1,\ldots,f_{(p-1)\pi(\alpha p)})-T_{\nu',\beta,\pi(\alpha p)}(f_1,\ldots,f_{(p-1)\pi(\alpha p)})\Bigl\|_\infty\geq \delta\Bigr)\leq \epsilon
   	\end{align*} 
   for any $\nu,\nu'\in {\rm Pr}(\Sigma)$ satisfying $W_1(\nu,\nu')<\delta/C',$ where $(f_k)_{k\geq 1}$ are i.i.d. sampled from $\lambda$ and are independent of other randomness.
   \end{lemma}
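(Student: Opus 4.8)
The plan is to show that, on a high‑probability event, the random function $\nu \mapsto T_{\nu,\beta,\pi(\alpha p)}(f_1,\dots ,f_{(p-1)\pi(\alpha p)})$ is Lipschitz in the $W_1$‑distance with a \emph{deterministic} constant. By \eqref{opeartor:eq1} this function is $\tfrac1\beta\log$ of the ratio $N_\nu(t)/D_\nu$, where $N_\nu(t):=\langle\cE_{\beta,\pi(\alpha p)}(\sigma,t)\rangle_{\beta,f}$ and $D_\nu:=\langle\int\cE_{\beta,\pi(\alpha p)}(\sigma,s)\,\nu(ds)\rangle_{\beta,f}$, and $\nu$ enters in three places: the product measure $\nu^{\otimes(p-1)\pi(\alpha p)}$ hidden in the bracket $\langle\cdot\rangle_{\beta,f}$ (in both $N_\nu$ and $D_\nu$) and the extra integral $\int\cdots\nu(ds)$ inside $D_\nu$. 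Since none of $N_\nu,D_\nu$ admits a uniform positive lower bound unless $\pi(\alpha p)$, the $\|\theta_k\|_\infty$ and the $\|f_k\|_\infty$ are all controlled, the first step must be a truncation.

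\emph{Step 1 (truncation).} Fixing $\epsilon,\delta>0$, I would use \eqref{bdd theta}, \eqref{eq reg theta}, \eqref{eq5} and Markov's inequality to choose an integer $k_0$ and a constant $M$, depending only on $\epsilon$, $C$, $L$ and the fixed model data, so that the event
\begin{align*}
\Omega_0:=\{\pi(\alpha p)\le k_0\}\cap\{\|\theta_k\|_\infty\vee\mathrm{Lip}(\theta_k)\le M,\ \|f_k\|_\infty\vee\mathrm{Lip}(f_k)\le M\ \text{for all }k\le (p-1)k_0\}
\end{align*}
has probability at least $1-\epsilon$. On $\Omega_0$, reading off \eqref{def:E}, there is a deterministic $A=A(M,k_0,\beta,\psi,p)$ with $N_\nu(t),D_\nu,N_{\nu'}(t),D_{\nu'}\in[e^{-A},e^{A}]$ for all $t\in\Sigma$.

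\emph{Step 2 (perturbing the product measures).} Writing $N_\nu(t)=\int G_t\,d\nu^{\otimes m}$ with $m=(p-1)\pi(\alpha p)$ and $G_t(\sigma)=\cE_{\beta,\pi(\alpha p)}(\sigma,t)\prod_k e^{\beta f_k(\sigma_k)}$ (a fixed, $\nu$‑free function), I would telescope the change of measure one coordinate at a time,
\begin{align*}
\Bigl|\int G_t\,d\nu^{\otimes m}-\int G_t\,d(\nu')^{\otimes m}\Bigr|\le\sum_{j=1}^m\Bigl|\int\Bigl(\int G_t\,(\nu-\nu')(d\sigma_j)\Bigr)d\bigl(\nu^{\otimes(j-1)}\otimes(\nu')^{\otimes(m-j)}\bigr)\Bigr|,
\end{align*}
and bound each inner $\sigma_j$‑integral by the Kantorovich--Rubinstein duality \eqref{dual}: on $\Omega_0$, with the other coordinates and $t$ frozen, $\sigma_j\mapsto G_t$ is Lipschitz with constant at most $e^{A}\beta(\mathrm{Lip}(\theta_{k(j)})+\mathrm{Lip}(f_j))\le 2\beta M e^{A}$ (only one $\theta$‑factor and one $f$‑factor depend on $\sigma_j$), so each summand is at most $2\beta M e^{A}\,W_1(\nu,\nu')$ and hence $\sup_t|N_\nu(t)-N_{\nu'}(t)|\le 2(p-1)k_0\beta M e^{A}\,W_1(\nu,\nu')$. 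For $D_\nu$ I would integrate $\widetilde G(\sigma,s)=\cE_{\beta,\pi(\alpha p)}(\sigma,s)\prod_k e^{\beta f_k(\sigma_k)}$ against $\nu^{\otimes m}\otimes\nu$ and run the same telescoping over $m+1$ coordinates; the extra coordinate $s$ appears in all $\le k_0$ of the $\theta$‑factors and in $\psi$, contributing a Lipschitz constant $\le e^{A}\beta(k_0 M+\mathrm{Lip}(\psi))$. Altogether, on $\Omega_0$,
\begin{align*}
\sup_{t\in\Sigma}|N_\nu(t)-N_{\nu'}(t)|\,\vee\,|D_\nu-D_{\nu'}|\le C''\,W_1(\nu,\nu')
\end{align*}
for a deterministic $C''=C''(M,k_0,\beta,\psi,p)$.

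\emph{Step 3 (through the logarithm, and conclusion).} Using $|\log x-\log y|\le|x-y|/\min(x,y)$ together with the two‑sided bounds from Step 1, on $\Omega_0$ and uniformly in $t$,
\begin{align*}
\bigl|T_{\nu,\beta,\pi(\alpha p)}(f_1,\dots)(t)-T_{\nu',\beta,\pi(\alpha p)}(f_1,\dots)(t)\bigr|\le\frac{e^{A}}{\beta}\bigl(|N_\nu(t)-N_{\nu'}(t)|+|D_\nu-D_{\nu'}|\bigr)\le\frac{2e^{A}C''}{\beta}\,W_1(\nu,\nu'),
\end{align*}
so putting $C':=2e^{A}C''/\beta$ — a deterministic constant depending only on $C$, $L$, $\epsilon$, $\beta$, $\psi$ and the fixed model data — gives, whenever $W_1(\nu,\nu')<\delta/C'$, that the left‑hand side is $<\delta$ everywhere on $\Omega_0$; consequently the probability in the statement is at most $\P(\Omega_0^{c})\le\epsilon$. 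The main obstacle is Steps 1 and 2 together: the truncation cannot be skipped (there is no a priori lower bound on the integrals), and its parameters must be tracked carefully through the product‑measure perturbation and the Lipschitz bookkeeping so that the final constant $C'$ depends only on the allowed quantities and, crucially, not on $\nu$ or $\nu'$.
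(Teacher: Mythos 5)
Your proposal is correct and follows essentially the same route as the paper's proof: a Markov-inequality truncation to control $\pi(\alpha p)$, the $\theta_k$'s and the $f_k$'s, followed by a coordinate-wise $W_1$-perturbation of the product measures via Kantorovich--Rubinstein duality (the paper packages your telescoping as the bound $W_1(\nu^{\otimes n},\nu'^{\otimes n})\le n\,W_1(\nu,\nu')$), and finally passing through the logarithm using the two-sided bounds on the truncated integrals. The only differences are cosmetic bookkeeping in the Lipschitz constants.
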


\begin{proof}
	Let $0<\epsilon<1.$ 
From \eqref{bdd theta}, \eqref{eq reg theta} and \eqref{eq5}, by using the Markov inequality and the union bound, there exist two positive constants $C_0$ and $L_0$ and an integer $k_0\geq 1$ such that with probability at least $1-\epsilon/2$, the following inequalities hold simultaneously: $\pi(\alpha p)\leq k_0,$ 
\begin{align*}
\sup_{k\leq k_0}\|\theta_k\|_\infty\leq \frac{C_0}{k_0},&\;\;\sup_{k\leq k_0}\mbox{Lip}(\theta_k)\leq L_0,\\
\sup_{k\leq (p-1)k_0}\|f_k\|_\infty\leq \frac{C_0}{(p-1)k_0},&\;\; \sup_{k\leq (p-1)k_0}\mbox{Lip}(f_k)\leq L_0.
\end{align*} Denote by $\mathcal{A}$ the event on which these inequalities hold.
Write
\begin{align}
\nonumber	&T_{\nu,\beta,\pi(\alpha p)}(f_1,\ldots,f_{(p-1)\pi(\alpha p)})(t)-T_{\nu',\beta,\pi(\alpha p)}(f_1,\ldots,f_{(p-1)\pi(\alpha p)})(t)\\
\label{eq7}	\begin{split}
    &=\frac{1}{\beta}\log \frac{\int\mathcal{E}_{\beta, \pi(\alpha p)}(\rho,t)\prod_{k=1}^{(p-1)\pi(\alpha p)}e^{\beta f_k(\rho_k)}\nu^{\otimes (p-1)\pi(\alpha p)}(d\rho)}{\int\mathcal{E}_{\beta, \pi(\alpha p)}(\rho,t)\prod_{k=1}^{(p-1)\pi(\alpha p)}e^{\beta f_k(\rho_k)}{\nu'}^{\otimes (p-1)\pi(\alpha p)}(d\rho)}\\
& \qquad -\frac{1}{\beta}\log \frac{\int\mathcal{E}_{\beta, \pi(\alpha p)}(\rho,s)\prod_{k=1}^{(p-1)\pi(\alpha p)}e^{\beta f_k(\rho_k)}\nu^{\otimes (p-1)\pi(\alpha p)}(d\rho)\nu(ds)}{\int\mathcal{E}_{\beta,\pi(\alpha p)}(\rho,s)\prod_{k=1}^{(p-1)\pi(\alpha p)}e^{\beta f_k(\rho_k)}{\nu'}^{\otimes (p-1)\pi(\alpha p)}(d\rho)\nu'(ds)}.
\end{split}
\end{align}
To treat the first term, without loss of generality, we assume that
\begin{align*}
    & \int\mathcal{E}_{\beta, \pi(\alpha p)}(\rho,t)\prod_{k=1}^{(p-1)\pi(\alpha p)}e^{\beta f_k(\rho_k)}\nu^{\otimes (p-1)\pi(\alpha p)}(d\rho)\\
    & \geq\int\mathcal{E}_{\beta, \pi(\alpha p)}(\rho,t)\prod_{k=1}^{(p-1)\pi(\alpha p)}e^{\beta f_k(\rho_k)}{\nu'}^{\otimes (p-1)\pi(\alpha p)}(d\rho)
\end{align*}
and write, by using $|\log(1+x)|=\log(1+x)\leq x$ for $x\geq 0,$
\begin{align*}
	&\frac{1}{\beta}\Bigl|\log \frac{\int\mathcal{E}_{\beta,\pi(\alpha p)}(\rho,t)\prod_{k=1}^{(p-1)\pi(\alpha p)}e^{\beta f_k(\rho_k)}\nu^{\otimes(p-1) \pi(\alpha p)}(d\rho)}{\int\mathcal{E}_{\beta, \pi(\alpha p)}(\rho,t)\prod_{k=1}^{(p-1)\pi(\alpha p)}e^{\beta f_k(\rho_k)}{\nu'}^{\otimes (p-1)\pi(\alpha p)}(d\rho)}\Bigr|\\
	&\leq \frac{1}{\beta}\frac{\int\mathcal{E}_{\beta, \pi(\alpha p)}(\rho,t)\prod_{k=1}^{(p-1)\pi(\alpha p)}e^{\beta f_k(\rho_k)}\bigl(\nu^{\otimes (p-1)\pi(\alpha p)}-{\nu'}^{\otimes (p-1)\pi(\alpha p)})(d\rho)}{\int\mathcal{E}_{\beta, \pi(\alpha p)}(\rho,t)\prod_{k=1}^{(p-1)\pi(\alpha p)}e^{\beta f_k(\rho_k)}{\nu'}^{\otimes(p-1) \pi(\alpha p)}(d\rho)}\\
	&\leq \bar L\exp\beta \Big(\sum_{k=1}^{\pi(\alpha p)}\|\theta_k\|_\infty+\sum_{k=1}^{(p-1)\pi(\alpha p)}\|f_k\|_\infty +  \|\psi\|_\infty\Big)\cdot W_1(\nu^{\otimes (p-1)\pi(\alpha p)},{\nu'}^{\otimes (p-1)\pi(\alpha p)})\\
	&\leq \bar L\exp\beta \Big(\sum_{k=1}^{\pi(\alpha p)}\|\theta_k\|_\infty+\sum_{k=1}^{(p-1)\pi(\alpha p)}\|f_k\|_\infty + \|\psi\|_\infty\Big) \cdot(p-1)\pi(\alpha p) W_1(\nu ,\nu'),
\end{align*}
where noting that for any $t\in \Sigma$ and $\rho,\rho'\in \Sigma^{(p-1)\pi(\alpha p)},$
\begin{align*}
&\Bigl|	\mathcal{E}_{\beta, \pi(\alpha p)}(\rho,t)\prod_{k=1}^{(p-1)\pi(\alpha p)}e^{\beta f_k(\rho_k)}-\mathcal{E}_{\beta, \pi(\alpha p)}(\rho',t)\prod_{k=1}^{(p-1)\pi(\alpha p)}e^{\beta f_k(\rho_k')}\Bigr|\leq\beta \bar L\|\rho-\rho'\|_2
\end{align*}
with
\begin{align*}
    \bar L&:=\Bigl(\max_{k\leq \pi(\alpha p)}\mbox{Lip}(\theta_k)+\max_{k\leq (p-1)\pi(\alpha p)}\mbox{Lip}(f_k)\Bigr)\cdot\exp\beta \Big(\sum_{k=1}^{\pi(\alpha p)}\|\theta_k\|_\infty+ \sum_{k=1}^{(p-1)\pi(\alpha p)}\|f_k\|_\infty +\|\psi\|_\infty\Big),
\end{align*}
the numerator in the second inequality used the dual representation \eqref{dual}
and the third inequality used the bound $W_1(\mu^{\otimes  n},{\mu'}^{\otimes n})\leq nW_1(\mu ,\mu')$ for any $n\geq 1.$ The second logarithmic term in \eqref{eq7} can be treated in the same way. As a conclusion, we obtain that on $\mathcal{A}$,
\begin{align*}
&\Bigl\|	T_{\nu,\beta, \pi(\alpha p)}(f_1,\ldots,f_{(p-1)\pi(\alpha p)})(t)-T_{\nu',\beta, \pi(\alpha p)}(f_1,\ldots,f_{(p-1)\pi(\alpha p)})\Bigl\|_\infty\leq C'W_1(\nu,\nu'),
\end{align*}
where $C':=4L_0e^{2\beta(2C_0 + \|\psi\|_\infty)}(p-1)k_0$.
Therefore,
\begin{align*}
&\P\Bigl(\Bigl\|	T_{\nu,\beta,\pi(\alpha p)}(f_1,\ldots,f_{(p-1)\pi(\alpha p)})(t)-T_{\nu',\beta,\pi(\alpha p)}(f_1,\ldots,f_{(p-1)\pi(\alpha p)})\Bigl\|_\infty\geq \delta\Bigr)\\
& \leq \P(\mathcal{A}^c)+\P\bigl( C'W_1(\nu,\nu')\geq \delta\bigr)\leq \epsilon,
\end{align*}
as long as $W_1(\nu,\nu')<\delta/C'.$
\end{proof}

\begin{proof}[Proof of Theorem \ref{continuity}]
	From Theorem \ref{tight}, $(\lambda_{\nu_n, \beta})_{n\geq 1}$ is tight and since $\Pr_1(\X)$ is complete and separable,  this sequence has a convergent subsequence. From now on, to ease our notation, we assume that $(\lambda_{\nu_n, \beta})_{n\geq 1}$ converges to some $\lambda_*$ and we aim to show that $\T_{\nu_0,\beta}(\lambda_*)=\lambda_*.$ If this is valid, then from Theorem \ref{uniqueness}, we must have $\lambda_*=\lambda_{\nu_0,\beta}$ and this will complete our proof.
	
	 To establish that $\T_{\nu_0,\beta}(\lambda_*)=\lambda_*,$ it suffices to show that $\T_{\nu_n,\beta}(\lambda_{\nu_n,\beta})\to \T_{\nu_0,\beta}(\lambda_*)$ in $\Pr_1(\X)$. Recall that $\T_{\nu_n,\beta}(\lambda_{\nu_n,\beta})$ and $\T_{\nu_0,\beta}(\lambda_*)$ are respectively the laws of $$T_{\nu_n,\beta,\pi(\alpha p)}(f_1^n,\ldots,f_{(p-1)\pi(\alpha p)}^n)\;\;\text{and} \;\;T_{\nu_0,\beta,\pi(\alpha p)}(f_1^*,\ldots,f^*_{(p-1)\pi(\alpha p)}),$$ where $(f_k^n)_{k\geq 1}$ and $(f_k^*)_{k\geq 1}$ are i.i.d. samples from $\lambda_{\nu_n,\beta}$ and $\lambda_*$ respectively. Denote
	 \begin{align*}
	 	T_n&=T_{\nu_0,\beta,\pi( \alpha p)}(f_1^n,\ldots,f_{(p-1)\pi(\alpha p)}^n),\\
	 	 	T_*&=T_{\nu_0,\beta,\pi( \alpha p)}(f_1^*,\ldots,f_{(p-1)\pi(\alpha p)}^*),\\
	 	\Delta_n&=T_{\nu_n,\beta, \pi(\alpha p)}(f_1^n,\ldots,f_{(p-1)\pi(\alpha p)}^n)-T_{\nu_0,\beta,\pi(\alpha p)}(f_1^n,\ldots,f_{(p-1)\pi(\alpha p)}^n)
	 \end{align*}
so that
 \begin{align*}
 	T_{\nu_n,\beta,\pi(\alpha p)}(f_1^n,\ldots,f_{(p-1)\pi(\alpha p)}^n)&=\Delta_n+T_n.
 \end{align*}
Owing to the structure of the operator $\T_{\nu_0,\beta}$, note that without loss of generality, we can take the sequences $(f_k^n)_{k\geq 1}$ and $(f_k^*)_{k\geq 1}$ to be normalized, i.e., $\int e^{f_k^n} d\nu_0 = \int e^{f_k^*} d\nu_0 = 1$. Since $\lambda_{\nu_n,\beta}\to\lambda_*$ weakly in $\Pr_1(\X)$, we have that $T_n\to T_*$ weakly in $\Pr_1(\X)$ as well.  From \eqref{eq2} and the fact that $\T_{\nu_n,\beta}(\lambda_{\nu_n,\beta})=\lambda_{\nu_n,\beta}\to \lambda_*$ in $\Pr_1(\X)$, we obtain that for some positive constants $C$ and $ L$,\begin{align*}
    & \int \|f\|_\infty \T_{\nu_n,\beta}(\lambda_{\nu_n,\beta})(df) \leq  C\;\;\text{ and} \;\;\int \mbox{\rm Lip}(f)(\T_{\nu_n,\beta}(\lambda_{\nu_n,\beta}))(df) \leq L
\end{align*}
uniformly for all $n\geq 1$. Consequently, from Lemma \ref{lem1}, for any $\epsilon,\delta>0$, there exists a constant $C'>0$ such that $\P(\|\Delta_n\|_\infty\geq \delta)\leq \epsilon$ as long as $W_1(\nu_n,\nu_0)\leq \delta/C',$ which means that $\Delta_n$ converges to zero in probability. Hence, by Slutsky's theorem, we have arrived at $\T_{\nu_n, \beta}(\lambda_{\nu_n,\beta})\stackrel{d}{\to} \T_{\nu_0,\beta}(\lambda_*)$. Finally, from \eqref{l_infty bound}, we see that $(\T_{\nu_n,\beta}(f_1^n,\ldots,f_{(p-1)\pi(\alpha p)}^n))_{n\geq 1}$ is uniformly integrable; this together with the convergence $\T_{\nu_n,\beta}(\lambda_{\nu_n,\beta})\stackrel{d}{\to} \T_{\nu_0,\beta}(\lambda_*)$ leads to
	 \begin{align*}
	 	\lim_{n\to\infty}\int \|f\|_\infty(\T_{\nu_n,\beta}(\lambda_{\nu_n, \beta}))(df)=\int \|f\|_\infty(\T_{\nu_0,\beta}(\lambda_*))(df).
	 \end{align*}
 Hence, $\T_{\nu_n,\beta}(\lambda_{\nu_n,\beta})\to \T_{\nu_0,\beta}(\lambda_*)$ in $\Pr_1(\X)$ and our proof is completed.
\end{proof}

\section{Replica symmetric behavior}\label{rs_behavior}

The objective of this section is to establish two important properties of the limiting spin distribution. First, we show that the system is in a pure state, meaning that any finite collection of spins is asymptotically independent under the Gibbs distribution, see Corollary \ref{cor ps} for a precise statement. The second purpose is to identify the limiting distribution of the spins as the unique fixed point of the distributional operator $\T_{\nu, \beta}$ introduced in the previous section; this is done in Theorem \ref{thm fixed point}. Equipped with these two properties, we characterize the asymptotic joint density of a collection of finitely many spins as the product of random density functions which are sampled from the measure which is the unique fixed point of $\T_{\nu,\beta}$.

\subsection{The invariance principle}

Fix a (non-random) sequence of real numbers $(c_N)_{N\geq 1}$ that satisfies $c_N \to \infty$, $c_N/N \to 0$ and $|c_{N+1} - c_N| \to 0$. We introduce the following perturbation to our Hamiltonian \eqref{eq:orig hamiltonian}, \begin{align*}
    H_N^{(p)}(\sigma) = \sum_{l \leq \pi(c_N)}\log \av \exp \Big({\sum_{k\leq \pi_l(\alpha p)}\theta_{l,k}(\sigma_{J(l,k,1)}, \ldots, \sigma_{J(l,k,p-1)}, \varepsilon) + \psi(\varepsilon)}\Big),
\end{align*}
where $\av$ denotes averaging in the variable $\varepsilon$ distributed as $\nu$, $(\theta_{l,k})_{l,k\geq 1}$ and $(\pi_l(\alpha p))_{l\geq 1}$ are i.i.d. distributed as $\theta$ and $\pi(\alpha p)$ respectively, and $(J_{l,k})_{l,k\geq 1} := (\{J(l,k,r):r\in [p-1]\})_{l,k\geq 1}$ are uniformly chosen from $\binom{[N]}{p-1}$. All randomness are independent of each other and of those introduced earlier. We will work with the perturbed Hamiltonian \begin{align}\label{pert_H}
     \hat H_N(\sigma) = H_N(\sigma) + H_N^{(p)}(\sigma)
\end{align}
and denote the corresponding Gibbs measure by $\hat G_{N,\beta, \nu}$. Note that since the perturbation is of $o(N)$, the free energy corresponding to the original and the perturbed Hamiltonians are asymptotically the same. Further, this perturbation does not alter the Gibbs average of functions that depend only on a bounded number of spins (see \cite[Lemma 2]{Panchenko2010SPINGM}), so we shall drop the hat from the notation for the Gibbs measure. 

Denote by $\mu_{N,\beta,\nu}$ the joint distribution of the array of all spins on all replicas $(\sigma_i^l)_{i \leq N, l\geq 1}$ under the annealed product Gibbs measure $\E G_{N,\nu,\beta}^{\otimes \infty}$, i.e., for any $q\geq 1$ and an arbitrary collection $(A_i^l)_{i\in[N],l\leq q} $ of Borel sets of $\mathbb{R}$, \begin{align*}
    \mu_{N,\beta,\nu}(\{\sigma_i^l \in A_i^l: i\in [N], l\in [q]\}) = \E G_{N,\nu,\beta}^{\otimes \infty}(\{\sigma_i^l \in A_i^l: i\in [N], l\in [q]\}).
\end{align*}
We extend the definition of $\mu_{N,\beta,\nu}$ to all $(\sigma_i^l)_{i,l\geq 1}$ by setting $\sigma_i^l = 0$ for all $i> N$. Denote by $\M_{\beta,\nu}$ the set of all weak subsequential limits of $(\mu_{N,\beta,\nu})_{N\geq 1}$, which are invariant with respect to finite-dimensional permutations of the indices $i$ and $l.$ Based on this property, the Aldous-Hoover representation ensures that for any $\mu \in \M_{\beta,\nu}$, there exists a measurable function $\sigma = \sigma_{\mu, \beta}:[0,1]^4\to \Sigma$ such that \begin{align*}
    \mu = \text{law of }(\sigma(w, u_l, v_i, x_{l,i}))_{l,i\geq 1},
    \end{align*}
where $w, u_l, v_i, x_{l,i}$ are uniform on $[0,1]$ and independent of each other. 

Fix integers $n, m, q \geq 1$. Here $n$ denotes the number of cavity coordinates, $m$ is the number of non-cavity coordinates, and $q$ is the number of replicas. For $l, k \geq 1$ and indices $i_1, \ldots, i_k \geq 1$, we shall write $s_{i_1, \ldots, i_k} = \sigma(w, u, v_{i_1, \ldots, i_k}, x_{i_1, \ldots, i_k})$ and $s^l_{i_1, \ldots, i_k} = \sigma(w, u_l, v_{i_1, \ldots, i_k}, x_{l, i_1, \ldots, i_k})$. For $k \geq 1$, let $f_1, \ldots, f_k:\Sigma^{(m+n)\times q}\to \R$ and $F: \bigtimes_{i=1}^k f_i(\Sigma^{(m+n)\times q}) \to \R$ be continuous functions. We write the inputs to the functions $f_k$ as $(s', s^{(0)})\in\Sigma^{(m+n)\times q}$ where \begin{align*}
    s' = (s_1^l, \ldots, s_m^l)_{1\leq l\leq q} \in \Sigma^{m\times q} \;\;\text{and} \;\; s^{(0)} = (s_{m+1}^l, \ldots, s_{m+n}^l)_{1\leq l\leq q} \in \Sigma^{n\times q}.
\end{align*}
We further let $\varepsilon = (\varepsilon_1^l, \ldots, \varepsilon_n^l)_{1\leq l\leq q}$ and define \begin{align}\label{eq:defn_e}
   \cE_\beta (\varepsilon):= \exp \beta\sum_{l\leq q}\sum_{i\leq n}\Big(\sum_{k \leq \pi_i(\alpha p)}\theta_{k,i}(s_{k,i,1}^l, \ldots, s_{k,i,p-1}^l, \varepsilon_i^l)+\psi(\varepsilon^l_i)\Big).
\end{align}

The following version of the invariance principle is a generalization of \cite[Equation (34)]{EJP2963}. We only state this result here without proof since it does not involve any major change when it applies to models with general spins.

\begin{theorem}[Invariance principle]\label{thm inv}
    Let $0 < \beta <\infty$. For any $\mu \in \M_{\beta,\nu}$ and $\sigma = \sigma_{\mu,\beta}$, we have \begin{align}\label{eq:inv_new}\begin{split}
    & \E F\Bigl( \E_{u,x} f_1(s',s^{(0)}), \ldots,  \E_{u,x} f_k(s',s^{(0)})\Bigr)\\
    &  = \E  F\left(\frac{\E_{u,x} \av f_1(s',\varepsilon) \cE_\beta(\varepsilon)}{\E_{u,x} \av \cE_\beta(\varepsilon)}, \ldots, \frac{\E_{u,x} \av f_k(s',\varepsilon) \cE_\beta(\varepsilon)}{\E_{u,x} \av \cE_\beta(\varepsilon)}\right),
\end{split}
\end{align}
where the subscripts to $\E$ denote expectation with respect to only those variables and $\av$ denotes averaging with respect to $\varepsilon$.
\end{theorem}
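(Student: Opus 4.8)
Since the perturbed Hamiltonian and Gibbs measure are set up exactly as in \cite[Section 3]{EJP2963} and \cite{Panchenko2010SPINGM}, the plan is to reproduce the proof of \cite[Eq.~(34)]{EJP2963}, the only change being that every sum over $\{-1,1\}$ is replaced by an integral against $\nu$ over the compact set $\Sigma$; the hypotheses \eqref{bdd theta}, \eqref{eq reg theta} and the boundedness of $\psi$ supply exactly the integrability that this replacement needs. Fix a $\mu\in\M_{\beta,\nu}$ together with a subsequence $(N_j)$ along which $\mu_{N_j,\beta,\nu}\to\mu$ weakly, and fix $m,n,q$ and the bounded continuous functions $f_1,\ldots,f_k,F$; let $\la\cdot\ra_N$ denote the perturbed Gibbs average of \eqref{pert_H}, replicated $q$ times and annealed.

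The core is the cavity computation at finite $N$. By invariance of the law of $(\sigma_i^l)$ under permutations of $[N]$, the $n$ cavity coordinates $m+1,\ldots,m+n$ may be viewed as spins freshly appended to a system built on the remaining $N-n$ coordinates. A Poissonization argument shows that, in the limit, each appended coordinate $i$ interacts through an independent $\mathrm{Pois}(\alpha p)$ number of clauses, each joining it to $p-1$ uniformly chosen non-cavity coordinates with i.i.d.\ copies of $\theta$, together with the field $\psi$ — the same clause structure on all $q$ replicas, with differing spin values. Integrating out the appended spins $\varepsilon_i^l$ against $\nu$ replaces each by the factor $\cE_\beta(\varepsilon)$ of \eqref{eq:defn_e}, so that $\la f_j\ra_N$ turns into $\la\av f_j(s',\varepsilon)\cE_\beta(\varepsilon)\ra_N/\la\av\cE_\beta(\varepsilon)\ra_N$ up to an error that vanishes as $N\to\infty$ along $(N_j)$; composing with $F$ and taking expectations, the cavity computation produces in the limit exactly
\[
\E F\!\left(\E_{u,x}f_1(s',s^{(0)}),\ldots\right)
= \E F\!\left(\frac{\E_{u,x}\av f_1(s',\varepsilon)\,\cE_\beta(\varepsilon)}{\E_{u,x}\av \cE_\beta(\varepsilon)},\ldots\right),
\]
with $s,s^l$ expressed through the Aldous--Hoover function $\sigma=\sigma_{\mu,\beta}$.

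Two points must be checked for this to close. First, the perturbation $H_N^{(p)}$ may be inserted and manipulated without affecting the conclusion: the conditions $c_N\to\infty$, $c_N/N\to0$, $|c_{N+1}-c_N|\to0$ make it negligible for the limiting free energy, and \cite[Lemma 2]{Panchenko2010SPINGM} shows that this class of perturbations leaves Gibbs averages of boundedly many spins — hence the limit $\mu$ — unchanged, which is also why the hat could be dropped from the Gibbs measure in the first place; its presence is nevertheless essential because it ``pre-installs'' the typical cavity contributions, so that re-deriving them through the cavity move is consistent in the limit. Second, one passes to the limit along $(N_j)$: since $F$ and the $f_j$ are bounded, continuous and depend on finitely many coordinates, the left side converges to $\E F(\E_{u,x}f_1(s',s^{(0)}),\ldots)$ with the spins read off from $\sigma_{\mu,\beta}$, while on the right side the empirical description of the appended cavity bundles converges to precisely $\cE_\beta(\varepsilon)$ of \eqref{eq:defn_e} written through $\sigma_{\mu,\beta}$ — the fresh non-cavity sites become new $v$-coordinates, the bundle sizes become the $\pi_i(\alpha p)$, and the interactions become i.i.d.\ copies of $\theta$. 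This yields \eqref{eq:inv_new}.

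I expect the main obstacle to be the first point: with the perturbation present, quantifying carefully that the Poissonization together with the manipulations of $H_N$ and $H_N^{(p)}$ changes the joint law of finitely many spins only by $o(1)$, uniformly enough to survive the passage to the limit, and that the appended bundles acquire exactly the law — independence across cavity coordinates and replicas included — dictated by $\cE_\beta(\varepsilon)$. This is entirely analogous to \cite[Section 3]{EJP2963}, and the passage from Ising to general compact spins introduces no genuinely new difficulty, which is why the statement was given without proof.
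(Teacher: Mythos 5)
The paper gives no proof of this theorem, deferring entirely to \cite[Equation (34)]{EJP2963} with the remark that passing from Ising to general compact spins "does not involve any major change"; your proposal does exactly the same thing, correctly reconstructing the cavity/Poissonization mechanism of the cited proof, the role of the perturbation $H_N^{(p)}$ and of \cite[Lemma 2]{Panchenko2010SPINGM}, and the passage to the limit through the Aldous--Hoover representation, with \eqref{bdd theta}, \eqref{eq reg theta} and the boundedness of $\psi$ supplying the integrability needed to replace sums over $\{-1,1\}$ by integrals against $\nu$. This matches the paper's (implicit) argument; the one point worth tightening if you were to write it out in full is that the finite-$N$ cavity identity should be organized within a single system size (using the extra perturbation terms to generate the cavity bundles) rather than by literally comparing the $N$- and $(N\pm n)$-coordinate systems, so as not to have to reconcile subsequential limits along shifted indices.
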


\subsection{Pure states}\label{sec3.1}

The replica symmetric behavior we aim to show is that the spins are asymptotically uncorrelated with respect to the Gibbs measure, or equivalently, the spin distribution array $(s_{i}^l)_{i,l\geq 1}$ is equal in distribution to $(\sigma(w,u,v_i,x_{i,l}))_{i,l\geq 1}$, where $u$ is uniform on $[0,1]$ and independent of other randomness. To describe our result, we will need the operator $\Phi^{(0)}: \Sigma^{n\times q} \to  \Sigma^{n\times q}$ that is defined as \begin{align*}
    \Phi^{(0)}(s^{(0)}) = \bigl((s_{m+1}^2, \ldots s_{m+n}^2), (s_{m+1}^1, \ldots s_{m+n}^1), (s_{m+1}^l, \ldots s_{m+n}^l)_{3\leq l\leq q}\bigr).
\end{align*}
In other words, $\Phi^{(0)}$ swaps $(s_{m+1}^1, \ldots s_{m+n}^1)$ with $(s_{m+1}^2, \ldots s_{m+n}^2)$ in $s^{(0)}$ and leaves the other coordinates intact. A key result for what follows is the next theorem.

\begin{theorem}\label{thm_pure_states}
     Let $f_1,f_2:\Sigma^{(m+n)\times q}\to \R$ be continuous functions such that $$f_1 (s',\Phi^{(0)}(s^{(0)})) = -f_1(s',s^{(0)})\;\; \text{and}\;\; 0<|f_1|\leq f_2.$$ 
     Suppose that \begin{align}\label{ht3}
        2\beta\alpha p (p-1)\exp(4\beta \|\psi\|_\infty + \alpha p(\E e^{4\beta\|\theta\|_\infty} - 1)\big)\E \|\theta\|_\infty e^{4\beta\|\theta\|_\infty}  <1
     \;\;\text{or}\;\; \alpha p (p-1) \leq 1 \end{align} holds. Then we have that \begin{align}\label{pstate}
        \E\Big|\frac{\E_{u,x} f_1(s',s^{(0)})}{\E_{u,x} f_2(s',s^{(0)})}\Big| = 0.
    \end{align}
\end{theorem}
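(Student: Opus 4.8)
The plan is to work directly with a subsequential limit $\mu\in\M_{\beta,\nu}$ and its Aldous--Hoover function $\sigma=\sigma_{\mu,\beta}$, and to show that the left-hand side $\Delta$ of \eqref{pstate} obeys a self-improving estimate: in the high-temperature regime it is bounded by a factor $<1$ times a quantity of the same form, while in the subcritical regime the associated recursion unfolds along an almost surely finite tree; in both cases the only possibility is $\Delta=0$. To set up, note that since $0<|f_1|\le f_2$ and $f_2$ is continuous on the compact set $\Sigma^{(m+n)\times q}$, there is $c>0$ with $f_2\ge c$; hence $|\E_{u,x}f_1|\le\E_{u,x}f_2$, giving the a priori bound $\Delta\le1$, and the map $F(a,b)=|a/b|$ is continuous on $[-\|f_1\|_\infty,\|f_1\|_\infty]\times[c,\|f_2\|_\infty]$, which contains the range of $(\E_{u,x}f_1,\E_{u,x}f_2)$. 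The invariance principle (Theorem \ref{thm inv}) then applies with this $F$ and this pair and gives
\begin{align*}
	\Delta=\E\left|\frac{\E_{u,x}\av f_1(s',\varepsilon)\,\cE_\beta(\varepsilon)}{\E_{u,x}\av f_2(s',\varepsilon)\,\cE_\beta(\varepsilon)}\right| ;
\end{align*}
the data $(f_1,f_2,n,m,q)$ will change in the course of the argument but always stays within the admissible class of the statement, so it suffices to bound the supremum of $\Delta$ over that class.

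The next step exploits the antisymmetry of $f_1$. Factoring $\cE_\beta(\varepsilon)=\prod_{l\le q}\prod_{i\le n}g_i^l(\varepsilon_i^l)$ with $g_i^l(t)=\exp\beta\big(\sum_{k\le\pi_i(\alpha p)}\theta_{k,i}(s_{k,i,1}^l,\ldots,s_{k,i,p-1}^l,t)+\psi(t)\big)\in\X$, and relabelling the dummy variables $\varepsilon^1\leftrightarrow\varepsilon^2$ inside $\av$ — which acts on the cavity block exactly as $\Phi^{(0)}$ — the identity $f_1(s',\Phi^{(0)}(s^{(0)}))=-f_1(s',s^{(0)})$ yields
\begin{align*}
	2\,\E_{u,x}\av f_1(s',\varepsilon)\cE_\beta(\varepsilon)=\E_{u,x}\av f_1(s',\varepsilon)\Big[\prod_{i\le n}g_i^1(\varepsilon_i^1)g_i^2(\varepsilon_i^2)-\prod_{i\le n}g_i^1(\varepsilon_i^2)g_i^2(\varepsilon_i^1)\Big]\prod_{l\ge3}\prod_{i\le n}g_i^l(\varepsilon_i^l).
\end{align*}
The bracketed factor vanishes identically once $g_i^1\equiv g_i^2$ for all $i$, and by a telescoping estimate it is bounded by a multiple of $\max_{i\le n}\|g_i^1-g_i^2\|_\infty$ times an exponential in $\sum_{k,i}\|\theta_{k,i}\|_\infty$ and $\|\psi\|_\infty$. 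Dividing by the denominator, which is at least $c\,\E_{u,x}\av\cE_\beta(\varepsilon)$, leads to an inequality of the form $\Delta\le\E\big[\,W\cdot\max_{i\le n}\|g_i^1-g_i^2\|_\infty\,\big]$ with a nonnegative weight $W$ whose exponential moments are finite by \eqref{bdd theta}; the relevant moment is controlled by $e^{4\beta\|\psi\|_\infty}\,\E\exp\big(4\beta\sum_{k\le\pi(\alpha p)}\|\theta_k\|_\infty\big)=e^{4\beta\|\psi\|_\infty}\exp\big(\alpha p(\E e^{4\beta\|\theta\|_\infty}-1)\big)$, precisely the prefactor in \eqref{ht3}, the exponent $4\beta$ rather than $2\beta$ arising from a Cauchy--Schwarz step that squares $W$ to decouple it from the discrepancy. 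Finally, $\theta_{k,i}(s_{k,i,\cdot}^1,\cdot)-\theta_{k,i}(s_{k,i,\cdot}^2,\cdot)$ is again antisymmetric under interchanging the replica labels of those spins, so \eqref{eq reg theta} and the Lipschitz bound on $\theta$ let me control $\|g_i^1-g_i^2\|_\infty$, up to the factor $\beta\sum_{k\le\pi_i(\alpha p)}\mathrm{Lip}(\theta_{k,i})$, by a quantity of exactly the same admissible form as $\Delta$ but attached one level deeper into the local neighbourhood of the interaction graph.

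It then remains to close the recursion. In the high-temperature case of \eqref{ht3}, summing over the $p-1$ cavity spins in each clause and over the $\mathrm{Pois}(\alpha p)$ clauses at each of the $n$ cavity coordinates, and using $\E\|\theta\|_\infty e^{4\beta\|\theta\|_\infty}<\infty$, I expect $\Delta\le K\,\Delta'$ with $\Delta'$ a quantity of the same form and $K=2\beta\alpha p(p-1)\exp\big(4\beta\|\psi\|_\infty+\alpha p(\E e^{4\beta\|\theta\|_\infty}-1)\big)\E\|\theta\|_\infty e^{4\beta\|\theta\|_\infty}<1$; iterating against the a priori bound $\le1$ forces $\Delta=0$, which is the analogue at the level of the exchangeable spin array of the contraction estimate in the proof of Theorem \ref{uniqueness}. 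In the subcritical case $\alpha p(p-1)\le1$ no contraction is available, but the recursion unfolds along the Galton--Watson tree with offspring distribution $(p-1)\pi(\alpha p)$, which is finite almost surely exactly as in Lemma \ref{gen_fixed_pt}; it therefore terminates after an almost surely finite number of steps at vertices with no attached clauses, where $g_i^1\equiv g_i^2\equiv e^{\beta\psi}$ and the antisymmetrized factor above is identically zero, forcing the corresponding contribution to vanish. Hence $\Delta=0$ in both regimes, which is \eqref{pstate}.

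The hard part will be making this recursion rigorous: one must isolate the precise class of antisymmetric functionals that is stable under one cavity step while carrying a uniform a priori bound, and control the joint law of the two replicas entering $W$ (the source of the loss from $2\beta$ to $4\beta$); and, in the subcritical regime, one must transport the mechanism by which an almost surely finite tree makes the recursion terminate — which for i.i.d.\ recursive distributional equations is exactly Lemma \ref{gen_fixed_pt} — to the present setting, where the array governing the limiting spin distribution is genuinely exchangeable rather than i.i.d. This last point is the same difficulty that reappears in the uniqueness argument of Section \ref{sec3.2}, and is why the subcritical case is naturally phrased as a recursion for the moments of the limiting spins on the Galton--Watson tree.
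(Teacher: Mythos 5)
Your overall architecture is the paper's: a self-bounding estimate $D\le KD$ with $K<1$ in the high-temperature regime, and termination of a recursion along the a.s.\ finite Galton--Watson tree in the subcritical regime; your constant even matches \eqref{ht3}. The identity $2\,\E_{u,x}\av f_1\cE_\beta(\varepsilon)=\E_{u,x}\av f_1\bigl[\cE_\beta(\varepsilon)-\cE_\beta(\Phi^{(0)}(\varepsilon))\bigr]$ is also correct. The gap is in the very next step. When you bound the bracketed factor by a telescoping estimate in terms of $\max_{i\le n}\|g_i^1-g_i^2\|_\infty$, you move the absolute value \emph{inside} the quenched average $\E_{u,x}$ and inside a supremum over the cavity variable: what you are left to control is of the type $\E\,\E_{u,x}\sup_t|\theta_{k,i}(s^1_{k,i,\cdot},t)-\theta_{k,i}(s^2_{k,i,\cdot},t)|$ (times weights). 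This is \emph{not} of the admissible form $\E\,|\E_{u,x}(\text{antisymmetric})/\E_{u,x}(\text{dominating})|$ --- the whole point of that form is that the cancellation occurs under $\E_{u,x}$ before the absolute value is taken --- and it does not vanish even in a pure state: there $s^1$ and $s^2$ are two independent draws from the same conditional law, which differ with positive probability, so $\E_{u,x}\sup_t|\theta(s^1_{\cdot},t)-\theta(s^2_{\cdot},t)|>0$ generically. Consequently neither the contraction (you only obtain $D\le K\cdot(\text{a quantity of order one})$, not $D\le KD$) nor the termination at the leaves (the corresponding quantities at internal vertices are strictly positive) closes as written.

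The fix, which is what the paper does, is to never let the absolute value cross $\E_{u,x}$. In the high-temperature regime one first applies the invariance principle to pass to $f_1^{(1)}=\av f_1\cE_\beta^{(0)}$, which is antisymmetric under the full swap $\Phi^{(1)}$ of the level-one spins, and then telescopes $f_1^{(1)}$ itself (not its absolute value) into a sum of pieces $f^{(1)}_{1,j}$, each still antisymmetric under a \emph{single-coordinate} swap $\Phi^{(1)}_j$ and pointwise dominated by $2\beta\|\theta_{\varnothing,k}\|_\infty e^{4\beta(\cdots)}f_2^{(1)}$; each ratio $|\E_{u,x}f^{(1)}_{1,j}|/\E_{u,x}f_2^{(1)}$ is then again an instance of the supremum $D$, and your constant arises exactly this way (the $4\beta$ comes from pairing the upper bound on $|f^{(1)}_{1,j}|$ with the matching lower bound on $f_2^{(1)}$, not from a Cauchy--Schwarz step). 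In the subcritical regime one propagates the antisymmetric function $f_1^{(r)}=\av f_1^{(r-1)}\cE_\beta^{(r-1)}$ itself down the tree by repeated applications of the invariance principle with $F(x_1,x_2)=|x_1/x_2|$, and only at the leaves, where $\cE_\beta^{(r)}\equiv 1$, does one invoke $\av(\text{antisymmetric})=0$.
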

Again, as in Theorem \ref{uniqueness}, the proof of this theorem comprises two parts, corresponding to the subcritical regime and the high temperature regime. The proof provided below for the subcritical regime is an improvement of \cite[Lemma 1]{EJP2963}, which is achieved by considering the recursive tree-like process introduced in Lemma \ref{gen_fixed_pt}.

\begin{proof}[Proof of Theorem \ref{thm_pure_states} (Subcritical regime)]
  Label $f_i^{(0)} = f_i$ for $i = 1, 2$. Recall the random tree $\G_\varnothing$ introduced in Lemma \ref{gen_fixed_pt}. For $k> m$, identify $(s^1_k, \ldots, s^q_k)$ with $(s^1_\i, \ldots, s^q_\i)$, where $\i$ is the $(k-m)$th vertex of $\G_\varnothing$ discovered during a breadth-first exploration of $\G_\varnothing$. For a level $r\geq 0$ in the tree $\G_\varnothing$, let $s^{(r)} := (s^1_\i, \ldots, s^q_\i)_{\i \in \V_{=r}}$, (here the indices in $\V_{=r}$ are ordered in the breadth-first sequence that they were discovered) and let\begin{align*}
    \cE_\beta^{(r)}(\varepsilon, s^{(r+1)}) = \exp \beta \sum_{l\leq q} \sum_{\i \in \V_{=r}}\Big(\sum_{k\leq \pi_\i(\alpha p)}\theta_{\i,k}(s^l_{\i((p-1)(k-1)+1)}, \ldots, s^l_{\i(p-1)k}, \varepsilon^l_\i) + \psi(\varepsilon^l_\i)\Big),
\end{align*}
where $\varepsilon = (\varepsilon^1_\i, \ldots, \varepsilon^q_\i)_{\i \in \V_{=r}}$, and for $i = 1, 2$, \begin{align*}
    f_i^{(r+1)}(s',s^{(r+1)}) = \av f_i^{(r)}(s',\varepsilon)\cE^{(r)}_\beta(\varepsilon, s^{(r+1)}). 
\end{align*}
With $\Phi^{(r)}$ the switching operator acting as \begin{align*}
    s^{(r)} \mapsto \big((s^2_\i)_{\i \in \V_{=r}}, (s^1_\i)_{\i \in \V_{=r}}, (s^3_\i)_{\i \in \V_{=r}}\ldots, (s^q_\i)_{\i \in \V_{=r}}\big),
\end{align*} it is easy to verify that conditioned on the randomness of $(\pi_\i(\alpha p))_{\i \in \V_{=r}}$ and $(\theta_{\i,k})_{k\leq \pi_\i(\alpha p), \i \in \V_{=r}}$, $f_1^{(r)}(s',\Phi^{(r)}(s^{(r)})) = -f_1^{(r)}(s',s^{(r)})$ and $|f_1^{(r)}| \leq f_2^{(r)}$. Indeed, this is true for $r=0$ by assumption, and for $r\geq 1$ one can show inductively that this holds since 
\begin{align*}
    f_1^{(r)} (s',\Phi^{(r)}(s^{(r)})) & = \av [f_1^{(r-1)}(s',\varepsilon)\cE_\beta^{(r-1)}\big(\varepsilon,\Phi^{(r)}(s^{(r)})\big)]\\
    & = \av [f_1^{(r-1)}\big(s',\Phi^{(r-1)}(\varepsilon)\big)\cE_\beta^{(r-1)}\big(\Phi^{(r-1)}(\varepsilon),\Phi^{(r)}(s^{(r)})\big)]\\
    & = -\av [f_1^{(r-1)}(s',\varepsilon)\cE_\beta^{(r-1)}\big(\Phi^{(r-1)}(\varepsilon),\Phi^{(r)}(s^{(r)})\big)]\\
    & = -\av [f_1^{(r-1)}(s',\varepsilon)\cE_\beta^{(r-1)}(s',s^{(r)})] =  - f_1^{(r)}(s',s^{(r)}).
\end{align*}
In the above computation, the second line follows by swapping the replica labels of $(\varepsilon^1_\i)_{\i \in \V_{=r-1}}$ and $(\varepsilon^2_\i)_{\i \in \V_{=r-1}}$ so the averaging in $\varepsilon$ is not altered, and the fourth line is due to $$\cE_\beta^{(r-1)}\big(\Phi^{(r-1)}(\varepsilon),\Phi^{(r)}(s^{(r)})\big) = \cE_\beta^{(r-1)}(\varepsilon,s^{(r)}).$$ Also, $|f_1^{(0)}| \leq f_2^{(0)}$, and for any $r\geq 1$, by induction, we have \begin{align*}
    |f_1^{(r)}(s',s^{(r)})| & \leq \av [|f_1^{(r-1)}(s',\varepsilon)| \cE_\beta^{(r-1)}(\varepsilon, s^{(r)})] \leq \av [f_2^{(r-1)}(s',\varepsilon)\cE_\beta^{(r-1)}(\varepsilon,s^{(r)})] = f_2^{(r)}(s',s^{(r)}).
\end{align*}
Let $\G'$ denote the graph $\G$ with only the edges in $\G_\varnothing$ present, and the rest of the edges deleted. In other words, in $\G'$, we set $\pi_\i(\alpha p) = 0$ for any $\i \not \in \G_\varnothing$. Let $\text{ht}(\G_\varnothing)$ denote the height of $\G_\varnothing$. Since $\alpha p(p-1) \leq 1$, almost surely, $\text{ht}(\G_\varnothing) <\infty$. Hence in $\G'$, there is a certain level $r_0 \geq 0$ such that for $r\geq r_0$, $\pi_\i(\alpha p) = 0$ for all $\i \in \V_{=r}$, and $\cE^{(r)}_\beta = 1$, leading to
    \begin{align*}
    f_1^{(r+1)}(s',s^{(r+1)})& = \av f_1^{(r)}(s',\varepsilon)\cE_\beta^{(r)}(\varepsilon, s^{(r+1)}) = \av f_1^{(r)}(s',\varepsilon) = \av [f_1^{(r)}(s',\Phi^{(r)}(\varepsilon))] \\
    & = -\av f_1^{(r)}(\varepsilon, s') = 0.
\end{align*}
 Let $R \geq 0$ be an integer. On the event $\{\text{ht}(\G_\varnothing) \leq R\}$, we have that $\pi_\i(\alpha p) = 0$ for all $\i \in \V_{=R}$, so using Theorem \ref{thm inv} repeatedly with $n = 1$ and $F(x_1, x_2) = |x_1/x_2|$, we have 
\begin{align*}
    \E \Bigl[\Big| \frac{\E_{u,x} f_1^{(0)}(s',s^{(0)})}{\E_{u,x} f_2^{(0)}(s',s^{(0)})} \Big| \big| \text{ht}(\G_\varnothing) \leq R\Bigl] & = \E \Bigl[\Big| \frac{\E_{u,x} f_1^{(1)}(s',s^{(1)})}{\E_{u,x} f_2^{(1)}(s',s^{(1)})} \Big| \big| \text{ht}(\G_\varnothing) \leq R\Bigl]\\
    & = \ldots = \E \Bigl[\Big| \frac{\E_{u,x} f_1^{(\text{ht}(\G_\varnothing))}(s',s^{(\text{ht}(\G_\varnothing))})}{\E_{u,x} f_2^{(\text{ht}(\G_\varnothing))}(s',s^{(\text{ht}(\G_\varnothing))})} \Big| \big| \text{ht}(\G_\varnothing) \leq R\Bigl] = 0,
\end{align*} where the last step follows from the previous display. Since $\text{ht}(\G_\varnothing) < \infty$ a.s., sending $R\to \infty$ and using the dominated convergence theorem yields that \begin{align*}
    \E \Big| \frac{\E_{u,x} f_1^{(0)}(s',s^{(0)})}{\E_{u,x} f_2^{(0)}(s',s^{(0)})} \Big| = 0.
\end{align*}
\end{proof}
The arguments for the high temperature regime are along the lines as that of \cite[Lemma 2]{EJP2963}. We include the proof for completeness.

\begin{proof}[Proof of Theorem \ref{thm_pure_states} (High temperature regime)]
Borrowing the notation from the previous part and setting $n = 1$, $F(x_1,x_2) = |x_1/x_2|$ in the invariance principle (Theorem \ref{thm inv}), we obtain \begin{align*}
        \E \Big|\frac{\E_{u,x} f_1^{(0)}(s',s^{(0)})}{\E_{u,x} f_2^{(0)}(s',s^{(0)})}\Big| & = \E \Big|\frac{\E_{u,x}f_1^{(1)}(s',s^{(1)})}{\E_{u,x} f_2^{(1)}(s',s^{(1)})}\Big|.
    \end{align*}
   For each $j \leq (p-1)\pi_\varnothing(\alpha p)$, let $\Phi^{(1)}_j$ be the map acting on $s^{(1)}$ that switches $s_j^1$ with $s^2_j$ and leaves all the other coordinates unchanged. Thus, denoting composition of functions by $\bigcirc$, we have $\comp_{j \leq (p-1)\pi_\varnothing(\alpha p)} \Phi^{(1)}_j = \Phi^{(1)}$. In the previous part, we established that $$f_1^{(1)}(s',\Phi^{(1)}(s^{(1)})) = -f_1^{(1)}(s',s^{(1)}).$$ Thus, for $j\leq (p-1)\pi_\varnothing(\alpha p)$, defining
   \begin{align*}
       f^{(1)}_{1,j}(s',s^{(1)}) = \frac{1}{2}\Big(f_1^{(1)}(s', \comp_{k<j}\Phi^{(1)}_k(s^{(1)})) -  f_1^{(1)}(s',\comp_{k\leq j}\Phi^{(1)}_k(s^{(1)}))\Big),
   \end{align*}
   we can write \begin{align*}
       f_1^{(1)}(s',s^{(1)}) & = \frac{1}{2}\big(f_1^{(1)}(s,s^{(1)}) - f_1^{(1)}(s',\Phi^{(1)}(s^{(1)}))\big) \\
       & = \frac{1}{2}\Big(f_1^{(1)}(s,s^{(1)}) - f_1^{(1)}(s', \comp_{j \leq (p-1)\pi_\varnothing(\alpha p)} \Phi^{(1)}_j(s^{(1)})\Big)=  \sum_{j\leq (p-1)\pi_\varnothing(\alpha p)} f^{(1)}_{1,j}.
   \end{align*}
   By definition, we have $f^{(1)}_{1,j}(s',\Phi^{(1)}_j(s^{(1)})) = - f^{(1)}_{1,j}(s',s^{(1)})$. Hence, we obtain that \begin{align}\label{eq: f to tilde f ineq}
       \E \Big|\frac{\E_{u,x} f_1^{(0)}(s',s^{(0)})}{\E_{u,x} f_2^{(0)}(s',s^{(0)})}\Big| & = \E \Big|\frac{\E_{u,x}f_1^{(1)}(s',s^{(1)})}{\E_{u,x} f_2^{(1)}(s',s^{(1)})}\Big| \leq \E \sum_{j \leq (p-1)\pi_\varnothing(\alpha p)} \Big|\frac{\E_{u,x} f^{(1)}_{1,j}(s',s^{(1)})}{\E_{u,x}f_2^{(1)}(s',s^{(1)})}\Big|.
   \end{align}
   From the definition of $f_1^{(1)}$, we have that \begin{align*}
        f_{1,j}^{(1)} (s',s^{(1)}) = \frac{1}{2}\av \Big[f_1^{(0)}(s',\varepsilon)\Big(\cE_\beta^{(0)}(\varepsilon, \comp_{k< j}\Phi^{(1)}_k(s^{(1)})) - \cE^{(0)}_\beta(\varepsilon,  \comp_{k\leq j}\Phi^{(1)}_k(s^{(1)}))\Big)\Big].
   \end{align*}
   Since all the maps $\Phi^{(1)}_j$ switch coordinates in the first and second replicas, if we write \begin{align*}
       \cE'_\beta(\varepsilon, s^{(1)}) = \exp \beta \sum_{l=1,2}\Big(\sum_{k\leq \pi_\varnothing(\alpha p)}\theta_{\varnothing, k}(s^l_{(k-1)(p-1) + 1}, \ldots, s^l_{k(p-1)}, \varepsilon^l) + \psi(\varepsilon^l)\Big)
   \end{align*}
   and $\cE''_\beta(\varepsilon, s^{(1)}) = \cE_\beta^{(1)}(\varepsilon, s^{(1)})/\cE'_\beta(\varepsilon, s^{(1)})$, then we can write 
   \begin{align*}
      f^{(1)}_{1,j} (s',s^{(1)}) = \frac{1}{2}\av \Big[f_1^{(0)}(s',\varepsilon)\cE''_\beta(\varepsilon, s^{(1)}) \Big(\cE'_\beta(\varepsilon, \comp_{k<j}\Phi^{(1)}_k( s^{(1)}))- \cE'_\beta(\varepsilon, \comp_{k\leq j}\Phi^{(1)}_k(s^{(1)})) \Big)\Big].
   \end{align*}
  Note that if $k$ is such that $(k-1)(p-1) + 1\leq j \leq k(p-1)$, then in the above display the only terms affected are $\theta_{k}(s^l_{(k-1)(p-1) + 1}, \ldots, s^l_{k(p-1)}, \varepsilon^l)$ for $l = 1, 2$. Using the inequality $|e^x - e^y| \leq e^{\max(x,y)}|x-y|$ and noting that $\varepsilon^l \in \Sigma \subseteq [-R,R]$, we thus have \begin{align*}
      & \Big|\cE'_\beta(\varepsilon, \comp_{k<j}\Phi^{(1)}_k( s^{(1)}))- \cE'_\beta(\varepsilon, \comp_{k\leq j}\Phi^{(1)}_k(s^{(1)}))\Big| \\
      & \leq 4\beta\|\theta_{\varnothing,k}\|_\infty\exp \Big(2\beta\sum_{k'\leq \pi_\varnothing(\alpha p)}\|\theta_{\varnothing,k'}\|_\infty + 2\beta \|\psi\|_\infty\Big).
  \end{align*}
  and thus \begin{align*}
      |f_{1,j}^{(1)} (s',s^{(1)})| & \leq 2\beta\|\theta_{\varnothing,k}\|_\infty \exp \Big(2\beta\sum_{k'\leq \pi_\varnothing(\alpha p)}\|\theta_{\varnothing,k'}\|_\infty + 2\beta \|\psi\|_\infty\Big)\av \Big(|f_1^{(0)}(s',\varepsilon)|\cE''_\beta(\varepsilon, s^{(1)})\Big) \\
      &  \leq 2\beta\|\theta_{\varnothing,k}\|_\infty \exp \Big(2\beta\sum_{k'\leq \pi_\varnothing(\alpha p)}\|\theta_{\varnothing,k'}\|_\infty + 2\beta \|\psi\|_\infty\Big)\av \Big(f_2^{(0)}(s',\varepsilon)\cE''_\beta(\varepsilon,s^{(1)})\Big).
  \end{align*}
  Also, \begin{align*}
      f_2^{(1)}(s',s^{(1)}) & = \av \Big(f_2^{(0)}(s',\varepsilon)\cE_\beta^{(0)}(\varepsilon, s^{(1)})\Big) = \av \Big(f_2^{(0)}(s',\varepsilon)\cE'_\beta(\varepsilon, s^{(1)})\cE''_\beta(\varepsilon, s^{(1)})\Big)\\
      & \geq \exp \Big(-2\beta\sum_{k'\leq \pi_\varnothing(\alpha p)}\|\theta_{\varnothing,k'}\|_\infty -2\beta \|\psi\|_\infty\Big) \av \Big(f_2^{(0)}(s',\varepsilon)\cE''_\beta(\varepsilon, s^{(1)})\Big)
  \end{align*}
  so \begin{align*}
       |f_1^{(1)} (s',s^{(1)})| & \leq 2\beta\|\theta_{\varnothing,k}\|_\infty \exp \Big(4\beta\sum_{k'\leq \pi_\varnothing(\alpha p)}\|\theta_{\varnothing, k'}\|_\infty + 4\beta \|\psi\|_\infty\Big) f_2^{(1)}(s',s^{(1)}).
  \end{align*}
  Let \begin{align*}
      D = \sup\E \Big|\frac{\E_{u,x}f_1(s',s^{(0)})}{\E_{u,x}f_2(s',s^{(0)})}\Big|,
  \end{align*}
  where the supremum is over all functions $f_1$ and $f_2$ satisfying $$0 <|f_1| \leq f_2\;\; \text{and}\;\; f_1(s',\Phi(s^{(0)})) = -f_1(s',s^{(0)}),$$ where $\Phi$ is an operator that switches only one coordinate in the first and second replicas. Then we have from \eqref{eq: f to tilde f ineq} that \begin{align*}
      D & \leq D\E \sum_{k\leq \pi_\varnothing(\alpha p)}\sum_{j=(k-1)(p-1) + 1}^{k(p-1)} 2\beta\|\theta_{\varnothing, k}\|_\infty \exp \Big( 4\beta\sum_{k'\leq \pi_\varnothing(\alpha p)}\|\theta_{\varnothing, k'}\|_\infty + 4\beta \|\psi\|_\infty\Big)\\
      & = 2\beta D(p-1)e^{4\beta \|\psi\|_\infty} \E L e^{4\beta L},
  \end{align*}
  where $L = \sum_{k\leq \pi(\alpha p)} \|\theta_k\|_\infty$. Now, conditioned on $\pi(\alpha p) = l\geq 1$, we have \begin{align*}
      \E L e^{4\beta L} & = \sum_{k\leq l} \E \|\theta_k\|_\infty e^{4\beta \|\theta_k\|_\infty}\E e^{4\beta \sum_{k'\leq l, k'\neq k}\|\theta_{k'}\|_\infty} = l\E\|\theta\|_\infty e^{4\beta \|\theta\|_\infty} \Big(\E e^{4\beta \|\theta\|_\infty}\Big)^{l-1}.
  \end{align*}
  Hence, integrating out the randomness in $\pi(\alpha p)$, we have \begin{align*}
      \E L e^{4\beta L} & = \alpha p \exp \big(\alpha p(\E e^{4\beta \|\theta\|_\infty} - 1)\big)\E \|\theta\|_\infty e^{4\beta \|\theta\|_\infty} .
  \end{align*}
  From \eqref{ht3} it follows that $D = 0$, thus proving the lemma.
\end{proof}

To use the invariance principle and the uniqueness of the fixed point of $\T_{\nu, \beta}$ to determine the distributions of the spins in the pure state, we need to ensure that the parameters $(\alpha, \beta)$ satisfy \eqref{ht1} and \eqref{ht3}.

\begin{lemma}\label{lem hi_temp}
    If \eqref{ht} holds then \eqref{ht1} and \eqref{ht3} hold.
\end{lemma}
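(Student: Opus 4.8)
The plan is a short two-case split according to whether $\alpha p(p-1)\le 1$. Both \eqref{ht1} and \eqref{ht3} are disjunctions whose second alternative is \emph{exactly} the inequality $\alpha p(p-1)\le 1$, so in that case there will be nothing to prove. Assume therefore that $\alpha p(p-1)>1$. Then the minimum appearing in \eqref{ht} cannot equal $1$ --- otherwise \eqref{ht} would reduce to $\alpha p(p-1)\le 1$, a contradiction --- so $\min\bigl(1,6\beta e^{4\beta\|\psi\|_\infty}\E\|\theta\|_\infty e^{4\beta\|\theta\|_\infty}\bigr)$ equals its second argument, and \eqref{ht} becomes
\begin{equation}\label{star-lemhitemp}
6\alpha\beta p(p-1)\,e^{4\beta\|\psi\|_\infty}\,\E\|\theta\|_\infty e^{4\beta\|\theta\|_\infty}\le 1 .
\end{equation}
The rest of the argument will derive the first alternatives of \eqref{ht1} and \eqref{ht3} from \eqref{star-lemhitemp} using only $e^{4\beta\|\psi\|_\infty}\ge 1$, the pointwise bound $e^{2\beta\|\theta\|_\infty}\le e^{4\beta\|\theta\|_\infty}$, and $p-1\ge 1$.

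For \eqref{ht1}, I would simply estimate, using \eqref{star-lemhitemp} and these monotonicity facts,
\[
4\alpha\beta p(p-1)\,\E\|\theta\|_\infty e^{2\beta\|\theta\|_\infty}
\;\le\; 4\alpha\beta p(p-1)\,e^{4\beta\|\psi\|_\infty}\,\E\|\theta\|_\infty e^{4\beta\|\theta\|_\infty}
\;\le\; \tfrac{4}{6}\;<\;1 .
\]

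For \eqref{ht3}, the only genuinely new ingredient is the extra factor $\exp\bigl(\alpha p(\E e^{4\beta\|\theta\|_\infty}-1)\bigr)$, which I would control by the elementary pointwise inequality $e^{u}-1\le u e^{u}$ valid for $u\ge 0$ (equivalently $1-e^{-u}\le u$), applied with $u=4\beta\|\theta\|_\infty$. Taking expectations gives $\E e^{4\beta\|\theta\|_\infty}-1\le 4\beta\,\E\|\theta\|_\infty e^{4\beta\|\theta\|_\infty}$, so by \eqref{star-lemhitemp} (and $p-1\ge 1$, $e^{4\beta\|\psi\|_\infty}\ge 1$),
\[
\alpha p\bigl(\E e^{4\beta\|\theta\|_\infty}-1\bigr)\;\le\;4\alpha\beta p\,\E\|\theta\|_\infty e^{4\beta\|\theta\|_\infty}\;\le\;\tfrac{4}{6}=\tfrac{2}{3}.
\]
Bounding the remaining prefactor by \eqref{star-lemhitemp} as $2\alpha\beta p(p-1)e^{4\beta\|\psi\|_\infty}\E\|\theta\|_\infty e^{4\beta\|\theta\|_\infty}\le\tfrac13$, the left-hand side of \eqref{ht3} is at most $\tfrac13 e^{2/3}<1$, since $e^{2/3}<3$. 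This will establish \eqref{ht3}.

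I do not expect any real obstacle; the only point to verify is that the constant $6$ in \eqref{ht} is comfortably large enough to simultaneously absorb the factor $4$ in \eqref{ht1} and the extra factor $e^{2/3}$ arising from the exponential in \eqref{ht3}, which it is ($\tfrac13 e^{2/3}\approx 0.65$).
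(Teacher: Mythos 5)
Your proof is correct and follows essentially the same route as the paper's: both arguments reduce to the case where the minimum in \eqref{ht} is attained by its second argument, bound the exponent $\alpha p(\E e^{4\beta\|\theta\|_\infty}-1)$ via $e^u-1\le ue^u$ to get $2/3$, and conclude with $\tfrac13 e^{2/3}<1$. The only cosmetic difference is that you split on whether $\alpha p(p-1)\le 1$ while the paper splits on whether $\kappa:=6\beta e^{4\beta\|\psi\|_\infty}\E\|\theta\|_\infty e^{4\beta\|\theta\|_\infty}$ exceeds $1$; the two case analyses are logically equivalent here.
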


\begin{proof}
    Let $$\kappa: = 6\beta e^{4\beta \|\psi\|_\infty}\E \|\theta\|_\infty e^{4\beta \|\theta\|_\infty}.$$ Suppose first that \eqref{ht} holds with $ \kappa>1$. Then $\alpha p(p-1) \leq 1$, so both \eqref{ht1} and \eqref{ht3} hold. Now suppose that  $\kappa \leq 1$. Then from \eqref{ht}, $\kappa \alpha p(p-1)\leq 1$. Since $\beta \geq 0$, $e^{4\beta \|\psi\|_\infty}\geq 1$ and thus \begin{align*}
 4\beta \E \|\theta\|_\infty e^{2\beta \|\theta\|_\infty}\alpha p(p-1) & < \kappa \alpha p(p-1) \leq 1,
    \end{align*}
    so \eqref{ht1} holds. Moreover, using $\kappa \alpha p(p-1)\leq 1$ twice, we get\begin{align*}
        & 2\beta\alpha p (p-1) \exp \big(4\beta \|\psi\|_\infty + \alpha p(\E e^{4\beta \|\theta\|_\infty} - 1)\big)\E \|\theta\|_\infty e^{4\beta \|\theta\|_\infty} \leq \frac{1}{3} \exp \big(\alpha p (\E e^{4\beta\|\theta\|_\infty} - 1)\big) \\
        & < \frac{1}{3} \exp \big(4\beta \alpha p \E \|\theta\|_\infty e^{4\beta \|\theta\|_\infty}\big) \leq \frac{1}{3}\exp \Big(\frac{2e^{-4\beta \|\psi\|_\infty}}{3(p-1)}\Big) \leq \frac{1}{3}e^{\frac{2}{3(p-1)}} <1
    \end{align*}
    for all $p \geq 2$ so \eqref{ht3} holds.
\end{proof}

Let $m ,q > 1$ be fixed and $I$ be a subset of $\{(l,i):i\leq q, l\leq m\}$ that does not contain $(1,m)$ and $(2,m)$. Let $n=1$, and $\kappa, (\kappa_i^l)_{(l,i)\in I}$ be non-negative integers and \begin{align*}
    f_1 = [(\sigma_m^1)^\kappa - (\sigma_m^2)^\kappa] \prod_{(l,i)\in I} (\sigma_i^l)^{\kappa_i^l}
\end{align*}
 and $f_2 \equiv 1$. Then $f_1(s',\Phi^{(0)}(s^{(0)})) = -f_1(s',s^{(0)})$ and thus from Theorem \ref{thm_pure_states} we have \begin{align}\label{eq: one replica at a time}
    \E (s_m^1)^\kappa \prod_{(l,i)\in I} (s_i^l)^{\kappa_i^l} = \E (s_m^2)^\kappa \prod_{(l,i)\in I} (s_i^l)^{\kappa_i^l}.
\end{align}
For an integer $k\geq 1$, let \begin{align*}
   \bar s^{l,(k)}_{i_1, \ldots, i_n}: = \bar \sigma^{(k)}(w, u_l, v_{i_1, \ldots, i_n}): =\int_0^1 (s_{i_1, \ldots, i_n}^l)^k dx_{l, i_1, \ldots, i_n} = \int_0^1\sigma^k(w, u_l, v_{i_1, \ldots, i_n}, x)dx.
\end{align*}
Since the $x$ variables appearing in \eqref{eq: one replica at a time} are independent of each other for different replica or spin indices, we obtain by integrating over all of them that \begin{align}\label{eq: one replica at a time new} 
     \E \bar s_m^{1, (\kappa)} \prod_{(l,i)\in I} \bar s_i^{l, (\kappa_i^l)}= \E \bar s_m^{2, (\kappa)} \prod_{(l,i)\in I} \bar s_i^{l, (\kappa_i^l)}.
\end{align}
In particular, when $\kappa_i^l = \kappa = k$, using \eqref{eq: one replica at a time new} twice and the symmetry in the spin indices, we obtain \begin{align*}
    \E \bar s_1^{1, (k)} \bar s_1^{2, (k)} \bar s_2^{1, (k)} \bar s_2^{2, (k)} = \E \bar s_1^{1, (k)} \bar s_1^{2, (k)} \bar s_2^{3, (k)} \bar s_2^{4, (k)}.
\end{align*}
Defining the overlaps \begin{align*}
    R_{1,2}^{(k)} = R_{1,2}^{(k)}(u_1, u_2) = \E_v \bar s^{1, (k)} \bar s^{2, (k)} = \E_{v, x} \sigma^k(w, u_1, v, x_1) \sigma^k(w, u_2, v, x_2),
\end{align*}
we have from the previous display that $$\E_w \var_u (R_{1,2}^{(k)} | w) = 0,$$ from which we obtain that for almost all $w, u_1, u_2$, $R_{1,2}^{(k)} = C(k,w)$, where $C(k,w)$ is a constant depending only on $k$ and $w$.

\begin{lemma}\label{lem: ps}
 Let $g:\Sigma\to \R$ be a continuous function. Then for almost all $w, u, v \in [0,1]$, $$\E_x g(\sigma(w, u, v, x)) = \E_{u, x}g(\sigma(w, u, v, x)).$$
\end{lemma}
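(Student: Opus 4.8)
This is the \emph{pure state} statement: the quenched single‑spin distribution $\E_x g(\sigma(w,u,v,x))$ should not feel the replica (state) coordinate $u$. Write
\[
\bar g(w,u,v):=\E_x g(\sigma(w,u,v,x)),\qquad \tilde g(w,u,v):=\bar g(w,u,v)-\E_{u'}\bar g(w,u',v),
\]
both bounded and measurable; since $\E_{u,x}g(\sigma(w,u,v,x))=\E_{u'}\bar g(w,u',v)$, the claim is exactly $\tilde g=0$ a.e. The plan is to extract a single covariance identity from Theorem~\ref{thm_pure_states} and then smear it in the site variable.

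First I would apply Theorem~\ref{thm_pure_states} with $q=2$, $n=1$, one non‑cavity spin (index $1$) and one cavity spin (index $2$), taking
\[
f_1=\bigl(g(s^1_2)-g(s^2_2)\bigr)\,g(s^1_1),\qquad f_2\equiv 1+(2\|g\|_\infty)^2 .
\]
Then $f_1(s',\Phi^{(0)}(s^{(0)}))=-f_1(s',s^{(0)})$ and $|f_1|\le f_2$, so, arguing exactly as in the derivation of \eqref{eq: one replica at a time} from Theorem~\ref{thm_pure_states} with $f_2$ constant, $\E\,|\E_{u,x}f_1|=0$ (the outer expectation over $w$ and the site variables). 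Because the replica latents $u_1,u_2$ and the noises $x_{l,i}$ attached to distinct $(l,i)$ are independent, $\E_{u,x}f_1$ factorizes as $\E_u[\tilde g(w,u,v_2)\,\bar g(w,u,v_1)]=\E_u[\tilde g(w,u,v_2)\,\tilde g(w,u,v_1)]$ (the last equality using $\E_u\tilde g\equiv 0$). Hence
\[
\E_u\bigl[\tilde g(w,u,v_0)\,\tilde g(w,u,v_1)\bigr]=0\qquad\text{for a.e.\ }(w,v_0,v_1).
\]

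Now fix such a $w$. For any bounded measurable $\phi:[0,1]\to\R$, Fubini (all integrands bounded by $2\|g\|_\infty$) gives
\[
\E_u\!\left[\Bigl(\int_0^1\tilde g(w,u,v)\phi(v)\,dv\Bigr)^{2}\right]
=\int_{[0,1]^2}\E_u\bigl[\tilde g(w,u,v_0)\tilde g(w,u,v_1)\bigr]\,\phi(v_0)\phi(v_1)\,dv_0\,dv_1=0,
\]
since the integrand vanishes Lebesgue‑a.e.\ on $[0,1]^2$ (it is $0$ off the null set $\{v_0=v_1\}$). Therefore $\int_0^1\tilde g(w,u,v)\phi(v)\,dv=0$ for a.e.\ $u$; letting $\phi$ range over the indicators of rational subintervals of $[0,1]$ and intersecting the exceptional $u$‑sets, we get that for a.e.\ $u$ the bounded function $v\mapsto\tilde g(w,u,v)$ integrates to $0$ over every rational interval, hence equals $0$ for a.e.\ $v$. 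Thus $\tilde g=0$ for a.e.\ $(w,u,v)$, which is the assertion.

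The point to watch is that Theorem~\ref{thm_pure_states} never lets one set $v_0=v_1$, i.e.\ it yields only \emph{off‑diagonal} information; but this is precisely why the $\phi$‑integration is used — the diagonal is Lebesgue‑null in $[0,1]^2$, so the vanishing of $\E_u[\tilde g(w,u,v_0)\tilde g(w,u,v_1)]$ off the diagonal already forces the $L^2(du)$‑norm of $\int_0^1\tilde g(w,u,v)\phi(v)\,dv$ to vanish for every $\phi$, which closes the argument. (For extra robustness one could instead feed Theorem~\ref{thm_pure_states} the test functions $f_1=(g(s^1_{r+1})-g(s^2_{r+1}))\prod_{j\le r}g(s^1_j)$ for all $r\ge0$ and obtain all higher mixed moments $\E_u[\prod_{j=0}^r\tilde g(w,u,v_j)]=0$ off the diagonal by a cumulant‑type induction, but the single second‑moment identity suffices.) Finally, the nominal condition ``$0<|f_1|$'' in Theorem~\ref{thm_pure_states} is accommodated exactly as in the passage preceding \eqref{eq: one replica at a time}: take $f_2$ constant and use that $\E_{u,x}f_2$ is bounded above and below, so $\E|\E_{u,x}f_1/\E_{u,x}f_2|=0$ gives $\E|\E_{u,x}f_1|=0$; and all the almost‑everywhere statements (over $\phi$, and — if one wants the exceptional $(w,u,v)$‑set independent of $g$ — over a countable dense set of continuous $g$'s, extending to general $g$ by uniform approximation) are countable and are intersected before concluding.
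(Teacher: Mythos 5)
Your proof is correct, and it reaches the conclusion by a genuinely different route than the paper. The paper applies Theorem \ref{thm_pure_states} to the monomial test functions $f_1=[(\sigma_m^1)^\kappa-(\sigma_m^2)^\kappa]\prod(\sigma_i^l)^{\kappa_i^l}$, deduces that the overlaps $R_{1,2}^{(k)}(u_1,u_2)=\E_v\,\bar\sigma^{(k)}(w,u_1,v)\bar\sigma^{(k)}(w,u_2,v)$ are a.e.\ constant in $(u_1,u_2)$, and then must pass from this \emph{off-diagonal-in-$u$} information to the diagonal value $\E_v(\bar\sigma^{(k)}(w,u,v))^2$; it does so via the Lebesgue differentiation theorem, and finally upgrades from polynomials to continuous $g$ by uniform approximation. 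You instead feed the theorem the single test function $(g(s_2^1)-g(s_2^2))g(s_1^1)$ built directly from the given continuous $g$, which produces the two-site, single-replica covariance identity $\E_u[\tilde g(w,u,v_0)\tilde g(w,u,v_1)]=0$ for a.e.\ $(w,v_0,v_1)$; here the missing diagonal lives in the $v$-variables rather than the $u$-variables, and you dispose of it by smearing against test functions $\phi(v)$, since $\E_u\bigl[(\int\tilde g\,\phi\,dv)^2\bigr]$ only sees the a.e.-defined off-diagonal kernel. Your computation of $\E_{u,x}f_1$ as the $u$-covariance is right, and the handling of the constant $f_2$ and of the nominal condition $0<|f_1|$ matches exactly what the paper itself does before \eqref{eq: one replica at a time}. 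What your route buys is the avoidance of both the polynomial approximation and the Lebesgue differentiation step, making the argument more self-contained; what it gives up is the intermediate structural fact that the overlaps $R_{1,2}^{(k)}$ equal deterministic-given-$w$ constants $C(k,w)$, which the paper records along the way and which is of independent interest as a statement of replica symmetry. One cosmetic imprecision: the exceptional set in $(v_0,v_1)$ delivered by Theorem \ref{thm_pure_states} is just some Lebesgue-null subset of $[0,1]^2$, not necessarily contained in the diagonal $\{v_0=v_1\}$; your double integral argument only needs nullity, so nothing breaks, but the parenthetical should be phrased accordingly.
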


\begin{proof}
    It suffices to show that the above is true when $g$ is a polynomial. We just showed that for all $k \geq 1$ and almost all $w, u_1, u_2$, \begin{align}\label{eq: ps}
        \E_v \bar \sigma^{(k)}(w, u_1, v) \bar \sigma^{(k)}(w, u_2, v) = C(k, w).
    \end{align}
    Fix $b \in (0,1)$ and $\epsilon > 0$ small enough. Then \begin{align*}
        \E_v\left[\frac{1}{2\epsilon}\int_{b-\epsilon}^{b+\epsilon} \bar \sigma^{(k)}(w, u_1, v)du_1 \cdot \frac{1}{2\epsilon}\int_{b-\epsilon}^{b+\epsilon} \bar \sigma^{(k)}(w, u_2, v)du_2\right] = C(k,w).
    \end{align*}
    Since the spins take values in the compact set $\Sigma$, $\bar \sigma^{(k)}$ is integrable, so by the Lebesgue differentiation theorem, for almost all $w, b, v$ \begin{align*}
        \frac{1}{2\epsilon}\int_{b-\epsilon}^{b+\epsilon} \bar \sigma^{(k)}(w, u, v)du \to \bar \sigma^{(k)} (w, b, v).
    \end{align*}
    By dominated convergence, we have for almost all $b$\begin{align*}
        \E_v (\bar \sigma^{(k)} (w, b, v))^2 = C(k,w).
    \end{align*}
    Together with \eqref{eq: ps} this implies that $\E_v \var_u \bar \sigma^{(k)}(w,u,v) = 0$ from which the statement of the lemma follows.
\end{proof}

We write down an immediate corollary of the above lemma. 
\begin{cor}\label{cor ps}
Let $g:\Sigma^n \to \R$ be a continuous function. For almost all $w, v_1, \ldots, v_n \in [0,1]$ \begin{align*}
\E_{u, x}g\big(\sigma(w, u, v_1, x_1), \ldots, \sigma(w, u, v_n, x_n)\big) = \E_{u, x}g\big(\sigma(w, u_1, v_1, x_1), \ldots, \sigma(w, u_n, v_n, x_n)\big).
\end{align*}
\end{cor}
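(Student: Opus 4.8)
The plan is to reduce to the case that $g$ is a monomial by a standard density argument, and then derive the monomial case directly from the one-variable identity in Lemma~\ref{lem: ps} together with the conditional independence built into the Aldous--Hoover representation. No genuinely new idea is needed beyond Lemma~\ref{lem: ps}; the work is organizational.

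First I would record the reduction. Since $\Sigma$ is compact, so is $\Sigma^n$, and by the Stone--Weierstrass theorem polynomials in $n$ variables are dense in $C(\Sigma^n)$ for the uniform norm, with every polynomial a finite linear combination of monomials $y_1^{k_1}\cdots y_n^{k_n}$. Both sides of the asserted identity are linear in $g$, and since the spins take values in the compact set $\Sigma\subset[-R,R]$ each side is bounded by $\|g\|_\infty$ uniformly in $(w,v_1,\dots,v_n)$. Hence it suffices to prove the identity for each fixed monomial on a full-measure set of tuples $(w,v_1,\dots,v_n)$; intersecting the countably many resulting exceptional null sets (one per monomial) gives a single full-measure set on which the identity holds for all monomials, hence for all polynomials by linearity, hence for all $g\in C(\Sigma^n)$ by the uniform bound and density.

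For a fixed monomial $y_1^{k_1}\cdots y_n^{k_n}$ I would use the notation $\bar\sigma^{(k)}(w,u,v)=\int_0^1\sigma^k(w,u,v,x)\,dx$ from the excerpt. Because, conditionally on $w$, the variables $u=u_1$ and $x_1,\dots,x_n$ are mutually independent, the left-hand side equals $\E_u\prod_{i=1}^n\bar\sigma^{(k_i)}(w,u,v_i)$; because the pairs $(u_i,x_i)$ are independent across $i$ given $w$, the right-hand side equals $\prod_{i=1}^n\E_u\bar\sigma^{(k_i)}(w,u,v_i)$. Now Lemma~\ref{lem: ps} applied with $g(y)=y^{k_i}$ says exactly that for almost every $(w,v_i)$ the function $u\mapsto\bar\sigma^{(k_i)}(w,u,v_i)$ is, for almost every $u$, equal to its own $u$-average. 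Intersecting these full-measure sets over $i\le n$, I can then pull the product outside $\E_u$ in the left-hand expression and identify it with the right-hand expression, which finishes the monomial case and, with the previous paragraph, the proof.

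The only obstacle is mild: it is the bookkeeping of the various ``almost everywhere'' statements. One must make sure the exceptional sets are drawn from a single countable family (the monomials, or equivalently a countable dense subfamily of $C(\Sigma^n)$) so that a common full-measure set of $(w,v_1,\dots,v_n)$ suffices, and one must check that the density step genuinely upgrades the identity from that countable family to all of $C(\Sigma^n)$ --- which it does precisely because, for each fixed good tuple, the two maps $g\mapsto\E_{u,x}g(\cdots)$ are bounded linear functionals on $C(\Sigma^n)$ and therefore agree everywhere once they agree on a dense set. Everything else is immediate from Lemma~\ref{lem: ps}.
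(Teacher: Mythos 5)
Your proposal is correct and follows essentially the same route as the paper: reduce to monomials via Stone--Weierstrass, then combine Lemma~\ref{lem: ps} (each $\bar\sigma^{(k_i)}(w,\cdot,v_i)$ is a.e.\ constant in $u$) with the conditional independence across coordinates to factor the $\E_u$-average of the product. Your extra care with the countable family of exceptional null sets and the bounded-linear-functional upgrade is just a more explicit version of what the paper leaves implicit.
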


\begin{proof}
Again, it suffices to show the lemma when $g$ is a polynomial as any continuous function on the compact set $\Sigma^n$ can be uniformly approximated by polynomials. Let $\kappa_1, \ldots, \kappa_n \geq 0$ be integers. Using Lemma \ref{lem: ps} with $g(x) = x^{\kappa_i}$, for almost all $w, u, v_1, \ldots, v_n$, we have \begin{align*}
    \E_{u, x}\sigma^{\kappa_i}(w, u, v_i, x_i) & = \E_x\sigma^{\kappa_i}(w, u, v_i, x_i).
\end{align*}
Hence, \begin{align*}
    \E_x \prod_{i\leq n}\sigma^{\kappa_i}(w, u, v_i, x_i) & =\prod_{i\leq n} \E_x \sigma^{\kappa_i}(w, u, v_i, x_i)= \prod_{i\leq n}\E_{u, x}\sigma^{\kappa_i}(w, u, v_i, x_i) = \E_{u, x}\prod_{i\leq n} \sigma^{\kappa_i}(w, u_i, v_i, x_i).
\end{align*}
Taking expectation in the variable $u$ in the above display yields the result.
\end{proof}

\subsection{Convergence to the fixed point}\label{sec3.2}

Throughout this section, we assume that $\nu$ is supported on a finite set $\Sigma_L=\{t_1,\ldots, t_{L}\}$ for some $L\geq 1.$ In \eqref{eq:inv_new}, taking  $m=0$, $q = 1$, and functions $f_1, f_2, \ldots, f_{nL}$ as $\1_{s_i = t_l}, 1 \le l \le L, 1 \le i \le n$, we have that for any continuous function $F$,
\begin{align} 
    & \E_{w,v} F\bigl( \left\lbrace\E_{u,x} \1_{s_i = t_l}\right\rbrace_{ l\leq L,  i\leq n}\bigr) \nonumber\\
    & = \E_{w,v} F\Bigl( \Big\lbrace\frac{\E_{u,x} \av \1_{\varepsilon_i = t_l} \exp[\beta\sum_{j\leq n}(\sum_{k\leq \pi_j(\alpha p)}\theta_{k,j}(s_{k,j,1}, \ldots, s_{k,j,p-1}, \varepsilon_j) + \psi(\varepsilon_j))]}{\E_{u,x} \av \exp[\beta\sum_{j\leq n}(\sum_{k\leq \pi_j(\alpha p)}\theta_{k,j}(s_{k,j,1}, \ldots, s_{k,j,p-1}, \varepsilon_j)+ \psi(\varepsilon_j))]}\Big\rbrace_{l\leq L, i\leq n}\Bigr). \label{exp:F_pure1}
\end{align}
Note that $s_{k,j,r}=\sigma(w,u,v_{k,j,r},x_{k,j,r}).$ Let $\hat s_{k,j,r}=\sigma(w,u_{k,j,r},v_{k,j,r},x_{k,j,r})$ and 
\[ \hat \cE^j_\beta (\varepsilon):= \exp \Big( \beta\sum_{k \leq \pi_j(\alpha p)}\theta_{k,j}(\hat s_{k,j,1}, \ldots, \hat s_{k,j,p-1}, \varepsilon_j)+\beta \psi(\varepsilon_j)\Big).\]
If we are in the pure state, then by Corollary~\ref{cor ps}, we can substitute each $s_{k,j,r}$ by $\hat s_{k,j,r}$ in \eqref{exp:F_pure1} to obtain 
\begin{align*}
    & \E_{w,v}F\bigl(\{\P_{u,x}(\sigma(w, u, v_i, x_i) = t_l)\}_{l\leq L, i\leq n}\bigr) =  \E_{w,v}F\Bigl( \Big\lbrace \frac{\av \1_{\varepsilon_i = t_l}\E_{u,x} \prod_{j \le n} \hat \cE^j_\beta (\varepsilon) }{ \quad \av \E_{u,x} \prod_{j \le n} \hat \cE^j_\beta (\varepsilon)}\Big\rbrace_{l\leq L,i\leq n}\Bigr)\\
      & =  \E_{w,v}F\Bigl( \Big\lbrace \frac{\av \1_{\varepsilon_i = t_l} \prod_{j \le n} \E_{u,x}  \hat \cE^j_\beta (\varepsilon) }{ \quad \av  \prod_{j \le n} \E_{u,x} \hat \cE^j_\beta (\varepsilon)}\Big\rbrace_{l\leq L,i\leq n}\Bigr) \\     &=  \E_{w,v}F\Bigl( \Big\lbrace \frac{ \Big( \av \1_{\varepsilon_i = t_l} \E_{u,x}  \hat \cE^i_\beta (\varepsilon) \Big) \Big( \av \prod_{j \le n, j \ne i} \E_{u,x}  \hat \cE^j_\beta (\varepsilon) \Big)  }{ \Big( \av \E_{u,x}  \hat \cE^i_\beta (\varepsilon) \Big) \Big(   \av  \prod_{j \le n, j \ne i} \E_{u,x} \hat \cE^j_\beta (\varepsilon) \Big) }\Big\rbrace_{l\leq L,i\leq n}\Bigr)\\
    &=     \E_{w,v}F\Bigl( \Big\lbrace \frac{\nu(t_l)\E_{u,x} \exp [\beta\sum_{k \leq \pi_i(\alpha p)}\theta_{k,j}(\hat s_{k,i,1}, \ldots, \hat s_{k,i,p-1}, t_l) + \beta \psi(t_l)]}{\int\E_{u,x} \exp[\beta\sum_{k \leq \pi_i(\alpha p)}\theta_{k,i}(\hat s_{k,i,1}, \ldots, \hat s_{k,i,p-1}, \varepsilon) + \beta \psi(\varepsilon)]\nu(d\varepsilon)}\Big\rbrace_{l\leq L,i\leq n}\Bigr),
\end{align*}
where the second equality follows from the fact that $(\hat s_{k,j, r})_{k\geq 1,j\leq n,r\leq p-1}$ are independent conditionally on $w$ and $(v_{k,j, r})_{k\geq 1, j\leq n, r\leq p-1}$. It follows that
\begin{align*}
\bigl\{\P_{u,x}(s_i = t_l)\bigr\}_{l\leq L, i\leq n}\stackrel{d}{=}\Big\lbrace \frac{\nu(t_l)\E_{u,x} \exp[\beta\sum_{k \leq \pi_i(\alpha p)}\theta_{k,i}(\hat s_{k,i,1}, \ldots, \hat s_{k,i,p-1}, t_l)+ \beta \psi(t_l)]}{\int\E_{u,x} \exp[\beta \sum_{k \leq \pi_i(\alpha p)}\theta_{k,i}(\hat s_{k,i,1}, \ldots, \hat s_{k,i,p-1}, \varepsilon) + \beta \psi(\varepsilon)]\nu(d\varepsilon)}\Big\rbrace_{l\leq L,i\leq n}.
\end{align*}
In particular, this equation ensures that $\P_{u,x}(\sigma(w, u, v_i, x_i) = t_l)>0$ a.s. and since $\nu(t)>0$ for $t\in \Sigma_L$, these allow us to define 
\begin{align}\label{density}
X_{w,v}(t)=\frac{1}{\beta}\log \frac{\P_{u,x}(\sigma(w,u,v,x)=t)}{\nu(t)}\;\;\ \text{ for all }t\in \Sigma_L,
\end{align}
where $w,u,v,x$ are i.i.d. uniform on $[0,1].$
In other words, $X_{w,v}$ is the exponent of the relative density of $\P_{u,x}(\sigma(w,u,v,x)\in \cdot)$ with respect to $\nu.$ Consequently, 
\begin{align}\label{eq8}
	\bigl(X_{w,v_i}\bigr)_{ i\leq n}\stackrel{d}{=}\bigl(T_{\nu,\beta, i,\pi_i(\alpha p)}\bigl(\bigl(X_{w,v_{k,i,r}}\bigr)_{k\leq \pi_i(\alpha p),r\leq p-1}\bigr)\bigr)_{i\leq n},
\end{align}
where for all $t \in \Sigma_L$, \begin{align*}
    & T_{\nu,\beta, i,\pi_i(\alpha p)}\bigl(\bigl(X_{w,v_{k,i,r}}\bigr)_{k\leq \pi_i(\alpha p),r\leq p-1}\bigr)(t) \\
    & = \frac{1}{\beta}\log  \frac{\E_{u,x} \exp[\beta\sum_{k \leq \pi_i(\alpha p)}\theta_{k,i}(\hat s_{k,i,1}, \ldots, \hat s_{k,i,p-1}, t)+ \beta \psi(t)]}{\int\E_{u,x} \exp[\beta \sum_{k \leq \pi_i(\alpha p)}\theta_{k,i}(\hat s_{k,i,1}, \ldots, \hat s_{k,i,p-1}, \varepsilon) + \beta \psi(\varepsilon)]\nu(d\varepsilon)}.
\end{align*}
From Theorem \ref{uniqueness}, for $(X_i)_{i\geq 1}$ be i.i.d. sampled from  $\lambda_{\nu,\beta}$ we have that
\begin{align}\label{eq9}
	(X_i)_{i\leq n}\stackrel{d}{=}\bigl(T_{\nu,\beta,i,\pi_i(\alpha p)}(X_{i,1},\ldots,X_{i,(p-1)\pi(\alpha p)})\bigr)_{i\leq n},
\end{align}
where $(X_{i,k})_{i,k\geq 1}$ are i.i.d. sampled from $\lambda_{\nu,\beta}.$ The main result of this section is the following theorem, where we show that sequences in \eqref{eq8} and \eqref{eq9} have the same distribution.
 \begin{theorem} \label{thm fixed point}
Suppose that \eqref{ht} holds, i.e.,  $\min(1, 6\beta e^{4\beta \|\psi\|_\infty}\E \|\theta\|_\infty e^{4\beta \|\theta\|_\infty}) \alpha p(p-1)\leq 1$. Then \begin{align}\label{eqdist}
    (X_i)_{i\geq 1}\stackrel{d}{=}(X_{w,v_i})_{i\geq 1}.
\end{align}
\end{theorem}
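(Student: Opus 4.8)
The plan is to prove \eqref{eqdist} by identifying the directing random measure of the exchangeable sequence $(X_{w,v_i})_{i\geq1}$ with the deterministic fixed point $\lambda_{\nu,\beta}$ of Theorem \ref{uniqueness}; since $(X_i)_{i\geq1}$ is by construction i.i.d.\ from $\lambda_{\nu,\beta}$, this is exactly the assertion. First I would note that, because $v_1,v_2,\dots$ are i.i.d.\ uniform and independent of $w$, the sequence $(X_{w,v_i})_{i\geq1}$ is conditionally i.i.d.\ given $w$ with (random) conditional law $\mu_w\in\Pr_1(\X)$, which by \eqref{density} and \eqref{l_infty bound} satisfies $\E\int\|f\|_\infty\,\mu_w(df)<\infty$. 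The next step is to extract from the cavity identity \eqref{eq8} a distributional fixed-point relation for $\mu_w$. The identity \eqref{eq8} holds for every $n$, and (after substituting the conditional densities \eqref{density}) its right-hand side is, conditionally on $w$, an i.i.d.\ sequence with law $\T_{\nu,\beta}(\mu_w)$, obtained by applying the operator \eqref{opeartor:eq1} with freshly resampled disorder (Poisson numbers, $\theta$'s, cavity coordinates) to conditionally i.i.d.\ copies of $X_{w,\cdot}$. Since the directing measure of an exchangeable sequence is a measurable functional of that sequence, equality in law of the two sequences (assembled over all $n$ by Kolmogorov's extension theorem) forces equality in law of their directing measures, i.e.
\[
\mu_w \;\stackrel{d}{=}\; \T_{\nu,\beta}(\mu_w)
\]
as $\Pr_1(\X)$-valued random variables; writing $M:=\mathcal L(\mu_w)$, this says that $M$ is invariant under the pushforward by $\T_{\nu,\beta}$.

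The core of the argument is then to iterate this invariance. Testing the relation $\mu_w\stackrel d=\T_{\nu,\beta}(\mu_w)$ against functionals of the form $\lambda\mapsto\int\phi\,d\lambda^{\otimes n}$ and using $M=(\T_{\nu,\beta})_*M$ repeatedly yields, for all $n\geq1$ and $R\geq1$,
\[
\E\!\left[\mu_w^{\otimes n}\right] \;=\; \E_{\lambda\sim M}\!\left[\bigl(\T_{\nu,\beta}^{\circ R}(\lambda)\bigr)^{\otimes n}\right],
\]
where $\T_{\nu,\beta}^{\circ R}$ is the $R$-fold composition. I would then show that $\T_{\nu,\beta}^{\circ R}(\lambda)\to\lambda_{\nu,\beta}$ in $W_1$ as $R\to\infty$ for \emph{every} $\lambda\in\Pr_1(\X)$: by Lemma \ref{lem hi_temp}, condition \eqref{ht} places us in one of the two regimes of Theorem \ref{uniqueness}. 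In the high-temperature regime this is Banach iteration, since the proof of Theorem \ref{uniqueness} shows $\T_{\nu,\beta}$ is a strict contraction on $(\Pr_1(\X),W_1)$ there. In the subcritical regime $\alpha p(p-1)\leq1$, unwinding the definition shows that $\T_{\nu,\beta}^{\circ R}(\lambda)$ is the law of the root value of the finite Galton--Watson tree $\G_{\varnothing}$ of Lemma \ref{gen_fixed_pt} truncated at depth $R$, with $\lambda$-distributed values assigned at the truncation frontier and the deterministic value $f_0$ (the value forced at vertices with no offspring, as in the proof of Theorem \ref{uniqueness}) assigned elsewhere; on $\{\mathrm{ht}(\G_{\varnothing})\leq R\}$, an event of probability tending to $1$, this coincides with the root value of the full a.s.\ finite tree, whose law is $\lambda_{\nu,\beta}$ by Lemma \ref{gen_fixed_pt}, and by \eqref{l_infty bound} the root value is bounded in sup-norm by an integrable quantity independent of the leaf values, so dominated convergence closes this case. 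Finally, the uniform estimate $\int\|f\|_\infty\,\T_{\nu,\beta}^{\circ R}(\lambda)(df)\leq 2\alpha p\,\E\|\theta\|_\infty+2\|\psi\|_\infty$ (from \eqref{l_infty bound}, or \eqref{eq2}) lets one pass the limit $R\to\infty$ through $\E_{\lambda\sim M}$ in the displayed identity, giving $\E[\mu_w^{\otimes n}]=\lambda_{\nu,\beta}^{\otimes n}$ for every $n$ and hence $\mathcal L\bigl((X_{w,v_i})_{i\geq1}\bigr)=\lambda_{\nu,\beta}^{\otimes\infty}=\mathcal L\bigl((X_i)_{i\geq1}\bigr)$.

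The step I expect to be the main obstacle is the passage from \eqref{eq8} to the distributional fixed-point relation for $\mu_w$: one must check that the operator appearing in \eqref{eq8} literally coincides with \eqref{opeartor:eq1} once the conditional densities \eqref{density} are inserted, that resampling the disorder makes the coordinates on the right-hand side of \eqref{eq8} conditionally i.i.d.\ given $w$ (and not merely exchangeable), and that the family of finite-$n$ identities assembles into a single statement about the infinite sequence and its almost surely unique directing measure. Everything after that is a soft iteration-and-limit argument depending only on the contraction estimate of Theorem \ref{uniqueness} or the a.s.\ finiteness of $\G_{\varnothing}$, together with the a priori sup-norm bound \eqref{l_infty bound}.
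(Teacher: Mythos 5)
Your argument is correct, and it takes a genuinely different route from the paper's. The paper splits into two separate proofs: in the subcritical regime it builds a stationary tree-indexed process $(Y_\i)_{\i\in\V}$ via Kolmogorov consistency, with level-$r$ marginals matching $(\varphi(w,v_\i))$, and uses the a.s.\ finiteness of $\G_\varnothing$ to conclude that the root value is determined by the disorder alone and hence has law $\lambda_{\nu,\beta}$; in the high-temperature regime it bounds $D(n)=W_1\bigl(\mathcal L((X_i)_{i\le n}),\mathcal L((X_{w,v_i})_{i\le n})\bigr)$ directly, obtaining $D(n)\le\gamma\,\E D((p-1)\pi(\alpha p n))$ from \eqref{almost_contraction} and iterating. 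You instead condition on $w$ to reduce the exchangeable fixed-point equation \eqref{eq8} to a fixed-point identity in law for the random directing measure $\mu_w$, namely $M=(\T_{\nu,\beta})_*M$ with $M=\mathcal L(\mu_w)$, and then finish by showing the deterministic iterates $\T_{\nu,\beta}^{\circ R}(\lambda)$ converge to $\lambda_{\nu,\beta}$ from \emph{any} starting point --- by Banach iteration under \eqref{ht1}, or by the depth-$R$ truncation of $\G_\varnothing$ together with \eqref{l_infty bound} in the subcritical case. This unifies the two regimes behind a single soft limit argument and makes transparent that the only inputs are the cavity identity and global attractivity of the unique fixed point; what it costs is some extra measure-on-measures bookkeeping (Borel measurability of $\T_{\nu,\beta}$ and of $\lambda\mapsto\int\Phi\,d\lambda^{\otimes n}$, de Finetti for $\X$-valued sequences, and the check that $\mu_w\in\Pr_1(\X)$ a.s., which follows from \eqref{density} and \eqref{l_infty bound}), all of which the paper's more hands-on constructions avoid. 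The step you flag as the main obstacle does go through: the operator in \eqref{eq8} coincides with \eqref{opeartor:eq1} once \eqref{density} is substituted (the $\E_{u,x}$ average over the $\hat s_{k,i,r}$, which are conditionally independent given $w$ and the $v$'s, produces exactly $\la\cdot\ra_{\beta,X}$ with $X$'s equal to the $X_{w,v_{k,i,r}}$), and since the disorder and the cavity labels $v_{k,i,r}$ are disjoint across $i$, the right-hand side of \eqref{eq8} is indeed conditionally i.i.d.\ given $w$ with conditional law $\T_{\nu,\beta}(\mu_w)$, so equality in law of the two infinite exchangeable sequences forces equality in law of their directing measures. One small precision worth keeping in a write-up: in the subcritical step the law of the truncated root conditioned on $\{\mathrm{ht}(\G_\varnothing)\le R\}$ is not $\lambda_{\nu,\beta}$; the correct statement is the coupling bound $W_1(\T_{\nu,\beta}^{\circ R}(\lambda),\lambda_{\nu,\beta})\le\E\bigl[\1_{\mathrm{ht}(\G_\varnothing)>R}\cdot(4\sum_{k\le\pi_\varnothing(\alpha p)}\|\theta_k\|_\infty+4\|\psi\|_\infty)\bigr]\to0$, which is what your dominated-convergence remark supplies.
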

The proof of this theorem is split into the subcritical (i.e., $\alpha p(p-1) \leq 1)$ and high temperature (i.e., $ 6\beta e^{4\beta\|\psi\|_\infty}\E \|\theta\|_\infty e^{4\beta \|\theta\|_\infty} \alpha p(p-1)\leq 1$) regimes.

\begin{proof}[Proof of Theorem \ref{thm fixed point} (Subcritical regime)]
    Let us regard the random functions $X_{w,v_i}$ and $X_i$ on $\Sigma_L$ as vectors in $\R^L$. For brevity, let us write \eqref{eq8} as \begin{align}\label{eq8.1}
        (X_{w,v_i})_{i\leq n} \stackrel{d}{=} \big(g(\xi_i, X_{w,v_{i,1}}, X_{w,v_{i,2}}, \ldots, X_{w,v_{i,(p-1)\pi_i(\alpha p)}})\big)_{i\leq n},
    \end{align}
    where $g$ is some fixed, measurable function, and the i.i.d. ${\rm Unif}([0,1])$ random variables $\xi_i$, drawn independently of $(\pi_i(\alpha p))_{i\leq n}$, capture the randomness in $(\theta_{k,i})_{k\geq 1, i\leq n}$. When $\pi_i(\alpha p) = 0$, we shall understand \begin{align*}
        g(\xi_i, X_{w,v_{i,1}}, X_{w,v_{i,2}}, \ldots, X_{w,v_{i,(p-1)\pi_i(\alpha p)}}) & =  y_0: = \Big(\psi(t_l) - \frac{1}{\beta}\log \Big[\sum_{\ell=1}^L e^{\beta \psi(t_{\ell})}\nu(t_{\ell})\Big]\Big)_{1\leq l\leq L}.
    \end{align*}
    Note that from \eqref{density}, we can write $X_{w,v_i}= \varphi(w,v_i)$ for some measurable function $\varphi$. Clearly, $(X_{w,v_i})_{i\geq 1}$ is exchangeable, and by de Finetti's Theorem (\cite[Theorem~1.6]{Panchenko2013TheSM}), any exchangeable sequence can be expressed in this form.

    Recall, from Lemma \ref{gen_fixed_pt} the set of vertices $\V$. Associated with the function $\varphi$, let us consider a process $(Y_\i)_{\i \in \V}$ that satisfies the following properties: \begin{enumerate}
        \item[(i)]$(\xi_\i, \pi_\i(\alpha p))_{\i\in \V}$ are i.i.d. copies of $(\xi, \pi(\alpha p))$,
        \item[(ii)] almost surely, for all $\i \in \V$, $Y_\i = g(\xi_\i, Y_{\i 1}, Y_{\i 2} ,\ldots, Y_{\i (p-1)\pi_\i(\alpha p)})$, and
        \item[(iii)] for each $r\geq 1$, $(Y_\i)_{\i \in \V_{=r}} \stackrel{d}{=} (\varphi(w,v_\i))_{\i \in \V_{=r}}$, where $w, v_\i$ are i.i.d. ${\rm Unif}([0,1])$. Also, $(Y_\i)_{\i \in \V_{=r}}$ is independent of $(\xi_\i, \pi_\i(\alpha p))_{\i \in \V_{=r}}$.
    \end{enumerate}
    We claim that such a process exists. For a level $r\geq 1$, let $(Y_\i)_{\i\in \V_{=r}}$ be specified by (iii) and $(\xi_\i, \pi_\i(\alpha p))_{\i \in \V_{\leq r}}$ be i.i.d. copies of $(\xi, \pi(\alpha p))$. For levels $k = r-1, r-2, \ldots, 0$, we recursively define $(Y_\i)_{\i\in V_{=k}}$ using (ii). Since $\varphi$ satisfies \eqref{eq8.1}, we have for each $0\leq k\leq r-1$, \begin{align*}
        (Y_\i)_{\i \in \V_{=k}} = \big( g(\xi_\i, Y_{\i 1}, Y_{\i 2} ,\ldots, Y_{\i (p-1)\pi_\i(\alpha p)})\big)_{\i \in \V_{=k}} \stackrel{d}{=} (\varphi(w,v_\i))_{\i \in \V_{=k}},
    \end{align*}
    so (iii) is satisfied for each $k \leq r-1$. Thus, for any $\i\in \V_{\leq r-1}$, $Y_\i$ is a function of $\{(\xi_{\i'}, \pi_{\i'}(\alpha p)): \i' = \i \text{ or } \i' \prec \i, |\i'| \leq r-1\}$ and $\{Y_{\i'}:|\i'| = r, \i' \prec \i\}$. A finite collection of the variables $(Y_\i)_{\i \in \V_{\leq r}}$ and $(\xi_\i, \pi_\i(\alpha p))_{\i \in \V_{\leq r}}$ specifies a distribution $\mu_r$ on $(\R^L)^{\V_{\leq r}} \times ([0,1]\times \Z_+)^{\V_{\leq r}}$ and the coordinate maps on this probability space satisfy (i), (ii) and (iii) for all $|\i'| \leq r$. By Kolmogorov's consistency theorem, there exists a probability measure $\mu$ on $(\R^L)^\V\times ([0,1] \times \Z_+)^\V$ whose coordinate maps, which we still call $(Y_\i)_{\i \in \V}, (\xi_\i, \pi_\i(\alpha p))_{\i \in \V}$ by an abuse of notation, satisfy (i), (ii) and (iii). 

    Since $\alpha p(p-1) \leq 1$, the tree $\G_\varnothing$ is almost surely finite. On this event we have that for each leaf ${\bm \ell}$ of $\G_\varnothing$, $Y_{\bm \ell} = y_0$ and thus, $Y_\varnothing$ is uniquely defined in terms of $(\xi_\i, \pi_\i(\alpha p))_{\i \in \G_\varnothing}$. For each vertex $j \in \N$ adjacent to $\varnothing$, we have that the tree rooted at $j$, i.e., the connected component of $j$ comprising vertices of generations $\geq 1$, call it  $\G_j$, is finite almost surely. So by a similar argument as that presented for $Y_\varnothing$, we have that $Y_j$ is uniquely defined by $(\xi_\i, \pi_\i(\alpha p))_{\i \in \G_j}$ and has the same distribution as that of $Y_\varnothing$. Because of (i), we obtain that $(Y_j)_{j\geq 1}$ are i.i.d. By (ii), we have that $Y_\varnothing = \T_{\nu,\beta}(Y_\varnothing)$, where by an abuse of notation, we identified $Y_\varnothing$ with its law. By Theorem \ref{uniqueness}, we have that the law of $Y_\varnothing$ is $\lambda_{\nu,\beta}$, and thus \begin{align*}
        (X_j)_{j\geq 1} \stackrel{d}{=} (Y_j)_{j\geq 1} \stackrel{d}{=}(\varphi(w,v_j))_{j\geq 1} = (X_{w,v_j})_{j\geq 1}.
    \end{align*}
\end{proof}

\begin{proof}[Proof of Theorem \ref{thm fixed point} (High temperature regime)]
	Let $n\geq 1$ be fixed. Consider the complete and separable metric space $(\X^n,d_n)$ with the metric $$d_n(f,f'):=\sum_{i=1}^n \|f_i-f_i'\|_\infty$$ for $f=(f_1,\ldots,f_n)$ and $f'=(f_1',\ldots,f_n')\in \X^n.$ Denote by $\mathcal{B}_n$ the corresponding Borel $\sigma$-field. For probability measures $Q_1$ and $Q_2$ defined on $(\X^n,\mathcal{B}_n)$ with $\int d_n(f,0)Q_i(df)<\infty$ for $i=1,2$, we define the Wasserstein $1$-distance between them as 
	$$
	W_1(Q_1,Q_2)=\inf_{\Pi\in \Pi(Q_1,Q_2)}\int d_n(f,f')d\Pi(f,f'),
	$$
	where $\Pi(Q_1,Q_2)$ is the collection of all couplings between $Q_1$ and $Q_2.$ Let $Q$ and $Q'$ be the laws of $(X_i)_{i\leq n}$ and $\bigl(X_{w,v_i}\bigr)_{ i\leq n}$ respectively and $D(n) = W_1(Q,Q')$. Note that from \eqref{eq8} and \eqref{eq9},
	$$
	Q=\sum_{m}p_mP_{m}\;\;\mbox{and}\;\;Q'=\sum_mp_mP_m'
	$$
	 for $m=(m_1,\ldots,m_n)\in \Z_+^n$, where $p_m=\prod_{i=1}^n\P(\pi(\alpha p)=m_i)$  and $P_m$ and $P_m'$ are the laws of 
	$$
	\bigl(T_{\nu,\beta,i,m_i}((X_{i,(k-1)m_i+r})_{k\leq m_i,r\leq p-1})\bigr)_{i\leq n}\;\;\mbox{and}\;\;\bigl(T_{\nu,\beta,i,m_i}((X_{w,v_{k,i,r}})_{k\leq m_i,r\leq p-1}\bigr)_{i\leq  n}
	$$
	respectively. For any integers $m_1,\ldots,m_n\geq 0$, let $$Z=(Z_{k,i,r},Z_{k,i,r}')_{k \leq {m_i},i\leq n,r\leq p-1}$$ be an arbitrary coupling between $(X_{(k-1)(p-1)+r,i})_{{k \leq {m_i},i\leq n,r\leq p-1}}$ and $(X_{w,v_{k,i,r}})_{k \leq {m_i},i\leq n,r\leq p-1}$. Using \eqref{eq8} and \eqref{eq9}, we see from \eqref{almost_contraction} that
\begin{align*}
	W_1(P_m,P_m')&\leq \E\sum_{i=1}^n \|T_{\nu,\beta,i,m_i}\bigl((Z_{k,i,r})_{k \leq {m_i},i\leq n,r\leq p-1}\bigr) - T_{\nu,\beta,i,m_i}\bigl((Z_{k,i,r}')_{k \leq {m_i},i\leq n,r\leq p-1}\bigr)\|_\infty\\
	&\leq \gamma \E\sum_{i=1}^n \sum_{k=1}^{m_i}\sum_{r=1}^{p-1}\|Z_{k,i,r}-Z'_{k,i,r}\|_\infty,
\end{align*}
where $\gamma: = 4\beta\E\|\theta\|_\infty e^{2\beta \|\theta\|_\infty}$. This implies that
\begin{align*}
	W_1(P_m,P_m')\leq \gamma D\Bigl((p-1)\sum_{i=1}^nm_i\Bigr).
\end{align*}
Thus, together with the following convexity property exhibited by the Wasserstein 1-distance
	$$
	W_1\Bigl(\sum_{m}p_mP_{m},\sum_{m}p_mP_{m}'\Bigr)\leq \sum_{m}p_m W_1(P_m,P_m'),
	$$
we obtain that $D(n)\leq \gamma \E D((p-1)\pi(\alpha p n)).$

Let $\delta =4\alpha p\E\|\theta\|_\infty + 4\|\psi\|_\infty$. We claim that $D(n) \leq (\gamma \alpha p (p-1))^j\delta n$ for all $j \geq 0$ and $n\geq 1$. To see this, note that from \eqref{eq8}, \eqref{eq9} and \eqref{l_infty bound} that $D(n)\leq \delta n$, so our claim is true for $j=0$. Suppose that our claim is true for all $j = 0, \ldots, j_0$ and $n\geq 1$. Then \begin{align*}
    D(n) \leq \gamma \E D\bigl((p-1)\pi(\alpha p n)\bigr) \leq \gamma (\gamma \alpha p(p-1))^{j_0}\delta (p-1)\E \pi(\alpha p n) = (\gamma \alpha p (p-1))^{j_0+1}\delta n.
\end{align*}
Note that by our assumption and Lemma \ref{lem hi_temp}, $\gamma \alpha p (p-1) < 1$. Thus taking $j\to \infty$ above, we have that $D(n) = 0$ for all $n \geq 1$, concluding the proof of the lemma.
\end{proof}

We finish this section by identifying the limiting joint density of the spins. 

\begin{cor}\label{rsansatz}
    Let $\Sigma$ be a collection of finitely many points and $\nu(t) > 0$ for all $t \in \Sigma$. Suppose that \eqref{ht} holds. Then for any $n\geq 1$, the following sequence
    \begin{align*}
    \Big( \Bigl(\frac{1}{\beta}\log \frac{G_{N,\beta, \nu}(\sigma_1 = t_1)}{\nu(t_1)}\Bigr)_{t_1\in \Sigma}, \ldots,  \Bigl(\frac{1}{\beta}\log \frac{G_{N,\beta, \nu}(\sigma_n = t_n)}{\nu(t_n)}\Bigr)_{t_n\in \Sigma}\Big) 
    \end{align*}
converges weakly to $$\big((X_1(t_1))_{t_1\in \Sigma}, \ldots, (X_n(t_n))_{t_n\in \Sigma}\big),$$
where $(X_i)_{i\geq 1}$ are i.i.d.\ samples from $\lambda_{\nu,\beta}$, the unique fixed point of $\T_{\nu,\beta}$ guaranteed by Theorem~\ref{thm1}.
\end{cor}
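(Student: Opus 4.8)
The plan is to prove the apparently stronger claim that the probability vectors $h_{N,i}:=\bigl(G_{N,\beta,\nu}(\sigma_i=t)\bigr)_{t\in\Sigma}$, $i\le n$, converge jointly in law to $\bigl(\nu(t)e^{\beta X_i(t)}\bigr)_{t\in\Sigma}$, $i\le n$, and then to transport this through the map $(a_t)_t\mapsto\bigl(\beta^{-1}\log(a_t/\nu(t))\bigr)_t$. Since $\Sigma$ is finite, each $h_{N,i}$ takes values in the probability simplex over $\Sigma$, which is compact, and polynomials are uniformly dense in the continuous functions on a compact set; hence it suffices to show that for every choice of exponents $(k_{i,t})_{i\le n,\,t\in\Sigma}\subset\Z_+$ the joint moment $\E\prod_{i\le n}\prod_{t\in\Sigma}G_{N,\beta,\nu}(\sigma_i=t)^{k_{i,t}}$ converges to the corresponding moment of the target vector.

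The first step is to rewrite these moments through the annealed multi‑replica measure $\mu_{N,\beta,\nu}$. Enumerate the multiset of pairs $(i,t)$ with $(i,t)$ occurring $k_{i,t}$ times as $(i_1,t_1),\dots,(i_K,t_K)$, where $K=\sum_{i,t}k_{i,t}$, and attach the distinct replica index $j$ to the $j$‑th pair. Because the events for distinct replicas depend on disjoint coordinates under the product Gibbs measure,
\begin{align*}
\E\prod_{i,t}G_{N,\beta,\nu}(\sigma_i=t)^{k_{i,t}}
= \E\, G_{N,\beta,\nu}^{\otimes K}\bigl(\{\sigma_{i_j}^{j}=t_j,\ j\le K\}\bigr)
= \mu_{N,\beta,\nu}\bigl(\{\sigma_{i_j}^{j}=t_j,\ j\le K\}\bigr).
\end{align*}
Since $\Sigma$ is finite, this cylinder event is clopen, hence a continuity set for every $\mu\in\M_{\beta,\nu}$. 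Passing to a weakly convergent subsequence $\mu_{N_\ell,\beta,\nu}\to\mu$ (such subsequences exist since the relevant product space is compact) and using the Aldous--Hoover representation $\mu=\mathrm{law}\bigl(\sigma(w,u_l,v_i,x_{l,i})\bigr)_{i,l}$, I group the $K$ replica factors by the value of $i_j$: for a fixed spin index $i$ the factors carry pairwise distinct replica indices, so conditionally on $w$ and $v_i$ the variables $\sigma(w,u_j,v_i,x_{j,i})$ are i.i.d.\ with law $\P_{u,x}(\sigma(w,u,v_i,x)\in\cdot)$, while distinct $i$'s involve independent $v_i$ and only share $w$. This yields
\begin{align*}
\lim_{\ell\to\infty}\E\prod_{i,t}G_{N_\ell,\beta,\nu}(\sigma_i=t)^{k_{i,t}}
= \E_{w,(v_i)_{i\le n}}\prod_{i\le n}\prod_{t\in\Sigma}\P_{u,x}\bigl(\sigma(w,u,v_i,x)=t\bigr)^{k_{i,t}}.
\end{align*}

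It remains to identify this limit. By \eqref{density} one has $\P_{u,x}(\sigma(w,u,v_i,x)=t)=\nu(t)e^{\beta X_{w,v_i}(t)}$, the positivity making this well defined having been established in Section~\ref{sec3.2}; thus the limit equals $\E\prod_{i,t}\bigl(\nu(t)e^{\beta X_{w,v_i}(t)}\bigr)^{k_{i,t}}$, and Theorem~\ref{thm fixed point} (applicable under the hypothesis \eqref{ht}) allows us to replace $(X_{w,v_i})_{i\le n}$ by i.i.d.\ samples $(X_i)_{i\le n}$ from $\lambda_{\nu,\beta}$ without changing this expectation. In particular the limit does not depend on the subsequence, so the whole sequence of moments converges and, all vectors lying in the compact simplex, $(h_{N,i})_{i\le n}$ converges weakly to $\bigl(\nu(\cdot)e^{\beta X_i(\cdot)}\bigr)_{i\le n}$. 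Finally, the map $(a_{i,t})\mapsto\bigl(\beta^{-1}\log(a_{i,t}/\nu(t))\bigr)$ is continuous on $\{a_{i,t}>0\ \text{for all }i,t\}$, a set of full measure under the limit because each $X_i$ is a.s.\ real‑valued; the continuous mapping theorem then delivers the weak convergence of $\bigl(\beta^{-1}\log(G_{N,\beta,\nu}(\sigma_i=t)/\nu(t))\bigr)_{i\le n,\,t\in\Sigma}$ to $\bigl(X_i(t)\bigr)_{i\le n,\,t\in\Sigma}$, as claimed. The only point that needs genuine care is the combinatorial bookkeeping in the second step, namely making the replica assignment explicit and checking that the grouping by spin index reconstitutes the products $\prod_t\P_{u,x}(\cdot)^{k_{i,t}}$; this rests entirely on the independence of the $u$‑ and $x$‑variables across distinct replicas in the Aldous--Hoover array, after which everything else is a routine assembly of compactness, the fact that discrete cylinder sets are continuity sets, and Theorem~\ref{thm fixed point}.
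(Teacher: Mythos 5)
Your proof is correct and follows essentially the same route as the paper's: both reduce the claim to the joint weak convergence of the vectors $\big(G_{N,\beta,\nu}(\sigma_i=t)\big)_{i\le n,\,t\in\Sigma}$ to $\big(\nu(t)e^{\beta X_i(t)}\big)_{i\le n,\,t\in\Sigma}$ via Theorem~\ref{thm fixed point} and then conclude with the continuous mapping theorem. The only difference is that where the paper simply cites \cite[Lemma 1]{Panchenko2010SPINGM} for the convergence of expectations of continuous functions of Gibbs averages of finitely many spins, you re-derive the special case you need (convergence of the joint moments of the marginal probabilities) directly from the multi-replica cylinder events of $\mu_{N,\beta,\nu}$ and the Aldous--Hoover representation, which is precisely the content of that lemma.
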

\begin{proof}
    Let $n \ge 1$ be fixed and $m$ be the cardinality of $\Sigma$. $F:\R^{mn} \to \R$ and $f_{1,1}, \ldots, f_{m,n}:\Sigma^n\to \R$ be continuous functions. From \cite[Lemma 1]{Panchenko2010SPINGM} and Theorem \ref{thm fixed point} we obtain that
    \begin{align*}
        \lim_{N\to \infty} \E F \big(\la f_{1,1}(\sigma_1, \ldots, \sigma_n)\ra_{N,\beta,\nu}, \ldots, \la f_{m,n}(\sigma_1, \ldots, \sigma_n)\ra_{N,\beta,\nu}\big) = \E F\big(\la f_{1,1} \ra_{\beta,X} , \ldots, \la f_{m,n} \ra_{\beta,X} \big).
    \end{align*}
    For $t_{1,1}, \ldots, t_{m,n} \in \Sigma$ let $f_{i,j}(\sigma_1, \ldots, \sigma_n) = \1_{\sigma_j = t_{i,j}}$. Then the above display implies the joint convergence \begin{align*}
    \big((G_{N,\nu,\beta}(\sigma_j = t_{i,j}))_{i\leq m, j\leq n}\big) \stackrel{d}{\to} \big((e^{\beta X_j(t_{i,j})}\nu(t_{i,j}))_{i\leq m, j\leq n}\big),
\end{align*}
which by the continuous mapping theorem yields the desired result.
\end{proof}

\section{Free energy of the dilute model}\label{fe_dilute_model}
This section is dedicated to the proof of Theorem \ref{thm1}. We note that the first part has been done in Theorem \ref{uniqueness}. Thus, we shall focus on proving the second part, which consists of three major steps. In the first step,  we assume that $\nu$ is supported on a finite set and obtain an expression for the limiting free energy. In the second step, we extend the validity of the expression to the situation when $\nu$ is supported on a compact set. In the final step, we establish the concentration of the free energy around its mean, thereby completing the proof of the $L^1$ convergence of the free energy to the expression $\cP_{\nu, \beta}(\lambda_{\nu, \beta})$.

\subsection{Finite support case}
    Note that since the perturbation in $\hat H_N$ (defined in \eqref{pert_H}) is $o(N)$, the free energies corresponding to the Hamiltonians $\hat H_N$ and $H_N$ are the same in the limit. Hence, with a slightly abuse of notation, we let $F_N(\beta)$ and $Z_N(\beta)$ be the free energy and partition function corresponding to $\hat H_N$ at temperature $\beta > 0$. We note that \begin{align*}
		\liminf_{N\to \infty} \E \log\frac{Z_{N+1}(\beta)}{Z_N(\beta)} \leq \liminf_{N\to \infty} F_N(\beta) \leq \limsup_{N\to \infty} F_N(\beta) \leq \limsup_{N\to \infty} \E \log\frac{Z_{N+1}(\beta)}{Z_N(\beta)}.
	\end{align*}
To establish our proof, it suffices to show that for any subsequence $(N_k)_{k\geq 1}$, 
$\E \log{Z_{N_k+1}(\beta)}/{Z_{N_k}}(\beta)$ converges to the same limit. The proof of this part is similar to that of \cite[Lemma 4]{Panchenko2010SPINGM}, so we shall only provide a brief sketch here. 

Fist of all, note that $\E \log{Z_{N+1}(\beta)}/{Z_N}(\beta)$ is bounded due to \eqref{bdd theta} and the boundedness of $\psi$. So, by relabelling an appropriate convergent subsequence, we can assume without loss of generality that $\E \log{Z_{N+1}(\beta)}/{Z_N}(\beta)$ converges and further that $(\mu_{N,\beta})_{N\geq 1}$ converges weakly to some $\mu\in \M_{\beta,\nu}$. The basic idea is to use the Aizenman-Sims-Starr scheme \cite{Aizenman_2003} by splitting the perturbed Hamiltonian $\hat H_{N+1}$ into three parts,   \begin{align*}
		\hat H_{N+1}(\sigma) & = \sum_{k\leq \pi(\alpha(N-p+1))}\theta_k(\sigma_{I(k,1)}, \ldots, \sigma_{I(k, p)}) + \sum_{i=1}^N\psi(\sigma_i)\\
  &\qquad + \sum_{k\leq \pi(\alpha p)}\hat \theta_k(\sigma_{J(k, 1)}, \ldots, \sigma_{J(k,p-1)}, \sigma_{N+1}) +\psi(\sigma_{N+1}) \\
  &\qquad + \sum_{l\leq \pi(c_{N+1})}\log \av_\tau \exp\sum_{k\leq \pi_l(\alpha p)}\theta_{l,k}(\sigma_{I(l,k,1)}, \ldots, \sigma_{I(l,k,p-1)}, \tau)
	\end{align*}
 for $\sigma\in \Sigma^{N+1},$
	where $(\theta_k)_{k\geq 1}$, $(\hat \theta_k)_{k\geq 1}$, $(\theta_{l,k})_{l,k\geq 1}$ are copies of $\theta$ and $(I_k)_{k\geq 1}$, $(J_k)_{l,k\geq 1}$, $(I_{l,k})_{l,k\geq 1}$ are uniformly chosen from $\binom{[N]}{p}$, $\binom{[N]}{p-1}$ and $\binom{[N+1]}{p-1}$ respectively, all independently of each other. Since with a high probability, all the indices appearing in $\cup_{l, k} I_{l,k}$ are not larger than $N$ and $|c_{N+1} - c_N| \to 0$, we can replace the $\pi(c_{N+1})$ in the perturbation term of the above display by $\pi(c_N)$ for $\hat H_{N+1}$ without altering the free energy asymptotically. Letting $\rho = (\sigma_1, \ldots, \sigma_N)$ and $\varepsilon = \sigma_{N+1}$, we let $H_N'(\rho)$ be defined as follows:
 \begin{align*}
     H_N'(\rho) & = \sum_{k\leq \pi(\alpha(N-p+1))}\theta_k(\rho_{I(k,1)}, \ldots, \rho_{I(k, p)}) + \sum_{i=1}^N\psi(\rho_i) \\
     & \qquad + \sum_{l\leq \pi(c_N)}\log \av_\tau \exp\sum_{k\leq \pi_l(\alpha p)}\theta_{l,k}(\rho_{I(l,k,1)}, \ldots, \rho_{I(l,k,p-1)}, \tau),
 \end{align*}
 where $I_{l,k}$ are now uniformly chosen samples from $\binom{[N]}{p-1}$.  Let $Z_N'(\beta)$ be the corresponding partition function. This enables us to write \begin{align*}
		\E \log \frac{Z_{N+1}(\beta)}{Z_N'(\beta)} = \E \log \Big\langle \av  \exp\Big(\beta\sum_{k\leq \pi(\alpha p)}\hat \theta_k(\rho_{J(k, 1)}, \ldots, \rho_{J(k,p-1)}, \varepsilon) + \beta \psi(\varepsilon)\Big) \Big\rangle_\beta',
	\end{align*}
    where $\langle \cdot \rangle'$ denotes the Gibbs average corresponding to $H_N'$. Note that $\hat H_N$ differs from $H_N'$ by $\pi(\alpha (p-1))$ many terms, so for any function $g$ depending on finitely many spins, $\E \langle g\rangle_\beta$ and $\E \langle g\rangle'_\beta$ are asymptotically equal, as guaranteed by \cite[Lemma 2]{Panchenko2010SPINGM}. Thus, conditioned on $\pi(\alpha p)$ and $\hat \theta_k$, we can replace the average $\langle \cdot \rangle'_\beta$ in the display above by $\langle \cdot \rangle_\beta$ without affecting the limit. In addition, noting that the indices appearing in $\cup_k J_k$ are all distinct with a high probability, we use the weak convergence of $\mu_{N,\beta}$ to $\mu$, in particular \cite[Lemma 1]{Panchenko2010SPINGM}, and the dominated convergence theorem (note that $\|\theta\|_\infty$ is integrable by \eqref{bdd theta}) to conclude that \begin{align*}
		& \E \log \Big\langle \av  \exp\Big(\beta\sum_{k\leq \pi(\alpha p)}\hat \theta_k(\rho_{J(k, 1)}, \ldots, \rho_{J(k,p-1)}, \varepsilon) + \beta \psi(\varepsilon)\Big) \Big\rangle_\beta \\
  & \to \E \log \E_{u,x}\av \exp\Big(\beta\sum_{k\leq \pi(\alpha p)}\hat \theta_k(s_{k, 1}, \ldots, s_{k,p-1}, \varepsilon) + \beta \psi(\varepsilon)\Big).
	\end{align*}
  
	Integrating over the randomness we conditioned over and using \eqref{bdd theta}, it follows that the above convergence holds unconditionally. A similar argument can be used to show that 
	\begin{align*}
		\E \log\frac{Z_N(\beta)}{Z_N'(\beta)} &\to \E \log \E_{u,x} \exp \beta\sum_{k\leq \pi(\alpha(p-1))}\hat \theta_k(s_{k,1}, \ldots, s_{k, p}) \\
  &= \alpha (p-1) \E \log \E_{u,x} \exp \beta\theta(s_1, \ldots, s_p).
	\end{align*}
	 Putting the two limits above together, we arrive at \begin{align*}
		& \lim_{N\to \infty} \E \log\frac{Z_{N+1}(\beta)}{Z_N(\beta)} = \lim_{N\to \infty} \E \log\frac{Z_{N+1}(\beta)}{Z'_N(\beta)} - \lim_{N\to \infty} \E \log\frac{Z_N(\beta)}{Z'_N(\beta)}\\
        & = \E \log \E_{u,x} \av \exp \Big(\beta\sum_{k\leq \pi(\alpha p)}\theta_k(s_{k, 1}, \ldots, s_{k,p-1}, \varepsilon) + \beta \psi(\varepsilon)\Big)\\
        &\qquad- \alpha (p-1) \E \log \E_{u,x} e^{\beta\theta(s_1, \ldots, s_p)}\\
		& = \E \log \E_{u,x} \av \exp \Big(\beta \sum_{k\leq \pi(\alpha p)}\theta_k(\sigma(w, u, v_{k, 1}, x_{k,1}), \ldots, \sigma(w,u,v_{k,p-1}, x_{k, p-1}), \varepsilon) + \beta \psi(\varepsilon)\Big)\\
	& \qquad - \alpha (p-1) \E \log \E_{u,x} \exp \beta\theta(\sigma(w, u, v_1, x_1), \ldots, \sigma(w, u, v_p, x_p)).
	\end{align*}
Letting $(u_{k,j})_{k\geq 1,j\leq p}$ and $(u_j)_{j\leq p}$ be i.i.d. uniform on $[0,1]$ independent of any other randomness, we can use Corollary \ref{cor ps} to obtain the above limit as 
	 \begin{align*}
		&  \E \log \E_{u,x} \av \exp \Big(\beta\sum_{k\leq \pi(\alpha p)}\theta_k(\sigma(w, u_{k,1}, v_{k, 1}, x_{k,1}), \ldots, \sigma(w,u_{k, p-1},v_{k,p-1}, x_{k, p-1}), \varepsilon) + \beta \psi(\varepsilon)\Big)\\
		&\qquad - \alpha (p-1) \E \log \E_{u,x} \exp \beta\theta(\sigma(w, u_1, v_1, x_1), \ldots, \sigma(w, u_p, v_p, x_p)).
		\end{align*}
Recalling \eqref{density} we finally write the above as 
	 \begin{align*}
		& \E \log \int e^{\beta  \psi(\varepsilon)} \prod_{k\leq \pi(\alpha p)} \Big[\exp \bigl(\beta\theta_k(\rho_{k,1}, \ldots, \rho_{k,p-1}, \varepsilon)\bigr)\prod_{i=1}^{p-1}\exp\bigl(\beta X_{w,v_{k,i}}(\rho_{k,i})\bigr)\nu (d\rho_{k,i})\Big] \nu (d\varepsilon)\\
		& \qquad - \alpha(p-1)\E \log \int \exp \big(\beta \theta(\rho_1, \ldots, \rho_p)\big) \prod_{i=1}^p \exp \big(\beta X_{w,v_i}(\rho_i)\big)\nu (d\rho_i)\\
		& =\cP_{\nu,\beta}(\lambda_{\nu, \beta}),
	\end{align*}
where the last equality used Theorem \ref{thm fixed point}.

\subsection{Compact support case}
We extend the result of the previous section to the case when $\nu$ is supported on the compact set $\Sigma$. Throughout this section, we will use $F_N(\beta)$ to denote the free energy corresponding to the unperturbed Hamiltonian \eqref{eq:orig hamiltonian}. For any $r\geq 1$, let $\Sigma_r \subseteq \Sigma$ be a $1/r$-net (with the $L_1$ metric) of $\Sigma$ of finite cardinality, which exists since $\Sigma$ is compact. For $x \in \Sigma$, let $\varrho_r(x)\in \Sigma_r$ be a point closest to $x$ and for $\sigma \in \Sigma^N$, let $\varrho_r(\sigma) = (\varrho_r(\sigma_1), \ldots, \varrho_r(\sigma_N))$.  Consider the following interpolating Hamiltonian \begin{align*}
       H_{N,r,t}(\sigma) = t H_N(\sigma) + (1-t)H_N(\varrho_r(\sigma))
    \end{align*}
    and the associated free energy $$ F_{N,r}(\beta, t) = \frac{1}{N}  \log \int e^{\beta H_{N,r,t}(\sigma)}\nu^{\otimes N}(d\sigma).$$ Let $\nu_r$ 
    be the push-forward of the measure $\nu$ under the map $\varrho_r: \Sigma \to \Sigma$. Then $\nu_r$ is a discrete measure on $\Sigma$ and $\nu_r\stackrel{d}{\to}\nu$.
    Also, note that $F_{N,r}(\beta,1) = F_N(\beta)$ and \begin{align*}
        F_{N,r}(\beta,0) = \frac{1}{N} \log \int e^{\beta H_N(\varrho_r(\sigma))}\nu^{\otimes N}(d\sigma) = \frac{1}{N} \log \int e^{\beta H_N(\sigma)}\nu_r^{\otimes N}(d\sigma).
    \end{align*}
    Now, \begin{align*}
        \frac{\partial}{\partial t} F_{N,r}(\beta,t) =\frac{\beta}{N} \langle H_N(\sigma) - H_N(\varrho_r(\sigma))\rangle_{\beta, N, r, t},
    \end{align*} 
    where $\langle \cdot\rangle_{\beta, N,r,t}$ is the Gibbs average with respect to the Hamiltonian $H_{N,r,t}$. Hence, \begin{align}  \label{eq disc approx}
        \limsup_{N\to \infty} |\E F_N(\beta) - \E F_{N,r}(\beta,0)| & \leq \limsup_{N\to \infty} \E\Big|\frac{\partial}{\partial t} F_{N,r}(\beta, t)\Big| \leq \frac{\beta}{r} \big(\alpha \E \text{Lip}(\theta) + \text{Lip}(\psi)\big).
    \end{align} 
   Then we have \begin{align}
        \limsup_{N\to \infty} |\E F_N(\beta) - \cP_{\nu,\beta}(\lambda_{\nu,\beta})| & \leq \lim_{r\to \infty}\limsup_{N\to\infty}|\E F_N(\beta) - \E F_{N,r}(\beta,0)| \label{eqn:first term}\\
        & + \lim_{r\to \infty}\limsup_{N\to \infty}|\E F_{N,r}(\beta,0) - \cP_{\nu_r,\beta}(\lambda_{\nu_r,\beta})| \label{eqn:second term} \\
        & + \lim_{r\to \infty} |\cP_{\nu_r,\beta}(\lambda_{\nu_r,\beta}) - \cP_{\nu,\beta}(\lambda_{\nu,\beta})| \label{eqn:last term}
    \end{align}
    We now show that each of the terms above vanishes. By \eqref{bdd theta} and  \eqref{eq disc approx}, it is easy to see that \eqref{eqn:first term} vanishes as $r\to\infty$. The second term \eqref{eqn:second term} vanishes as a consequence of the convergence of the free energy in the discrete case developed in the previous subsection. 
   
   The term  \eqref{eqn:last term} can be handled as follows. Recall that $\mathcal{P}_{\nu_r,\beta}$ and $\mathcal{P}_{\nu,\beta}$ involve two major logarithmic terms. We treat the convergence of their second terms first. Equip the space $\X^p$ with the norm, $\|\f\|_\infty:=\sum_{i=1}^p\|f_i\|_\infty$ for $\f = (f_1, \ldots, f_p) \in \X^p$. Let \begin{align*}
          \Phi_\f =   \Phi_{\theta, \f}(\rho): = \exp \beta \Big(\theta(\rho_1, \ldots, \rho_p) + \sum_{i=1}^p f_i(\rho_i)\Big)
        \end{align*}
        and 
        \[  \nu( \Phi_\f)  :=  \log \int \Phi_{\theta, \f}(\rho) \nu^{\otimes p}(d\rho).  \]
       It is easy to check that \begin{align}\label{bound1}
        |\nu(\Phi_\f)| \leq \beta(\|\theta\|_\infty + \|\f\|_\infty)\;\;\text{and}\;\;  |\nu_r(\Phi_\f)| \leq \beta(\|\theta\|_\infty +\|\f\|_\infty)
    \end{align}
    and \begin{align}\label{bound2}
       |\nu_r(\Phi_\f) - \nu_r(\Phi_{\f'})| \leq \beta \|\f-\f'\|_\infty.
    \end{align}
        From Theorem \ref{continuity},  $W_1(\lambda_{\nu_r,\beta}^{\otimes p},\lambda_{\nu,\beta}^{\otimes p})\to 0$, which is equivalent to $\lambda_{\nu_r,\beta}^{\otimes p}\stackrel{d}{\to} \lambda_{\nu,\beta}^{\otimes p}$ and
  $$
  \int \|\f\|_\infty \lambda_{\nu_r,\beta}^{\otimes p}(d\f)\to\int \|\f\|_\infty \lambda_{\nu,\beta}^{\otimes p}(d\f).
  $$
 Since $\mathcal{X}^p$ is a separable space, the support of $\lambda_{\nu,\beta}^{\otimes p}$ is separable as well. As a result, from Skorokhod's representation theorem, there exist $\mathcal{X}^p$-valued random variables $(Y_r)_{r\geq 1}$ and $Y$ defined on a common probability space such that the laws of $Y_r$ and $Y$ are respectively equal to $\lambda_{\nu_r,\beta}$ and $\lambda_{\nu,\beta}$ and $Y_r\to Y$ a.s. Now write
 \begin{align}
  \nonumber & \Big|\E_\theta\int  \nu_r (\Phi_\f)   \lambda_{\nu_r,\beta}^{\otimes p}(d\f) -  \E_\theta\int  \nu (\Phi_\f) \lambda_{\nu,\beta}^{\otimes p}(d\f)\Big|\\
 \nonumber &=\bigl|\E\nu_r(\Phi_{Y_r})-\E \nu(\Phi_Y)\bigr|\\
    \label{add:eq1} &\leq \E\bigl|\nu_r(\Phi_{Y_r})-\nu_r(\Phi_Y)\bigr|+\E\bigl|\nu_r(\Phi_Y)-\nu(\Phi_Y)\bigr|,
 \end{align}
 where $\E_\theta$ is the expectation with respect to $\theta$ only. 
Note that for every fixed $\f$, conditional on $\theta$, the map $\rho \in \Sigma^p \mapsto \Phi_\f(\rho)$ is bounded and continuous. Hence, $\nu_r(\Phi_Y)\to \nu(\Phi_Y)$ a.s. and from \eqref{bound1}, $|\nu_r(\Phi_Y)-\nu(\Phi_Y)|\leq 2\beta (\|\theta\|_\infty+\E\|Y\|_\infty),$ which is integrable. It follows that the second term in the last display vanishes as $r$ tends to infinity. As for the first term, note that from \eqref{bound2}, 
  \begin{align*}
      \E\bigl|\nu_r(\Phi_{Y_r})-\nu_r(\Phi_Y)\bigr|&\leq \beta \E\|Y_r-Y\|_\infty.
  \end{align*}
  Since $Y_r\to Y$ in probability and $\E \|Y_r\|_\infty\to \E\|Y\|_\infty,$ by \cite[Theorem 4.6.3]{Durrett_2019} or Theorem \ref{thm A1}, we see that $\E \|Y_r-Y\|_\infty\to 0.$ These together imply that \eqref{add:eq1} vanishes and it is the same as 
  \begin{align}\label{add:eq2}
       &\lim_{r\to\infty}\bigl|\E \log \la e^{\beta \theta(\sigma)}\ra_{\beta,X_r}-\E \log \la e^{\beta \theta(\sigma)}\ra_{\beta,X}\bigr|=0
  \end{align}
  for $X_r\sim \lambda_{\nu_r,\beta}$ and $X\sim \lambda_{\nu,\beta}.$ 
  In a similar manner, for $k\geq 0$, let $\f = (f_{j,i})_{j\leq k, i\leq p-1}$ and \begin{align*}
      \Psi_{k,\f} = \Psi_{k, (\theta_j)_{j\leq k}, \f}(\rho, \varepsilon) := \exp \beta\Big(\sum_{j=1}^k \Big(\theta_k(\rho_{k,1}, \ldots, \rho_{k, p-1}, \varepsilon) + \sum_{i=1}^{p-1}f_{k, i}(\rho_{k, i})\Big)  +  \psi(\varepsilon)\Big)
  \end{align*}
  and \begin{align*}
      \nu(\Psi_{k,\f}) := \log \int \Psi_k(\rho,\varepsilon) \nu^{\otimes (k(p-1) + 1)}(d\rho, d\varepsilon).
  \end{align*}
   The same argument as above yields that for each $k \geq 0$, \begin{align*}
      \lim_{r\to\infty}\Big|\E_\Theta\int \nu_r(\Psi_{k,\f}) \lambda_{\nu_r,\beta}^{\otimes k(p-1)}(d\f) - \E_\Theta\int \nu(\Psi_{k,\f}) \lambda_{\nu,\beta}^{\otimes k(p-1)}(d\f)\Big| =0,
  \end{align*}
  where $\E_\Theta$ is the expectation with respect to $\Theta = (\theta_k)_{k\leq \alpha N}$.
  Finally, for any $r \in \Z_+$, since \begin{align*}
      \Big|\E_\Theta\int \nu_r(\Psi_{k,\f}) \lambda_{\nu_r,\beta}^{\otimes k(p-1)}(d\f)\Big| \leq \E_\Theta\int |\nu_r(\Psi_{k,\f})| \lambda_{\nu_r,\beta}^{\otimes k(p-1)}(d\f) \leq \beta (k\E\|\theta\|_\infty + \|\psi\|_\infty),
  \end{align*}
  we can use the dominated convergence theorem with respect to $\pi(\alpha p)$ to get \begin{align*}
      \lim_{r\to\infty}\Bigl|\E\log \Bigl\la \int\mathcal{E}_{\beta,\pi(\alpha p)}(\sigma,\varepsilon)\nu_r(d\varepsilon)\Bigr\ra_{\beta,X_r}-\E\log \int\Bigl\la \mathcal{E}_{\beta,\pi(\alpha p)}(\sigma,\varepsilon)\nu(d\varepsilon)\Bigr\ra_{\beta,X}\Bigr|=0.
  \end{align*}
  Together with \eqref{add:eq2}, \eqref{eqn:last term} vanishes and our proof is completed.

\subsection{Concentration of the free energy}
 In this section, we will show that the free energy is concentrated around its mean. We denote by $F_N(\beta)$ the free energy corresponding to the unperturbed Hamiltonian \eqref{eq:orig hamiltonian} at temperature $\beta \in (0, \infty)$.
 
 \begin{prop}\label{concentration}
      There exists a constant $K$ depending only on $\alpha$ and $\beta$ such that for all $N\geq 1$, \begin{align*}
        \E |F_N(\beta) - \E F_N(\beta)| \leq \frac{K}{\sqrt{N}}(\E\|\theta\|^2_\infty)^{1/2}.
    \end{align*}
 \end{prop}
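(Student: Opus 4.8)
The plan is to deduce the claim from a variance estimate via Cauchy--Schwarz, $\E|F_N(\beta)-\E F_N(\beta)|\le \var(F_N(\beta))^{1/2}$, so that it suffices to show $\var\bigl(\log Z_N(\beta)\bigr)\le K^2 N\,\E\|\theta\|_\infty^2$ for a constant $K$ depending only on $\alpha$ and $\beta$ (recall $\E\|\theta\|_\infty^2<\infty$ by \eqref{bdd theta}, which makes all the quantities below finite). Since $\psi$ is non-random, the disorder in $Z_N(\beta)$ is carried entirely by the marked Poisson point process $\omega=\{(\theta_k,I_k):k\le\pi(\alpha N)\}$ on the Polish space $C(\Sigma^p)\times\binom{[N]}{p}$, whose intensity measure $\Pi$ has total mass $\alpha N$ (it is $\alpha N$ times the product of the law of $\theta$ and the uniform law on $\binom{[N]}{p}$). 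The elementary estimate I would use throughout is that inserting one extra clause $z=(\vartheta,J)$ multiplies the integrand $e^{\beta H_N(\sigma)}$ by $e^{\beta\vartheta(\sigma_J)}\in[e^{-\beta\|\vartheta\|_\infty},e^{\beta\|\vartheta\|_\infty}]$, so the add-one-point difference operator obeys $|D_z\log Z_N(\beta)|=\bigl|\log\langle e^{\beta\vartheta(\sigma_J)}\rangle\bigr|\le\beta\|\vartheta\|_\infty$, pointwise in $\omega$; similarly, replacing a clause $(\theta_k,I_k)$ by an independent copy $(\theta_k',I_k')$ perturbs $\log Z_N(\beta)$ by at most $\beta(\|\theta_k\|_\infty+\|\theta_k'\|_\infty)$.

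For the shortest route I would invoke the Poincaré inequality for Poisson functionals: $\var\bigl(\log Z_N(\beta)\bigr)\le \int |D_z\log Z_N(\beta)|^2\,\Pi(dz)$, which by the bound above is at most $\beta^2\int\|\vartheta\|_\infty^2\,\Pi(dz)=\alpha N\beta^2\,\E\|\theta\|_\infty^2$. Dividing by $N^2$ and taking a square root gives the proposition with $K=\sqrt{\alpha}\,\beta$. If one prefers to avoid that inequality, the same bound (up to a constant) follows by more elementary means: condition on $M:=\pi(\alpha N)$ and decompose $\var(\log Z_N)=\E[\var(\log Z_N\mid M)]+\var\bigl(\E[\log Z_N\mid M]\bigr)$. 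Conditionally on $M=m$, $\log Z_N$ is a function of the independent coordinates $(\theta_k,I_k)_{k\le m}$, and the replacement estimate above combined with the Efron--Stein inequality yields $\var(\log Z_N\mid M=m)\le 2m\beta^2\,\E\|\theta\|_\infty^2$; averaging over $M$ (with $\E M=\alpha N$) controls the first term. For the second term, $g(m):=\E[\log Z_N\mid M=m]$ satisfies $|g(m+1)-g(m)|\le\beta\,\E\|\theta\|_\infty$ by the one-clause estimate, and the Poincaré inequality for the Poisson law of $M$ gives $\var\bigl(g(M)\bigr)\le\alpha N\beta^2(\E\|\theta\|_\infty)^2$. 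Summing, $\var(\log Z_N)\le 3\alpha N\beta^2\,\E\|\theta\|_\infty^2$, hence the conclusion with $K=\sqrt{3\alpha}\,\beta$.

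The statement is routine, and the single point requiring care is that the number of interaction terms is itself random and \emph{unbounded}, which rules out a direct bounded-differences/Azuma argument along one filtration. This is exactly what is absorbed by the Poisson Poincaré inequality in the first route, or by isolating the fluctuations of $\pi(\alpha N)$ through the conditional-variance decomposition in the second. Modulo this, everything reduces to the stability of $\log Z_N(\beta)$ under adding or replacing a single clause, which is immediate from the uniform bound on $\theta$.
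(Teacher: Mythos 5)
Your argument is correct, and both of your routes rest on exactly the same one-clause stability estimate that drives the paper's proof, namely that adding or resampling a single term $(\vartheta,J)$ shifts $\log Z_N(\beta)$ by at most $\beta\|\vartheta\|_\infty$ (resp.\ $\beta(\|\theta_k\|_\infty+\|\theta_k'\|_\infty)$). Where you differ is in how the fluctuations are aggregated. The paper works in $L^1$ throughout: it first controls the fluctuation coming from the Poisson count $\pi(\alpha N)$ by coupling it with an independent copy and bounding $\E|\pi-\pi'|$ by $\sqrt{2\,\var(\pi(\alpha N))}$, and then, conditionally on $\pi(\alpha N)$, controls the clause randomness by a Doob martingale in the filtration generated by $(\theta_k,I_k)_{k\le r}$ together with orthogonality of martingale increments. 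Your second route is the Efron--Stein/Poisson-Poincar\'e rendering of essentially the same two-step decomposition ($\var=\E[\var(\cdot\mid M)]+\var(\E[\cdot\mid M])$), while your first route packages both sources of randomness at once by viewing the disorder as a marked Poisson point process of intensity $\alpha N\cdot(\text{law of }\theta)\otimes\mathrm{Unif}\binom{[N]}{p}$ and invoking the Poincar\'e inequality $\var F\le\E\int(D_zF)^2\,\Pi(dz)$; this is the cleanest of the three and gives the best constant ($K=\sqrt{\alpha}\,\beta$), at the cost of citing a result the paper avoids. Your observation that a single Azuma-type filtration argument fails because the number of clauses is random and unbounded is exactly the issue the paper's (and your second route's) two-term split is designed to absorb. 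One cosmetic point: the Poisson Poincar\'e bound should carry an outer expectation, $\var F\le\E\bigl[\int(D_zF)^2\Pi(dz)\bigr]$; since your bound on $D_z\log Z_N$ is pointwise in the configuration this changes nothing, but you should state it that way.
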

The proof of this proposition is standard, see \cite{10.1214/22-AOP1597} for example. Nevertheless, we include it here for completeness. Throughout this section, random variables appearing in the subscripts to $\E$ shall mean that the expectation is carried out with respect to these random variables only.

\begin{proof}
   As earlier denote $\Theta = (\theta_k)_{k\leq \pi(\alpha N)}$ and $I = \{I(k,i):k\leq \pi(\alpha N), i\leq p\}$. Then we have from Jensen's inequality that \begin{align*}
    \E|F_N(\beta) - \E F_N(\beta)| & \leq \E|F_N(\beta) - \E_{\pi(\alpha N)}F_N(\beta)| + \E|\E_{\pi(\alpha N)}F_N(\beta) - \E_{\pi(\alpha N),\Theta, I}F_N(\beta)|\\
    & \leq \E|F_N(\beta) - \E_{\pi(\alpha N)}F_N(\beta)| + \E|F_N(\beta) - \E_{\Theta, I}F_N(\beta)|
\end{align*}
so to prove the proposition, it suffices to show that each of the terms in the last line above vanishes as $N\to \infty$. This is the content of the next two lemmas.
\end{proof}

\begin{lemma}
There exists a constant $K>0$ depending only on $\alpha$ and $\beta$ such that for each $N\geq 1$,
    $$\E|F_N(\beta) - \E_{\pi(\alpha N)}F_N(\beta)| \leq \frac{K}{\sqrt{N}}\E\|\theta\|_\infty.$$
\end{lemma}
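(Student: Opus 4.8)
The plan is to view $F_N(\beta)$ as a function of the Poisson count $M:=\pi(\alpha N)$ with every other source of randomness frozen, and to exploit the fact that $F_N$ depends on $M$ in a Lipschitz way with a small constant. Concretely, fix a realization of the disorder $\Theta=(\theta_k)_{k\ge 1}$, the index sets $(I_k)_{k\ge 1}$, and $\psi$, and write $F_N(M)$ for the (unperturbed) free energy when the Hamiltonian contains exactly $M$ interaction terms $\sum_{k\le M}\theta_k(\sigma_{I(k,1)},\dots,\sigma_{I(k,p)})$. Passing from $M$ to $M+1$ multiplies the partition function pointwise by the factor $e^{\beta\theta_{M+1}(\cdot)}\in[e^{-\beta\|\theta_{M+1}\|_\infty},e^{\beta\|\theta_{M+1}\|_\infty}]$, so $|F_N(M+1)-F_N(M)|\le \tfrac{\beta}{N}\|\theta_{M+1}\|_\infty$, and telescoping gives, for all $0\le M\le M'$,
\[
|F_N(M')-F_N(M)|\le \frac{\beta}{N}\sum_{k=M+1}^{M'}\|\theta_k\|_\infty .
\]

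Next I would introduce an independent copy $M'$ of $M$ and apply Jensen's inequality conditionally on $(\Theta,(I_k)_{k})$: since $\E_{\pi(\alpha N)}F_N(\beta)=\E[F_N(M)\mid \Theta,(I_k)_k]$, we get $\E_M|F_N(M)-\E_M F_N(M)|\le \E_{M,M'}|F_N(M)-F_N(M')|$. Substituting the telescoping bound above and then taking the remaining expectation over $\Theta$, note that $\Theta$ is independent of the pair $(M,M')$ and that the $\theta_k$ are i.i.d.\ copies of $\theta$; hence the conditional expectation of $\sum_{k}\|\theta_k\|_\infty$ over the $|M-M'|$ intermediate indices equals $|M-M'|\,\E\|\theta\|_\infty$. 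This yields
\[
\E|F_N(\beta)-\E_{\pi(\alpha N)}F_N(\beta)|\le \frac{\beta}{N}\,\E\|\theta\|_\infty\,\E|M-M'| .
\]

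Finally I would bound $\E|M-M'|\le \bigl(\E(M-M')^2\bigr)^{1/2}=\bigl(2\,\mathrm{Var}(\pi(\alpha N))\bigr)^{1/2}=\sqrt{2\alpha N}$, which gives the assertion with $K=\beta\sqrt{2\alpha}$, a constant depending only on $\alpha$ and $\beta$. I do not anticipate a genuine obstacle here; the only points requiring care are that the single-term increment bound holds uniformly in all the frozen randomness (it uses nothing beyond boundedness of $\theta$ and $\psi$), and that the independence of $\Theta$ from $(M,M')$ is what lets the sum over a random range of i.i.d.\ terms factor into $\E|M-M'|\cdot\E\|\theta\|_\infty$.
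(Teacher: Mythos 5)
Your proposal is correct and is essentially the paper's own argument: both introduce an independent copy of $\pi(\alpha N)$, bound the free-energy difference by $\frac{\beta}{N}\sum_k\|\theta_k\|_\infty$ over the intermediate indices, apply Jensen's inequality, factor out $\E\|\theta\|_\infty$ by independence, and control $\E|\pi(\alpha N)-\pi'(\alpha N)|$ via Cauchy–Schwarz to get $\sqrt{2\alpha N}$, yielding the same constant $K=\beta\sqrt{2\alpha}$.
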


\begin{proof}
    Let $\hat\pi(\alpha N)$ be a copy of $\pi(\alpha N)$, independent of everything else and $\hat F_N$ be the free energy when $\pi(\alpha N)$ is replaced by $\hat\pi(\alpha N)$. Without loss of generality, assume that $\hat\pi(\alpha N) \geq \pi(\alpha N)$. Using $|\log (x/y)| \leq |x-y|/\min(x,y)$ for $x,y>0$, it is easy to see that $|\hat F_N(\beta) - F_N(\beta)| \leq \beta N^{-1}\sum_{\pi(\alpha N) < k\leq \hat\pi(\alpha N)}\|\theta_k\|_\infty$, so by Jensen's and the Cauchy-Schwarz inequalities, \begin{align*}
        \E|F_N(\beta) - \E_{\pi(\alpha N)}F_N(\beta)| & \leq \E|\hat F_N(\beta) - F_N(\beta)| \leq \frac{\beta\E\|\theta\|_\infty}{N}\E|\pi(\alpha N) - \pi'(\alpha N)| \\
        & \leq \frac{\beta\E\|\theta\|_\infty}{N}\sqrt{2\text{Var}(\pi(\alpha N))} \leq \frac{\beta\sqrt{2\alpha}}{\sqrt{N}}\E\|\theta\|_\infty.
    \end{align*}
\end{proof}

\begin{lemma}
There exists a constant $K>0$ depending only on $\alpha$ and $\beta$ such that for each $N\geq 1$,
    $$\E|F_N(\beta) - \E_{\theta, I}F_N(\beta)| \leq \frac{K}{\sqrt{N}}(\E\|\theta\|^2_\infty)^{1/2}.$$
\end{lemma}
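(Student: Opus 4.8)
The plan is to prove the bound conditionally on $\pi(\alpha N)$ using the Efron--Stein inequality and then integrate out the Poisson variable. First I would fix the value $m$ of $\pi(\alpha N)$; conditionally on $\{\pi(\alpha N)=m\}$, the disorder $(\theta_k,I_k)_{k\leq m}$ is an i.i.d.\ family (the term $\sum_{i}\psi(\sigma_i)$ being non-random), and $F_N(\beta)$ is a measurable function of it; moreover $\E_{\theta,I}F_N(\beta)=\E[F_N(\beta)\mid\pi(\alpha N)]$. The key elementary input is a bounded-difference estimate: if we replace a single pair $(\theta_k,I_k)$ by an independent copy $(\theta_k',I_k')$, the Hamiltonian changes, pointwise in $\sigma$, by at most $\|\theta_k\|_\infty+\|\theta_k'\|_\infty$, so with $\delta:=\beta(\|\theta_k\|_\infty+\|\theta_k'\|_\infty)$ one has $e^{-\delta}Z_N(\beta)\leq Z_N'(\beta)\leq e^{\delta}Z_N(\beta)$, whence the free energy changes by at most $\frac{\beta}{N}(\|\theta_k\|_\infty+\|\theta_k'\|_\infty)$. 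Denoting by $F_N^{(k)}$ the free energy with the $k$-th coordinate resampled, this gives $(F_N-F_N^{(k)})^2\leq \frac{2\beta^2}{N^2}(\|\theta_k\|_\infty^2+\|\theta_k'\|_\infty^2)$, hence $\E[(F_N-F_N^{(k)})^2\mid\pi(\alpha N)=m]\leq \frac{4\beta^2}{N^2}\E\|\theta\|_\infty^2$.

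Applying the Efron--Stein inequality conditionally on $\{\pi(\alpha N)=m\}$ then yields
\begin{align*}
\var\bigl(F_N(\beta)\mid \pi(\alpha N)=m\bigr)
\leq \tfrac12\sum_{k=1}^{m}\E\bigl[(F_N-F_N^{(k)})^2\mid\pi(\alpha N)=m\bigr]
\leq \frac{2\beta^2 m}{N^2}\,\E\|\theta\|_\infty^2 .
\end{align*}
By the conditional Jensen inequality, $\E\bigl[|F_N(\beta)-\E_{\theta,I}F_N(\beta)|\mid\pi(\alpha N)\bigr]\leq \var\bigl(F_N(\beta)\mid\pi(\alpha N)\bigr)^{1/2}$, and taking expectation over $\pi(\alpha N)$ together with $\E\sqrt{\pi(\alpha N)}\leq(\E\pi(\alpha N))^{1/2}=\sqrt{\alpha N}$ gives
\begin{align*}
\E\bigl|F_N(\beta)-\E_{\theta,I}F_N(\beta)\bigr|
\leq \frac{\beta\sqrt{2}}{N}\,\E\sqrt{\pi(\alpha N)}\,\bigl(\E\|\theta\|_\infty^2\bigr)^{1/2}
\leq \frac{\beta\sqrt{2\alpha}}{\sqrt N}\,\bigl(\E\|\theta\|_\infty^2\bigr)^{1/2},
\end{align*}
so the claim holds with $K=\beta\sqrt{2\alpha}$, which depends only on $\alpha$ and $\beta$.

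There is no serious obstacle; the only points requiring care are that the one-coordinate perturbation bound must be uniform in $\sigma$ (hence it is expressed through $\|\theta_k\|_\infty$ rather than a typical size of $\theta_k$), and that $\|\theta\|_\infty$ is merely square-integrable under \eqref{bdd theta} — which is precisely why the Efron--Stein variance estimate, rather than a bounded-differences (McDiarmid) inequality, is the right tool here. Finiteness of $\E\|\theta\|_\infty^2$ makes all the expectations above finite and the argument goes through verbatim.
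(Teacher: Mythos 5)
Your argument is correct and is essentially the paper's proof: both rest on the same one-coordinate resampling estimate $|F_N-F_N^{(k)}|\leq \frac{\beta}{N}(\|\theta_k\|_\infty+\|\theta_k'\|_\infty)$ followed by a coordinate-wise variance decomposition (the paper writes it as a Doob martingale-difference sum conditionally on $\pi(\alpha N)$, which is the standard route to Efron--Stein) and a final application of Jensen. The only cosmetic difference is the order in which Jensen and the integration over $\pi(\alpha N)$ are performed, which changes the constant but not the $N^{-1/2}(\E\|\theta\|_\infty^2)^{1/2}$ rate.
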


\begin{proof}
    On the event $\pi(\alpha N) \geq 1$, let $H_N^-$ be the Hamiltonian obtained by dropping the term $\theta_1(\sigma_{I(1,1)}, \ldots, \sigma_{I(1, p)})$ from $H_N$, and $\tilde H_N$ be obtained from $H_N$ by replacing $\theta_1(\sigma_{I(1,1)}, \ldots, \sigma_{I(1, p)})$ with an independent copy $\tilde \theta_1(\sigma_{\tilde I(1,1)}, \ldots, \sigma_{\tilde I(1, p)})$. We let $H_N = \tilde H_N$ when $\pi(\alpha N) = 0$. Denote the free energies corresponding to $H_N^-$ and $\tilde H_N$ by $F_N^-(\beta)$ and $\tilde F_N(\beta)$ respectively. It is easy to see that \begin{align*}
        |F_N(\beta) - F_N^-(\beta)| \leq \frac{\beta}{N}\|\theta_1\|_\infty \;\; \text{and}\;\;|\tilde F_N(\beta) - F_N^-(\beta)| \leq \frac{\beta}{N}\|\tilde \theta_1\|_\infty.
    \end{align*}
    Using the triangle inequality, we get
\begin{align*}
        (\tilde F_N(\beta) - F_N(\beta))^2 \leq \frac{\beta^2}{N^2}(\|\theta_1\|_\infty + \|\tilde \theta_1\|_\infty)^2 \leq \frac{2\beta^2}{N^2}(\|\theta_1\|_\infty^2 + \|\tilde \theta_1\|_\infty^2).
    \end{align*}
    Consider the filtration $(\mathcal{F}_r)_{r\geq 1}$ defined as $\mathcal{F}_0= \{\emptyset, \Omega\}$ and for $r\geq 1$, $\mathcal{F}_r$ is the $\sigma$-algebra generated by $\{\theta_k, I(k, i): k\leq r, i\leq p\}$. For $r\geq 1$, denote by $\Delta_r$ the martingale difference $$\Delta_r= \E_{\pi(\alpha N),\, \mathcal{F}_r} F_N(\beta) - \E_{\pi(\alpha N),\, \mathcal{F}_{r-1}}F_N(\beta).$$
   Using Jensen's inequality and symmetry, we have from the previous display that $$\E_{\pi(\alpha N)} \Delta_k^2 \leq \frac{4\beta^2}{N^2}\E\|\theta\|_\infty^2.$$ Thus, 
   \begin{align*}
        \E\big(F_N(\beta) - \E_{\Theta, I}F_N(\beta)\big)^2 & = \E\, \E_{\pi(\alpha N)} \Big(\sum_{k=1}^{\pi(\alpha N)}\Delta_k\Big)^2 = \E \sum_{k=1}^{\pi(\alpha N)}\E_{\pi(\alpha N)} \Delta_k^2 \leq \frac{4\alpha\beta^2}{N}\E\|\theta\|_\infty^2,
    \end{align*}
    and the statement of the lemma follows from Jensen's inequality.
\end{proof}

\section{Ground state energy of the dilute model}\label{gse_dilute_model}
 In this section we present the proof of Theorem \ref{thm:GSE}. Recall the operator $\T_\infty$ defined in \eqref{op_infty}. The first part of Theorem \ref{thm:GSE}, i.e., showing that in the regime \eqref{small alpha}, $\T_\infty$ admits a unique fixed point $\lambda_\infty \in \Pr_1(\X)$ follows readily from Lemma \ref{gen_fixed_pt}. Thus, in the remainder of this section, we prove the second part Theorem \ref{thm:GSE}, i.e., we establish that \begin{align}\label{thm1.2.2}
     \lim_{N\to \infty} \E |\gse_N - \cP_\infty(\lambda_\infty)| = 0.
 \end{align}
 
 The idea of our approach is to approximate the ground state energy by the free energy at a positive temperature. To that extent, we consider a measure $\nu \in {\rm Pr}(\Sigma)$ that satisfies the following property: for any $\epsilon > 0$, there exists a constant $c(\epsilon) > 0$ such that $\nu((a-\epsilon, a + \epsilon)) > c(\epsilon)$ for any $a \in \Sigma$. The existence of $\nu$ can be argued as follows. If $\Sigma$ is a finite set, then the uniform measure on $\Sigma$ obviously satisfies this criterion. If $\Sigma$ is an infinite set, there exists a sequence $(a_n)_{n\geq 1}\subset \Sigma$ that is dense in $\Sigma$. Let $\nu$ be the probability measure defined as $\nu(a_n)=2^{-n}$ for $n\geq 1$. Now, for any $\epsilon>0,$ since $I_n:=(a_n-\epsilon,a_n+\epsilon)$ for $n\geq 1$ forms an open covering of $\Sigma$, it follows from the compactness of $\Sigma$ that there exists some $n_0\geq1$ such that $I_1,\ldots,I_{n_0}$ cover $\Sigma$. Take $c(\epsilon)=\min(\nu(a_1),\ldots,\nu(a_{n_0}))>0.$ Then for any $a\in \Sigma,$ there exists some $1\leq n\leq n_0$ so that $a\in I_n$ and $\nu((a-\epsilon,a+\epsilon))\geq \nu(a_n)\geq c(\epsilon).$

 If $\sigma^*$ is a ground state (maximizer) of $\gse_N$, then  \begin{align*}
    \frac{1}{\beta}F_N(\beta) = \frac{1}{N\beta}\log \int e^{\beta H_N(\sigma)} \nu^{\otimes N}(d\sigma) \leq \frac{1}{N}H_N(\sigma^*) = \gse_N.
\end{align*} On the other hand, for any $\epsilon>0,$
\begin{align*}
    \frac{1}{\beta}F_N(\beta)&\geq \frac{1}{N\beta} \log \int_{B_\infty(\sigma^*,\epsilon)} e^{\beta H_N(\sigma)} \nu^{\otimes N}(d\sigma) \\
    &=\frac{1}{N\beta} \log \frac{\int_{B_\infty(\sigma^*,\epsilon)} e^{\beta (H_N(\sigma)-H_N(\sigma^*))} \nu^{\otimes N}(d\sigma)}{\nu^{\otimes N}(B_\infty(\sigma^*,\epsilon))}+\gse_N+\frac{1}{N\beta}\log \nu^{\otimes N}(B_\infty(\sigma^*,\epsilon))\\
    &\geq -\frac{\epsilon\sqrt{p}}{N}\sum_{k=1}^{\pi(\alpha N)}\mbox{Lip}(\theta_k)+\gse_N+\frac{1}{\beta}\log c(\epsilon),
\end{align*}
where $B_\infty(\sigma^*,\epsilon):=\prod_{i=1}^N(\sigma_i^*-\epsilon,\sigma_i^*+\epsilon)$. It follows that 
\begin{align*}
    \Bigl| \frac{1}{\beta}F_N(\beta)-\gse_N\Bigr|\leq \frac{\epsilon\sqrt{p}}{N}\sum_{k=1}^{\pi(\alpha N)}\mbox{Lip}(\theta_k)+\frac{1}{\beta}|\log c(\epsilon)|
\end{align*}
and from \eqref{eq lip}, we see that 
\begin{align}\label{gse fe approx}
\limsup_{\beta\to\infty}\limsup_{N\to\infty}\E\Bigl| \frac{1}{\beta}F_N(\beta)-\gse_N\Bigr|=0.
\end{align}

In light of establishing \eqref{thm1.2.2}, we write \begin{align}
         \lim_{N\to \infty} \E |\gse_N - \cP_\infty(\lambda_\infty)| & \leq \limsup_{\beta \to \infty}\limsup_{N\to \infty}\E \Big|\gse_N - \frac{1}{\beta}F_N(\beta)\Big| \label{eq1.2.2.1}\\
         & \qquad + \limsup_{\beta \to \infty}\limsup_{N\to \infty}\frac{1}{\beta}\E |F_N(\beta)-\E F_N(\beta)|\label{eq1.2.2.2}\\
         & \qquad + \limsup_{\beta \to \infty}\limsup_{N\to \infty}\frac{1}{\beta}|\E F_{N, \beta}-\cP_{\nu,\beta}(\lambda_{\nu, \beta})\label{eq1.2.2.3}|\\
         & \qquad + \lim_{\beta \to \infty}\Big|\frac{1}{\beta}\cP_{\nu,\beta}(\lambda_{\nu, \beta}) - \cP_\infty(\lambda_\infty)\Big|.\label{eq1.2.2.4}
     \end{align}
     The terms (on the right side of) \eqref{eq1.2.2.1}, \eqref{eq1.2.2.2} and \eqref{eq1.2.2.3} vanish due to \eqref{gse fe approx}, Proposition \ref{concentration} and the second part of Theorem \ref{thm1} respectively. It remains to show that the final term \eqref{eq1.2.2.4} is zero as well. To that extent, we record the following result that will be used later.

\begin{lemma}\label{lem,infty}
    Assume $\alpha p (p-1)\leq 1$. We have that $W_1(\lambda_\infty, \lambda_{\nu, \beta}) \to 0$ as $\beta \to \infty$.
\end{lemma}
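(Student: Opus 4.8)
\textbf{Proof plan for Lemma~\ref{lem,infty}.} The plan is to show that $\lambda_{\nu,\beta}$ stays in a compact subset of $\Pr_1(\X)$ as $\beta\to\infty$, extract a weak limit along a subsequence, and identify the limit as the unique fixed point $\lambda_\infty$ of $\T_\infty$; uniqueness from Theorem~\ref{uniqueness} (with $\beta=\infty$) then upgrades this to full convergence. First I would record uniform bounds: by \eqref{eq2} in Theorem~\ref{uniqueness}, for every $\beta$,
\begin{align*}
	\int \|f\|_\infty\,\lambda_{\nu,\beta}(df)\leq 2\alpha p\,\E\|\theta\|_\infty + 2\|\psi\|_\infty,\qquad
	\int \mbox{\rm Lip}(f)\,\lambda_{\nu,\beta}(df)\leq \alpha p\,\E\,\mbox{\rm Lip}(\theta) + \mbox{\rm Lip}(\psi),
\end{align*}
with the same bounds holding for $\lambda_\infty$ (from \eqref{l_infty bound} and \eqref{lip_bound} applied to $\T_\infty$). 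With $K(C,L)$ as in \eqref{defn_K}, Markov's inequality applied to these two bounds together with the Arzel\`a--Ascoli precompactness of $K(C,L)$ shows that $\{\lambda_{\nu,\beta}:\beta>0\}$ is tight, and uniform integrability of $\|f\|_\infty$ follows from the first moment bound being independent of $\beta$. Hence every sequence $\beta_n\to\infty$ has a subsequence along which $\lambda_{\nu,\beta_n}$ converges in $W_1$ to some $\lambda_*\in\Pr_1(\X)$ (recall $W_1$-convergence is weak convergence plus convergence of the first moment, as noted at the start of Section~\ref{distributional_operator}).

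Next I would show $\T_\infty(\lambda_*)=\lambda_*$. Fix a realization of $\pi(\alpha p)$ and of $(\theta_k)_{k\leq\pi(\alpha p)}$, and sample $(X_k^{(n)})_{k\geq 1}$ i.i.d.\ from $\lambda_{\nu,\beta_n}$ and $(X_k^*)_{k\geq 1}$ i.i.d.\ from $\lambda_*$. The key pointwise estimate is that, as $\beta\to\infty$ with fixed continuous functions $g_1,\dots,g_{(p-1)r}$,
\begin{align*}
	T_{\nu,\beta,r}(g_1,\dots,g_{(p-1)r})(t)
	= \frac1\beta\log\frac{\la \cE_{\beta,r}(\sigma,t)\ra_{\beta,X}}{\la\int\cE_{\beta,r}(\sigma,s)\nu(ds)\ra_{\beta,X}}
	\longrightarrow T_{\infty,r}(g_1,\dots,g_{(p-1)r})(t)
\end{align*}
uniformly in $t$; this is a Laplace/Varadhan-type statement: $\beta^{-1}\log\int e^{\beta\Psi}\,d(\text{measure}) \to \operatorname{ess\,sup}\Psi$, and here the relevant essential suprema with respect to $\nu^{\otimes\cdot}$ genuinely equal the true suprema over $\Sigma^{\cdot}$ precisely because of the chosen $\nu$ with $\nu((a-\epsilon,a+\epsilon))>c(\epsilon)$ for all $a$ (so $\nu$ has full support on $\Sigma$). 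Combining this with a Skorokhod coupling so that $X_k^{(n)}\to X_k^*$ a.s.\ in $\X$, together with the joint continuity of $g\mapsto T_{\nu,\beta,r}(\cdot)$, one gets $T_{\nu,\beta_n,\pi(\alpha p)}(X_1^{(n)},\dots)\to T_{\infty,\pi(\alpha p)}(X_1^*,\dots)$ in distribution; here I would be slightly careful to let $\beta_n\to\infty$ \emph{and} the sample change simultaneously, but the uniform-in-$t$ convergence above plus the Lipschitz bound \eqref{lip_bound} (which controls the modulus of continuity uniformly in $\beta$) makes this routine. The uniform $L^\infty$ bound \eqref{l_infty bound}, valid for all $\beta$ including $\beta=\infty$, gives uniform integrability, so $\T_{\nu,\beta_n}(\lambda_{\nu,\beta_n})\to\T_\infty(\lambda_*)$ in $W_1$. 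Since $\T_{\nu,\beta_n}(\lambda_{\nu,\beta_n})=\lambda_{\nu,\beta_n}\to\lambda_*$, we conclude $\T_\infty(\lambda_*)=\lambda_*$, hence $\lambda_*=\lambda_\infty$ by the uniqueness in Theorem~\ref{uniqueness}. As the subsequential limit is always $\lambda_\infty$, the full sequence converges: $W_1(\lambda_{\nu,\beta},\lambda_\infty)\to 0$ as $\beta\to\infty$.

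\textbf{Main obstacle.} The delicate point is the $\beta\to\infty$ limit of the operator when the input measure $\lambda_{\nu,\beta}$ is also varying with $\beta$: one needs the convergence $\beta^{-1}\log\la\cdot\ra_{\beta,X}\to\sup(\cdot)$ to hold \emph{uniformly} enough to survive replacing $X$ by a $\beta$-dependent, weakly convergent sample. I would handle this by first proving the deterministic uniform bound
\begin{align*}
	\bigl\|T_{\nu,\beta,r}(g_1,\dots,g_{(p-1)r}) - T_{\infty,r}(g_1,\dots,g_{(p-1)r})\bigr\|_\infty
	\leq \frac{\omega(\beta,r)}{\beta}
\end{align*}
where $\omega(\beta,r)$ depends only on $r$, $\|\psi\|_\infty$, $\max_{k\leq r}\|\theta_k\|_\infty$, $\max_k\|g_k\|_\infty$, $\max_k\mbox{\rm Lip}(\theta_k)$, $\max_k\mbox{\rm Lip}(g_k)$, and the function $c(\cdot)$ (this last because the lower bound $\nu((a-\epsilon,a+\epsilon))>c(\epsilon)$ quantifies how close the $\nu$-averaged log-integral is to the true supremum). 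Combined with the uniform (in $\beta$) $L^\infty$ and Lipschitz control from \eqref{eq2} on the random functions $X_k^{(n)}$, this reduces everything to the already-established continuity of $T_\infty$ in its function arguments and a standard Slutsky argument, exactly as in the proof of Theorem~\ref{continuity}.
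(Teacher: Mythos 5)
Your proposal is correct and follows essentially the same route as the paper's proof: tightness of $(\lambda_{\nu,\beta})_{\beta>0}$ via the uniform $L^\infty$ and Lipschitz controls and Arzel\`a--Ascoli, extraction of a subsequential $W_1$-limit $\lambda_*$, identification of $\lambda_*$ as a fixed point of $\T_\infty$ by showing $\|T_{\nu,\beta_n,\pi(\alpha p)}-T_{\infty,\pi(\alpha p)}\|_\infty\to 0$ in probability on the varying samples and applying Slutsky, and then invoking the uniqueness from Theorem~\ref{uniqueness}. Your explicit treatment of the Laplace-principle step and of the role of the full-support property of $\nu$ fills in details the paper only gestures at ("following an argument similar to Lemma~\ref{lem1}"), but it is not a different argument.
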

The proof of this lemma is similar to that of Theorem \ref{continuity}, so we briefly sketch it below.
\begin{proof}[Sketch of proof]
    We first show that for any $\nu \in \mathrm{Pr}(\Sigma)$ fixed, the set of probability measures \begin{align*}
        \mathcal{M}:=\big\{\T_{\nu,\beta}(\lambda) : 0<\beta < \infty, \lambda \in \Pr_1(\X)\big\} \cup \big\{\T_\infty(\lambda):\lambda \in \Pr_1(\X)\big\}
    \end{align*}
    is tight. Indeed, the proof of this statement follows along the same lines as Theorem \ref{tight}, in particular, given $\epsilon > 0$, we can find $C_0 = C_0(\epsilon)$ and $L_0 = L_0(\epsilon)$ such that $\lambda$-measure of the set $K(C_0,L_0)$ (see \eqref{defn_K} for the definition) exceeds $1-\epsilon$ for all $\lambda \in \mathcal{M}$.
    
    Observe that for every $\beta >0$, by Theorem \ref{uniqueness}, the operator $\T_{\nu,\beta}$ admits a unique fixed point $\lambda_{\nu,\beta} \in \Pr_1(\X)$. Owing to the tightness of $(\lambda_{\nu,\beta})_{\beta>0} \subset \mathcal{M}$ and the completeness and separability of $\Pr_1(\X)$, we may assume that there is a sequence $(\beta_n)_{n\geq 1}$ such that $\lim_{n\to\infty}\beta_n =\infty$ and $\lambda_{\beta_n} \to \lambda_* \in \Pr_1(\mathcal{X})$ in distribution.

    It remains to show that $\T_\infty(\lambda_*) = \lambda_*$, since the uniqueness of the fixed point of $\T_\infty$ would then guarantee that $\lambda_* = \lambda_\infty$. Note that $\lambda_{\nu,\beta_n} \stackrel{d}{\to} \lambda_*$ implies that $\T_\infty(\lambda_{\nu,\beta_n}) \stackrel{d}{\to} \T_\infty(\lambda_*)$. Following an argument similar to that presented in Lemma \ref{lem1}, it can be shown that \begin{align*}
        \|T_{\nu,\beta_n,\pi(\alpha p)}(f_1^n, \ldots, f^n_{(p-1)\pi(\alpha p)}) - T_\infty(f_1^n, \ldots, f^n_{(p-1)\pi(\alpha p)})\|_\infty \to 0
    \end{align*}
    in probability as $n\to \infty$, where $(f_i^n)_{i\geq 1}$ are i.i.d. samples from $\lambda_{\nu,\beta_n}$. These observations, together with Slutsky's theorem, imply that  $\T_{\nu,\beta_n}(\lambda_{\nu,\beta_n}) \stackrel{d}{\to} \T_\infty(\lambda_*)$. This weak convergence can be upgraded to convergence in the Wasserstein 1-distance by utilizing the uniform bounds \eqref{l_infty bound} and \eqref{lip_bound} that hold true for $\T_\infty$ as well.
\end{proof}

Equipped with Lemma \ref{lem,infty}, we handle \eqref{eq1.2.2.4} in a manner similar to that used previously to show that \eqref{eqn:last term} is small. For brevity and completeness, we outline the argument here for the difference of the first terms in $\cP_{\nu,\beta}(\lambda_{\nu,\beta})$ and $\cP_\infty(\lambda_\infty)$, the difference of the second terms can be handled similarly.
For $\f = (f_1, \ldots, f_p) \in \X^p$, we declare \begin{align*}
          \Phi_\f = \Phi_{\theta, \f}(\rho): = \theta(\rho_1, \ldots, \rho_p) + \sum_{i=1}^p f_i(\rho_i)
        \end{align*}
        and let \begin{align*}
             R_\beta(\Phi_\f) := \frac{1}{\beta} \log \int e^{\beta \Phi_\f(\rho)} \nu^{\otimes p}(d\rho)\;\; \text{and} \;\; R_\infty(\Phi_\f) = \sup_{\rho \in \Sigma^p} \Phi_\f.
        \end{align*}
Lemma \ref{lem,infty} implies that $W_1(\lambda_{\nu,\beta}^{\otimes p},\lambda_\infty^{\otimes p})\to 0$, so by Skhorokhod's representation theorem, we may switch to a new probability space where there exist random variables $Y_\beta$ and $Y$ with laws $\lambda_{\nu,\beta}^{\otimes p}$ and $\lambda_\infty^{\otimes p}$ respectively such that $Y_\beta \to Y$ almost surely and $\E \|Y_\beta\|_\infty \to \E \|Y\|_\infty$. For every fixed $\f$ and $\theta$, note that $|R_\beta(\Phi_\f)|$ is uniformly bounded for $\beta \leq \infty$ and the map $\rho \in \Sigma^p \mapsto \Phi_\f(\rho)$ is continuous and bounded. Together with \eqref{bdd theta}, this implies that \begin{align*}
    \E|R_\beta(\Phi_Y)-R_\infty(\Phi_Y)|\to 0.
\end{align*}
Again, $R_\beta(\Phi_\f)$ is 1-Lipschitz in $\f$, so \eqref{bdd theta} and Theorem \ref{thm A1} implies that  \begin{align*}
    \E|R_\beta(\Phi_{Y_\beta}) - R_\beta(\Phi_Y)|\to 0.
\end{align*}
Thus, from the previous two displays we have that \begin{align*}
    \E|R_\beta(\Phi_{Y_\beta}) - R_\infty(\Phi_Y)|\to 0
\end{align*}
as desired.

\section{Counting configurations satisfying a fraction of the constraints}\label{sec6}

Throughout this section, $\upnu$ shall stand for the uniform measure on $\{-1,1\}$. Recall that for $\beta \in \R$, $F(\beta) = \cP_{\upnu, \beta}(\lambda_{\upnu,\beta})$, where $\lambda_{\upnu,\beta}$ is the unique fixed point of the operator $\T_{\upnu,\beta}$ guaranteed by Theorem \ref{thm1}.

\begin{proof}[Proof of Theorem \ref{csp:thm1}]
Recall that the constraint $\theta$ is satisfied by $\sigma \in \{-1,1\}^p$ when $\theta(\sigma) = 0$, otherwise $\theta(\sigma) = -1$ and we say that $\sigma$ does not satisfy $\theta$. The Hamiltonian and the free energy are defined in the usual manner: for $\beta \in \R$, \begin{align*}
    H_N(\sigma) = \sum_{k\leq \pi(\alpha N)}\theta_k(\sigma_{I(k,1)}, \ldots, \sigma_{I(k,p)})\;\;\text{and}\;\; F_N(\beta) = \frac{1}{N}\log \int e^{\beta H_N(\sigma)}\upnu^{\otimes N}(d\sigma).
\end{align*}
Thus, we have \begin{align*}
    \mathcal{AN}_{N,\epsilon} = 2^N \upnu^{\otimes N} \Big(\Big| \frac{H_N(\sigma)}{\pi(\alpha N)} -(t-1)\Big| \leq \epsilon\Big).
\end{align*}
From Griffith's lemma, we obtain that $\lim_{N\to\infty}F_N'(\beta)=(\lim_{N\to \infty}F_N(\beta))' = F'(\beta)$ for every $\beta\in \R$ where $F'(\beta)$ is defined. Together with this, the convexity of $F_N$, and the concentration of the free energy (Proposition \ref{concentration}), for any $\epsilon>0$ we can find $\delta,K>0$ such that for any $N\geq 1,$ with a probability of at least $1-K/N$, the following inequalities hold
	\begin{align*}
		\frac{F_N(\beta+\delta)-F_N(\beta)}{\delta}-F'(\beta)\leq \frac{\epsilon}{2}\;\;\text{and} \;\;\frac{F_N(\beta-\delta)-F_N(\beta)}{-\delta}-F'(\beta)\leq \frac{\epsilon}{2}.
	\end{align*}
Consequently, on the event that the above two inequalities hold, we have
\begin{align*}
\frac{1}{N}\log \int_{H_N(\sigma)\geq N(F'(\beta)+\epsilon )}e^{\beta H_N(\sigma)}\upnu^{\otimes N}(d\sigma)
&\leq \frac{1}{N}\log \int e^{(\beta+\delta)H_N(\sigma)}\upnu^{\otimes N}(d\sigma)-\delta(F'(\beta)+\epsilon )\\
&=F_N(\beta+\delta)-\delta(F'(\beta)+\epsilon )\\
&=F_N(\beta)-\delta \epsilon +\delta\Bigl(\frac{F_N(\beta+\delta)-F_N(\beta)}{\delta}-F'(\beta)\Bigr)\\
&\leq F_N(\beta)-\frac{\delta \epsilon }{2}
\end{align*}
and
\begin{align*}
	\frac{1}{N}\log \int_{H_N(\sigma)\leq N(F'(\beta)-\epsilon )}e^{\beta H_N(\sigma)}\upnu^{\otimes N}(d\sigma)
	&\leq \frac{1}{N}\log \int e^{(\beta-\delta)H_N(\sigma)}\upnu^{\otimes N}(d\sigma)+\delta(F'(\beta)-\epsilon )\\
	&=F_N(\beta-\delta)+\delta(F'(\beta)-\epsilon )\\
	&=F_N(\beta)-\delta \epsilon -\delta\Bigl(\frac{F_N(\beta-\delta)-F_N(\beta)}{-\delta}-F'(\beta)\Bigr)\\
&\leq F_N(\beta)-\frac{\delta \epsilon }{2}.
\end{align*}
Hence, with a probability of at least $1-K/N$,
\begin{align*}
	G_{N,\beta,\upnu}(|H_N(\sigma)-NF'(\beta)|\leq N\epsilon )&\geq 1-2e^{-\delta\epsilon  N/2},
\end{align*}
where $G_{N,\beta,\upnu}$ is the Gibbs measure associated to $H_N$ at temperature $\beta$. From this, by taking log on both sides, we readily have
\begin{align*}
\lim_{\epsilon \downarrow 0}	\limsup_{N\to\infty}\Bigl|\frac{1}{N}\log \int_{|H_N(\sigma)-NF'(\beta)|\leq N\epsilon }e^{\beta H_N(\sigma)}\upnu^{\otimes N}(d\sigma)-F(\beta)\Bigr|=0
\end{align*}
and therefore, for any $\beta\in \R,$
\begin{align*}
\lim_{\epsilon \downarrow 0}\limsup_{N\to\infty}\Bigl|	\frac{1}{N}\log \upnu^{\otimes N}\bigl(|H_N(\sigma)-NF'(\beta)|\leq N\epsilon \bigr)-(F(\beta)-\beta F'(\beta))\Bigr|=0.
\end{align*}
In view of the definition of $\mathcal{AN}_{N,\epsilon}$ and noting that  $\pi(\alpha N)/N$ converges to $\alpha$ almost surely, the above limit is equivalent to
\begin{align*}
\lim_{\epsilon \downarrow 0}\limsup_{N\to\infty}\Bigl|	\frac{1}{N}\log \mathcal{AN}_{N,\epsilon }(\alpha^{-1} F'(\beta))-\bigl(\log 2+F(\beta)-\beta F'(\beta)\bigr)\Bigr|=0.
\end{align*}
Finally, since $F$ is convex, for points $\beta_1 \leq \beta_2$ where $F$ is differentiable, we have $F'(\beta_1) \leq F'(\beta_2)$. Further when $0 < \beta_1 \leq \beta_2$, we have \begin{align*}
    F(\beta_2) - F(\beta_1) \leq F'(\beta_2) (\beta_2 - \beta_1) \leq \beta_2 F'(\beta_2) - \beta_1 F'(\beta_1).
\end{align*}
Rearranging this gives the second assertion in Theorem  \ref{csp:thm1}. The final assertion can be established similarly.
\end{proof}

To establish the proof of Corollary \ref{csp:cor1}, we need the following lemma.

\begin{lemma}\label{cor1}
Suppose that $\alpha p(p-1)\leq 1$ and  $\theta$ is symmetric. For any $\beta \in \R$ we have that 
	\begin{align*}
	F(\beta)=\alpha \E\log \bigl(e^{-\beta}+(1-e^{-\beta})\upnu^{\otimes p}(\theta(\rho)=0)\bigr)\;\;\text{and}\;\;F'(\beta)=-\alpha \E\frac{ 1-\upnu^{\otimes p}(\theta(\rho)=0)}{1+(e^{\beta}-1)\upnu^{\otimes p}(\theta(\rho)=0)}.
\end{align*}
\end{lemma}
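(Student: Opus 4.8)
\textbf{Proof proposal for Lemma \ref{cor1}.}

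The plan is to exploit the fact that under the symmetry hypothesis, $\lambda_{\upnu,\beta}$ is the Dirac mass at the zero function, so that $F(\beta) = \cP_{\upnu,\beta}(\lambda_{\upnu,\beta})$ can be evaluated by plugging $X \equiv 0$ into the functional $\cP_{\upnu,\beta}$ defined in \eqref{free_energy}. First I would verify the claim that $\delta_0$ is the fixed point of $\T_{\upnu,\beta}$: with $X \equiv 0$, the operator \eqref{opeartor:eq1} reads $T_{\upnu,\beta,\pi(\alpha p)}(0,\ldots,0)(t) = \beta^{-1}\log\big(\la \cE_{\beta,\pi(\alpha p)}(\sigma,t)\ra / \la \int \cE_{\beta,\pi(\alpha p)}(\sigma,s)\upnu(ds)\ra\big)$, where $\la \cdot \ra$ is now just integration against $\upnu^{\otimes}$; since $\psi \equiv 0$ here and, by the symmetry $\theta_k(\sigma) = \theta_k(-\sigma)$, averaging $\cE_{\beta,\pi(\alpha p)}(\sigma,t) = \exp(\beta\sum_k \theta_k(\sigma_{\cdot},t))$ over $\sigma \in \{-1,1\}^{(p-1)\pi(\alpha p)}$ gives a quantity independent of $t$, the ratio is identically $1$ and $T_{\upnu,\beta,\pi(\alpha p)}(0,\ldots,0) \equiv 0$. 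By the uniqueness half of Theorem \ref{thm1} (equivalently Theorem \ref{uniqueness}), valid since $\alpha p(p-1) \le 1$, this forces $\lambda_{\upnu,\beta} = \delta_0$.

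Next I would compute $\cP_{\upnu,\beta}(\delta_0)$. With $X \equiv 0$ and $\psi \equiv 0$, the second term $\alpha(p-1)\E\log\la e^{\beta\theta(\sigma)}\ra_{\beta,X}$ becomes $\alpha(p-1)\E\log\big(\upnu^{\otimes p}(\theta=0) + e^{-\beta}\upnu^{\otimes p}(\theta=-1)\big) = \alpha(p-1)\E\log(e^{-\beta} + (1-e^{-\beta})\upnu^{\otimes p}(\theta=0))$, using that $\theta$ takes only the two values $0$ and $-1$ (here the expectation $\E$ is over the randomness of $\theta$, which the statement suppresses in the inner probability by writing $\upnu^{\otimes p}(\theta(\rho)=0)$ as a random variable). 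The first term is $\E\log\la \int\cE_{\beta,\pi(\alpha p)}(\sigma,\varepsilon)\upnu(d\varepsilon)\ra_{\beta,X}$; with $X\equiv 0$ this is $\E\log \int\cdots\int \prod_{k\le \pi(\alpha p)} e^{\beta\theta_k(\sigma_{(k-1)(p-1)+1},\ldots,\sigma_{k(p-1)},\varepsilon)}\,\upnu^{\otimes}(d\sigma)\upnu(d\varepsilon)$, and because the $p$-tuples $(\sigma_{(k-1)(p-1)+1},\ldots,\sigma_{k(p-1)},\varepsilon)$ for distinct $k$ together with $\varepsilon$ are independent and uniform (each coordinate used once), the integral factors as $\prod_{k\le\pi(\alpha p)} \E_{\upnu^{\otimes p}} e^{\beta\theta_k} = \prod_{k\le\pi(\alpha p)}(e^{-\beta} + (1-e^{-\beta})\upnu^{\otimes p}(\theta_k=0))$. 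Taking the logarithm and then the expectation over the Poisson number $\pi(\alpha p)$ (mean $\alpha p$) and over the i.i.d.\ $\theta_k$'s gives $\alpha p\,\E\log(e^{-\beta} + (1-e^{-\beta})\upnu^{\otimes p}(\theta=0))$. Subtracting the two contributions yields $F(\beta) = (\alpha p - \alpha(p-1))\E\log(e^{-\beta} + (1-e^{-\beta})\upnu^{\otimes p}(\theta=0)) = \alpha\,\E\log(e^{-\beta} + (1-e^{-\beta})\upnu^{\otimes p}(\theta=0))$, as claimed.

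Finally, the formula for $F'(\beta)$ follows by differentiating under the expectation: writing $\varrho := \upnu^{\otimes p}(\theta=0)$, we have $\frac{d}{d\beta}\log(e^{-\beta} + (1-e^{-\beta})\varrho) = \frac{-e^{-\beta} + e^{-\beta}\varrho}{e^{-\beta}+(1-e^{-\beta})\varrho} = \frac{-(1-\varrho)}{1 + (e^\beta - 1)\varrho}$ after multiplying numerator and denominator by $e^\beta$, so $F'(\beta) = -\alpha\,\E\frac{1-\varrho}{1+(e^\beta-1)\varrho}$. Justifying the interchange of derivative and expectation is routine since $\varrho \in [0,1]$ makes the integrand and its $\beta$-derivative uniformly bounded on compact $\beta$-intervals; I expect the only mildly delicate point in the whole argument to be the bookkeeping that the $p$-tuples entering $\cE_{\beta,\pi(\alpha p)}$ really do involve disjoint blocks of the dummy spins so that the factorization is legitimate — this is exactly the structure built into \eqref{def:E}, so it is a matter of careful reading rather than a genuine obstacle.
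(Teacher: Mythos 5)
Your overall route is the same as the paper's: use the symmetry of $\theta$ to check that the Dirac mass at the zero function is a fixed point of $\T_{\upnu,\beta}$, invoke uniqueness from Theorem \ref{uniqueness} to conclude $\lambda_{\upnu,\beta}=\delta_0$, plug $X\equiv 0$ into $\cP_{\upnu,\beta}$, and differentiate. The fixed-point verification, the evaluation of the second term, and the computation of $F'$ are all fine.

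There is, however, one step whose justification as written is wrong: the factorization of the first term. You claim that the $p$-tuples $(\sigma_{(k-1)(p-1)+1},\ldots,\sigma_{k(p-1)},\varepsilon)$ for distinct $k$ are independent because ``each coordinate is used once.'' This is false: the coordinate $\varepsilon$ is the \emph{same} in every factor of $\cE_{\beta,\pi(\alpha p)}(\sigma,\varepsilon)$ (see \eqref{def:E}), so the tuples all share their last entry. Conditioning on $\varepsilon$, the integral over the $\sigma$-blocks does factor, giving
\begin{align*}
\int \prod_{k\le \pi(\alpha p)} h_k(\varepsilon)\,\upnu(d\varepsilon),
\qquad h_k(\varepsilon):=\int e^{\beta\theta_k(\rho,\varepsilon)}\upnu^{\otimes(p-1)}(d\rho),
\end{align*}
but this does not equal $\prod_k\int h_k(\varepsilon)\upnu(d\varepsilon)$ for a general $\theta$. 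The factorization is rescued only by the symmetry hypothesis: the change of variables $\rho\mapsto-\rho$ together with $\theta_k(x)=\theta_k(-x)$ shows $h_k(1)=h_k(-1)$, i.e.\ each $h_k$ is constant in $\varepsilon$, and moreover equals $\int e^{\beta\theta_k(\rho)}\upnu^{\otimes p}(d\rho)$. This is exactly the observation you already made when verifying the fixed point, and it is the argument the paper uses at this step; you just need to invoke it here instead of the incorrect independence claim. With that repair the rest of the computation, including $F(\beta)=(\alpha p-\alpha(p-1))\E\log(e^{-\beta}+(1-e^{-\beta})\upnu^{\otimes p}(\theta(\rho)=0))$ and the differentiation, goes through as you wrote it.
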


\begin{proof} Since $\theta$ satisfies $\theta(\rho)=\theta(-\rho),$ it is easy to see that
		$$
	\theta_k(\rho_{(k-1)(p-1)+1}, \ldots, \rho_{k(p-1)},1)=\theta_k(-\rho_{(k-1)(p-1)+1}, \ldots, -\rho_{k(p-1)},-1),
	$$
	which implies that
	\begin{align*}
		\int  \cE_{\beta,r}(\rho, 1) \upnu^{\otimes (p-1)r}(d\rho)&=\int  \cE_{\beta,r}(\rho, -1) \upnu^{\otimes (p-1)r}(d\rho),
	\end{align*}
 where $ \cE_{\beta,r}$ is as in \eqref{def:E}. Consequently, for $f_1,\ldots,f_{(p-1)r}\equiv 0,$ 
	\begin{align*}
		T_{\upnu,\beta,r}(f_1,\ldots,f_{(p-1)r})(t)&=\frac{1}{\beta}\log\frac{\int   \cE_{\beta,r}(\rho, t) \upnu^{\otimes (p-1)r}(d\rho)}{\int   \cE_{\beta,r}(\rho, s) \upnu^{\otimes (p-1)r}(d\rho)\upnu(ds)}=0.
	\end{align*}
	From this and the unique fixed point $\lambda_{\upnu,\beta}$ of the distributional operator $\T_{\upnu,\beta}$ in Theorem \ref{thm1}, we obtain that $\lambda_{\upnu,\beta}$ is a Dirac measure at the zero function. Plugging this into our limiting free energy, it follows that
	\begin{align*}
	F(\beta) & = \E \log \int \prod_{k\leq \pi(\alpha p)}\Bigl(\int \exp \Big(\beta\theta_k(\rho_{k,1}, \ldots, \rho_{k,p-1}, \varepsilon )\Big) \prod_{i=1}^{p-1}\upnu(d\rho_{k,i})\Bigr)\upnu(d\varepsilon )\\
		&\qquad - \alpha(p-1)\E \log \int e^{\beta\theta(\rho_1, \ldots, \rho_p)}\prod_{i=1}^p\upnu(d\rho_i).
	\end{align*}
	Here, since by the symmetry of $\theta$ again,
	\begin{align*}
		\int e^{\beta\theta_k(\rho_{k,1}, \ldots, \rho_{k,p-1}, 1)} \prod_{i=1}^{p-1}\upnu(d\rho_{k,i})&=\int e^{\beta\theta_k(\rho_{k,1}, \ldots, \rho_{k,p-1},-1)} \prod_{i=1}^{p-1}\upnu(d\rho_{k,i}),
	\end{align*}
	the first term can be written as
	\begin{align*}
		\E\sum_{k=1}^{\pi(\alpha p)}\log \int e^{\beta\theta_k(\rho_{k,1}, \ldots,\rho_{k,p})} \prod_{i=1}^{p}\upnu(d\rho_{k,i})&=\alpha p \E\log \int e^{\beta\theta(\rho_1,\ldots,\rho_p)}\prod_{i=1}^p\upnu(d\rho_i).
	\end{align*}
    Combining with the second term, it leads to
	\begin{align*}
		F(\beta)&=\alpha  \E\log \int e^{\beta\theta(\rho_1,\ldots,\rho_p)}\prod_{i=1}^p\upnu(d\rho_i) =\alpha \E\log \bigl(e^{-\beta}\upnu^{\otimes p}(\theta(\rho)=-1)+\upnu^{\otimes p}(\theta(\rho)=0)\bigr)\\
		&=\alpha \E\log \bigl(e^{-\beta} + (1-e^{-\beta})\upnu^{\otimes p}(\theta(\rho)=0)\bigr).
	\end{align*}
 The expression for $F'$ follows from differentiating the above expression.
\end{proof}

\begin{proof}[Proof of Corollary \ref{csp:cor1}]
The first part of Corollary \ref{csp:cor1} follows directly from Lemma \ref{cor1}. As for the second part, owing to the form of $\theta$ given by \eqref{sym_theta} and the fact that $\upnu$ is a uniform measure on $\{-1,1\}^p$, we have that $$\varrho=\upnu^{\otimes p}\bigl(\phi(x_1,\ldots,x_p)=0\bigr) = \upnu^{\otimes p}(\theta(x_1, \ldots, x_p)=0)$$ for almost every choice of $\theta$. Thus from the above lemma, we have that
	\begin{align*}
	F(\beta)=\alpha \log \bigl(e^{-\beta}+(1-e^{-\beta})\varrho\bigr)\;\;\text{and}\;\;F'(\beta)=-\frac{\alpha e^{-\beta} (1-\varrho)}{e^{-\beta}+(1-e^{-\beta})\varrho}.
\end{align*}
Now, if we denote $t=1+\alpha^{-1}F'(\beta)$, then 
\begin{align*}
    t=\frac{\varrho }{e^{-\beta}+(1-e^{-\beta})\varrho} \;\;\text{or equivalently} \;\; e^{-\beta}&=\frac{(1-t)\varrho}{(1-\varrho)t}.
\end{align*}
This enables us to express
\begin{align*}
	F(\beta)-\beta F'(\beta)&=\alpha(1-t)\log \frac{1-\varrho}{1-t}+\alpha t\log \frac{\varrho}{t},
\end{align*}
so for any $0<t<1,$ we have
\begin{align*}
	\lim_{\epsilon \downarrow 0}	\limsup_{N\to \infty}\Bigl|\frac{1}{N}\log\mathcal{AN}_{N,\epsilon }(t)-\Bigl(\log 2+\alpha(1-t)\log \frac{1-\varrho}{1-t}+\alpha t\log \frac{\varrho}{t}\Bigr)\Bigr|=0.
\end{align*}

\end{proof}

\section{Free energy of the dilute continuous hardcore model}\label{proof_hc}
This section is dedicated to the proof of Theorem \ref{thm_hardcore}. Note that Lemma \ref{gen_fixed_pt} ensures that $\T_\infty^\h$ admits a unique fixed point, so we only need to prove the second part of the assertion, namely, the limit of the logarithmic volume of the hardcore model. Our approach introduces a free energy associated the hardcore model, namely, for any $\beta>0,$ \begin{align*}
F_N(\beta)& = - \log \upsilon_0+\frac{1}{N}  \log \int \eta^{\sum_{i=1}^N \sigma_i} e^{\beta\sum_{k=1}^{\pi(\alpha N)}\theta_k} d\sigma,
\end{align*}
where $\theta(\sigma_1,\sigma_2)=-(\sigma_1+\sigma_2-1)\1_{\sigma_1+\sigma_2\geq 1}$ and $\theta_k=\theta(\sigma_{I(k,1)},\sigma_{I(k,2)})$.
It is easy to see that the free energy converges to the logarithmic volume of the hardcore model as $\beta\to\infty$,
\begin{align*}
F_N(\infty)&: = \frac{1}{N} \log \nu^{\otimes N}(\mathfrak{P}_G) = - \log \upsilon_0+\frac{1}{N} \log \int \eta^{\sum_{i=1}^N \sigma_i}\prod_{k =1}^{\pi(\alpha N)} \1_{\theta_k\le 0} d\sigma .
\end{align*}
In the meantime, we also know that the limiting free energy $F_N(\beta)$ for any $\beta>0$ exists and is described by Theorem \ref{thm1}. Our major task will be to establish the interchangeability of the limits between $N$ and $\beta$ for $F_N(\beta).$ We proceed in three steps as follows.

\subsection{Approximation of the free energy}
Our first step establishes that  $F_N(\beta)$ converges to $F_N(\infty)$ in $L^1$ distance uniformly over all $N$ as $\beta$ tends to infinity.
\begin{lemma}\label{hc_1} We have that
    \begin{align*}
        \lim_{\beta \to \infty} \limsup_{N}\E|F_N(\beta) - F_N(\infty)| = 0.
    \end{align*}
\end{lemma}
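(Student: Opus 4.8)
## Proof proposal for Lemma \ref{hc_1}

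The plan is to compare $F_N(\beta)$ and $F_N(\infty)$ directly by a sandwich argument, exploiting the fact that the interaction $\theta(\sigma_1,\sigma_2) = -(\sigma_1+\sigma_2-1)\1_{\sigma_1+\sigma_2\geq 1}$ is nonpositive and bounded, so that $e^{\beta\theta_k}$ increases to $\1_{\theta_k = 0}$ pointwise and monotonically as $\beta\to\infty$. First I would record the trivial upper bound: since $e^{\beta\theta_k}\geq \1_{\theta_k\leq 0} = \1_{\theta_k = 0}$ (because $\theta_k\leq 0$ always, and $e^{\beta\theta_k} = 1$ exactly when $\theta_k = 0$), we get $F_N(\beta)\geq F_N(\infty)$ for every $\beta>0$ and every realization of the disorder. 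Hence it suffices to produce a matching lower bound on $F_N(\infty)$, i.e., an upper bound on $F_N(\beta) - F_N(\infty)$ that is uniform in $N$ and vanishes as $\beta\to\infty$.

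For the lower bound on $F_N(\infty)$, I would split the configuration space $[0,1]^N$ according to how much the ``soft'' constraints are violated. Fix $\delta>0$. Given $\sigma$, say constraint $k$ is \emph{$\delta$-bad} if $\sigma_{I(k,1)}+\sigma_{I(k,2)} \geq 1 + \delta$, i.e., $\theta_k \leq -\delta$. The key point is a \emph{contraction/retraction} idea: from any $\sigma$ one can move to a nearby configuration $\sigma'\in\mathfrak{P}_{\G_N}$ by decreasing coordinates, and the number of coordinates that must be decreased is controlled by the number of violated constraints. More precisely, I would argue that
\begin{align*}
\int \eta^{\sum_i \sigma_i} e^{\beta\sum_k \theta_k}\,d\sigma
&\leq \int \eta^{\sum_i \sigma_i} \prod_{k}\bigl(\1_{\theta_k \geq -\delta} + e^{-\beta\delta}\1_{\theta_k < -\delta}\bigr)^{-1}\cdots
\end{align*}
— rather, more cleanly, I would bound $e^{\beta\theta_k}\leq \1_{\theta_k = 0} + e^{-\beta\delta} + \1_{-\delta < \theta_k < 0}$ and expand the product, or (simplest) use $e^{\beta\theta_k}\leq \1_{\theta_k > -\delta} + e^{-\beta\delta}$, so that
\[
e^{\beta\sum_k\theta_k} \leq \prod_k\bigl(\1_{\theta_k>-\delta} + e^{-\beta\delta}\bigr) = \sum_{S\subseteq [\pi(\alpha N)]} e^{-\beta\delta|S^c|}\prod_{k\in S}\1_{\theta_k>-\delta}.
\]
Integrating, the terms with $|S^c|$ large are killed by $e^{-\beta\delta|S^c|}$, while for the dominant term $S = [\pi(\alpha N)]$ the integral is over the ``$\delta$-relaxed polytope'' $\{\sigma: \sigma_{I(k,1)}+\sigma_{I(k,2)} < 1+\delta\}$. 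This relaxed volume is bigger than $\nu^{\otimes N}(\mathfrak{P}_{\G_N})$, but only by a controlled amount: scaling each coordinate $\sigma_i \mapsto \sigma_i/(1+\delta)$ maps the $\delta$-relaxed polytope into $\mathfrak{P}_{\G_N}$ (since $(\sigma_{I(k,1)}+\sigma_{I(k,2)})/(1+\delta) < 1$), with Jacobian $(1+\delta)^{-N}$ and with the weight $\eta^{\sum\sigma_i}$ distorted by at most a factor $\eta^{\pm N\delta/(1+\delta)}$ (uniformly, since $\eta$ is fixed and $\sum_i|\sigma_i - \sigma_i/(1+\delta)|\leq N\delta$). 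Taking $\tfrac1N\log$, summing the geometric series in $|S^c|$ (which contributes an $o(1)$ as $\beta\to\infty$ once $\delta$ is fixed, using $\pi(\alpha N)\approx \alpha N$ terms and the elementary bound $\tfrac1N\log\sum_{j}\binom{\pi(\alpha N)}{j}e^{-\beta\delta j}$), one obtains
\[
F_N(\beta) - F_N(\infty) \leq \varepsilon_1(\beta,\delta) + C\delta
\]
for a constant $C$ depending only on $\eta$ and $\alpha$, where $\varepsilon_1(\beta,\delta)\to 0$ as $\beta\to\infty$ for each fixed $\delta$, uniformly in $N$ (after taking $\limsup_N$; the Poisson fluctuations of $\pi(\alpha N)$ are handled in $L^1$ by the same estimates used in Proposition \ref{concentration} or just by $\E\pi(\alpha N) = \alpha N$). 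Letting $\beta\to\infty$ then $\delta\to 0$ finishes the proof.

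The main obstacle I anticipate is making the geometric estimate on the ``relaxed volume'' genuinely uniform in $N$ — one must be careful that the distortion of the measure $\eta^{\sum_i\sigma_i}\nu^{\otimes N}$ under the rescaling $\sigma\mapsto\sigma/(1+\delta)$, and the combinatorial sum over subsets $S$ of violated constraints, both produce only $e^{o(N)}$ factors with the $o(N)$ controlled by $\delta$ and $\beta$ but not $N$. A secondary technical point is dealing with the random number of constraints $\pi(\alpha N)$: since $\tfrac1N\pi(\alpha N)\to\alpha$ a.s.\ and in $L^1$, and $F_N(\beta) - F_N(\infty)\in[0, \alpha p\pi(\alpha N)/N]$-ish (bounded by the number of constraints times $\|\theta\|_\infty\leq 1$, divided by $N$), one can pass all the above estimates through $\E[\cdot]$ without difficulty, but the bookkeeping should be done carefully. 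I would present the argument with $\delta$ fixed first, get the bound for fixed disorder, then take expectations and $\limsup_N$, and finally send $\beta\to\infty$ and $\delta\downarrow 0$.
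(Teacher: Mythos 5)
Your overall strategy is viable and genuinely different from the paper's, but as written it contains one real gap in the subset expansion.

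On the comparison: the paper does not compare $F_N(\beta)$ and $F_N(\infty)$ directly. It differentiates in $\beta$, reduces $\E|\partial_\beta F_N(\beta)|$ by symmetry to a ratio of Gibbs integrals attached to a single edge, and bounds that ratio by $C(\beta^{-2}+e^{-\beta/2})$ uniformly in $N$ --- the numerator via $\int_0^{1/2}\sigma_1 e^{-\beta\sigma_1}d\sigma_1\le\beta^{-2}$ plus an $e^{-\beta/2}$ tail, the denominator from below by restricting the spins of vertex $1$ and its neighbours to $[0,1/2]$ and using that the Gibbs density is nonincreasing in each coordinate. Integrating the derivative bound over $[\beta,\infty)$ gives the lemma with rate $\beta^{-1}$. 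Your route --- the pointwise inequality $F_N(\beta)\ge F_N(\infty)$, the relaxation $e^{\beta\theta_k}\le\1_{\theta_k>-\delta}+e^{-\beta\delta}$, and the rescaling $\sigma\mapsto\sigma/(1+\delta)$ mapping the $\delta$-relaxed polytope into $\mathfrak{P}_{\G_N}$ at cost $e^{O(\delta N)}$ --- avoids differentiating the free energy entirely and is a legitimately different argument; after optimizing $\delta\sim\beta^{-1}\log\beta$ it yields a slightly worse but still sufficient rate.

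The gap: after expanding $\prod_k(\1_{\theta_k>-\delta}+e^{-\beta\delta})=\sum_S e^{-\beta\delta|S^c|}\prod_{k\in S}\1_{\theta_k>-\delta}$, you control the sum by $\sum_j\binom{\pi(\alpha N)}{j}e^{-\beta\delta j}$ as if every $S$-term were comparable to the $S=[\pi(\alpha N)]$ term. It is not: for a proper subset $S$ the constraints in $S^c$ are dropped, and $\int\eta^{\sum_i\sigma_i}\prod_{k\in S}\1_{\theta_k>-\delta}\,d\sigma$ exceeds the full-constraint integral by a factor growing with $|S^c|$; in the extreme case $S=\emptyset$ the ratio to the polytope volume is $e^{cN}$ with $c$ bounded away from $0$, which $e^{-\beta\delta|S^c|}$ does not absorb for moderate $|S^c|$. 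What you need is the per-constraint statement that dropping a single edge constraint inflates the weighted volume by at most a constant $C=C(\eta)$, uniformly over which other constraints are retained. This does hold, and the proof is exactly the paper's denominator trick: the integrand $\prod_{k\in S}\1_{\theta_k>-\delta}$ is nonincreasing in each coordinate (the constraint region is a down-set), so restricting the two endpoints of a dropped edge to $[0,1/2]$ --- which automatically enforces that constraint --- loses at most a factor $\bigl(2\max(1,\eta^{-1/2})\bigr)^2$. With that insertion the sum over $S$ is bounded by $(1+Ce^{-\beta\delta})^{\pi(\alpha N)}$, whose normalized logarithm still vanishes in expectation as $\beta\to\infty$ for fixed $\delta$, and the rest of your argument (rescaling cost $C\delta$, then $\beta\to\infty$, then $\delta\downarrow 0$) goes through.
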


\begin{proof} To lighten our expressions, we consider the case $\eta\leq 1$ only; the proof for $\eta >1$ is similar. 
A direct computation yields \begin{align*}
    \partial_\beta F_{N}(\beta)& = -\frac{1}{N}\frac{\sum_{j\leq \pi(\alpha N)}\int \eta^{\sum_{i\leq N} \sigma_i}\prod_{k\neq j}(\1_{\omega_k \leq 0} + e^{-\beta\omega_k}\1_{\omega_k > 0}) d\sigma}{\int \eta^{\sum_{i\leq N} \sigma_i}\prod_{k\leq \pi(\alpha N)}(\1_{\omega_k \leq 0} + e^{-\beta\omega_k}\1_{\omega_k > 0}) d\sigma}
\end{align*}
for $\omega_k:=\sigma_{I(k,1)}+\sigma_{I(k,2)}-1.$
Since every term in the sum is nonnegative, $ \E|\partial_\beta F_{N}(\beta)|=  -\E\partial_\beta F_{N}(\beta)$ and using symmetry gives
\begin{align}
    \E|\partial_\beta F_{N}(\beta)|
    &=\E \frac{\pi(\alpha N)}{N} \frac{\int f_{N, \beta}(\sigma) (\sigma_1 + \sigma_2 - 1) e^{-\beta (\sigma_1 + \sigma_2 - 1)} \1_{\sigma_1 + \sigma_2 > 1}\eta^{\sum_{i\leq N}\sigma_i}d\sigma}{\int f_{N, \beta}(\sigma) (\1_{\sigma_1 + \sigma_2 \leq 1} + e^{-\beta (\sigma_1 + \sigma_2 - 1)} \1_{\sigma_1 + \sigma_2 > 1})\eta^{\sum_{i\leq N}\sigma_i}d\sigma}, \label{eq:a0f}
\end{align}
where $ f_{N,\beta}(\sigma) = \prod_{k\leq \pi(\alpha N)-1} (\1_{\omega_k\leq 0} + e^{-\beta \omega_k} \1_{\omega_k > 0})$. For the rest of the proof, we will show that  there exists a positive constant $C$ depending only on $\alpha$ and $\eta$ such that 
\begin{align}\label{add:eq99}
\sup_{N}\E |\partial_\beta F_{N}(\beta)|\leq C(\beta^{-2}+e^{-\beta/2})
\end{align}
for any $\beta\geq 2.$ If this holds, then we readily have that as $\beta\to\infty,$
\begin{align*}
 \sup_N \E|F_N(\beta) - F_N(\infty)| 
  &\le \int_{\beta}^\infty \sup_N\E  |\partial_{\beta'} F_N(\beta')| d\beta'\to 0,
\end{align*}
and this completes our proof.

Now we turn to the proof of \eqref{add:eq99}.  Consider the integral in the numerator of \eqref{eq:a0f} with all spins but $\sigma_1$ and $\sigma_2$ fixed. This can be written as
 \begin{align}
& \int_0^1 \int_0^1 f_{N, \beta}(\sigma)  (\sigma_1 + \sigma_2 - 1) e^{-\beta (\sigma_1 + \sigma_2 - 1)}\1_{\sigma_1 + \sigma_2 > 1}  \eta^{\sum_{i\leq N}\sigma_i} d \sigma_1 d\sigma_2 \nonumber\\
&= \int_0^1  \int_{1- \sigma_2}^1  f_{N, \beta}(\sigma) (\sigma_1 + \sigma_2 - 1) e^{-\beta (\sigma_1 + \sigma_2 - 1)} \1_{\sigma_1 + \sigma_2 >1} \eta^{\sum_{i\leq N}\sigma_i}d \sigma_1 d\sigma_2 \nonumber \\
&\le \int_0^1  \int_{0}^{\sigma_2} f_{N, \beta}(\sigma)  \sigma_1 e^{-\beta \sigma_1} \eta^{\sum_{i\neq 2}\sigma_i} d \sigma_1 d\sigma_2,  \label{eq:a1f} 
\end{align}
where in the inequality above, we performed a change of variables from $\sigma_1+\sigma_2-1$ to $\sigma_1$ in the inner integral and used that $f_{N, \beta}(\sigma)$ is nonincreasing in $\sigma_1$ and $\eta\leq 1.$ Decomposing the inner integral of \eqref{eq:a1f} over disjoint intervals $[0, \min(1/2, \sigma_2)]$ and $[\min(1/2, \sigma_2), \sigma_2]$, we have \begin{align*}
    \eqref{eq:a1f} & = \int_0^1 \int_0^{\min(1/2,\sigma_2)} f_{N, \beta}(\sigma)  \sigma_1 e^{-\beta \sigma_1} \eta^{\sum_{i\neq 2}\sigma_i} d \sigma_1 d\sigma_2  +  \int_{1/2}^1\int_{1/2}^{\sigma_2}f_{N, \beta}(\sigma)  \sigma_1 e^{-\beta \sigma_1} \eta^{\sum_{i\neq 2}\sigma_i} d \sigma_1 d\sigma_2.
\end{align*}
In the second integral, bounding $\sigma_1 e^{-\beta \sigma_1} \le (1/2) e^{-\beta/2}$ (note that for $\beta \geq 2$, $xe^{-\beta x}$ achieves a global maximum at $x =1/\beta \leq 1/2$), and then replacing the variable $\sigma_1$ with $\sigma_1 -1/2$, we deduce that
\begin{align*}
    \eqref{eq:a1f} &\le  \int_0^1  \int_{0}^{1/2} f_{N, \beta}(\sigma)  \sigma_1 e^{-\beta \sigma_1}  \eta^{\sum_{i\neq 2}\sigma_i} d \sigma_1 d\sigma_2  + \frac{1}{2}e^{-\beta/2} \int_{1/2}^1  \int_{0}^{\sigma_2 - 1/2} f_{N, \beta}(\sigma)  \eta^{\sum_{i\neq 2}\sigma_i}  d \sigma_1 d\sigma_2 \\
    &\le  \int_0^1  \int_{0}^{1/2} f_{N, \beta}(\sigma)  \sigma_1 e^{-\beta \sigma_1} \eta^{\sum_{i\neq 2}\sigma_i} d \sigma_1 d\sigma_2+ \frac{1}{2}e^{-\beta/2} \int_0^{1/2}  \int_{0}^{1/2} f_{N, \beta}(\sigma) \eta^{\sum_{i\neq 2}\sigma_i} d \sigma_1 d\sigma_2.
\end{align*}
Integrating over the rest of the coordinates, we obtain
\begin{align} \label{eq:a2f}
& \int f_{N, \beta}(\sigma) (\sigma_1 + \sigma_2 - 1) e^{-\beta (\sigma_1 + \sigma_2 - 1)}\1_{\sigma_1 + \sigma_2 > 1}  \eta^{\sum_{i\leq N}\sigma_i }d \sigma \nonumber\\
&\le   \int_{ [0, 1/2] \times [0, 1]^{N-1} } f_{N, \beta}(\sigma)   \sigma_1 e^{-\beta \sigma_1}  \eta^{\sum_{i\neq 2}\sigma_i}d \sigma + \frac{1}{2} e^{-\beta/2} \int_{ [0, 1/2]^2 \times [0, 1]^{N-2} }f_{N, \beta}(\sigma) \eta^{\sum_{i\neq 2}\sigma_i} d \sigma \nonumber \\
&=:  R_1 + R_2.
\end{align}
On the other hand, by dropping the exponential term below and restricting the domain of integration, we obtain that the integral appearing in the denominator of \eqref{eq:a0f} is bounded as 
\begin{align*}
 & \int f_{N, \beta}(\sigma) \big ( \1_{\sigma_1 + \sigma_2 \leq 1} +e^{-\beta(\sigma_1 + \sigma_2 - 1)} \1_{\sigma_1 + \sigma_2 > 1} \big)  \eta^{\sum_{i\leq N}\sigma_i}d\sigma  \ge \eta^{1/2}\int_{ [0, 1/2]^2 \times [0, 1]^{N-2} } f_{N, \beta}(\sigma) \eta^{\sum_{i\neq 2}\sigma_i}d \sigma.
\end{align*}
It follows that 
\begin{align} \label{eq:a3f}
    R_2\cdot\Big( \int f_{N, \beta}(\sigma) \big ( \1_{\sigma_1 + \sigma_2 \leq 1} +e^{-\beta(\sigma_1 + \sigma_2 - 1)} \1_{\sigma_1 + \sigma_2 > 1} \big)  \eta^{\sum_{i\leq N}\sigma_i} d\sigma \Big)^{-1} \le \frac{e^{-\beta/2}}{2\eta^{1/2}}.
\end{align}

Next, to handle $R_1$ in a similar manner, observe that with high probability, $f_{N,\beta}$ does not contain the $(1,2)$ edge and we can rewrite $f_{N,\beta}$ as
 \begin{align*}
f_{N, \beta}(\sigma)  =  g_{N, \beta}(\sigma) \prod^*_{k} \big ( \1_{ \theta_k \le 0} +e^{-\beta \theta_k} \1_{ \theta_k > 0} \big),
 \end{align*}
 where the product $\prod^*_{k}$ is taken over all edges $k \le \pi(\alpha N)-1$ such that  $1 \in \{  I(k, 1), I(k, 2) \}$. Let $J'$ denote the collection of vertices that appear as a neighbor of $1$ among these edges and let $J  = J' \cup \{2\}$. Thus, the set $J$ represents all neighbors of the vertex $1$.
To handle the integral $R_1$, observe that $f_{N,\beta}$ is nonincreasing in each coordinate. Together with $\eta\leq 1$, we have
\begin{align*}
    \int_{[0, 1]^{J} } f_{N, \beta}(\sigma)   \eta^{\sum_{i\neq 2}\sigma_i} d\sigma_J
    &\le 2^{ | J| }   \int_{[0, 1/2]^{J}} f_{N, \beta}(\sigma)  \eta^{\sum_{i\neq 2}\sigma_i}  d\sigma_J \\
    &= 2^{ | J| }   \int_{[0, 1/2]^{J}} g_{N, \beta}(\sigma)   \eta^{\sum_{i\neq 2}\sigma_i}\prod^*_{k} \big ( \1_{ \omega_k \le 0} +e^{-\beta \omega_k} \1_{ \omega_k > 0} \big) d\sigma_J,
\end{align*}
where we adopted the notation $\sigma_J: = (\sigma_j)_{j\in J}$ for $J \subseteq [N]$. Note that $\prod^*_{k} \big ( \1_{ \omega_k \le 0} +e^{-\beta \omega_k} \1_{ \omega_k > 0} \big) \equiv 1$ if $\sigma_1 \le 1/2$ and $\sigma_j \le 1/2$ for $j \in J' \subseteq J$. Thus, integrating over the remaining variables, we obtain
\begin{align*}
    R_1 &\le 2^{ | J| }   \int_{  [0, 1/2]^{J \cup 
    \{1\}}   \times [0, 1]^{[N] \setminus (J \cup \{1\})} } \sigma_1 e^{-\beta \sigma_1} g_{N, \beta}(\sigma) \eta^{\sum_{i\neq 2}\sigma_i}  d\sigma.
\end{align*}
Note that $g_{N, \beta}(\sigma) $ does not depend on $\sigma_1$, $\int_0^{1/2} \sigma_1 e^{-\beta \sigma_1} d\sigma_1 \le \beta^{-2},$ and $\eta^{\sigma_1} \leq 1$. It follows that 
\begin{align} \label{eq:a4f}
    R_1 \le  2^{ | J| } \beta^{-2} \int_{  [0, 1/2]^{J}   \times [0, 1]^{[N] \setminus (J \cup \{1\})} } g_{N, \beta}(\sigma) \eta^{\sum_{i>2}\sigma_i}  d\sigma_{[N]\setminus\{1\}}.
\end{align}
On the other hand, for the integral in the denominator of \eqref{eq:a0f}, by dropping the exponential term below and restricting the domain of integration, we obtain that
    \begin{align} \label{eq:a5f}
    & \int  f_{N, \beta}(\sigma)  \big ( \1_{\sigma_1 + \sigma_2 \leq 1} +e^{-\beta (\sigma_1 + \sigma_2 - 1)} \1_{\sigma_1 + \sigma_2 > 1} \big)  \eta^{\sum_{i\leq N}\sigma_i} d\sigma \nonumber \\
        &\geq \eta\int_{  [0, 1/2]^{J}   \times [0, 1]^{[N] \setminus (J \cup \{1\})} } g_{N, \beta}(\sigma) \eta^{\sum_{i> 2}\sigma_i}  d\sigma_{[N]\setminus \{1\}}.
    \end{align}
Combining \eqref{eq:a4f} and \eqref{eq:a5f}, we have 
\begin{align}\label{eq:a6f}
    R_1 \cdot \Big(\int f_{N, \beta}(\sigma) \big ( \1_{\sigma_1 + \sigma_2 \leq 1} +e^{-\beta (\sigma_1 + \sigma_2 - 1)} \1_{\sigma_1 + \sigma_2 > 1} \big) \eta^{\sum_{i\leq N}\sigma_i}  d\sigma \Big)^{-1} \le \frac{2^{|J|}}{\eta\beta^2}.
\end{align}
Therefore, combining \eqref{eq:a0f}, \eqref{eq:a2f}, \eqref{eq:a3f} and \eqref{eq:a6f} we get
\begin{align*}
\E |\partial_\beta F_N(\beta)| &\le \frac{1}{\eta \beta^2} \E \Big[ \frac{\pi(\alpha N)}{N} 2^{|J|} \Big] +   \frac{\alpha e^{-\beta/2}}{2\eta^{1/2}} \le \frac{1}{\eta \beta^2} \left ( \E \Big[ \frac{\pi(\alpha N)^2}{N^2}\Big]  \E[ 2^{2|J|} ] \right)^{1/2} +    \frac{\alpha e^{-\beta/2}}{2\eta^{1/2}}.
\end{align*}
This readily implies \eqref{add:eq99} since $|J|$ is Poisson distributed with mean $ \pi(2 \alpha)$.

\end{proof}

\subsection{Convergence of the free energy}
 From Theorem \ref{thm1}, we know that $\lim_{N\to\infty}F_{N}(\beta)$ is given by $\mathcal{P}_{\nu,\beta}(\lambda_{\nu,\beta})$, where $\nu(dt)\propto \eta^{t}dt$ on $[0,1].$ This limit can be reformulated as follows. Let $\X_0$ be the set of all continuous functions $f\geq 0$ that satisfy $\int_0^1 f(x) dx = 1$. For $\lambda\in \Pr_1(\X_0)$, let\begin{align*}
    \cP_\beta^\h(\lambda) & = -\log \upsilon_0 + \E\log \int \eta^t\prod_{k\leq \pi(2\alpha)}\bigl(F_k(1-t) +G_{\beta, k}(1-t)\bigr) dt \\
    & \qquad - \alpha \E \log \int \bigl(F_1(1-t) + G_{\beta, 1}(1-t)\bigr)f_2(t) dt,
\end{align*}
where $(f_k)_{k\geq 1}$ are i.i.d. copies with law $\lambda$, $F_k$ is the cumulative distribution function of $f_k$, and \begin{align}\label{defn G}
    G_{\beta,k}(1-t) &= \int_{1-t}^1  e^{-\beta( \rho  - ( 1-t) ) }   f_k (\rho)  d \rho = \int_0^{t}  e^{-\beta u }   f_k (u  + (1-t))  d u.
\end{align}
Define the operator $\T^\h_\beta: \Pr_1(\X_0) \to \Pr_1(\X_0)$ as follows: for $\lambda \in \Pr_1(\X_0)$, $\T^\h_\beta(\lambda)$ is the law of the function \begin{align}
     T^\h_\beta(f_1, \ldots, f_{\pi(2\alpha)})(t)  & = \frac{\int  \eta^t\prod_{k\leq \pi(2\alpha)} \big ( \1_{\rho_k + t\le 1 } + e^{-\beta( \rho_k + t-1) } \1_{ \rho_k + t >1 } \big)  f_k(\rho_k)  d \rho  }{
     \int_0^1  \eta^s\int \prod_{k\leq \pi(2\alpha)} \big ( \1_{\rho_k + s \le 1} + e^{-\beta( \rho_k + s-1) } \1_{ \rho_k + s>1 } \big)  f_k (\rho_k)  d \rho    ds
    } \notag\\
    & =  \frac{ \eta^t\prod_{k\leq \pi(2\alpha)} \big(  F_k (1- t)  + G_{\beta, k}(1-t) \big) }{ \int_0^1 \eta^s\prod_{k\leq \pi(2\alpha)} \big(  F_k (1-s)  + G_{\beta, k}(1-s) \big) ds }. \label{fp_hc}
\end{align}
From Lemma \ref{gen_fixed_pt}, $\T_\beta^\h$ admits a unique fixed point, called $\lambda_\beta^\h$. From Theorem \ref{thm1}, we have 

\begin{lemma}\label{hc_2}
    Let $\alpha \leq 1/2$. For every $\beta <\infty$, $
    \lim_{N\to \infty}\E | F_N(\beta) - \cP_\beta^\h(\lambda_\beta^\h)| = 0.
$
\end{lemma}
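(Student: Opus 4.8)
The plan is to recognize Lemma \ref{hc_2} as essentially a reformulation of Theorem \ref{thm1} applied to the specific choice $\Sigma = [0,1]$, $\nu(dt) \propto \eta^t\,dt$, $\psi(t) = \beta^{-1}(\log \eta) t$ (so that $e^{\beta\psi(t)}$ accounts for the $\eta^t$ weighting — or, alternatively, absorb $\eta^t$ into $\nu$ itself and take $\psi \equiv 0$), and $\theta(\sigma_1,\sigma_2) = -(\sigma_1+\sigma_2-1)\1_{\sigma_1+\sigma_2\geq 1}$ with $p = 2$, $\pi(\alpha p) = \pi(2\alpha)$. First I would check that this $\theta$ satisfies the hypotheses \eqref{bdd theta} and \eqref{eq reg theta}: it is bounded (by $1$ in sup norm) and Lipschitz (with constant at most $\sqrt 2$), both deterministically, so $\E\|\theta\|_\infty^2 < \infty$ and $\E\,\mathrm{Lip}(\theta) < \infty$ trivially. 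Since $\beta < \infty$ is fixed and $\alpha p(p-1) = 2\alpha \leq 1$ by hypothesis, the subcritical branch of the regime condition \eqref{ht} holds, so Theorem \ref{thm1} applies: there is a unique fixed point $\lambda_{\nu,\beta} \in \Pr_1(\X)$ of $\T_{\nu,\beta}$ and $F_N(\beta) \to \cP_{\nu,\beta}(\lambda_{\nu,\beta})$ in $L^1$.

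The substance of the proof is then a change of variables identifying $\cP_{\nu,\beta}(\lambda_{\nu,\beta})$ with $\cP_\beta^\h(\lambda_\beta^\h)$ and the operator $\T_{\nu,\beta}$ (restricted appropriately) with $\T_\beta^\h$. The key observation is that for this $\theta$, the exponential weight factorizes in a convenient way: for the constraint between spin $\rho_k$ and spin $t$,
\begin{align*}
e^{\beta\theta(\rho_k,t)} = \1_{\rho_k + t \le 1} + e^{-\beta(\rho_k+t-1)}\1_{\rho_k+t > 1},
\end{align*}
and integrating this against a density $f_k(\rho_k)\,d\rho_k$ over $\rho_k \in [0,1]$ produces exactly $F_k(1-t) + G_{\beta,k}(1-t)$, where $F_k$ is the CDF of $f_k$ and $G_{\beta,k}$ is the tail integral defined in \eqref{defn G}. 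So I would work on the subspace of $\X$ consisting of log-densities: if $X \in \X = C([0,1])$ corresponds to the log-density $f(t) = e^{\beta X(t)} \nu(\{t\})$... more carefully, since $\nu$ is absolutely continuous here, I should pass through the identification $g \in \X_\beta \leftrightarrow f := e^{\beta g} \cdot (\text{density of }\nu) / \upsilon_0^{-1}$, i.e. match $f(t)\,dt$ with $e^{\beta g(t)}\nu(dt)$, so that $f \in \X_0$ (continuous, nonnegative, integrates to $1$). Under this bijection — which is a homeomorphism onto its image because $g \mapsto e^{\beta g}$ is continuous and bounded away from $0$ on the compact $[0,1]$, with continuous inverse — the operator $\T_{\nu,\beta}$ becomes $\T_\beta^\h$ and $\cP_{\nu,\beta}$ becomes $\cP_\beta^\h$ by direct substitution into \eqref{opeartor:eq1}, \eqref{free_energy}, \eqref{fp_hc} and the definition of $\cP_\beta^\h$. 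Consequently the unique fixed point $\lambda_{\nu,\beta}$ pushes forward to the unique fixed point $\lambda_\beta^\h$ of $\T_\beta^\h$ (uniqueness already known from Lemma \ref{gen_fixed_pt}), and $\cP_{\nu,\beta}(\lambda_{\nu,\beta}) = \cP_\beta^\h(\lambda_\beta^\h)$. Chaining with Theorem \ref{thm1} gives $\E|F_N(\beta) - \cP_\beta^\h(\lambda_\beta^\h)| \to 0$.

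The only mildly delicate points, which I would verify carefully rather than the bulk computation: (i) the definition of $F_N(\beta)$ in this section includes the additive constant $-\log\upsilon_0$ and writes the integral against Lebesgue measure $d\sigma$ with the $\eta^{\sum\sigma_i}$ weight explicit, whereas Theorem \ref{thm1}'s $F_N(\beta)$ uses $\nu^{\otimes N}$ with $\nu(dt) = \upsilon_0^{-1}\eta^t dt$; these differ by exactly $\frac{1}{N}\log\bigl(\upsilon_0^{-N}\bigr) = -\log\upsilon_0$, which is precisely the constant prepended in both $F_N(\beta)$ and $\cP_\beta^\h$, so the bookkeeping is consistent — I would spell this out in one line. (ii) One must check that $\lambda_{\nu,\beta}$ is actually supported on $\X_\beta$ (normalized log-densities) so that the pushforward lands in $\X_0$; this follows from the normalization built into the operator $\T_{\nu,\beta}$ exactly as remarked after Corollary \ref{cor2}. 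The main obstacle, such as it is, is purely notational: making the dictionary between the $(X, \nu, \psi)$ formulation and the $(f, \eta)$ formulation airtight, and confirming that the measurability/continuity hypotheses of Lemma \ref{gen_fixed_pt} and Theorem \ref{thm1} transfer under the change of variables. There is no real analytic difficulty here — all the hard work (existence, uniqueness, $L^1$ convergence) is already done in Theorems \ref{uniqueness} and \ref{thm1}.
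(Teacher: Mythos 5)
Your proposal is correct and follows the same route as the paper: the paper also obtains Lemma \ref{hc_2} by applying Theorem \ref{thm1} with $p=2$, $\theta(\sigma_1,\sigma_2)=-(\sigma_1+\sigma_2-1)\1_{\sigma_1+\sigma_2\geq 1}$, $\psi\equiv 0$, and $\nu(dt)=\upsilon_0^{-1}\eta^t\,dt$ (so that $2\alpha\leq 1$ puts one in the subcritical branch of \eqref{ht}), and then rewriting $\cP_{\nu,\beta}(\lambda_{\nu,\beta})$ and $\T_{\nu,\beta}$ in terms of densities via $e^{\beta\theta(\rho,t)}=\1_{\rho+t\leq 1}+e^{-\beta(\rho+t-1)}\1_{\rho+t>1}$, which integrates to $F_k(1-t)+G_{\beta,k}(1-t)$. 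The hypothesis checks and the $-\log\upsilon_0$ bookkeeping you flag are exactly the (routine) points the paper leaves implicit.
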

\subsection{Approximation of the energy functional}
In the third step, we approximate the zero-temperature functional $\mathcal{P}_\infty^\h$ defined in \eqref{add:eq01} by the positive-temperature one $\mathcal{P}_\beta^\h$ defined in the previous section. 

\begin{lemma}\label{hc_3}
We have that 
$\lim_{\beta\to\infty}\cP_{\beta}^\h(\lambda_{\beta}^\h) =\cP_\infty^\h(\lambda_\infty^\h),$
    where $\lambda_{\beta}^\h$ and $\lambda_\infty^\h$ are the unique fixed points of $\T_{\beta}^\h$ and $\T_\infty^\h$ respectively.
\end{lemma}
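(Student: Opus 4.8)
## Proof Plan for Lemma \ref{hc_3}

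The plan is to follow the same blueprint used for the term \eqref{eqn:last term} in Section \ref{fe_dilute_model} and for the term \eqref{eq1.2.2.4} in Section \ref{gse_dilute_model}: first establish a Wasserstein convergence of the fixed points $\lambda_\beta^\h \to \lambda_\infty^\h$ in $\Pr_1(\X_0)$, then transfer this through the energy functional via Skorokhod's representation and uniform integrability. First I would prove the analogue of Lemma \ref{lem,infty}: namely that $W_1(\lambda_\beta^\h, \lambda_\infty^\h) \to 0$ as $\beta \to \infty$. The argument mirrors the proof of Theorem \ref{continuity}. Tightness of the family $\{\T_\beta^\h(\lambda) : 0 < \beta \leq \infty, \lambda \in \Pr_1(\X_0)\}$ follows from an Arzel\`a--Ascoli argument: each function in the image of $\T_\beta^\h$ (see \eqref{fp_hc}) is a normalized density of the form $\eta^t \prod_k(F_k(1-t) + G_{\beta,k}(1-t))$ divided by its integral; since the $F_k, G_{\beta,k}$ are uniformly bounded by $1$ and have uniformly controlled modulus of continuity once one conditions on $\pi(2\alpha) \leq k_0$ (using that $G_{\beta,k}(1-t) \leq F_k(1-t)$ and that $t \mapsto F_k(1-t)$ is Lipschitz with constant $\|f_k\|_\infty$), one obtains uniform bounds on $\|\cdot\|_\infty$ and $\mathrm{Lip}(\cdot)$ with high probability, hence a precompact set carrying mass $\geq 1-\epsilon$. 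By tightness and completeness of $\Pr_1(\X_0)$, pass to a subsequence $\lambda_{\beta_n}^\h \to \lambda_*$; it then suffices to show $\T_\infty^\h(\lambda_*) = \lambda_*$, which forces $\lambda_* = \lambda_\infty^\h$ by the uniqueness from Lemma \ref{gen_fixed_pt}, killing the subsequence.

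To show $\T_\infty^\h(\lambda_*) = \lambda_*$, I would argue as in Lemma \ref{lem1}: $\lambda_{\beta_n}^\h \xrightarrow{d} \lambda_*$ gives $\T_\infty^\h(\lambda_{\beta_n}^\h) \xrightarrow{d} \T_\infty^\h(\lambda_*)$, and separately one shows the sup-norm difference
\[
\Big\| T^\h_{\beta_n}(f_1^n, \ldots, f_{\pi(2\alpha)}^n) - T_\infty^\h(f_1^n, \ldots, f_{\pi(2\alpha)}^n)\Big\|_\infty \to 0
\]
in probability, where $(f_i^n)_{i\geq 1}$ are i.i.d. from $\lambda_{\beta_n}^\h$; here $T_\infty^\h$ denotes the function appearing in \eqref{eq:fixpt_beta_infty_hardcore}. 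The key pointwise estimate is that $G_{\beta,k}(1-t) = \int_0^t e^{-\beta u} f_k(u + (1-t))\,du \to 0$ uniformly in $t$ as $\beta \to \infty$, at rate $O(\beta^{-1}\|f_k\|_\infty)$ on the event $\{\pi(2\alpha) \leq k_0, \max_k \|f_k\|_\infty \leq C_0\}$; since $F_k(1-t)$ does not depend on $\beta$, the numerator and denominator of \eqref{fp_hc} converge to those of \eqref{eq:fixpt_beta_infty_hardcore}, and one must rule out the denominator degenerating, which follows because $\int_0^1 \eta^s \prod_k F_k(1-s)\,ds > 0$ a.s. — a point I'd need to handle with a little care since $F_k(1-s)$ vanishes at $s = 1$, but it is bounded below on $[0, 1/2]$ say by positivity of densities. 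Then Slutsky's theorem gives $\T_{\beta_n}^\h(\lambda_{\beta_n}^\h) \xrightarrow{d} \T_\infty^\h(\lambda_*)$, and since $\T_{\beta_n}^\h(\lambda_{\beta_n}^\h) = \lambda_{\beta_n}^\h \to \lambda_*$, we get $\T_\infty^\h(\lambda_*) = \lambda_*$. Upgrading to $W_1$-convergence uses the uniform $L^\infty$ bounds analogous to \eqref{l_infty bound}.

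Finally, with $W_1(\lambda_\beta^\h, \lambda_\infty^\h) \to 0$ in hand, I would transfer convergence through $\cP_\beta^\h$ exactly as in the treatment of \eqref{eqn:last term} and \eqref{eq1.2.2.4}. By Skorokhod's representation theorem, realize on a common probability space random variables $Y_\beta \sim (\lambda_\beta^\h)^{\otimes \pi(2\alpha)}$-type tuples and $Y \sim$ the corresponding $\lambda_\infty^\h$-tuples with $Y_\beta \to Y$ a.s. and $\E\|Y_\beta\|_\infty \to \E\|Y\|_\infty$; handle each of the three terms of $\cP_\beta^\h$ separately, writing the first term as $\E \log \int \eta^t \prod_k (F_k(1-t) + G_{\beta,k}(1-t))\,dt$ and splitting $|\cP_\beta^\h(\lambda_\beta^\h) - \cP_\infty^\h(\lambda_\infty^\h)|$ into (a) the effect of replacing $Y_\beta$ by $Y$, controlled by the Lipschitz dependence of $\log \int \eta^t \prod_k F_k(1-t)\,dt$ on the densities in sup norm together with Theorem \ref{thm A1}, and (b) the effect of sending $G_{\beta,k} \to 0$ for fixed $Y$, controlled by dominated convergence using $G_{\beta,k} \leq F_k \leq 1$ and the $O(\beta^{-1})$ decay; finally use the dominated convergence theorem in the Poisson variable $\pi(2\alpha)$ with the integrable bound coming from the uniform estimates on $|\cP_\beta^\h|$. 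The main obstacle I anticipate is the lower bound on the denominator integrals uniformly in $\beta$: because the relevant integrands $F_k(1-s)$ vanish at the endpoint $s=1$ and could in principle concentrate mass, one must argue (using the explicit $\eta^s$ factor, or restricting to $s \in [0, 1/2]$ where the densities are bounded below) that $\int_0^1 \eta^s \prod_k F_k(1-s)\,ds$ stays bounded away from zero with high enough probability; everything else is a routine adaptation of the arguments already carried out in Sections \ref{fe_dilute_model}--\ref{gse_dilute_model}.
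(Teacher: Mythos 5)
Your overall architecture is the same as the paper's: first prove $\lambda_\beta^\h \to \lambda_\infty^\h$ by tightness, subsequential limits, and uniqueness of the fixed point of $\T_\infty^\h$, then transfer this through the functional via Skorokhod's representation and dominated convergence in $\pi(2\alpha)$. However, there is a genuine gap exactly at the point you flag as the ``main obstacle,'' and the resolutions you propose do not work. First, the family $\{\T_\beta^\h(\lambda):\lambda\in\Pr_1(\X_0)\}$ is \emph{not} tight: taking $\lambda=\delta_f$ with $f$ a density concentrated on $[1-\epsilon,1]$, the output of $\T_\infty^\h$ on the event $\pi(2\alpha)\ge 1$ is supported on $[0,\epsilon]$ with sup-norm at least of order $1/\epsilon$, so no fixed compact set carries mass $1-\epsilon$ uniformly over $\lambda$. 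Second, ``positivity of densities'' gives no quantitative lower bound on $\int_0^1\eta^s\prod_k F_k(1-s)\,ds$: a draw from an arbitrary $\lambda$, or even from $\lambda_\beta^\h$ a priori, could have $F_k(1/2)$ arbitrarily small, and without a uniform lower bound on these denominators you get neither the uniform $\|\cdot\|_\infty$/Lipschitz bounds needed for Arzel\`a--Ascoli nor the integrable domination needed to pass to the limit inside $\E\log(\cdot)$. (Also, your claimed inequality $G_{\beta,k}(1-t)\le F_k(1-t)$ is false in general; what holds is $F_k(1-t)+G_{\beta,k}(1-t)\le 1$.)

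The missing idea, which is the crux of the paper's proof, is to exploit the \emph{self-consistency} of the fixed point: from \eqref{fp_hc}, any density $f_{\beta,k}$ drawn from $\lambda_\beta^\h$ has $t\mapsto\eta^{-t}f_{\beta,k}(t)$ nonincreasing on $[0,1]$, because $F_{\beta,k}(1-t)$ and $G_{\beta,k}(1-t)$ are nonincreasing in $t$. This yields the deterministic bound $F_{\beta,k}(1/2)\ge(1+\eta^{1/2})^{-1}$, hence (conditionally on $\pi(2\alpha)$) a lower bound $c_1c_0^{\pi(2\alpha)}$ on all the normalizing denominators, uniform in $\beta$, and from there the stochastic domination $\|f_{\beta,k}\|_\infty\preccurlyeq c_2/(c_1c_0^{\pi(2\alpha)})$ and a matching Lipschitz estimate. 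These bounds are what make the tightness of $\{\lambda_\beta^\h\}$ (not of the full image family), the identification of the subsequential limit as a fixed point of $\T_\infty^\h$, and the dominated-convergence step for $\E\log T_1$ and $\E\log T_2$ all go through. Everything else in your plan --- the $O(\beta^{-1})$ decay of $G_{\beta,k}$, Slutsky, Skorokhod, and the splitting of the difference of functionals --- is correct and matches the paper once this monotonicity input is supplied.
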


From Lemmas \ref{hc_1}-\ref{hc_3}, the proof of Theorem \ref{thm_hardcore} is completed by the triangle inequality. Thus, for the rest of this section, we prove Lemma \ref{hc_3}. First, we need a result concerning the convergence of the fixed point measures.

\begin{lemma}
    As $\beta \to \infty$,  $\lambda_{\beta}^\h \stackrel{d}{\to}\lambda_\infty^\h$.
\end{lemma}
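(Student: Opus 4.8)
The plan is to bypass the tightness/Skorokhod machinery used for Lemma~\ref{lem,infty} and instead work pathwise on the random Galton--Watson tree, exploiting that the tree is a.s.\ finite in the subcritical regime $2\alpha\le 1$. For the continuous hardcore model $p=2$, the constraint $\theta$ is deterministic, and there is no external field, so both $\T^\h_\beta$ (for $\beta<\infty$) and $\T^\h_\infty$ fit the framework of Lemma~\ref{gen_fixed_pt} with offspring distribution $\pi(2\alpha)$ and the common base function $f_0(t)=\eta^t/\upsilon_0$ (the value produced by $T^\h_\beta$ or $T^\h_\infty$ on the empty tuple). Fix the common probability space carrying the offspring variables $(\pi_\i(2\alpha))_{\i\in\V}$, let $\G_\varnothing$ be the associated Galton--Watson tree, and for each $\beta\le\infty$ run the recursion from the leaves of $\G_\varnothing$ (each assigned $f_0$) up to the root, applying $T^\h_\beta$ at every internal vertex; write $f^{(\beta)}_\i\in\X_0$ for the value at vertex $\i$. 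By (the proof of) Lemma~\ref{gen_fixed_pt}, $\lambda^\h_\beta$ is the law of $f^{(\beta)}_\varnothing$, and crucially the \emph{same} tree realization serves for all $\beta$.

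Next I would show that, on the a.s.\ event $\{\G_\varnothing$ is finite$\}$, $f^{(\beta)}_\i\to f^{(\infty)}_\i$ in $(\X_0,\|\cdot\|_\infty)$ as $\beta\to\infty$, for every $\i\in\G_\varnothing$, by induction on the height of the subtree rooted at $\i$. The base case (a leaf) is trivial, since $f^{(\beta)}_\i=f_0$ for all $\beta$. For the inductive step, assume $f^{(\beta)}_{\i j}\to f^{(\infty)}_{\i j}$ uniformly for every child $\i j$, and let $F^{(\beta)}_{\i j}$ denote the CDF of $f^{(\beta)}_{\i j}$. Uniform convergence of densities gives uniform convergence $F^{(\beta)}_{\i j}(1-\cdot)\to F^{(\infty)}_{\i j}(1-\cdot)$; and since $\sup_\beta\|f^{(\beta)}_{\i j}\|_\infty<\infty$ along the convergent sequence, the bound $G_{\beta,\i j}(1-t)\le \|f^{(\beta)}_{\i j}\|_\infty/\beta$ shows $G_{\beta,\i j}(1-\cdot)\to 0$ uniformly. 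Hence each factor $F^{(\beta)}_{\i j}(1-t)+G_{\beta,\i j}(1-t)$, all lying in $[0,1]$, converges to $F^{(\infty)}_{\i j}(1-t)$ uniformly in $t$; so does their finite product, and multiplying by $\eta^t$ and dividing by the corresponding normalizing integral yields $f^{(\beta)}_\i=T^\h_\beta(f^{(\beta)}_{\i 1},\ldots,f^{(\beta)}_{\i m})\to T^\h_\infty(f^{(\infty)}_{\i 1},\ldots,f^{(\infty)}_{\i m})=f^{(\infty)}_\i$ in $\|\cdot\|_\infty$, provided the normalizing denominators stay bounded away from $0$. This last positivity is established by a separate downward induction from the leaves: $f_0>0$ on $[0,1]$, and if all children $f^{(\infty)}_{\i j}$ are strictly positive on $[0,1)$, then $F^{(\infty)}_{\i j}(1-t)>0$ for $t<1$, forcing $f^{(\infty)}_\i(t)\propto\eta^t\prod_j F^{(\infty)}_{\i j}(1-t)>0$ on $[0,1)$ and $\int_0^1\eta^s\prod_j F^{(\infty)}_{\i j}(1-s)\,ds>0$; the denominators of $T^\h_\beta$ converge to this positive limit by the uniform convergence just noted.

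Taking $\i=\varnothing$ gives $f^{(\beta)}_\varnothing\to f^{(\infty)}_\varnothing$ almost surely, and almost sure convergence on a common space implies convergence in distribution, i.e.\ $\lambda^\h_\beta\stackrel{d}{\to}\lambda^\h_\infty$. I expect the main (and only real) obstacle to be the bookkeeping that keeps the recursion well posed in the $\beta\to\infty$ limit: the uniform vanishing of the exponential corrections $G_{\beta,\i j}$ (handled by the $L^\infty$-bound along the convergent subsequence) and the uniform lower bound on the normalizing constants (handled by the strict positivity of the limiting densities on $[0,1)$). Once these are in place the propagation up the a.s.\ finite tree is routine, entirely in the spirit of the subcritical-regime arguments in Lemma~\ref{gen_fixed_pt} and Theorem~\ref{thm fixed point}.
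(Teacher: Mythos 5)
Your argument is correct, and it takes a genuinely different route from the paper. The paper proves this lemma by a compactness argument: it first derives the quantitative bounds $F_{\beta,k}(1/2)\ge (1+\eta^{1/2})^{-1}$, a stochastic domination $\|f_{\beta,k}\|_\infty\preccurlyeq c_2 c_1^{-1}c_0^{-\pi(2\alpha)}$, and a uniform Lipschitz estimate, then invokes Arzel\`a--Ascoli to get tightness of $\{f_\beta\}$, extracts a subsequential weak limit via Skorokhod, identifies it as a fixed point of $\T^\h_\infty$ by passing to the limit in the fixed-point equation, and concludes by uniqueness. You instead couple all temperatures on a single realization of the (a.s.\ finite, since $2\alpha\le 1$) Galton--Watson tree and propagate uniform convergence from the leaves to the root by induction on subtree height; the two points you flag as the only real work --- the uniform vanishing of $G_{\beta,\i j}$ via $\|G_{\beta,\i j}\|_\infty\le\beta^{-1}\|f^{(\beta)}_{\i j}\|_\infty$, and the strict positivity of the limiting normalizers, proved by a second induction showing $f^{(\infty)}_\i>0$ on $[0,1)$ --- are exactly the right ones, and both inductions close. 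Your route is more elementary (no tightness, no subsequences, no appeal to the uniqueness of the $\beta=\infty$ fixed point beyond its construction) and gives almost sure rather than merely distributional convergence on the coupled space. What it does not give you is the quantitative control that the paper's proof produces as a by-product: the bounds \eqref{eq:f_beta_uniform_bdd} and the lower bounds \eqref{eq:deno_hardcore_lb1}--\eqref{eq:deno_hardcore_lb2} are reused essentially verbatim in the proof of Lemma \ref{hc_3} to dominate the terms $T_1$ and $T_2$, so if you adopt your proof of this lemma you would still need to establish those uniform estimates separately before proceeding to the convergence of the energy functionals.
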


\begin{proof}
    Let $f, (f_k)_{k\geq 1}$ and $f_\beta, (f_{\beta, k})_{k\geq 1}$ be the samples drawn from $\lambda_\infty^\h$ and $\lambda_\beta^\h$ respectively, all independent of each other. If $F_{\beta, k}$ is the cumulative distribution function of $f_{\beta, k}$ and \begin{align}\label{defn_G}
        G_{\beta, k}(1-t) = \int_{1-t}^1 e^{-\beta(\rho - (1-t))} f_{\beta,k}(\rho) d\rho = \int_0^t e^{-\beta u} f_{\beta, k}(u + (1-t))du,
    \end{align}
    then for $0 \le t_1 \le t_2 \le 1$, we have $F_{\beta,k}(1-t_1) \geq F_{\beta,k}(1-t_2)$ and $G_{\beta,k}(1-t_1) \geq G_{\beta,k}(1-t_2)$, so from \eqref{fp_hc}, the map $ t \mapsto \eta^{-t} f_{\beta,k}(t)$ is nonincreasing on $[0, 1].$ We therefore have  \begin{align*}
        F_{\beta,k}(1/2) =  \int_0^{1/2} f_{\beta,k} (s) ds  \ge \int_0^{1/2} f_{\beta,k}(s+1/2) \eta^{-1/2} ds  = \eta^{-1/2} (1 - F_{\beta,k}(1/2)),
    \end{align*}
     yielding that 
     \[ F_{\beta,k}(1/2) \ge (1+ \eta^{1/2})^{-1} =: c_0.\]
     Denote, for $m \ge 0$,
\[ C_{\beta, m} =  \int_0^1 \eta ^t \prod_{k\leq m} \big(  F_{\beta, k} (1-t)  + G_{\beta, k}(1-t) \big)  dt. \]
Let $c_1 := \min( \eta^{1/2} , 1)/2$. Then we have, by restricting the integral from $[1/2,1]$, that
\begin{align}\label{eq:deno_hardcore_lb1}
   C_{\beta, \pi(2 \alpha) }  &\ge  \int_0^1 \eta ^t \prod_{k\leq \pi(2 \alpha)} F_{\beta, k} (1-t) dt \ge 
   c_1 \prod_{k\leq \pi(2\alpha)}  F_{\beta, k} (1/2) \ge c_1 c_0^{\pi(2\alpha)}.
\end{align}
By a similar argument, we can obtain 
\begin{align}\label{eq:deno_hardcore_lb2}
   \int_0^1 \eta ^t \prod_{k\leq \pi(2 \alpha)} F_{ k} (1-t)  dt 
    \ge c_1 c_0^{\pi(2\alpha)},
\end{align}
where $F_k$ is the cumulative distribution function of $f_k$. Recalling $G_{\beta,k}$ from \eqref{defn_G}, we trivially have that $F_{\beta, k}(t) + G_{\beta, k}(t) \leq 1$ for any $t \in [0,1]$. Thus, denoting $c_2 := \max( \eta , 1)$, we obtain from \eqref{fp_hc}, \eqref{eq:deno_hardcore_lb1} and \eqref{eq:deno_hardcore_lb2} that
\begin{equation} \label{eq:f_beta_uniform_bdd}
  \| f_{\beta,k} \|_\infty \preccurlyeq   \frac{c_2}{c_1  c_0^{\pi(2\alpha)}}\;\;\text{and} \;\;\| f_k \|_\infty \preccurlyeq \frac{c_2}{c_1  c_0^{\pi(2\alpha)}},
\end{equation}
where $\preccurlyeq$ denotes (first order) stochastic domination. 

Next, using the nonincreasing property of $t \mapsto \eta^{-t}f_{\beta,k}(t)$, we see that for any $0\leq t_1,t_2\leq 1$,
$$
|f_{\beta,k}(t_1)-f_{\beta,k}(t_2)|\leq \|f_{\beta,k}\|_\infty\max\bigl(|\eta^{|t_1-t_2|}-1|,|\eta^{-|t_1-t_2|}-1|\bigr)\leq c_3\|f_{\beta,k}\|_\infty|t_1-t_2|
$$
for $c_3:=\max(\eta,1/\eta)|\log \eta|$. Let $
        \phi_\beta(t,s)= \1_{s \le 1-t } + e^{-\beta( s - (1-t))}  \1_{s > 1-t }.
   $
    For any $0\leq t_2\leq t_1\leq 1,$ note that
   $
\phi_\beta(t_1,s)=\phi_\beta(t_2,s+\delta)
$ for $\delta=t_1-t_2.$ 
From this and a change of variable,
\begin{align*}
    &\int_0^1\phi_\beta(t_1,s)f_{\beta,k}(s)ds-\int_0^1\phi_\beta(t_2,s)f_{\beta,k}(s)ds\\
    &= \int_0^1\phi_\beta(t_2,s+\delta)f_{\beta,k}(s+\delta-\delta)ds-\int_0^1\phi_\beta(t_2,s)f_{\beta,k}(s)ds\\
&=\int_\delta^{1+\delta}\phi_\beta(t_2,s)f_{\beta,k}(s-\delta)ds-\int_0^1\phi_\beta(t_2,s)f_{\beta,k}(s)ds\\
    &=\Bigl(\int_1^{1+\delta}-\int_0^\delta\Bigr)\phi_\beta(t_2,s)f_{\beta,k}(s-\delta)ds+\int_0^1\phi_\beta(t_2,s)\bigl(f_{\beta,k}(s-\delta)-f_{\beta,k}(s)\bigr)ds.
\end{align*}
It follows that
\begin{align}\label{eq:f_Lip}
    \Bigl|\int_0^1\phi_\beta(t_1,s)f_{\beta,k}(s)ds-\int_0^1\phi_\beta(t_2,s)f_{\beta,k}(s)ds\Bigr|&\leq (2+c_3)\|f_{\beta,k}\|_\infty|t_1-t_2|.
\end{align}
From \eqref{fp_hc}, \eqref{eq:f_beta_uniform_bdd} and \eqref{eq:f_Lip}, we then have that for any $0 \le t_1 , t_2 \le 1,$ 
\begin{align} 
    |f_\beta(t_1) - f_\beta(t_2)| &\preccurlyeq  \frac{ c_2} {C_{\beta, \pi(2 \alpha) }} \sum_{ k \le \pi(2\alpha)} \Big | \int_0^1  \phi_\beta(t_1 , \rho_k )f_{\beta,k}(\rho_k) d \rho_k  -  \int_0^1\phi_\beta(t_2 , \rho_k )  f_{\beta,k}(\rho_k) d \rho_k \Big| \nonumber\\
    &\le \frac{|t_1 - t_2|(2+ c_3) c_2}{ c_1 c_0^{\pi(2\alpha)}} \sum_{ k \le \pi(2\alpha)} \|   f_{\beta, k} \|_\infty  \le \frac{|t_1 - t_2|(2+ c_3) c_2}{ c_1 c_0^{\pi(2\alpha)}} \sum_{ k \le \pi(2\alpha)}  \frac{c_2}{c_1  c_0^{ \pi_k(2\alpha)}}, \label{eq:f_beta_uniform_lip}
\end{align}
    where $(\pi_k(2\alpha))_{k\geq 1}$ are independent Poisson random variables with mean $2\alpha$.
    
    Now, by \eqref{eq:f_beta_uniform_bdd}, \eqref{eq:f_beta_uniform_lip}, and the Arzela-Ascoli theorem, $\{ f_\beta: \beta > 0 \}$ is tight. Hence, there exists a subsequence $\beta_n \to \infty $ such that 
    $ f_{\beta_n} \stackrel{d}{\to} f_\infty$. We claim that $f_\infty$ satisfies \eqref{eq:fixpt_beta_infty_hardcore}. If this holds, then from the uniqueness of fixed point equation \eqref{eq:fixpt_beta_infty_hardcore}, it will follow that 
    \[ f_\infty \stackrel{d}{=} f\;\;\text{and}\;\; f_\beta \stackrel{d}{\to} f \text{ as } \beta \to \infty. \]  To establish our claim, by the Skorokhod representation theorem, we can assume without loss of generality that 
    $ \lim_{\beta\to\infty}\| f_\beta  -  f_\infty \|_\infty = 0$ a.s.
Let $f_{\infty, k} $ be i.i.d.\ copies of $f_{\infty},  $ independent of $\pi(2 \alpha)$, and assume that 
$\lim_{\beta\to\infty}\| f_{\beta, k}  -  f_{\infty, k} \|_\infty =0$ a.s.\ for each $k$. 
We need to argue that as $\beta \to \infty$, uniformly in $t\in [0,1]$,
\[ \frac{  \eta^t\prod_{k\leq \pi(2\alpha)} \big( \int_0^1 \phi_{\beta}(t, \rho_k) 
    f_{\beta, k}(\rho_k) d \rho_k \big)   }{\int_0^1 \eta^s\prod_{k\leq \pi(2\alpha)} \big( \int_0^1 \phi_{\beta}(s, \rho_k) 
    f_{\beta, k}(\rho_k) d \rho_k \big)    ds} \to 
    \frac{  \eta^t\prod_{k\leq \pi(2\alpha)} \big( \int_0^1 \1_{ \rho_k \le 1-t}
    f_{\infty, k}(\rho_k) d \rho_k \big)   }{\int_0^1 \eta^s\prod_{k\leq \pi(2\alpha)} \big( \int_0^1 \1_{ \rho_k \le 1-s }
    f_{\infty, k}(\rho_k) d \rho_k \big)    ds}. \]
 The desired convergence follows since after conditioning on $\pi(2\alpha)$, for each $k$,
\begin{align}\label{eq:intermediate_dct_hardcore}
&  \sup_{t \in [0, 1]}\Big|  \int_0^1 \phi_{\beta}(t, \rho_k) 
    f_{\beta, k}(\rho_k) d \rho_k   - \int_0^1 \1_{ \rho_k \le 1-t}
    f_{\infty, k}(\rho_k) d \rho_k  \Big|  \nonumber \\
    &\le \| f_{\beta, k} -  f_{\infty , k}\|_{\infty} + \| f_{\infty , k}\|_{\infty} \int_0^1 e^{-\beta u} du 
     \le \| f_{\beta, k} -  f_{\infty , k}\|_{\infty} + \beta^{-1} \| f_{\infty , k}\|_{\infty} \to 0.
\end{align}
\end{proof}

Equipped with the previous lemma, let us complete the proof of Lemma \ref{hc_3}.
\begin{proof}[Proof of Lemma \ref{hc_3}]
Let $(f_k)_{k\geq 1}$ and $(f_{\beta,k})_{k\geq 1}$ be independent samples from $\lambda_\infty^\h$ and $\lambda_\beta^\h$ respectively. We write \begin{align*}
    \cP_\beta^\h(\lambda_\beta^\h) - \cP_\infty^\h(\lambda_\infty^\h) & = \E \log \frac{ \int_0^1  \eta ^t\int \prod_{k\leq \pi(2\alpha)} (\1_{ \rho_k \leq 1-t } + e^{-\beta( \rho_k + t-1) } \1_{ \rho_k >1-t  } )  f_{\beta, k}(\rho_k)  d \rho  dt }{
    \int_0^1 \int \prod_{k\leq \pi(2\alpha)}  \1_{ \rho_k \leq 1-t }   f_k(\rho_k)  d \rho \eta^t  dt}\\
    & \qquad  + \E \log \frac{\int (\1_{\rho_1 + \rho_2 \leq 1} + e^{-\beta (\rho_1 + \rho_2 -1 )}\1_{\rho_1 + \rho_2 >1}) f_{\beta, 1}(\rho_1)f_{\beta,2}(\rho_2)d\rho}{\int\1_{\rho_1 + \rho_2 \leq 1} f_1(\rho_1)f_2(\rho_2)d\rho}.
\end{align*}
    Let us show the convergence of the first term since the other term can be handled similarly. We want to show 
    \begin{align} \label{eq:ratio_a1}
    \E \log \frac{ \int_0^1  \eta ^t\int \prod_{k\leq \pi(2\alpha)} ( \1_{ \rho_k \leq 1-t } + e^{-\beta( \rho_k + t-1) } \1_{ \rho_k>1-t  } )  f_{\beta, k}(\rho_k)  d \rho dt }{
    \int_0^1  \eta ^t\int \prod_{k\leq \pi(2\alpha)}  \1_{ \rho_k \leq 1-t }   f_k(\rho_k)  d \rho  dt} \to 0.
\end{align}
From the previous lemma, we may assume that without loss of generality, almost surely, for each $k$
\[ \| f_{k, \beta}  - f_k \|_\infty \to 0. \]
Write the expression in \eqref{eq:ratio_a1} as
\begin{align*}
   & \E \log \frac{ \int_0^1  \eta ^t\prod_{k\leq \pi(2\alpha)} \big(  F_{\beta, k} (1- t)  + G_{\beta, k}(1-t) \big) dt }{ \int_0^1 \eta ^t \prod_{k\leq \pi(2\alpha)}  F_{\beta, k} (1-t) dt } +  \E \log \frac{ \int_0^1 \eta ^t \prod_{k\leq \pi(2\alpha)}  F_{\beta, k} (1-t)   dt }{  \int_0^1 \eta ^t \prod_{k\leq \pi(2\alpha)}  F_{ k} (1-t)   dt} \\
   &=: \E \log T_1 + \E \log T_2.
   \end{align*}
Recall the definition of $G_{\beta,k}$ from \eqref{defn_G}. For $\beta \geq 1$, we have $$\| G_{\beta, k} \|_\infty \le \| f_{\beta, k}\|_\infty \int_0^1 e^{-\beta u} du = \| f_{\beta, k}\|_\infty.$$ 
Using this and the lower bound \eqref{eq:deno_hardcore_lb1}, we obtain
\begin{align*}
    1 \le T_1 &\le 1 + \sum_{k \le \pi(2 \alpha)} \frac{\| G_{\beta, k} \|_\infty}{\int_0^1 \eta ^t\prod_{k\leq \pi(2 \alpha)} F_{\beta, k} (1-t)  dt} \cdot 
    { \int_0^1  \eta ^t\prod_{i\leq \pi(2\alpha), \, i \ne k} \big ( F_{\beta, i}(1-t) + G_{\beta, i}(1-t) \big)  dt } \\
    &\le 1 + \sum_{k \le \pi(2 \alpha)}  \frac{c_2\| f_{\beta, k}\|_\infty}{\beta c_1  c_0^{\pi_k(2\alpha)}} \prod_{i\leq \pi(2\alpha), \, i \ne k}  \Big(1  +   \| f_{\beta, i} \|_\infty  \Big) \\
     &\le 1 + \frac{1}{\beta} \sum_{k \le \pi(2 \alpha)}  \Big(\frac{c_2}{c_1 c_0^{\pi_k(2\alpha)}}\Big)^2 \prod_{i\leq \pi(2\alpha), \, i \ne k}  \Big(1  +   \frac{c_2}{ c_1  c_0^{\pi_i(2\alpha)}} \Big), 
\end{align*}
where in the last inequality, we used the bound \eqref{eq:f_beta_uniform_bdd}. It follows that $\E \log T_1 \to 0$ as $\beta \to \infty$. By a reasoning similar to \eqref{eq:intermediate_dct_hardcore}, for each $k$, 
\[ \| F_{\beta, k}  -  F_{k} \|_\infty \to 0 \ \ \text{a.s.} \]
which yields that $T_2 \ge 1$. On the other hand,
\begin{align*}
    \max\Big(T_2, \frac{1}{T_2}\Big) &\le c_2\max \Bigl[ \Big( \int_0^1\eta ^t\prod_{k\leq \pi(2\alpha)}  F_{\beta, k} (1-t)    dt \Big)^{-1},  \Big (\int_0^1\eta ^t\prod_{k\leq \pi(2\alpha)}  F_{ k} (1-t) dt \Big)^{-1} \Bigr] \\
    & \le \frac{c_2}{c_1  c_0^{\pi(2\alpha)}} 
\end{align*} 
by  \eqref{eq:deno_hardcore_lb1} and \eqref{eq:deno_hardcore_lb2}. Therefore, by the dominated convergence theorem, 
$\E \log T_2 \to 0.$
\end{proof}

{
\footnotesize
\bibliographystyle{siam}
\bibliography{diluted_ref}
}

\appendix
\section{Appendix}
Recall that for $\f = (f_1, \ldots, f_p) \in \X^p$, $\|\f\|_\infty := \sum_{i=1}^p \|f_i\|_\infty$.
\begin{theorem}\label{thm A1}
    Let $X, (X_n)_{n\geq 1}$ be $\X^p$-valued random variables such that $\E \|X\|_\infty, \E \|X_n\|_\infty < \infty$ for all $n$. If $X_n \to X$ in probability and $\E \|X_n\|_\infty \to \E \|X\|_\infty$, then $\E \|X_n - X\|_\infty \to 0$.
\end{theorem}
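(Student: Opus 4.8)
The plan is to run the classical Scheffé-type argument, adapted to $\X^p$-valued random variables and to convergence in probability. First I would record two elementary reductions. Since $f\mapsto\|f\|_\infty$ is $1$-Lipschitz on $(\X^p,\|\cdot\|_\infty)$ by the reverse triangle inequality, the hypothesis $X_n\to X$ in probability yields $\|X_n\|_\infty\to\|X\|_\infty$ in probability, and of course $\|X_n-X\|_\infty\to 0$ in probability. Moreover $\|\cdot\|_\infty$ on $\X^p$ is a genuine norm (the $\ell^1$-sum of the sup-norms on the coordinates), so the triangle inequality gives $\|X_n-X\|_\infty\le\|X_n\|_\infty+\|X\|_\infty$. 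Consequently the random variable
\[
g_n:=\|X_n\|_\infty+\|X\|_\infty-\|X_n-X\|_\infty
\]
is nonnegative, and combining the two reductions above, $g_n\to 2\|X\|_\infty$ in probability.

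Next I would pass from convergence in probability to an almost sure setting where Fatou's lemma applies, via the subsequence principle. Fix an arbitrary subsequence; along a further subsequence $(n_k)$ one has $g_{n_k}\to 2\|X\|_\infty$ almost surely. Applying Fatou's lemma to the nonnegative variables $g_{n_k}$,
\[
2\,\E\|X\|_\infty=\E\Big(\liminf_k g_{n_k}\Big)\le\liminf_k\E g_{n_k}
=\liminf_k\Big(\E\|X_{n_k}\|_\infty+\E\|X\|_\infty-\E\|X_{n_k}-X\|_\infty\Big).
\]
Using the hypothesis $\E\|X_n\|_\infty\to\E\|X\|_\infty$, the right-hand side equals $2\,\E\|X\|_\infty-\limsup_k\E\|X_{n_k}-X\|_\infty$, and since $\E\|X\|_\infty<\infty$ we may cancel the common term $2\,\E\|X\|_\infty$ to obtain $\limsup_k\E\|X_{n_k}-X\|_\infty\le 0$. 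As $\E\|X_{n_k}-X\|_\infty\ge 0$, this gives $\E\|X_{n_k}-X\|_\infty\to 0$ along this sub-subsequence.

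Finally, since every subsequence of $\big(\E\|X_n-X\|_\infty\big)_{n\ge1}$ admits a further subsequence tending to $0$, the whole sequence tends to $0$, which is the assertion. There is no serious obstacle here; the only points needing care are the use of the subsequence trick so that Fatou is legitimately applicable to an a.s.\ convergent sequence, and the appeal to $\E\|X\|_\infty<\infty$ (equivalently $\E\|X_n\|_\infty<\infty$) to ensure all the expectations are finite and the cancellation in the Fatou inequality is valid.
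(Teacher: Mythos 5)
Your proof is correct, but it follows a genuinely different route from the paper's. You run the classical Scheff\'e-type argument: since $\|\cdot\|_\infty$ is a norm on $\X^p$, the variable $g_n=\|X_n\|_\infty+\|X\|_\infty-\|X_n-X\|_\infty$ is nonnegative and converges in probability to $2\|X\|_\infty$; passing to an a.s.\ convergent sub-subsequence, Fatou's lemma plus the hypothesis $\E\|X_n\|_\infty\to\E\|X\|_\infty$ forces $\limsup\E\|X_{n_k}-X\|_\infty\le 0$, and the subsequence principle finishes. Every step checks out: the expectations are finite so the splitting of $\E g_{n_k}$ and the cancellation of $2\E\|X\|_\infty$ are legitimate, and the reduction to a.s.\ convergence is exactly what makes Fatou applicable. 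The paper instead argues by truncation: it introduces a clipping map $\phi_M$ on $\X^p$ and a continuous cutoff $\psi_M$ on $\R_+$, splits $\E\|X_n-X\|_\infty$ into a bounded piece (handled by the continuous mapping and bounded convergence theorems) and two tail pieces $\E\|X_n\|_\infty\1_{\|X_n\|_\infty>M}$ and $\E\|X\|_\infty\1_{\|X\|_\infty>M}$, the first of which is controlled by converting the moment convergence into uniform integrability via $\psi_M$. The underlying mechanism is the same (moment convergence supplies the missing uniform integrability), but your version gets there in one application of Fatou without any truncation bookkeeping, at the cost of invoking the sub-subsequence trick twice; the paper's version is more hands-on but makes the uniform-integrability content explicit.
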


\begin{proof}

For $M>0$, let $\phi_M: \X \to \X$ be defined as \begin{align*}
    (\phi_M(f))(x) = \begin{cases}
        M, & \text{if }f(x) \geq M,\\
        f(x), & \text{if }-M \leq f(x) \leq M,\\
        -M, & \text{if }f(x) \leq -M,
    \end{cases}
\end{align*}
and for $\f \in \X^p$, we let $\phi_M(\f) = (\phi_M(f_1), \ldots, \phi_M(f_p))$.

Fix $\epsilon >0$ and let $n_0, M_0$ be large integers depending on $\epsilon$ that will be chosen later. Noting that $\|\phi_M(f) - f\|_\infty \leq \|f\|_\infty \1_{\|f\|_\infty > M}$, we write \begin{align}
    \E \|X_n - X\|_\infty & \leq \E \|\phi_{M_0}(X_n) - \phi_{M_0}(X)\|_\infty + \E \|\phi_{M_0}(X_n) - X_n\|_\infty + \E \|\phi_{M_0}(X) - X\|_\infty \notag\\
    & \leq \E \|\phi_{M_0}(X_n) - \phi_{M_0}(X)\|_\infty + \E \|X_n\|_\infty \1_{\|X_n\|_\infty > {M_0}} + \E \|X\|_\infty \1_{\|X\|_\infty > {M_0}} \label{eqn}
\end{align}
It is easy to see that $\phi_M$ is continuous on $\X^p$, so by the continuous mapping theorem, $\phi_M(X_n) \to \phi_M(X)$ in probability, and by the bounded convergence theorem, we can choose $n_0$ large enough so that the first term in \eqref{eqn} is $\leq \epsilon/3$ for all $n \geq n_0$. Owing to the dominated convergence theorem, the third term can be made $\leq \epsilon/3$ by choosing $M_0$ large enough. It remains to show that second term in \eqref{eqn} is $\leq \epsilon/3$.

For $M>1$, consider the continuous function $\psi_M:\R_+ \to \R_+$ defined as \begin{align*}
    \psi_M(t) = \begin{cases}
        t, & \text{on } [0,M-1],\\
        0, & \text{on }[M, \infty),\\
        \text{linear}, & \text{on }[M-1,M].
    \end{cases}
\end{align*}
For every fixed choice of $M$, the continuous mapping theorem and the bounded convergence theorem yield that $\E \psi_M(\|X_n\|_\infty) \to \E \psi_M(\|X\|_\infty)$. Hence, by increasing $n_0$ and $M_0$ if necessary, we can write\begin{align*}
        \E \|X_n\|_\infty \1_{\|X_n\|_\infty \geq M_0} & \leq \E \|X_n\|_\infty - \E \psi_{M_0}(\|X_n\|_\infty) \leq \E \|X\|_\infty - \E \psi_{M_0}(\|X\|_\infty) + \epsilon/4 \leq \epsilon/3.
    \end{align*}
\end{proof}
\end{document}